\author{Giovanni Sold\`a \and Manlio Valenti}
\date{}
\newcommand{\printauthor}{{
\footnotesize

Giovanni Sold\`a, \textsc{Department of Mathematics: Analysis, Logic and Discrete Mathematics, \newline 
Ghent University, \newline 
Krijgslaan 281 S8, 9000 Ghent, BE}\par\nopagebreak
\textit{E-mail address}: \url{giovanni.a.solda@gmail.com}

\medskip

Manlio Valenti, \textsc{Department of Mathematics, Computer Science and Physics\newline
University of Udine\newline
Udine, UD 33100, IT}\par\nopagebreak
\textsc{Current address: Department of Mathematics\newline University of Wisconsin – Madison\newline
Madison (WI), 53706, USA}\par\nopagebreak\textit{E-mail address}: \url{manliovalenti@gmail.com}
}}
\title{Algebraic properties of the first-order part of a problem}
\begin{document}

\maketitle

\begin{abstract}
	In this paper we study the notion of first-order part of a computational problem, first introduced in \cite{DSYFirstOrder}, which captures the ``strongest computational problem with codomain $\mathbb{N}$ that is Weihrauch reducible to $f$". This operator is very useful to prove separation results, especially at the higher levels of the Weihrauch lattice. We explore the first-order part in relation with several other operators already known in the literature. We also introduce a new operator, called \emph{unbounded finite parallelization}, which plays an important role in characterizing the first-order part of parallelizable problems. We show how the obtained results can be used to explicitly characterize the first-order part of several known problems.
\end{abstract}

\subjclass[2020]{Primary: 03D78; Secondary: 03D30, 03D55}
\keywords{Weihrauch reducibility, computable analysis, degree-theoretic operations}

\tableofcontents

\section{Introduction}

When working with reducibilities in computability theory, a standard strategy to show that $a \not \le b$ is to explicitly present an element $c$ such that $c\le a$ and $c\not\le b$. This is not specific to any particular reducibility notion $\le$, as it only exploits the transitivity of the order. Of course, there is no canonical way to choose such a $c$ in general, and this is very dependent on the nature of the problems under analysis. 

In the context of computable analysis and Type-$2$ Theory of Effectivity, we usually work with multi-valued functions on represented spaces and use Weihrauch reducibility to compare their uniform computational strength. In the attempt to provide ``simple" witnesses to a non-reduction $f\not\weireducible g$, it is natural to look for a multi-valued function $h$ with codomain $\mathbb{N}$ that belongs to the lower cone of $f$ but not to the lower cone of $g$. 
This is very common in the literature, but all the proofs require ad-hoc strategies and it is hard to collect them within the same framework. Recently, Dzhafarov, Solomon, and Yokoyama \cite{DSYFirstOrder} suggested studying the lower cone of a computational problem from a more algebraic point of view. To this end, they introduced the notion of \emph{first-order part} of a problem $f$, capturing the ``strongest multi-valued function with codomain $\mathbb{N}$ that is Weihrauch reducible to $f$". 

In this paper, we explore the algebraic properties of the operator that maps a problem to its first-order part. After introducing the relevant background notions on Weihrauch reducibility (Section~\ref{sec:background}), we formally introduce the first-order part operator (Section~\ref{sec:fop}) and highlight some examples where the first-order part of a problem has been (implicitly) used in the literature (Section~\ref{sec:literature}).

The literature on Weihrauch reducibility enjoys a wide variety of different operators on multi-valued functions, and the algebraic structure of the Weihrauch degrees is well-studied. In Section~\ref{sec:alg_fop}, we study the relation of the first-order part with the most common operators and highlight the algebraic connections between them.

In order to characterize the connections between the first-order part and the parallelization operators, in Section~\ref{sec:u*} we introduce a new operator $\ustar{(\cdot)}$, which intuitively captures the idea of using a problem a finite number of times in parallel, but without having to commit in advance to the exact number of instances to use. In particular, using the unbounded finite parallelization, we characterize the first-order part of the problems that are Weihrauch-equivalent to the parallelization of functions with codomain $\mathbb{N}$. Moreover, in the same spirit of Section~\ref{sec:alg_fop}, in Section \ref{sec:alg_u*}, we study the relation between $\ustar{(\cdot)}$ and the most common operators on multi-valued functions. 

Section~\ref{sec:fop_diamond} will be devoted to the connections between the first-order part operator, the unbounded finite parallelization operator, and the diamond operator, which roughly corresponds to closure under compositional product. 

Finally, in Section~\ref{sec:applications} we show how our results can be applied to characterize the first-order part of several well-known problems, including in particular $\mflim^{(n)}$ and $\WKL^{(n)}$. Moreover, we provide some bounds for the first-order part of $\RT{2}{2}$, in particular answering a question raised by Brattka and Rakotoniaina \cite{BRramsey17}.

\subsection*{Acknowledgments}
We would like to thank Vittorio Cipriani, Damir Dzhafarov, Alberto Marcone, Arno Pauly, Paul Shafer, and Keita Yokoyama for useful discussions and many valuable suggestions during the preparation of the draft. We would also like to thank the anonymous reviewer for his/her very careful reading of the paper and the many valuable suggestions.  

Soldà's research was partially supported by a London Mathematical Society Early Career Fellowship (year 2021) and by the grant FWO Odysseus type II, "Recursion, reflection, and second-order arithmetic", G0F8421N. Valenti's research was partially supported by the Italian PRIN 2017 Grant ``Mathematical Logic: models, sets, computability".

\section{Background}
\label{sec:background}

We briefly recall the main notions in Type-2 Theory of Effectivity and Weihrauch reducibility that will be needed in this paper. For a more thorough presentation, the reader is referred to \cite{BGP17, Weihrauch00}.

We write $\Baire$ (resp.\ $\Cantor$) for the Baire space (resp.\ Cantor space) endowed with the product topology. We also write $\baire$ (resp.\ $\cantor$) for the set of finite sequences of natural numbers (resp.\ binary sequences). In particular, we write $\str{n_0, n_1,\hdots, n_k }$ for the string $\sigma:= i \mapsto n_i$ (we write $\str{}$ to denote the empty string). Similarly, we denote an infinite string by $\str{n_0, n_1,\hdots}$, when it is clear from the context how to continue the sequence. We write $\length{\sigma}$ for the length of $\sigma$ and $\sigma\concat \tau$ for the concatenation of the strings $\sigma$ and $\tau$. We use the symbol $\prefix$ to denote the prefix relation, and for every string $x$ and every $n\in\mathbb{N}$, we denote with $x[n]$ the prefix of $x$ of length $n$. We will use the symbol $\coding{\cdot}$ to denote a fixed computable bijection $\function*{\baire}{\mathbb{N}}$ with computable inverse. It is often convenient to write $\coding{n_0,\hdots, n_k}$ in place of $\coding{\str{n_0,\hdots, n_k}}$. We assume that the bijection enjoys all the usual computability properties, such as $\sigma \mapsto \length{\sigma}$ being computable. In the literature, the symbol $\coding{\cdot}$ is often used to denote also the \textdef{join} between two (finite or infinite) strings with the same length. With a small abuse of notation, if $x_0,\hdots,x_{k-1}$ are $k$ strings of the same length we will write $\coding{x_0,\hdots, x_{k-1}}(j):= \coding{x_0(j),\hdots, x_{k-1}(j)}$. Moreover, if $\sequence{x_i}{i\in\mathbb{N}}$ is a sequence of infinite strings we define $\coding{x_0,x_1,\hdots}(\coding{i,j}) := x_i(j)$. It is sometimes convenient to abbreviate $\coding{x_0,\hdots, x_{k-1}}$ with $\pairing{x_i}_{i<k}$ (analogously we write $\pairing{x_i}_{i\in\mathbb{N}}$ for the join of infinitely many strings).

We fix a computable enumeration of $\sequence{\Phi_e}{e\in\mathbb{N}}$ of partial computable functionals from $\Baire$ to $\Baire$. In the following, we use $\UTM$ to denote a fixed universal Type-2 functional, i.e.\  $\UTM\pfunction{\Baire\times \Baire}{\Baire}$ is s.t.\ for every continuous function $F\pfunction{\Baire}{\Baire}$ there is $w\in\Baire$ s.t.\ for every $x\in\Baire$, $\UTM(w,x) = F(x)$. In this case we say that $w$ is an index for $F$. Since continuous functionals are computable relatively to an oracle, if $w= \str{e}\concat p$ it is convenient to think of the $w$-th continuous functional as the map $q\mapsto \Phi_e(\coding{p,q})$. In particular, this induces a listing $\sequence{\Phi_p}{p\in\Baire}$ of the partial continuous functionals\footnote{To be precise, it is a listing of the continuous functionals with $G_\delta$ domain.} $\Baire\to\Baire$. We hope that the context will dispel any ambiguity between $\Phi_e$ and $\Phi_p$. 

A \textdef{represented space} is a pair $(X,\repmap{X})$ where $X$ is a set and $\repmap{X}\pfunction{\Baire}{X}$ is a surjective function called \textdef{representation map}. For every $x\in X$ we say that $\repmap{X}^{-1}(x)$ is the set of \textdef{$\repmap{X}$-names} or \textdef{$\repmap{X}$-codes} for $x$. We will avoid mentioning explicitly the representation map whenever it is clear from the context. If $(X,\repmap{X})$ and $(Y,\repmap{Y})$ are represented spaces, a \textdef{realizer} for $f\pmfunction{X}{Y}$, denoted $F \vdash f$, is a function $F\pfunction{\Baire}{\Baire}$ s.t.\ $(\forall p \in \dom(f\circ \repmap{X}))(\repmap{Y}(F(p))\in f(\repmap{X}(p)))$. Realizers allow us to transfer properties of functions on the Baire space (such as computability or continuity) to multi-valued functions on represented spaces. In particular, we say that a multi-valued function between represented spaces is computable if it has a computable realizer.

Computational problems are formalized via multi-valued functions on represented spaces, and, throughout the paper, we will use the words (computational) problem and multi-valued function interchangeably. Weihrauch reducibility is a notion of reducibility that calibrates the uniform computational strength of computational problems. If $f\pmfunction{X}{Y}$ and $g\pmfunction{Z}{W}$ are multi-valued functions, we say that $f$ is \textdef{Weihrauch reducible} to $g$, and write $f\weireducible g$, if there are two computable functionals $\Phi,\Psi\pfunction{\Baire}{\Baire}$ s.t.\ 
\[ (\forall G \vdash g)( (p\mapsto \Psi(p, G\Phi(p))) \vdash f).   \]
We say that $f$ is \textdef{strongly Weihrauch reducible} to $g$, and write $f\strongweireducible g$, if $\Psi$ does not have direct access to $p$. In symbols, $f\strongweireducible g$ if there are two computable functionals $\Phi$ and $\Psi$ s.t.\ 
\[ (\forall G \vdash g)( \Psi G \Phi \vdash f).   \]
The map $\Phi$ computing names of inputs of $g$ from names of inputs of $f$ is called \textdef{forward functional}, while $\Psi$ is called \textdef{backward functional}. Unless otherwise mentioned, we will denote the forward functional with $\Phi$, and use $\Psi$ for the backward functional.

Both Weihrauch reducibility and strong Weihrauch reducibility are quasi-orders, and hence they induce two degree structures on the class of computational problems. There are several natural operations on multi-valued functions, and most of them lift to the Weihrauch degrees and the strong Weihrauch degrees. Below we formally introduce the ones that we need in this paper.

The Weihrauch degrees form a distributive lattice, with join $\sqcup$ and meet $\sqcap$ defined as
\begin{itemize}
	\item $(f\sqcup g)(i,x):= \{i\}\times f(x)$ if $i=0$ and $(f\sqcup g)(i,x):= \{i\}\times g(x)$ if $i=1$;
	\item $(f\sqcap g)(x,z):= \{0\} \times f(x) \cup \{1\}\times g(z)$.
\end{itemize}

We define the \textdef{parallel product} $f \times g$ as $(f \times g)(x,y) = f(x) \times g(y)$. The parallel product captures the idea of using $f$ and $g$ in parallel. We will write $f^n$ to denote the parallel product of $n$ copies of $f$ (i.e.\ $f^1:=f$, $f^2:=f\times f$, and so on). The \textdef{finite parallelization} allows us to apply a problem a finite number of times in parallel, and is defined as $f^*(\sequence{x_i}{i<n}):=\{ \sequence{y_i}{i<n} \st (\forall i<n)(y_i \in f(x_i)) \}$. Its infinite generalization is called \textdef{parallelization}, and is formally defined as the problem $\parallelization{f}:=\sequence{x_n}{n\in\mathbb{N}}\mapsto \{ \sequence{y_i}{i\in\mathbb{N}} \st (\forall i\in\mathbb{N})(y_i \in f(x_i)) \}$. In other words, given a countable sequence of $f$-instances, $\parallelization{f}$ asks for a $f$-solution for each given $f$-instance. In Section~\ref{sec:u*}, we will introduce a new operator, called \textdef{unbounded finite parallelization}, where we ask for a finite number of instances of $f$ in parallel, but without having to commit in advance to the exact number of instances.

Two multi-valued functions $\varphi\pmfunction{X}{Y}$ and $\psi\pmfunction{Z}{X}$ can be composed letting $(\varphi \circ \psi)(x) := \varphi(\psi(z)) = \bigcup_{x\in \psi(z)} \varphi(x)$ with $\dom(\varphi \circ \psi) := \{z \in \dom(\psi) \st \psi(z) \subseteq \dom(\varphi)\}$. However, the composition does not lift to Weihrauch degrees, as it requires a precise matching between the codomain of $\psi$ and the domain of $\varphi$. In general, if $f\pmfunction{X}{Y}$ and $g\pmfunction{Z}{W}$ are two arbitrary multi-valued functions, to capture the idea of applying $f$ and $g$ in series, we consider instead the \textdef{compositional product} $f\compproduct g$, defined as 
\[ f\compproduct g := \max_{\weireducible} \{ f_0\circ g_0 \st f_0 \weireducible f \text{ and } g_0 \weireducible g\}.  \]
The compositional product was first introduced in \cite{BolWei11}, and proven to be well-defined in \cite{BP16}. While $f\compproduct g$ is (formally speaking) a Weihrauch degree, it is convenient to identify it with its representative whose domain is 
\[ \{ (p,z)\in \Baire \times Z \st z \in \dom(g) \text{ and }(\forall q \in \repmap{W}^{-1}(g(z)))(\repmap{X}\Phi_p(q)\in \dom(f)) \} \] 
and whose output is any pair $(w,y)$ with $w\in g(z)$ and $y \in f(\repmap{X}\Phi_p\repmap{W}^{-1}(w))$ (see \cite{Westrick20diamond} for a short proof of the fact that this problem is a representative of the compositional product). For every $f$, we denote with $f^{[n]}$ the $n$-fold iteration of the compositional product of $f$ with itself, i.e., $f^{[1]} = f$, $f^{[2]} = f \compproduct f$, and so on. By unfolding the definition, it is straightforward to see that we can equivalently think of $f^{[n]}$ as the problem that takes in input $(\sequence{p_i}{i<n-1},x)$, where $x\in \dom(f)$ and $p_i\in \Baire$, and produces $\sequence{y_i}{i<n}$ with $y_0\in f(x)$ and $y_{i+1} \in f(\repmap{X}\Phi_{p_i}\repmap{Y}^{-1}(y_i) )$. In Section~\ref{sec:fop_diamond}, we will formally introduce the \emph{diamond} operator, which roughly captures the closure under compositional product.

The operators $\sqcup,\sqcap,\times,{}^*,$ and $~ \parallelization{} ~$ lift to the Weihrauch and strong Weihrauch degrees. We say that $f$ is a \textdef{cylinder} if $f \strongweiequiv f \times \id$.
If $f$ is a cylinder, then $g \weireducible f$ if and only if $g \strongweireducible f$ (\cite[Cor.\ 3.6]{BG09}). 

We now mention a couple of additional non-degree-theoretic operators which are nevertheless very important in the literature. 

The \textdef{jump} of $f \pmfunction{X}{Y}$ is the problem $f' \pmfunction{X'}{Y}$ defined as $f'(x) := f(x)$, where $X'$ is the represented space $(X,\repmap{X'})$ and $\repmap{X'}$ maps a convergent sequence $\sequence{p_n}{n\in\mathbb{N}}$ in $\Baire$ to $\repmap{X}(\lim_{n\to\infty} p_n)$. In other words, $f'$ takes in input a sequence that converges to a name of an $f$-instance and produces an $f$-solution to that instance. We use $f^{(n)}$ to denote the $n$-th jump of a problem. Observe that, if we define $\mflim \pfunction{(\Baire)^\mathbb{N}}{\Baire}$ by $\mflim((p_n)_{n \in \mathbb{N}}) := \lim_{n \to \infty} p_n$ then we have $f^{(n)}\weireducible f\compproduct \mflim^{[n]}$. The converse reduction does not hold in general (take e.g.\ a function $f$ that only has computable outputs). The equivalence $f^{(n)}\weiequiv f\compproduct \mflim^{(n)}$ holds if $f$ is a cylinder, hence in particular $\mflim^{[n+1]}\weiequiv \mflim^{(n)}$.

The \textdef{total continuation} or \textdef{totalization} of $f\pmfunction{X}{Y}$ is the total multi-valued function $\totalization{f}\mfunction{X}{Y}$ defined as 
\[ \totalization{f}(x):=\begin{cases}
	f(x)	& \text{if } x\in\dom(f)\\
	Y		& \text{otherwise.}
\end{cases} \]

Notice that the definition of $\mathsf{T}f$ is sensitive to the particular definition of $f$ as a multi-valued function between represented spaces. We also mention the \emph{completion} of a problem, which is an operator that maps a multi-valued function to a total one on different represented spaces. The precise definition will be stated in Section~\ref{sec:fop_diamond}. For a more detailed exposition we refer to \cite{BGCompOfChoice19}.

Next, we introduce some problems which are milestones in the Weihrauch lattice. We already mentioned $\mflim$ and its jumps. Of utmost importance is the family of \emph{choice} problems: every represented space $(X,\repmap{X})$ can be endowed with the final topology induced by the map $\repmap{X}$. The space of open subsets of $X$ can be equipped with a representation map using the fact that a subset $U\subset X$ is open iff its characteristic function $\chi_U\function{X}{\Sier}$ has a continuous realizer, where $\Sier=\{0,1\}$ is equipped with the Sierpi\'nski topology. In turn, we represent a closed set (in the final topology on $X$) via a name for its complement. This intuitively corresponds to having a c.e.\ procedure (relative to the code of the set) to decide membership in the complement. With a similar strategy, we can define a representation map for every level of the Borel hierarchy on $X$, as well as for the families $\boldfaceSigma^1_1(X)$, $\boldfacePi^1_1(X)$, $\boldfaceDelta^1_1(X)$  (see \cite[Sec.\ 4]{Pauly16}).

For every level $\boldfaceGamma$ of the Borel hierarchy, or $\boldfaceGamma\in \{ \boldfaceSigma^1_1,\boldfacePi^1_1, \boldfaceDelta^1_1 \}$ we define $\codedChoice{\boldfaceGamma}{}{X}\pmfunction{\boldfaceGamma(X)}{X}$ as the problem of choosing an element from a non-empty set $A\in \boldfaceGamma(X)$. If $\boldfaceGamma=\boldfacePi^0_1$ we simply write $\Choice{X}$. Despite the choice problems can be defined in such generality, we will only mention the choice problems on the sets $k\in \mathbb{N}$, $\mathbb{N}$, $\mathbb{R}$, $\Cantor$, and $\Baire$. In particular, $\CCantor\strongweiequiv \WKL$, where $\WKL$ stands for Weak K\"onig's Lemma and is the problem of producing a path through an ill-founded subtree of $\cantor$. It is known that, for every $n>0$, $\codedChoice{\boldfacePi^0_{n+1}}{}{\mathbb{N}}\strongweiequiv \codedChoice{\boldfacePi^0_n}{}{\mathbb{N}}{}'$. 

There are several variants of the choice problems where we restrict our attention to (non-empty) sets with additional properties. In particular, we write $\codedUChoice{\boldfaceGamma}{}{X}$ if the choice is restricted to singletons. Other useful ones are the \textdef{cofinite choice} principles, where the choice is restricted to cofinite sets. It is easy to see that the cofinite choice problem for $\boldfaceGamma$ subsets of $\mathbb{N}$ is equivalent to the problem $\CodedBound{\check{\boldfaceGamma}}$, where $\check{\boldfaceGamma}$ is the dual class, consisting of finding a bound for a finite subset of $\mathbb{N}$. The problems $\CNatural, \CCantor, \CBaire,$ and $\UCBaire$ are closed under compositional product (see \cite[Cor.\ 7.12]{BGP17}). We also mention the following well-known fact that will be useful in the rest of the paper:

\begin{theorem}
    \thlabel{thm:cn_finite_codomain}
    Fix $p\in \Baire$. Let $\weireducible^p$ denote the relativized version of Weihrauch reducibility where the forward and backward functionals $\Phi$, $\Psi$ are required to be $p$-computable.
    For every $p\in\Baire$ and every problem $f$ with finite codomain, $\CNatural\not\weireducible^p f$.
\end{theorem}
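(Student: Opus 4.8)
The plan is to argue by contradiction, exploiting that a problem with finite codomain can pass along only finitely many candidate answers. Assume $\CNatural \weireducible^p f$, witnessed by $p$-computable functionals $\Phi$ and $\Psi$, and let $Y = \{y_1, \dots, y_m\}$ be the codomain of $f$. For each $i \le m$ fix a $\repmap{Y}$-name $r_i$ of $y_i$ and set $\Psi_i(q) := \Psi(q, r_i)$; since $\Psi$ is continuous, the natural number named by $\Psi_i(q)$, when it is defined, depends on only a finite prefix of $q$. Recall also that a name of a $\CNatural$-instance may be taken to be an enumeration $q$ of the complement of a non-empty set $A \subseteq \mathbb{N}$, and that the $\CNatural$-solutions of $q$ are exactly the members of $\mathbb{N} \setminus \ran(q)$.

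The first step is to show that for \emph{every} name $q$ of a $\CNatural$-instance at least one of $\Psi_1(q), \dots, \Psi_m(q)$ is defined and names an element of $\mathbb{N} \setminus \ran(q)$. For the reduction to be correct, $\Phi(q)$ must be a name of some $z \in \dom(f)$; then $f(z)$ is a non-empty subset of $Y$, so $y_i \in f(z)$ for some $i$, and taking any realizer $G$ of $f$ with $G(\Phi(q)) = r_i$ forces the map $q \mapsto \Psi(q, G\Phi(q)) = \Psi_i(q)$ to output a $\CNatural$-solution of $q$. The mildly fussy point is the existence of such a $G$: one extends the single assignment $\Phi(q) \mapsto r_i$ to a realizer of $f$, which is possible because the representations involved are surjective and the values of $f$ are non-empty.

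The second step derives the contradiction by constructing, via a finite-injury argument, a single name $q$ of a bona fide $\CNatural$-instance that defeats $\Psi_1, \dots, \Psi_m$ at once. I would build $q$ as the union of an increasing chain of finite strings, passing repeatedly through the indices $1, \dots, m$: whenever $\Psi_i$ halts on the current finite prefix and names a value $v_i$, enumerate $v_i$ into $\ran(q)$; use the ``no information'' symbol of the representation whenever no such $v_i$ is to be added, so that $q$ is total. Then only the (at most $m$) values $v_i$ arising from the indices $i$ with $\Psi_i(q)$ defined are ever enumerated into $\ran(q)$, so $\mathbb{N} \setminus \ran(q)$ is cofinite; in particular $q$ genuinely names a $\CNatural$-instance. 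Conversely, for each $i \le m$ either $\Psi_i(q)$ diverges, or it converges while reading only a finite prefix of $q$ that appears at some stage of the construction, at which stage its value is thrown into $\ran(q)$; in both cases $\Psi_i(q)$ fails to name a member of $\mathbb{N} \setminus \ran(q)$. This contradicts the first step.

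I expect the main obstacle to be the finite-injury bookkeeping in the second step: one must check that, because a value output by $\Psi_i$ on a finite prefix is output on every extension, no commitment is ever retracted, so $q$ and the $v_i$ are well defined, and that simulating the $\Psi_i$ on finite prefixes at successive stages does catch every $i$ with $\Psi_i(q)$ convergent. Both are routine once set up carefully. Finally, note that $q$ need not be computable, so relativizing to $p$ (and, if necessary, to the possibly non-computable $r_i$) does not affect the argument.
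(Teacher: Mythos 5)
Your argument is correct and is essentially the same diagonalization as in the paper: fix names $r_1,\dots,r_m$ for the finitely many possible $f$-outputs, observe that for any valid name $q$ of a $\CNatural$-instance some $\Psi(q,r_i)(0)$ must converge to a correct answer, and then build a single $q$ that defeats all $m$ of the $\Psi_i$ by enumerating their (at most $m$) committed outputs into the complement. The paper presents the same idea slightly more tersely, starting from $0^{\mathbb N}$ and "iterating the reasoning" after each modulus-of-continuity bound, whereas you make the finite-injury bookkeeping explicit and note (correctly, via monotonicity of $\Psi_i$) that commitments are never retracted, so only finitely many numbers are ever excluded and $q$ indeed names a cofinite, hence nonempty, set.
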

\begin{proof}
    Let $Y = \{y_0,\hdots, y_n\}$ be the codomain of $f$ and let $q_i$ be a name for $y_i$. Assume towards a contradiction that there are two $p$-computable functionals $\Phi, \Psi$ witnessing $\CNatural\weireducible^p f$. Let $0^\mathbb{N}$ be a name for $\mathbb{N}\in\dom(\CNatural)$. By continuity of $\Psi$, there is $k_0$ s.t.\ for every $i\le n$, if $\Psi(0^\mathbb{N}, q_i)(0)\downarrow$ then it does in $k_0$ steps. Consider now the input $0^{k_0}\concat \str{x_0,\hdots, x_n}\concat 0^\mathbb{N}$, where $x_i := \Psi(0^\mathbb{N}, q_i)(0)+1$ if the computation converges in $k_0$ steps, and $0$ otherwise. By iterating this reasoning we can diagonalize against every possible output of $f$, obtaining an input for which the pair $\Phi, \Psi$ fails to compute a valid solution.
\end{proof}

Another important family of problems comes from Ramsey's theorem for $n$-tuples and $k$-colors: for every $A\subset \mathbb{N}$, let $[A]^n:=\{ B\subset A\st |B|=n\}$ be the set of subsets of $A$ with cardinality $n$. For $k\ge 2$, a \textdef{$k$-coloring} of $[\mathbb{N}]^n$ is a function $c\function{[\mathbb{N}]^n}{k}$. An infinite set $H$ s.t.\ $c([H]^n)=\{i\}$ for some $i<k$ is said to be \textdef{$c$-homogeneous for color $i$}. A $k$-coloring $c$ of $[\mathbb{N}]^n$ can be represented by a string $p\in\Baire$ s.t.\ for each $(i_0,\hdots,i_{n-1})\in[\mathbb{N}]^n$, $p(\coding{i_0,\hdots,i_{n-1}})=c(i_0,\hdots,i_{n-1})$. We denote the represented space of $k$-colorings of $n$-tuples with $\mathcal{C}_{n,k}$.

We define $\RT{n}{k}\mfunction{\mathcal{C}_{n,k}}{\Cantor}$ as the total multi-valued function that maps a coloring $c$ to the set of all $c$-homogeneous sets. Similarly we define $\RT{n}{\mathbb{N}}\mfunction{\bigcup_{k\ge 1}\mathcal{C}_{n,k} }{\Cantor}$ as $\RT{n}{\mathbb{N}}(c):=\RT{n}{k}(c)$, where $k-1$ is the maximum of the range of $c$. Note that the input for $\RT{n}{\mathbb{N}}$ does not include information on which colors appears in the range of the coloring. 

A coloring $c\function{[\mathbb{N}]^n}{k}$, with $n\geq 1$ is called \textdef{stable} if, for every $x\in [\mathbb{N}]^{n-1}$, $\lim_{y\to\infty} c(x\cup \{y\})$ exists. We denote with $\SRT{n}{k}$ (resp.\ $\SRT{n}{\mathbb{N}}$) the restriction of $\RT{n}{k}$ (resp.\ $\RT{n}{\mathbb{N}}$) to stable colorings.

The uniform computational content of Ramsey's theorems is well-studied (see e.g.~\cite{BRramsey17,DDHMS16, DGHPP18,Patey2016}). 

We finally mention the following two problems:

\begin{itemize}
	\item $\LPO\function{\Cantor}{\{0,1\}}$ is defined as $\LPO(p):=1$ iff $(\exists n)(p(n)=1)$. It is often convenient to think of $\LPO$ as the problem of finding a yes/no answer to a $\Sigma^{0,p}_1$ or $\Pi^{0,p}_1$ question. 
	\item $\chiPi\function{\Cantor}{\{ 0,1\}}$ is the characteristic function of (the set of characteristic functions for) well-founded trees.
\end{itemize} 
It is known that, for every $n$, $\parallelization{\LPO^{(n)}}\strongweiequiv \mflim^{(n)}$. Moreover, $\parallelization{\chiPi}\strongweiequiv \PiCA$, where $\PiCA$ is the problem of producing the characteristic function of $A\subset \mathbb{N}$ given a $\boldfacePi^1_1$-code for it (see also \cite{HirstLM}).

A very important result in classical computability theory is Kleene's fixed point theorem, also called recursion theorem (see e.g.\ \cite[Sec.\ 2.2]{Soare16}). An important observation is that the recursion theorem relativizes to continuous functionals (by essentially the same proof).

\begin{theorem}
	\thlabel{thm:continuous_recursion_theorem}
	For every total continuous map $F\function{\Baire}{\Baire}$ there is $w\in \Baire$ s.t.\ $\Phi_{F(w)} = \Phi_w$. Moreover, $w$ can be found uniformly from $F$.
\end{theorem}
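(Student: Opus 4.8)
The plan is to replay the classical proof of Kleene's recursion theorem, with the enumeration of partial computable functions replaced by the listing $\sequence{\Phi_p}{p\in\Baire}$ of partial continuous functionals fixed above, and with the classical $s$-$m$-$n$ theorem replaced by its continuous counterpart. The latter holds for the usual reasons: composing two (partial) computable functionals and fixing part of an input are effective operations, and they keep one inside the class of computable functionals with $G_\delta$ domain. Concretely, what I need is that for every partial computable $H\pfunction{\Baire\times\Baire}{\Baire}$ there is a \emph{total} computable $h\function{\Baire}{\Baire}$, obtained uniformly from (a code for) $H$, such that $\Phi_{h(p)} = H(p,\cdot)$ for every $p$ (with matching domains).

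First I would build a diagonal functional. The map $(p,q)\mapsto \UTM(\Phi_p(p),q)$ is partial computable in $(p,q)$, being the composition of the partial computable functional $p\mapsto \Phi_p(p)$ with $\UTM$. Applying the continuous $s$-$m$-$n$ statement to it yields a total computable $D\function{\Baire}{\Baire}$ with $\Phi_{D(p)} = \Phi_{\Phi_p(p)}$ whenever $\Phi_p(p)$ is defined. Here $\Phi_p(p)$ need not be defined for all $p$, but that is harmless: $D$ will only ever be evaluated at indices of total functionals.

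Next, given a total continuous $F\function{\Baire}{\Baire}$, say $F = \Phi_{w_F}$, the functional $F\circ D$ is again total and continuous, so from $w_F$ one computes an index $v$ with $\Phi_v = F\circ D$, since precomposing $F$ with the fixed functional $D$ is effective on indices. Set $w := D(v)$. Since $\Phi_v = F\circ D$ is total, $\Phi_v(v)$ is defined and equals $F(D(v)) = F(w)$, hence
\[ \Phi_w = \Phi_{D(v)} = \Phi_{\Phi_v(v)} = \Phi_{F(w)}, \]
which is the desired equality. Unwinding the construction, $w = D(v)$ where $v$ was obtained from $w_F$ by a fixed computable procedure, so $w$ can be found uniformly from a code for $F$.

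The only genuine content is the continuous $s$-$m$-$n$ statement invoked to produce $D$ and $v$; granting it, the argument is the classical one verbatim, which is exactly what ``by essentially the same proof'' refers to. The one bookkeeping subtlety is that the self-application $\Phi_p(p)$ is in general partial, so identities like $\Phi_{D(p)} = \Phi_{\Phi_p(p)}$ must be read as identities of partial functionals; this causes no trouble in the final step because there $D$ is applied to the index $v$ of a total functional.
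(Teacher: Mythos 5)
Your proof is correct and is exactly what the paper has in mind: the paper gives no proof of its own, only the remark that Kleene's recursion theorem "relativizes to continuous functionals (by essentially the same proof)" with a pointer to Soare, and your write-up carries out that classical argument verbatim in the continuous setting, with the only substantive ingredient being the continuous $s$-$m$-$n$ lemma, which holds for the standard reasons you give.
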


We conclude this section with the following lemma, which will be useful in the rest of the paper.

\begin{lemma}
	\thlabel{thm:countable_sparse_splitting}
	There are two sequences $\sequence{A_n}{n\in\mathbb{N}}$ and $\sequence{B_n}{n\in\mathbb{N}}$ of subsets of $\mathbb{N}$ s.t.\ 
	\begin{enumerate}
		\item for every $n$, $1^n0 \prefix B_n$;
		\item for every $n$, $\emptyset' \not\turingreducible A_n$, $\emptyset' \not\turingreducible B_n$, but $\emptyset' \turingreducible A_n \oplus B_n$;
		\item for every computable sequence $\sequence{e_i}{i\in\mathbb{N}}\subset \mathbb{N}$ and every computable functional $\Psi$ there is $x\in\mathbb{N}$ s.t.\ if for every $i$, $\{e_i\}^{B_i}(x)\downarrow = \gamma_i \prefix B_i$ then 
		\[ \emptyset'(x) \neq \Psi(x, \pairing{\gamma_i}_{i\in\mathbb{N}}, \pairing{A_j}_{j\in\mathbb{N}} ). \]
	\end{enumerate}
\end{lemma}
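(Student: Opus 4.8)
The plan is to build the sequences $\sequence{A_n}{n\in\mathbb{N}}$ and $\sequence{B_n}{n\in\mathbb{N}}$ by a single finite-injury (or even a direct) priority construction relative to $\emptyset'$, interleaving three families of requirements: (i) the coding requirements guaranteeing $\emptyset'\turingreducible A_n\oplus B_n$ for each $n$; (ii) the ``sparseness/anti-cone'' requirements guaranteeing $\emptyset'\not\turingreducible A_n$ and $\emptyset'\not\turingreducible B_n$; and (iii) the diagonalization requirements of clause~(3). Condition~(1) is trivially arranged by fixing $B_n[n+1]=1^n0$ in advance and never changing it, so the real work is in (2) and (3).

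The most natural setup is to make $A_n$ and $B_n$ \emph{Turing-incomparable halves of $\emptyset'$ along a sparse Sacks-style splitting}. Concretely, partition $\mathbb{N}$ into infinitely many ``columns'' and, for each pair $(A_n,B_n)$, reserve a column so that $A_n(m)$ and $B_n(m)$ jointly decide $\emptyset'(m)$ (say $A_n(m)\oplus B_n(m)$ codes $\emptyset'(m)$ in a fixed slot), while each individually is kept so sparse — and the coding bits so delayed relative to $\emptyset'$-changes — that neither alone computes $\emptyset'$. A clean way to do the incomparability: use a Sacks splitting of $\emptyset'$ into $A_n, B_n$, which automatically gives $\emptyset'\not\turingreducible A_n$ and $\emptyset'\not\turingreducible B_n$, and is uniform enough to run simultaneously for all $n$ (the columns don't interfere). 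Requirement~(1) is compatible because we may assume $\emptyset'(m)=0$ for $m<$ some threshold, or simply shift the coding so the first $n+1$ bits of $B_n$ are free to be $1^n0$; the Sacks splitting construction has enough freedom to respect finitely many pre-set values.

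For clause~(3): the point of the hypothesis ``$1^n0\prefix B_n$'' together with ``$\{e_i\}^{B_i}(x)\downarrow=\gamma_i\prefix B_i$'' is that any single use of a column $B_i$ can only extract a \emph{finite} prefix $\gamma_i$ of $B_i$, and crucially $\gamma_i$ lives in the ``public'' part of $B_i$ — it reveals only initial-segment information, not the infinitely-delayed coding bits. So the combined object $\pairing{\gamma_i}_{i}\oplus\pairing{A_j}_j$ that $\Psi$ is fed is, up to a finite amount of information chosen by $\Psi$'s opponent, computable from $\bigoplus_j A_j$ plus finitely much. Since $\bigoplus_j A_j$ is a Sacks-half it does \emph{not} compute $\emptyset'$, hence cannot uniformly decide $\emptyset'$ on a fresh input; this is exactly what we diagonalize against. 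Formally, I would run a movable marker $x=x(e_\bullet,\Psi)$: wait (using the $\emptyset'$ oracle driving the construction) for a stage where all the $\{e_i\}^{B_i[s]}(x)$ have apparently converged with output $\gamma_i$ an initial segment of the current $B_i$, and where $\Psi(x,\pairing{\gamma_i}_i,\pairing{A_j}_j)[s]\downarrow$; then set $\emptyset'(x)$ (i.e., force the underlying $\Sigma^0_1$ fact at $x$) to be the \emph{opposite} of the value $\Psi$ just computed, making sure no $\gamma_i$ and no $A_j$ bit that $\Psi$ actually queried gets changed afterward. The sparseness of the columns gives us room to keep those finitely many committed bits fixed while still satisfying all lower-priority coding requirements.

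The main obstacle, and the step to be most careful about, is the interaction between the \emph{coding} requirements (1)–(2) — which want to pin down $A_n\oplus B_n$ to $\emptyset'$ and therefore need to change bits whenever $\emptyset'$ changes — and the \emph{diagonalization} requirement (3), which needs a finite set of ``frozen'' bits of some $B_i$'s and $A_j$'s. The resolution is the usual priority bookkeeping plus the sparseness built into the construction: each diagonalization strategy, once it acts at input $x$, only ever needs to protect the \emph{use} of finitely many converging computations, and we arrange the coding so that the bits responsible for encoding $\emptyset'(m)$ for large $m$ sit strictly beyond any previously-protected use; when a new $\emptyset'(m)$-change arrives we code it in an as-yet-untouched region, never disturbing the frozen initial segments $\gamma_i$. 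One must check that this is consistent — i.e., that protecting infinitely many such uses over the whole construction still leaves room to code every bit of $\emptyset'$ — which is where having genuinely infinitely many free positions per column (the ``sparse'' in the lemma's name) is essential. A verification that each requirement acts finitely often and that the limit sequences satisfy (1)–(3) completes the argument.
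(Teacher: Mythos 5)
There is a genuine and fatal gap in your diagonalization strategy for clause (3). You write that, once a candidate computation $\Psi(x, \pairing{\gamma_i}_i, \pairing{A_j}_j)$ has apparently converged, you would ``set $\emptyset'(x)$ \dots\ to be the opposite of the value $\Psi$ just computed.'' But $\emptyset'$ is a fixed set; the construction builds the $A_n$'s and $B_n$'s \emph{relative} to $\emptyset'$, it does not get to choose what $\emptyset'(x)$ is. Moreover, the strategy as written is internally inconsistent: you simultaneously propose to \emph{freeze} all the bits of $\pairing{\gamma_i}_i$ and $\pairing{A_j}_j$ that $\Psi$ actually queried, and to make $\Psi$'s output wrong. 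With those bits frozen and $\emptyset'$ fixed, there is nothing left to change, so $\Psi$'s output cannot be forced to be wrong. The only way to satisfy clause (3) in this kind of construction is the one the paper uses: a forcing-style finite extension argument. At each stage, associated to a pair $(e,h)$ coding the sequence $\sequence{e_i}{i}$ and the functional $\Psi = \{h\}$, one searches among the still-free extensions of the current finite segments of the $A_n$'s and $B_n$'s for one that forces a failure --- either some $\{e_i\}^{B_n^{(s+1)}}(x)$ converges to a string that is \emph{not} a prefix of $B_n^{(s+1)}$, or some $\{e_i\}^{\bar B_n}(x)$ can be made to diverge along every further extension (so the hypothesis of (3) is vacuous at $x$), or $\{h\}$ converges to the wrong value of $\emptyset'(x)$ or can be made to diverge. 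The existence of such an extension is then argued by contradiction: if no extension forced any of these failures, the uniformly computable extensions would give a computable procedure for $\emptyset'$, which is absurd. Your proposal replaces this forcing dichotomy with a move (``change $\emptyset'(x)$'') that is simply not available, so the proof does not go through.

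Two secondary points, which would matter even if the above were repaired. First, your appeal to Sacks splitting handles clause (2) for a single pair $(A,B)$, but the lemma needs the stronger, ``access-pattern-restricted'' non-computability of clause (3): it is not enough that $\bigoplus_j A_j$ alone fails to compute $\emptyset'$, because the $\gamma_i$'s are prefixes of the $B_i$'s and, together with the $A_j$'s, could in principle encode $\emptyset'$. What the construction has to arrange --- and what the ``stretching'' of two incomparable halves $X,Y$ into $A_n,B_n$ in the paper's proof achieves --- is that the map sending $x$ to the prefix of $B_n$ actually needed by an $A_n\oplus B_n$-computation of $\emptyset'(x)$ is itself not $B_n$-computable, so the functionals $\{e_i\}^{B_i}$ cannot supply the right prefixes. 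You gesture at this (``the coding bits are infinitely delayed'') but never give a mechanism that guarantees it. Second, the paper's argument is a plain finite extension construction (effectively, forcing over computable conditions), not a finite-injury priority construction: no requirement is ever injured or re-initialized. Recasting it as a priority argument is not wrong per se, but it adds bookkeeping complexity you do not need and does not supply the missing diagonalization step.
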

\begin{proof}
	The construction of the sets $\sequence{A_n}{n\in\mathbb{N}}$ and $\sequence{B_n}{n\in\mathbb{N}}$ is done using a finite extension argument. At each stage $s$ we will have a finite prefix $A^{(s)}_n$ (resp.\ $B^{(s)}_n$) of $A_n$ (resp.\ $B_n$). For the sake of readability, we write $\mathbf{A}^{(s)}=\bigoplus_{n\in\mathbb{N}} A^{(s)}_n$ and $\mathbf{B}^{(s)}=\bigoplus_{n\in\mathbb{N}} B^{(s)}_n$. Moreover, we say that $\mathbf{A}^{(s+1)}$ extends $\mathbf{A}^{(s)}$ if, for each $n$, $A^{(s)}_n \prefix A^{(s+1)}_n$.
	
	Let $X,Y\subset \mathbb{N}$ be Turing-incomparable sets with $X\oplus Y \turingequiv \emptyset'$. Intuitively, we will build $A_n$ and $B_n$ as follows: the first bit of $A_n$ is $X(0)$ while the first bits of $B_n$ code $n$. Then the following bits of $A_n$ will code the position in $B_n$ where to find $Y(0)$ (which will possibly be some large value $k$). Then, the bits after $B_n(k)$ will code the position in $A_n$ where to find $X(1)$, and so on. In other words, we ``stretch" $X$ and $Y$ into $A_n$ and $B_n$, so that $\emptyset'\turingreducible A_n\oplus B_n$ but we cannot $B_n$-computably map $x$ to the prefix of $B_n$ needed to compute $\emptyset'(x)$. 
	
	We start by defining $A^{(0)}_n:= \str{X(0)}$ and $B^{(0)}_n:=1^n 0$ for every $n$. We also define $c_0:=1$ (this is just an index to keep track of what is the next element of $X$ and $Y$ we need to code). 
	
	At stage $s=\coding{e,h}$ if $\{e\}$ is not total then there is nothing to do: we simply let $\mathbf{A}^{(s+1)}:=\mathbf{A}^{(s)}$, $\mathbf{B}^{(s+1)}:=\mathbf{B}^{(s)}$, $c_{s+1}:=c_s$ and go to the next stage. Assume $\{e\}$ is total and let $e_i:=\{e\}(i)$. We look for some computable extension $\mathbf{A}^{(s+1)}$ of $\mathbf{A}^{(s)}$ and $\mathbf{B}^{(s+1)}$ of $\mathbf{B}^{(s)}$ s.t.\ for every $n$, $A^{(s+1)}_n$ and $B^{(s+1)}_n$ are of the form respectively
	\begin{gather*}
		A^{(s)}_n \concat 1^{\length{B^{(s)}_n}+ \length{\sigma_n}}0 \concat \tau_n \concat \str{ X(c_s) }\\
		B^{(s)}_n\concat \sigma_n \concat \str{Y(c_s)}\concat 1^{\length{A^{(s+1)}_n}-1}0
	\end{gather*}
	for some $\sigma_n, \tau_n \in \cantor$, and such that one of the following conditions hold:
	\begin{itemize}
		\item there are $i,n,x\in\mathbb{N}$ s.t.\ $\{e_i\}^{B^{(s+1)}_n}(x)\downarrow=\gamma_i$, $\length{\gamma_i}\le \length{B^{(s+1)}_n}$ and $\gamma_i \not \prefix B^{(s+1)}_n$;
		\item there are $i,n,x\in\mathbb{N}$ s.t.\ for every extension $\bar B_n$ of $B^{(s+1)}_n$, $\{e_i\}^{\bar B_n}(x)\uparrow$;
		\item there is $x\in\mathbb{N}$ s.t., letting $\gamma_i(x):=\{e_i\}^{B^{(s+1)}_i}(x)$ for every $i$, and defining $\boldsymbol{\gamma}(x):= \bigoplus_{i\in\mathbb{N}} \gamma_i(x)$, either $\{ h \}^{\boldsymbol{\gamma}(x)\oplus \mathbf{A}^{(s+1)}}(x)\downarrow\neq \emptyset'(x)$ or, for every extension $\bar{\mathbf{A}}$ of $\mathbf{A}^{(s+1)}$, $\{h\}^{\boldsymbol{\gamma}(x)\oplus \bar{\mathbf{A}} }(x)\uparrow$.
	\end{itemize}
	We then define $c_{s+1}:= c_s +1$ and go to the next stage.
	
	Observe that a choice of $\mathbf{A}^{(s+1)}$ and $\mathbf{B}^{(s+1)}$ as described is always possible: if this is not the case then, for some $s=\coding{e,h}$, for every computable extensions $\mathbf{A}^{(s+1)}$ and $\mathbf{B}^{(s+1)}$ as above we would have:
	\begin{itemize}
		\item for every $i,x$, $\{e_i\}^{B^{(s+1)}_i}(x)\downarrow=:\gamma_i(x)\prefix B^{(s+1)}_i$
		\item for every $x$, $\{ h \}^{\boldsymbol{\gamma}(x)\oplus \mathbf{A}^{(s+1)}}(x) = \emptyset'(x)$, where $\boldsymbol{\gamma}(x):= \bigoplus_{i\in\mathbb{N}} \gamma_i(x)$.
	\end{itemize}
	However, since both $\mathbf{A}^{(s+1)}$ and $\mathbf{B}^{(s+1)}$ are computable (they were obtained by uniformly computably extending $\mathbf{A}^{(0)}$ and $\mathbf{B}^{(0)}$ finitely many times), this would imply that $\emptyset'$ is computable, which is obviously a contradiction. 
	
	For every $n$, we define $A_n := \lim_{s\to\infty} A^{(s)}_n$ and $B_n := \lim_{s\to\infty} B^{(s)}_n$. Observe that both $\mathbf{A}^{(s)}$ and $\mathbf{B}^{(s)}$ are extended infinitely many times, hence for every $n$, $A_n, B_n \in\Cantor$. It is trivial to see that the obtained sets satisfy the conditions $(1),(3)$ above. To show that condition $(2)$ is satisfied notice that, for every $n$, we can (uniformly) compute the sets $X$ and $Y$ (fixed at the beginning), hence $\emptyset' \turingreducible A_n \oplus B_n$. The conditions $\emptyset'\not\turingreducible A_n$ and $\emptyset'\not\turingreducible B_n$ are satisfied by construction. Indeed, to show that $\emptyset'\not\turingreducible A_n$ pick $e$ s.t., for every $i$, $\{e\}(i)= e_0$ and $\{e_0\}$ is the function that returns the first bit of the oracle. For every $s$ and every function $\varphi$ using $A_n$ as oracle, we can compute an index $h$ for a function s.t., if $\eta_i(x):=\{e_i\}^{B^{(s+1)}_i}(0)$ for every $i$ and  $\boldsymbol{\eta}:= \bigoplus_{i\in\mathbb{N}} \eta_i$, $\{h\}^{\boldsymbol{\eta} \oplus \mathbf{A}} = \varphi^{A_n}$. In particular, at stage $\coding{e,h}$ we diagonalized against any such $\varphi$. With a similar reasoning, notice that if $\emptyset'\turingreducible B_n$ via a computable function $\psi$, then we can choose $e$ and $h$ appropriately so that $\{e\}(n)$ is the index of a function mapping $x$ to the prefix of $B_n$ necessary to $\psi$ to compute $\emptyset'(x)$, and $h$ simulates $\psi$ on the corresponding column of its oracle. 
\end{proof}

\section{The first-order part of a problem}
\label{sec:fop}

Recall that, if $w=\str{e}\concat p$, $\Phi_w(x)$ simulates the $e$-th computable Turing functional with oracle $p$ and input $x$. We conventionally represent the natural numbers via the map $p\mapsto p(0)$. Let us now define formally the first-order part of a problem: 

\begin{definition}
	\thlabel{def:first_order_part}
	We say that a computational problem $f\pmfunction{X}{Y}$ is \textdef{first-order}, and write $f\in\mathcal{F}$, if there is a computable injection $\function*{Y}{\mathbb{N}}$ with computable inverse. For every problem $f\pmfunction{X}{Y}$, the \textdef{first-order part of $f$} is the multi-valued function $\firstOrderPart{f}\pmfunction{\Baire\times X}{\mathbb{N}}$ defined as follows:
	\begin{itemize}
		\item instances are pairs $(w,x)$ s.t.\ $x\in \dom(f)$ and for every $y\in f(x)$ and every name $p_y$ for $y$, $\Phi_w(p_y)(0)\downarrow$;
		\item a solution for $(w,x)$ is any $n$ s.t.\ there is a name $p_y$ for a solution $y\in f(x)$ with $\Phi_w(p_y)(0)\downarrow=n$.
	\end{itemize}
\end{definition}

Observe that any first-order problem is strongly Weihrauch equivalent to some problem with codomain $\mathbb{N}$.
Intuitively, the first-order part of $f$ behaves ``just like $f$, but stops at the first bit''. The motivation for this notion comes from the following fact:

\begin{proposition}[{\cite{DSYFirstOrder}}]
    \thlabel{thm:fop_max}
	For every problem $f$, $\firstOrderPart{f} \weiequiv \max_{\weireducible}\{ g\in \mathcal{F} \st g \weireducible f\}$.
\end{proposition}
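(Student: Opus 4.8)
The plan is to verify the two defining properties of a maximum: first that $\firstOrderPart{f}\in\mathcal{F}$ and $\firstOrderPart{f}\weireducible f$, and second that every $g\in\mathcal{F}$ with $g\weireducible f$ satisfies $g\weireducible\firstOrderPart{f}$. The first clause is essentially unwinding the definition: by construction $\firstOrderPart{f}$ has codomain $\mathbb{N}$, so it lies in $\mathcal{F}$ (using the identity injection $\function*{\mathbb{N}}{\mathbb{N}}$). For $\firstOrderPart{f}\weireducible f$, I would describe the forward functional that, on input $(w,x)$, discards $w$ and hands $x$ to $f$; given a name $p_y$ for a solution $y\in f(x)$ returned by the oracle, the backward functional (which does have access to $w$) computes $\Phi_w(p_y)(0)$ and outputs it as a natural number. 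By the definition of the instances of $\firstOrderPart{f}$ this computation converges, and the value is by definition a valid $\firstOrderPart{f}$-solution. Note this reduction genuinely uses the input-access of $\weireducible$ (to recover $w$), which is why the statement is about Weihrauch and not strong Weihrauch equivalence.

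The substantive direction is maximality. Suppose $g\pmfunction{Z}{W}$ is first-order, witnessed by a computable injection $\iota\function*{W}{\mathbb{N}}$ with computable inverse, and suppose $g\weireducible f$ via computable functionals $\Phi,\Psi$: for every $G\vdash f$ and every $z\in\dom(g)$, $\Psi(z,G\Phi(z))$ is a name for some element of $g(z)$. Without loss of generality we may assume (by composing with $\iota$ and its inverse at the level of realizers) that $g$ already has codomain $\mathbb{N}$ and is represented by $p\mapsto p(0)$, so that $\Psi(z,G\Phi(z))(0)$ is itself a valid $g$-solution. The idea is: on a $g$-instance (coded by some $\bar z\in\Baire$), the forward functional of the reduction $g\weireducible\firstOrderPart{f}$ should produce the $\firstOrderPart{f}$-instance $(w,\Phi(\bar z))$, where $w$ is an index for the functional $q\mapsto \Psi(\bar z,q)$ — that is, $w=\str{e}\concat\bar z$ for a fixed $e$ such that $\Phi_e(\coding{\bar z,q})=\Psi(\bar z,q)$, obtained uniformly from the index of $\Psi$ via the $s$-$m$-$n$ theorem. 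One then checks that $(w,\Phi(\bar z))$ is a legitimate instance of $\firstOrderPart{f}$: since $\Phi(\bar z)$ is, by the hypothesized reduction, a name for an element of $\dom(f)$, and since for every $f$-solution $y$ with name $p_y$ the value $\Psi(\bar z,p_y)(0)=\Phi_w(p_y)(0)$ is defined (it is a $g$-solution, hence in particular a natural number). A $\firstOrderPart{f}$-solution to $(w,\Phi(\bar z))$ is then, by definition, some $n=\Phi_w(p_y)(0)=\Psi(\bar z,p_y)(0)$ for an actual $f$-solution $y$ — and by the correctness of the original reduction this $n$ is a $g$-solution to $\bar z$. So the backward functional can simply return $n$ unchanged.

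The main subtlety — and the step I would be most careful about — is the bookkeeping around names versus realizers: the reduction $g\weireducible f$ is stated in terms of a fixed realizer $G$ of $f$, whereas $\firstOrderPart{f}$ is defined in terms of \emph{all} names $p_y$ of $f$-solutions, quantified universally in the instance condition and existentially in the solution condition. One has to observe that for any choice of oracle $G\vdash f$, $G\Phi(\bar z)$ is \emph{a} name of \emph{some} $y\in f(\Phi(\bar z))$, so running $\Phi_w$ on it lands in the existential clause of the definition of $\firstOrderPart{f}$-solutions; and conversely the universal clause in the instance condition of $\firstOrderPart{f}$ is exactly what guarantees convergence no matter which solution/name the $f$-oracle happens to supply. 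A second minor point worth spelling out is why we may assume the codomain of $g$ is literally $\mathbb{N}$ with the standard representation: this is because $g\in\mathcal{F}$ means $g\strongweiequiv g'$ for some $g'$ with codomain $\mathbb{N}$, and $\weireducible$ is transitive, so it suffices to prove $g'\weireducible\firstOrderPart{f}$. Assembling these pieces gives $g\weireducible\firstOrderPart{f}$, completing the proof.
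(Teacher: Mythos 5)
Your proposal is correct and follows essentially the same approach as the paper's own sketch: you verify that $\firstOrderPart{f}\in\mathcal{F}$ and $\firstOrderPart{f}\weireducible f$, then for the maximality direction you map a $g$-instance $\bar z$ to the $\firstOrderPart{f}$-instance $(w,\Phi(\bar z))$, where $w$ is an index computed from $\bar z$ and the index of $\Psi$ so that $\Phi_w(q)=\Psi(\bar z,q)$ — which is exactly the string the paper calls $r$. The extra care you take over the quantification pattern (universal over names in the instance condition, existential in the solution condition, and how a chosen name of a solution yields a realizer $G$ with $G\Phi(\bar z)=p_y$) and over the WLOG reduction of $g$'s codomain to $\mathbb{N}$ is just a more explicit rendering of the steps the paper elides.
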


We briefly give the idea of the proof, as it can guide the intuition when working with the first-order part of a problem. Observe that $\firstOrderPart{f}$ is a first-order problem (by definition) and $\firstOrderPart{f}\weireducible f$, hence $\firstOrderPart{f}$ belongs to $\{g\in \mathcal{F} \st g \weireducible f \}$. Let $g$ be a first-order problem that reduces to $f$ via the functionals $\Phi$, $\Psi$. Assume without loss of generality that $g\pmfunction{\Baire}{\mathbb{N}}$ and $f\pmfunction{\Baire}{\Baire}$ (this makes the presentation easier, as we do not have to keep track of the representation maps). By definition of Weihrauch reducibility, for every input $p\in\dom(g)$, $\Phi(p)\in\dom(f)$ and, for every solution $q\in f\Phi (p)$, $\Psi(p,q)(0)\in g(p)$. We can computably find a string $r\in\Baire$ s.t.\ for every $t\in\Baire$,
\[\Phi_r(t) = \Psi(p,t). \]
It is straightforward to check that $(r,\Phi(p))$ is a valid input for $\firstOrderPart{f}$, and that $\firstOrderPart{f}(r,\Phi(p))= \Psi(p,q)(0)$, for some solution $q\in f\Phi (p)$, i.e.\ it uniformly computes $g$.

Equivalently\footnotemark{}, we can define the first order part of $f$ as the partial multi-valued function s.t.\
\begin{itemize}
	\item instances are triples $(p,e,i)\in \Baire \times\mathbb{N}\times\mathbb{N}$ s.t.\ $\repmap{X}\Phi_e(p)=:x\in \dom(f)$ and for every $p_y\in \repmap{Y}^{-1}(f(x))$, $\Phi_i(p,p_y)(0)\downarrow$;
	\item a solution for $(p,e,i)$ is any $n$ s.t.\ $\Phi_i(p,p_y)(0)\downarrow=n$, for some name $p_y$ of a solution for $f\repmap{Y}\Phi_e(p)$.
\end{itemize}
\footnotetext{This is actually the original definition used by Dzhafarov, Solomon, and Yokoyama. }%
The two definitions yield two strongly Weihrauch equivalent problems. The difference lies in the fact that, in the first case, we need to consider an input $w\in\Baire$ that (intuitively) codes also the original input for the function we are reducing to $f$, while in the second case we need to specify two integer indexes, as the input will be automatically accessible (as part of the definition of Weihrauch reducibility). 

\begin{remark}
	Observe that the first-order part $\firstOrderPart{f}$ is Weihrauch-equivalent to the problem of producing ``sufficiently long'' prefixes of $f$-solutions. More precisely, $\firstOrderPart{f}$ is Weihrauch-equivalent to the problem that takes in input a pair $(w,x)\in\dom(\firstOrderPart{f})$ and produces a prefix $\sigma$ for some solution $y\in f(x)$ s.t.\ $\Phi_w(\sigma)(0)\downarrow$. Indeed, a solution for $(w,x)\in\dom(\firstOrderPart{f})$ can be uniformly computed from $(w,x)$ and a sufficiently long prefix $\sigma$ for a solution of $f(x)$. The converse reduction follows e.g.\ by \thref{thm:fop_max}. In other words, it can be useful to think of $\firstOrderPart{f}$ as the problem of mapping $(w,x)$ to a prefix of $y\in f(x)$ that satisfies the c.e.\ condition named by $w$.
\end{remark}

The first-order part is a degree-theoretic interior operator, hence a common strategy to characterize the first-order part of a problem $f$ is to show that a first-order function $f_0$ reduces to $f$ and that, for every first-order $g$, if $g\weireducible f$ then $g\weireducible f_0$.

\subsection{Previous appearances in the literature}
\label{sec:literature}

As already mentioned, while Dzhafarov, Solomon, and Yokoyama were the first to consider the operator $\firstOrderPart{(\cdot)}$ on multi-valued functions, there are several results in the literature on Weihrauch degrees that implicitly characterize (or provide a bound for) the first-order part of a problem. Observe e.g.\ that the non-reduction $\Choice{\mathbb{R}}\not\weireducible \CCantor$ \cite{BdBPLow12} can be proved knowing that $\CNatural \weireducible \Choice{\mathbb{R}}$ but $\CNatural \not\weireducible \CCantor$, i.e.\ $\firstOrderPart{\Choice{\mathbb{R}}}\not\weireducible \firstOrderPart{\CCantor}$. In fact, we have $\firstOrderPart{\WWKL}\weiequiv \firstOrderPart{\CCantor}\weiequiv \Choice{2}^*$ \cite{DSYFirstOrder} while $\firstOrderPart{\Choice{\mathbb{R}}}\weiequiv\CNatural$ (\thref{thm:fop_CR}). Besides, \cite[Thm.\ 8.2]{BGP17} lists a number of computational problems that are Weihrauch equivalent to $\CNatural$, immediately characterizing their first-order part.

The first-order part of $\mflim$ was characterized in \cite[Prop.\ 13.10]{BolWei11}, where the authors show that $\firstOrderPart{\mflim}\weiequiv \CNatural$. We generalize this result in \thref{thm:fop_lim}.

In \cite[Prop.\ 3.4]{BRramsey17}, the authors characterize the first-order part of the problems $\RT{1}{k}$ and $\SRT{1}{k}$ for $k\ge 2$ or $k=\mathbb{N}$. More results on the first-order part of principles related to Ramsey's theorem will be obtained in Section~\ref{sec:applications}.

Let $(X,\repmap{X})$ and $(Y, \repmap{Y})$ be two represented spaces. A multi-valued function $f$ is called \textdef{densely realized} if for every $p\in \dom(f\circ \repmap{X})$, $\repmap{Y}^{-1}\circ f \circ \repmap{X}$ is dense in $\dom(\repmap{Y})$. By \cite[Prop.\ 4.13]{BGP17}, if $f\pmfunction{X}{Y}$ is densely realized and $\repmap{Y}$ is total then $\firstOrderPart{f}\weireducible\id$. In particular, the problems $\MLR$, $\mathsf{NON}$, and $\mathsf{NHA}$, defined as ``given $p\in \Baire$, produce some $q\in\Baire$ s.t.\ $q$ is Martin-L\"of random (resp.\ non-computable, non-hyperarithmetic) relatively to $p$'' are all densely realized, and therefore their first-order part is uniformly computable. The same applies to the problem $\COH$ (see e.g.\ \cite[Def.\ 12.1]{BHK2017} and \cite[Prop.\ 8.16]{BGP17}).

The use of the first-order part is particularly helpful when studying principles at the higher levels of the Weihrauch lattice. In \cite{KMP20}, the authors show that $\CBaire\strictlyweireducible \TCBaire$ \cite[Prop.\ 8.2.1]{KMP20} using the fact that $\chiPi \not\weireducible \LPO\compproduct \CBaire \weiequiv \CBaire$, while $\chiPi \weireducible \LPO\compproduct \TCBaire$. The first-order part of $\CBaire$ was explicitly characterized in \cite[Prop.\ 2.4]{GPVDescSeq}, showing that $\firstOrderPart{\CBaire} \weiequiv \codedChoice{\boldfaceSigma^1_1}{}{\mathbb{N}}$. The same argument shows that $\firstOrderPart{\UCBaire} \weiequiv \codedUChoice{\boldfaceSigma^1_1}{}{\mathbb{N}}$ and $\firstOrderPart{\TCBaire} \weiequiv \totalization{(\codedChoice{\boldfaceSigma^1_1}{}{\mathbb{N}})}$.

In \cite[Lem.\ 4.7]{KMP20} the authors (implicitly\footnote{Since $\SigmaWKL$ is parallelizable, $\PiBound\weireducible \SigmaWKL$ iff $\parallelization{\PiBound}\weireducible\SigmaWKL$.}) use the fact that $\PiBound\not\weireducible\SigmaWKL$ to separate $\SigmaWKL$ from $\parallelization{\codedChoice{\boldfaceSigma^1_1}{}{\mathbb{N}}}$,  where $\SigmaWKL$ is the generalization of $\WKL$ to binary trees presented via a $\boldfaceSigma^1_1$ code (see \cite[Def.\ 4.2]{KMP20}). In fact, \cite[Cor.\ 3.19]{KiharaADauriacChoice} strengthen this observation, showing that $\parallelization{\PiBound} \weiincomparable \SigmaWKL \weiequiv \parallelization{\codedChoice{\boldfaceSigma^1_1}{}{2}}$. The principle $\PiBound$ was used in the context of the Weihrauch degrees of some infinite-dimensional generalizations of Ramsey theorems \cite[Thm.\ 4.11]{MVRamsey}, and played an important role in \cite{GPVDescSeq}: in particular, the authors explicitly prove that $\PiBound \weiequiv \firstOrderPart{\DS}$, where $\DS$ is the problem of producing an infinite descending sequence through an ill-founded linear order.

The characterization of the first-order part of problems proved very helpful also in the analysis of the uniform computational strength of Cantor-Bendixson theorem \cite{CMVCantorBendixson}.

\section{Some algebraic rules}
\label{sec:alg_fop}

We now prove some results describing the interaction between the first-order part operator and some well-known operators on multi-valued functions.

\begin{proposition} 
	\thlabel{thm:algebraic_rules_fop}
	For every multi-valued functions $f$ and $g$,
	\begin{enumerate}
		\item $\firstOrderPart{(f \sqcup g)} \weiequiv \firstOrderPart{f} \sqcup \firstOrderPart{g}$;
		\item $\firstOrderPart{(f \sqcap g)} \weiequiv \firstOrderPart{f} \sqcap \firstOrderPart{g}$;
		\item $\firstOrderPart{f} \times \firstOrderPart{g} \weireducible \firstOrderPart{(f\times g)}$;
		\item $\firstOrderPart{f} \compproduct \firstOrderPart{g} \weireducible \firstOrderPart{(f\compproduct g)} \weireducible \firstOrderPart{f} \compproduct g$.
	\end{enumerate}
	None of the above reductions can be reversed. In fact, 	
	\begin{enumerate}
		\setcounter{enumi}{4}
		\item \label{itm:fop_alg_counterexample1} there are $f,g$ s.t.\ $\firstOrderPart{f} \compproduct \firstOrderPart{g}\not\weireducible \firstOrderPart{(f \times g)}$; 
		\item \label{itm:fop_alg_counterexample2} there are $f,g$ s.t.\ $\firstOrderPart{(f \times g)} \not \weireducible \firstOrderPart{f} \compproduct \firstOrderPart{g}$.
	\end{enumerate}
	
\end{proposition}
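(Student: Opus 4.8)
My plan is to get (1)--(3) and the first reduction in (4) uniformly from \thref{thm:fop_max}. In each case the left-hand side is plainly a first-order problem: for $\firstOrderPart f\sqcup\firstOrderPart g$, $\firstOrderPart f\sqcap\firstOrderPart g$ and $\firstOrderPart f\times\firstOrderPart g$ the codomain is a computable copy of $\mathbb N$, and, using the representative of $\compproduct$ recalled in Section~\ref{sec:background}, so is the codomain $\mathbb N\times\mathbb N$ of $\firstOrderPart f\compproduct\firstOrderPart g$. Moreover each such problem reduces to the corresponding combination of $f$ and $g$: on an instance $(w,x)$ of $\firstOrderPart f$ one runs the reduction $\firstOrderPart f\weireducible f$ and applies $\Phi_w$ to the resulting $f$-name, doing the same for $g$ componentwise (for $\sqcup,\sqcap,\times$) or in sequence (for $\compproduct$, using monotonicity of $\compproduct$). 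By \thref{thm:fop_max} this yields the four reductions. For the converse inclusions in (1) and (2) it suffices to observe that an instance $(w,(i,x))$ of $\firstOrderPart{(f\sqcup g)}$ — and likewise $(w,(x,z))$ of $\firstOrderPart{(f\sqcap g)}$ — is handled by the first-order part of the appropriate coordinate problem once $w$ is computably replaced by an index $w'$ with $\Phi_{w'}(p_y)(0)=\Phi_w\bigl(\text{name of }(i,y)\bigr)(0)$, i.e.\ once the tag $i$ is hard-coded into the c.e.\ condition; the domain and solution clauses then transfer verbatim.

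For the second reduction in (4), $\firstOrderPart{(f\compproduct g)}\weireducible\firstOrderPart f\compproduct g$, I would work with the representative of $f\compproduct g$ recalled in Section~\ref{sec:background}. Given an instance $(w,(p,z))$, feed $z$ to $g$; from a name $p_v$ of a solution $v\in g(z)$, compute on one hand the $f$-instance $\repmap{X}\Phi_p(p_v)$ and on the other an index $w'$ (computable from $w$, $p$ and $p_v$) with $\Phi_{w'}(p_y)(0)=\Phi_w(\coding{p_v,p_y})(0)$, and feed $(w',\repmap{X}\Phi_p(p_v))$ to $\firstOrderPart f$. The crucial feature is that inside the compositional product the forward functional of the ``$f$-factor'' sees $p_v$, so it may hard-code it into $w'$; that $(w',\repmap{X}\Phi_p(p_v))\in\dom(\firstOrderPart f)$ and that its solutions are solutions of $\firstOrderPart{(f\compproduct g)}(w,(p,z))$ then follows directly from the domain clause of the representative.

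It remains to establish non-reversibility. The witnesses (\ref{itm:fop_alg_counterexample1}) and (\ref{itm:fop_alg_counterexample2}) in fact show that $\firstOrderPart f\compproduct\firstOrderPart g$ and $\firstOrderPart{(f\times g)}$ are $\weireducible$-incomparable for suitable $f,g$, and (\ref{itm:fop_alg_counterexample2}) alone already kills the reversals of (3) and of the first reduction in (4): since $f\times g\weireducible f\compproduct g$ and $\firstOrderPart{(\cdot)}$ is monotone, either reversal, together with $\firstOrderPart f\times\firstOrderPart g\weireducible\firstOrderPart f\compproduct\firstOrderPart g$, would give $\firstOrderPart{(f\times g)}\weireducible\firstOrderPart f\compproduct\firstOrderPart g$, contradicting (\ref{itm:fop_alg_counterexample2}). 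For the non-reversibility of the second reduction in (4) I would take $f=g=\WKL$: since $\firstOrderPart\WKL\weiequiv\Choice{2}^*$ and $\WKL$ is closed under compositional product, $\firstOrderPart{(\WKL\compproduct\WKL)}\weiequiv\firstOrderPart\WKL\weiequiv\Choice{2}^*$, whereas $\firstOrderPart\WKL\compproduct\WKL\weiequiv\Choice{2}^*\compproduct\WKL\weistronger\WKL\not\weireducible\Choice{2}^*$. For (\ref{itm:fop_alg_counterexample1}) I would take $f=g=\LPO$: both $\LPO$ and $\LPO\times\LPO$ have finite codomain, hence coincide with their first-order parts, so the claim reduces to $\LPO\compproduct\LPO\not\weireducible\LPO\times\LPO$; this is a self-reference argument, where a hypothetical reduction converts two nested $\Sigma^0_1$-questions into two independent ones with a decoder, and \thref{thm:continuous_recursion_theorem} is used to build the nested questions so that the decoder commits to an answer before the inner question is resolved, after which one flips that question.

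The main obstacle is (\ref{itm:fop_alg_counterexample2}), where I would invoke \thref{thm:countable_sparse_splitting}. Let $\sequence{A_n}{n}$, $\sequence{B_n}{n}$ be as in that lemma and let $f,g\function{\mathbb N}{\Baire}$ be the single-valued problems $f(n)=A_n$, $g(n)=B_n$. Since $\emptyset'\turingreducible A_n\oplus B_n$ uniformly in $n$, the problem $c\function{\mathbb N}{\{0,1\}}$, $c(x):=\emptyset'(x)$, satisfies $c\weireducible\firstOrderPart{(f\times g)}$: map $x$ to the instance $(w,(0,0))$ with $\Phi_w(\coding{A_0,B_0})(0)=\emptyset'(x)$. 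On the other hand, recalling that $\firstOrderPart h$ is equivalent to producing a long-enough prefix of an $h$-solution satisfying the c.e.\ condition named by the input (the remark after \thref{thm:fop_max}), $\firstOrderPart f\compproduct\firstOrderPart g$ amounts to ``from suitable input, first produce a finite prefix of some $B_m$, and then, using that prefix, produce a finite prefix of some $A_n$''. Unfolding a putative reduction $c\weireducible\firstOrderPart f\compproduct\firstOrderPart g$ would therefore produce a computable sequence $\sequence{e_i}{i}$ and a computable functional $\Psi$ such that, for every $x$, if $\{e_i\}^{B_i}(x)\downarrow=\gamma_i\prefix B_i$ for all $i$, then $\Psi(x,\pairing{\gamma_i}_i,\pairing{A_j}_j)=\emptyset'(x)$ — contradicting clause (3) of \thref{thm:countable_sparse_splitting}. (The $\pairing{A_j}_j$ appears because the ``$f$-step'' of the compositional product may use the prefix exported by the ``$g$-step'' to decide which $A_n$ to read.) Hence $c\not\weireducible\firstOrderPart f\compproduct\firstOrderPart g$, so $\firstOrderPart{(f\times g)}\not\weireducible\firstOrderPart f\compproduct\firstOrderPart g$. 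The delicate point is to line $\firstOrderPart f\compproduct\firstOrderPart g$ up with the exact template forbidden by clause (3): the bounded information that the $g$-step can export about $B_m$ must, by the ``stretching'' construction underlying \thref{thm:countable_sparse_splitting}, be useless for recovering $\emptyset'$ even in the presence of all the $A_j$'s.
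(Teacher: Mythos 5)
Your proposal is correct, and while it arrives at the same conclusions as the paper, the route is genuinely different in several places; I'll walk through the comparison.

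For the positive reductions you lean more systematically on \thref{thm:fop_max}: you simply note that each left-hand side is first-order and reduces, by monotonicity of the respective operator, to the corresponding combination of $f$ and $g$, and then invoke maximality. The paper instead mixes this with the lattice properties of $\sqcap$ (meet) and explicit computations of the index $r$ via $\Phi_r$. Your approach is more uniform and arguably cleaner; the only directions that still need bespoke index manipulation are the converses of (1) and (2), and your ``hard-code the tag $i$ into the c.e.\ condition'' description matches the paper's construction. Your treatment of the second reduction in (4) coincides with the paper's.

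For the failures of reversibility the divergence is more substantial. The paper's counterexample for (6) is the pair of \emph{constant} problems $f\equiv\chi_{A_0}$, $g\equiv\chi_{B_0}$, using only a consequence of \thref{thm:countable_sparse_splitting}(3) specialized to $A_0,B_0$; you instead take the indexed families $f(n)=A_n$, $g(n)=B_n$, which forces you to invoke the full strength of clause (3) with the whole sequence $\sequence{e_i}{i}$ and the join $\pairing{A_j}_j$, since the $f$-step may indeed choose which $A_n$ to read as a function of the $g$-step's output. This is correct, just slightly more work than strictly necessary; the unfolding you sketch (each $e_i$ checks whether $m(x)=i$ and if so exports the relevant prefix of $B_i$, and the lemma's $\Psi$ re-runs the rest of the reduction using $\pairing{A_j}_j$) is the right way to line it up with the lemma. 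For the non-reversal of the first reduction in (4) you derive a contradiction from (6) via $\firstOrderPart{(f\times g)}\weireducible\firstOrderPart{(f\compproduct g)}$, whereas the paper uses the concrete witness $f=g=\mflim$ together with the forward-referenced fact $\firstOrderPart{(\mflim^{(n)})}\weiequiv\CNatural^{(n)}$; your derivation is economical and avoids that dependency. For the non-reversal of the second reduction in (4) you use $f=g=\WKL$ and the (also forward-referenced) equivalence $\firstOrderPart{\WKL}\weiequiv\Choice{2}^*$ together with closure of $\WKL$ under $\compproduct$; the paper takes $f=g=\mflim$ and relies on the more abstract and slicker observation that the right-hand side, being first-order, can only produce computable outputs, while the output of $\firstOrderPart{f}\compproduct g$ includes an actual $\mflim$-solution. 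For (5) the paper cites Brattka--Pauly; your sketch of the self-reference argument is the idea behind that proof, so the appeal is sound, though for a complete argument one would still either cite or flesh out the fixed-point diagonalization.

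In short: no gaps, but several genuinely different choices of counterexamples and a more uniform use of \thref{thm:fop_max}; the paper's choices for (4)'s reversals and (6) are somewhat more economical, while your derivation of the first reversal in (4) from (6) is a small simplification over the paper's.
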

\begin{proof}
	Without loss of generality, we assume that $f$ and $g$ are multi-valued functions on the Baire space.
	\begin{enumerate}
		\item This is straightforward from the definitions. Recall that an input for $f \sqcup g$ is of the type $(i,x)$ where $x\in \dom(f)$ if $i=0$ and $x\in \dom(g)$ if $i=1$. 
		
		To prove the left-to-right reduction it suffices to map $(w,(i,x))$ to $(i,(w,x))$. To prove the right-to-left reduction it suffices to consider the inverse map $(i,(w,x)) \mapsto (w,(i,x))$.
		\item To prove the left-to-right reduction notice that, by the monotonicity of $\firstOrderPart{(\cdot)}$, $\firstOrderPart{(f \sqcap g)} \weireducible \firstOrderPart{f}$ and $\firstOrderPart{(f \sqcap g)} \weireducible \firstOrderPart{g}$. Since $\sqcap$ is the meet in the Weihrauch lattice, we obtain $\firstOrderPart{(f \sqcap g)} \weireducible \firstOrderPart{f} \sqcap \firstOrderPart{g}$. To prove the right-to-left reduction, recall that, by definition,  
		\[ 		(\firstOrderPart{f} \sqcap \firstOrderPart{g})((w,x),(v,z)) =\firstOrderPart{f}(w,x) \sqcup  \firstOrderPart{g}(v,z). \]
		We can uniformly compute $r\in\Baire$ s.t.\ 
		\[ \Phi_r(\coding{i,t}) = \begin{cases}
			\coding{0, \Phi_w(t)} & \text{if } i=0, \\
			\coding{1, \Phi_v(t)} & \text{if } i=1.
		\end{cases} \]
		It follows that every solution for $\firstOrderPart{(f \sqcap g)}(r,(x,z))$ is a solution for $(\firstOrderPart{f} \sqcap \firstOrderPart{g})((w,x),(v,z))$.
		
		\item To show that the reduction holds it suffices to consider the map $((w,x),(v,z)) \mapsto (r,(x,z))$ where $\Phi_r(\coding{y,t}) = \coding{\Phi_w(y), \Phi_v(t)}$. The fact that the converse reduction does not hold (in general) follows from point (\ref{itm:fop_alg_counterexample2}).

		\item For the first reduction, notice that, by the monotonicity of $\firstOrderPart{(\cdot)}$, $\firstOrderPart{f}\compproduct\firstOrderPart{g} \weireducible f\compproduct g$. Since $\firstOrderPart{f}\compproduct\firstOrderPart{g}$ is a first-order problem we have $\firstOrderPart{f}\compproduct\firstOrderPart{g} \weireducible \firstOrderPart{(f\compproduct g)}$. 
		
		To prove the second reduction, observe that a solution for $\firstOrderPart{(f\compproduct g)}(w, (v,z))$ is $\Phi_{w}(t,y)(0)$ for some $t\in g(z)$ and $y\in f(\Phi_v(t))$. To show that $\firstOrderPart{(f\compproduct g)} \weireducible\firstOrderPart{f} \compproduct g$ it is enough to consider the map $(w,(v,z))\mapsto (r,z)$, where $r\in\Baire$ is s.t.\ 
		\begin{gather*}
			\Phi_r(t) = \pairing{p,\Phi_v(t)};\\
			\Phi_p(y) = \Phi_w(t, y).
		\end{gather*}

		A counterexample for the reduction $\firstOrderPart{(f\compproduct g)}\weireducible \firstOrderPart{f} \compproduct\firstOrderPart{g}$ is given by $f=g=\mflim$. Indeed, we will show $\firstOrderPart{(\mflim^{(n)})} \weiequiv \CNatural^{(n)}$ (\thref{thm:fop_lim}), hence a reduction would yield 
		\[ \CNatural' \weiequiv \firstOrderPart{(\mflim \compproduct \mflim)} \weireducible \firstOrderPart{\mflim} \compproduct \firstOrderPart{\mflim} \weiequiv \CNatural \compproduct \CNatural \weiequiv \CNatural, \]
		which is a contradiction. 
		
		To show that the reduction $\firstOrderPart{f} \compproduct g \weireducible \firstOrderPart{(f\compproduct g)}$ can fail it is enough to notice that the right-hand side always has computable solutions. An explicit counterexample can therefore be obtained by choosing again $f=g=\mflim$.
		
		\item The non-reduction is witnessed by $f=g=\LPO$ \cite[Proof of Prop.\ 4.8(6)]{BP16}.
		
		\item To show that the reduction $\firstOrderPart{(f\times g)}\weireducible \firstOrderPart{f} \compproduct\firstOrderPart{g}$ can fail, let $\sequence{A_n}{n\in\mathbb{N}}$ and $\sequence{B_n}{n\in\mathbb{N}}$ be as in \thref{thm:countable_sparse_splitting} and let $A:=A_0$, $B:=B_0$. In particular we have 
		\begin{itemize}
			\item $\emptyset'\not\turingreducible A$, $\emptyset'\not\turingreducible B$, $\emptyset'\turingreducible A \oplus B$
			\item for every $e$, if $\{e\}^{A\oplus B} = \emptyset'$ then the map sending $x$ to the prefix of $B$ used in the computation $\{e\}^{A\oplus B}(x)$ is not $B$-computable.
		\end{itemize}
		
		Let $f$ (resp.\ $g$) be the constant map returning the characteristic function of $A$ (resp.\ $B$). Clearly, $\charfun{\emptyset'}\weireducible \firstOrderPart{(f\times g)}$. On the other hand $\charfun{\emptyset'}\not\weireducible\firstOrderPart{f}\compproduct \firstOrderPart{g}$. Notice indeed that $\firstOrderPart{f}$ is equivalent to the map that, given $w$ produces a sufficiently long prefix $A[n]$ of $A$ s.t.\ $\Phi_w(A[n])(0)\downarrow$. Analogously, we can think of $\firstOrderPart{g}$ as producing a sufficiently long prefix of $B$. 
		
		Assume there is a reduction $\charfun{\emptyset'}\weireducible \firstOrderPart{f}\compproduct\firstOrderPart{g}$ as witnessed by $\Phi$,$\Psi$. For every $x$, $\Phi(x)=\coding{\Phi_1(x),\Phi_2(x)}$ is s.t.\ $\Phi_2(x)$ is an input for $\firstOrderPart{g}$ and $\Phi_1(x)$ is the index of a functional that, given $A$ and the prefix of $B$ produced by $\firstOrderPart{g}$, computes $\charfun{\emptyset'}$. Since all computations are done uniformly, this corresponds to the existence of two indexes $e$ and $i$ for computable functions s.t.\ $\{e\}^{B}(n)= B[m]$ and $\{i\}^{A\oplus B[m]}(n)=\charfun{\emptyset'}(n)$. Hence, the existence of a Weihrauch reduction contradicts the properties of $A$ and $B$.\qedhere
	\end{enumerate}
\end{proof}

The counterexample used to show that $\firstOrderPart{(f \times g)} \not \weireducible \firstOrderPart{f} \compproduct \firstOrderPart{g}$ suggests also how to show that, in general, $\firstOrderPart{(f \times f)} \not \weireducible \firstOrderPart{f} \compproduct \firstOrderPart{f}$. Indeed, letting $A,B$ as above, we can define $f$ so that $f(0):=\chi_A$ and $f(1):=\chi_B$. It is not hard to adapt the proof of \thref{thm:algebraic_rules_fop}(\ref{itm:fop_alg_counterexample2}) to show that such $f$ satisfies the claim.

We can also explore the connections between the first-order part and the jump in the Weihrauch lattice. To this end, we introduce the following notion:

\begin{definition}
	We say that a first-order problem $f$ is a \textdef{first-order cylinder} if, for every first-order $g$,
	\[ g\weireducible f \Rightarrow g\strongweireducible f. \]
\end{definition}
Notice that no first-order problem can be a (classical) cylinder: indeed, we already noticed that every first-order problem is strongly Weihrauch equivalent to a problem with codomain $\mathbb{N}$, and therefore only has computable outputs (hence, in particular, it cannot strongly compute $\id$).

\begin{proposition}
	\thlabel{thm:fo_cylinder}
	If $f$ is a cylinder then $\firstOrderPart{f}$ is a first-order cylinder.
\end{proposition}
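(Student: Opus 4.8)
The plan is to exploit the fact that for a cylinder $f$ every Weihrauch reduction to $f$ upgrades to a strong one, and that the backward functional of a strong reduction of a first-order problem to $f$ is exactly the sort of ``program'' that the first component of an instance of $\firstOrderPart{f}$ expects.

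As in the proof sketch of \thref{thm:fop_max}, it suffices to treat first-order problems of the shape $g\pmfunction{\Baire}{\mathbb{N}}$, with $\mathbb{N}$ represented by $p\mapsto p(0)$: every first-order problem is strongly Weihrauch equivalent to such a $g$, and being a first-order cylinder is preserved under strong Weihrauch equivalence (by transitivity of $\weireducible$ and $\strongweireducible$). Likewise we take $f\pmfunction{\Baire}{\Baire}$. So let $g$ be first-order with $g\weireducible\firstOrderPart{f}$. Since $\firstOrderPart{f}\weireducible f$ we get $g\weireducible f$, and as $f$ is a cylinder this upgrades to $g\strongweireducible f$; fix computable $\Phi,\Psi$ witnessing this. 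Unwinding the definition, for every $x\in\dom(g)$ we have $\Phi(x)\in\dom(f)$, and for every $y\in f(\Phi(x))$ and every name $p_y$ of $y$ the value $\Psi(p_y)(0)$ is defined and lies in $g(x)$ (given such $p_y$, there is a realizer $F$ of $f$ with $F(\Phi(x))=p_y$, and $\Psi F\Phi\vdash g$). Let $w_0\in\Baire$ be a fixed index for the computable functional $\Psi$, i.e.\ $\Phi_{w_0}=\Psi$.

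Now I claim that $x\mapsto(w_0,\Phi(x))$ together with the identity on $\mathbb{N}$ as backward functional witnesses $g\strongweireducible\firstOrderPart{f}$. First, $(w_0,\Phi(x))$ is a valid instance of $\firstOrderPart{f}$: $\Phi(x)\in\dom(f)$, and for every $y\in f(\Phi(x))$ and every name $p_y$ of $y$ we have $\Phi_{w_0}(p_y)(0)=\Psi(p_y)(0){\downarrow}$. Second, any solution $n$ of $\firstOrderPart{f}(w_0,\Phi(x))$ is of the form $\Phi_{w_0}(p_y)(0)=\Psi(p_y)(0)$ for some name $p_y$ of some $y\in f(\Phi(x))$, hence $n\in g(x)$; so returning $n$ unchanged solves $g(x)$. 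Since the backward functional is the identity, it has no access to $x$, so the reduction is strong. As $g$ was an arbitrary first-order problem Weihrauch below $\firstOrderPart{f}$, this shows $\firstOrderPart{f}$ is a first-order cylinder.

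The only points needing care are the representation bookkeeping — reducing a general first-order $g$ to one with codomain $\mathbb{N}$ under the first-bit representation, and fixing an index $w_0$ for the (possibly partial) computable functional $\Psi$ — but these are routine, and there is no genuine combinatorial obstacle.
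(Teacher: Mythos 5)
Your argument is correct, and it reaches the conclusion by a route that is genuinely different from the paper's. The paper works directly with the given reduction $g\weireducible\firstOrderPart{f}$ (say via $\Phi,\Psi$) and uses the cylinder property of $f$ in the form $\id\times f\strongweireducible f$ (with witnesses $\Phi_S,\Psi_S$) to pack the input name $p_t$ together with the program component $w_t$ into a new $f$-instance $\Phi_S(\coding{p_t,w_t},x_t)$; a single fixed program $v$ then unpacks via $\Psi_S$, applies $\Phi_{w_t}$, and finishes with $\Psi$. You instead discard the assumed reduction to $\firstOrderPart{f}$, keep only the weaker consequence $g\weireducible f$, upgrade it to $g\strongweireducible f$ using the cylinder property in the form ``a Weihrauch reduction to a cylinder is strong,'' and observe that the backward functional $\Psi$ of that strong reduction is exactly the kind of program the $w$-slot of a $\firstOrderPart{f}$-instance expects; hence $x\mapsto(w_0,\Phi(x))$ with $\id$ as backward functional is already a strong reduction, because $w_0$ is a fixed computable point. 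Both proofs ultimately rest on the same property of $f$, but your phrasing is more modular: it isolates the clean observation that for a cylinder $f$ and first-order $g$, $g\weireducible f$ already yields $g\strongweireducible\firstOrderPart{f}$, which is a strong-reducibility refinement of \thref{thm:fop_max}. The paper's version stays closer to the data it is handed and never passes explicitly through $g\strongweireducible f$; unwinding your use of that fact would produce a functional not far from their $v$. Your bookkeeping on representations and on fixing the index $w_0$ is adequate.
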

\begin{proof}
	Let $g$ be a first-order problem and assume $g\weireducible \firstOrderPart{f}$ via $\Phi,\Psi$.  Without loss of generality we can assume that $g$ has codomain $\mathbb{N}$. Let also $\Phi_S, \Psi_S$ be two computable functionals witnessing the reduction $\id \times f \strongweireducible f$. Let $v\in\Baire$ be s.t.\ 
	
	\[ \Phi_v(p) = \Psi(p_0, \Phi_{p_1}(p_2)), \]
	where $\coding{\coding{p_0,p_1},p_2}=\Psi_S(p)$.
	
	Let $p_t$ be a name for an input $t$ of $g$, and let $\Phi(p_t)$ be a name for an input $(w_t,x_t)$ of $\firstOrderPart{f}$. We claim that the maps $p_t\mapsto (v, \Phi_S(\coding{p_t,w_t},x_t))$ and $\id$ witness $g \strongweireducible \firstOrderPart{f}$. Indeed, $\Phi_S(\coding{p_t,w_t},x_t)$ is a name for some $y\in\dom(f)$ s.t.\ every name $p_z$ for some $z\in f(y)$ uniformly computes (via $\Psi_S$) a name for a pair $(\coding{p_t,w_t} ,p_r)$, where $p_r$ is a name for some $r\in f(x_t)$. In particular, we obtain $\Phi_v(p_z)(0)= \Psi(p_t, \Phi_{w_t}(p_r))(0)$, which is a valid solution for $g(t)$ (as $g$ has codomain $\mathbb{N}$).
\end{proof}

As a trivial consequence, if $f$ is a cylinder, $g$ is a first-order cylinder and $g\weiequiv \firstOrderPart{f}$, then $g\strongweiequiv \firstOrderPart{f}$ and hence $g' \strongweiequiv (\firstOrderPart{f})'$ (as the jump lifts to the strong Weihrauch degrees).

\begin{proposition}
	\thlabel{thm:fop_jumps}
	For every multi-valued function $f$, $\firstOrderPart{(f')} \strongweireducible (\firstOrderPart{f})'$.	Moreover, if $f$ is a cylinder then $\firstOrderPart{(f')}\strongweiequiv(\firstOrderPart{f})'$.
\end{proposition}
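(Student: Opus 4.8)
The statement splits into the reduction $\firstOrderPart{(f')} \strongweireducible (\firstOrderPart{f})'$, which I would establish for arbitrary $f$, and the converse reduction, which is where the cylinder hypothesis enters.

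For the first reduction the observation is that $(\firstOrderPart{f})'$ receives the Turing-functional index $w$ only as a limit, whereas $\firstOrderPart{(f')}$ receives it directly, so a constant sequence does the job. Given a name $\pairing{w,(p_n)_{n\in\mathbb{N}}}$ of an instance $(w,x)$ of $\firstOrderPart{(f')}$ — so $(p_n)_{n}$ converges to a $\repmap{X}$-name of some $x\in\dom(f)$ and $\Phi_w(p_y)(0)\downarrow$ for every name $p_y$ of every $y\in f(x)$ — the forward functional outputs the sequence $(\pairing{w,p_n})_{n\in\mathbb{N}}$. This converges to $\pairing{w,p}$, a name of the pair $(w,x)$; since $f'(x)=f(x)$, the pair $(w,x)$ lies in $\dom(\firstOrderPart{f})$, and the $\firstOrderPart{f}$-solutions of $(w,x)$ are precisely the $\firstOrderPart{(f')}$-solutions of the original instance. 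Hence the identity backward functional works and the reduction is strong.

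For the converse, assume $f$ is a cylinder, so $f\strongweiequiv\id_{\Baire}\times f$; since the jump lifts to the strong Weihrauch degrees, fix computable functionals $\Phi_S,\Psi_S$ witnessing $(\id_{\Baire}\times f)'\strongweireducible f'$. Let $\seq{s}=(\pairing{w_n,p_n})_{n\in\mathbb{N}}$ name an instance of $(\firstOrderPart{f})'$: thus $(w_n)_n$ converges to some $w$, $(p_n)_n$ converges to a $\repmap{X}$-name of some $x\in\dom(f)$, and $(w,x)\in\dom(\firstOrderPart{f})$. In particular $\seq{s}$ is a name of an instance of $(\id_{\Baire}\times f)'$, namely the point $(w,x)\in\Baire\times\dom(f)$, so $\seq{t}:=\Phi_S(\seq{s})$ is a name of an $f'$-instance $z\in\dom(f)$. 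Fix an index $v$ (found effectively from $\Psi_S$, hence depending only on $f$) such that $\Phi_v(r)(0)\downarrow = \Phi_a(p_y)(0)$ whenever $\pairing{a,p_y}=\Psi_S(r)$. The forward functional of the converse reduction sends $\seq{s}$ to $\pairing{v,\seq{t}}$, and the backward functional is the identity. To see this is correct: if $p_{y'}$ names a solution $y'\in f'(z)=f(z)$, then $\Psi_S(p_{y'})$ names an $(\id_{\Baire}\times f)'$-solution of $(w,x)$, i.e.\ a pair $(w,y)$ with $y\in f(x)$; writing $\Psi_S(p_{y'})=\pairing{w,p_y}$ we get $\Phi_v(p_{y'})(0)=\Phi_w(p_y)(0)$, which converges because $(w,x)\in\dom(\firstOrderPart{f})$ — so $\pairing{v,\seq{t}}\in\dom(\firstOrderPart{(f')})$ — and which is exactly a $\firstOrderPart{f}$-solution of $(w,x)$, i.e.\ a $(\firstOrderPart{f})'$-solution of $\seq{s}$. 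Hence $(\firstOrderPart{f})'\strongweireducible\firstOrderPart{(f')}$, which together with the first part yields $\firstOrderPart{(f')}\strongweiequiv(\firstOrderPart{f})'$.

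The crux is conceptual rather than computational: one must recognise that being a cylinder is precisely what lets the redundant $\id_{\Baire}$-component transport the limit $w=\lim_n w_n$ \emph{through} $f'$ and hand it back as a single string to the fixed index $v$, which can then finish the computation $\Phi_w(p_y)(0)$ without ever computing a limit itself. (Without the cylinder hypothesis even the bare reduction $(\firstOrderPart{f})'\weireducible\firstOrderPart{(f')}$ can fail, e.g.\ for a constant $f$.) The remaining work is routine bookkeeping with the representations of the jumped spaces and with the already-invoked fact that the jump lifts to the strong Weihrauch degrees.
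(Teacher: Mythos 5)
Your first half (the reduction $\firstOrderPart{(f')} \strongweireducible (\firstOrderPart{f})'$) uses the same constant-sequence idea as the paper. For the converse under the cylinder hypothesis, however, you take a genuinely different route. The paper reasons structurally: since $f$ is a cylinder, $f'\strongweiequiv f\compproduct\mflim$, so $(\firstOrderPart{f})'\strongweireducible\firstOrderPart{f}\compproduct\mflim\strongweireducible f\compproduct\mflim\strongweiequiv f'$; then maximality of the first-order part gives $(\firstOrderPart{f})'\weireducible\firstOrderPart{(f')}$, and \thref{thm:fo_cylinder} (the ``first-order cylinder'' lemma) upgrades this to a strong reduction. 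You instead build the reduction $(\firstOrderPart{f})'\strongweireducible\firstOrderPart{(f')}$ directly from the cylinder isomorphism $f\strongweiequiv\id_\Baire\times f$ and the fact that the jump lifts to strong equivalences: the index $w$ is threaded through the $\id_\Baire$-factor of $(\id_\Baire\times f)'$ and recovered on the way back by $\Psi_S$, so a fixed index $v$ suffices for the outer $\firstOrderPart{(\cdot)}$. Both arguments are correct. The paper's version is shorter once \thref{thm:fo_cylinder} and the $\mflim$-characterization of the jump are in place and fits the degree-theoretic style of the surrounding material; yours is more self-contained and makes explicit the computational mechanism — namely that the cylinder structure is exactly what transports the limit $w=\lim_n w_n$ through $f'$ so that the backward functional never has to compute a limit. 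That the bare reduction can fail for non-cylinders (e.g.\ constant $f$) is also correctly observed, matching the paper's remark after the proposition.
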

\begin{proof}
	The first statement is a trivial consequence of the definitions. Indeed, given an input $(w,\seq{x})$ for $\firstOrderPart{(f')}$, where $\seq{x}=\sequence{x_n}{n\in\mathbb{N}}$ is a sequence that converges to $x\in\dom(f)$, it is enough to consider the input $(\seq{w},\seq{x})$ for $(\firstOrderPart{f})'$, where $\seq{w}=\sequence{w_n}{n\in\mathbb{N}}$ is the constant sequence $w_n:=w$. Clearly 
	\[ (\firstOrderPart{f})'(\seq{w},\seq{x}) = \firstOrderPart{(f')}(w,\seq{x}).\]
	
	Assume now that $f$ is a cylinder. In particular, $f'$ is a cylinder and $f\compproduct \mflim \strongweiequiv f'$. This implies that
	\[ (\firstOrderPart{f})'\strongweireducible \firstOrderPart{f}\compproduct \mflim \strongweireducible f\compproduct \mflim \strongweiequiv f' \]
	Since $(\firstOrderPart{f})'$ is first-order, the maximality of the first-order part implies that $(\firstOrderPart{f})'\weireducible \firstOrderPart{(f')}$. By \thref{thm:fo_cylinder}, $\firstOrderPart{(f')}$ is a first-order cylinder, hence $(\firstOrderPart{f})'\strongweireducible \firstOrderPart{(f')}$.
\end{proof}

The reduction $(\firstOrderPart{f})'\weireducible \firstOrderPart{(f')}$ can fail if $f$ is not a cylinder. To see this, one can simply notice that $(\firstOrderPart{f})'$ takes in input a sequence $(\sequence{w_n}{n\in\mathbb{N}},\sequence{x_n}{n\in\mathbb{N}})$ that converges to $(w,x)$ and produces $\firstOrderPart{f}(w,x)$, whereas the input for $\firstOrderPart{(f')}$ is of the type $(v, \sequence{z_n}{n\in\mathbb{N}})$, where $\sequence{z_n}{n\in\mathbb{N}}$ converges to an input for $f$. The forward functional of the reduction $(\firstOrderPart{f})' \weireducible \firstOrderPart{(f')}$ would have to commit to some $v(0)$ (i.e.\ to some index for a Turing machine) in a finite number of steps. One can diagonalize against the reduction by changing the sequence $\sequence{w_n}{n\in\mathbb{N}}$ after that stage, so that $\lim_{n\to\infty} w_n(0)=w(0)$ is the index of a different computable function. This procedure does not work if $f$ is a cylinder, as in that case $f' \weiequiv f\compproduct\mflim$, hence one can use $\mflim$ to get the correct $(w,x)$.

\section{The unbounded finite parallelization}
\label{sec:u*}

Let us introduce the following ``unbounded-${}^*$'' operator. Intuitively it generalizes the finite parallelization ${}^*$, by relaxing the requirement that the number of instances of the problem is specified as part of the input.
\begin{definition}
	For every $f\pmfunction{X}{Y}$ define $\ustar{f}\pmfunction{\Baire\times \infStrings{X}}{\finStrings{(\Baire)}}$ as follows:
	\begin{itemize}
		\item instances are pairs $(w,\sequence{x_n}{n\in\mathbb{N}})$ s.t.\ $\sequence{x_n}{n\in\mathbb{N}}\in \dom(\parallelization{f})$ and for each sequence $\sequence{q_n}{n\in\mathbb{N}}$, with $\repmap{Y}(q_n) \in f(x_n)$, there is $k\in \mathbb{N}$ s.t $\Phi_w(\pairing{q_i}_{i<k})(0)\downarrow$ in $k$ steps;
		
		\item a solution for $(w,\sequence{x_n}{n\in\mathbb{N}})$ is every finite sequence $\sequence{q_n}{n<k}$ s.t.\ for every $n$, $\repmap{Y}(q_n) \in f(x_n)$ and $\Phi_w( \pairing{q_i}_{i<k})(0)\downarrow$ in $k$ steps.
	\end{itemize}		
\end{definition}

Observe that, given how the join of finitely many strings is defined, $k$ is uniformly computable from $\pairing{q_i}_{i<k}$. The definition needs to be adapted in case a different definition for the join is used. Similarly, the bound on the number of steps guarantees that we can see sufficiently long prefixes of each $q_i$. A larger bound can be chosen without affecting the development of the theory, as long as it can be uniformly computed from $k$. We underline that choosing a computable bound is important for the proof of the following \thref{thm:game-def-ustar}.

One can think of $\ustar{f}$ as the problem that, given an input for $\parallelization{f}$ (i.e.\ a sequence of inputs for $f$), produces ``sufficiently many'' names for solutions, where the ``sufficiently many'' is determined precisely by the convergence of a given Turing functional. In other words, we require to have a c.e.\ procedure (relatively to the input) that tells us how many columns of the output we need to take a look at.

\begin{proposition}
	\thlabel{thm:u*<=parallelization}
	For every problem $f\pmfunction{X}{Y}$, $f\strongweireducible f^* \strongweireducible \ustar{f}\weireducible \parallelization{f}$.
\end{proposition}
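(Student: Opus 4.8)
The plan is to verify the three reductions in the chain $f\strongweireducible f^* \strongweireducible \ustar{f}\weireducible \parallelization{f}$ separately, working through the definitions.

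For $f\strongweireducible f^*$: this is standard. Given an instance $x$ of $f$, the forward functional produces the one-element sequence $\str{x}$ (i.e.\ the sequence of length $1$), which is an $f^*$-instance; a solution $\sequence{y_i}{i<1}$ to $f^*$ has $y_0\in f(x)$, and the backward functional just extracts $y_0$. No access to the original input is needed, so this is a strong reduction.

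For $f^*\strongweireducible \ustar{f}$: given an $f^*$-instance $\sequence{x_i}{i<n}$ (which carries the number $n$ explicitly, since $n$ is uniformly computable from the coded finite sequence), the forward functional must produce a pair $(w,\sequence{z_m}{m\in\mathbb{N}})$ where $\sequence{z_m}{m\in\mathbb{N}}\in\dom(\parallelization{f})$ and $w$ codes a Turing functional that, on any sequence of names of solutions, converges after looking at finitely many columns. The natural choice is to take $z_m := x_m$ for $m<n$ and $z_m := x_0$ (or any fixed element of $\dom(f)$) for $m\ge n$, and to let $\Phi_w$ be the functional that, on input $\pairing{q_i}_{i<k}$, halts (outputting, say, $0$) exactly when $k = n$, running for enough steps. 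This $w$ can be computed from $n$ by the parametrized form of the s-m-n theorem. One checks that $(w,\sequence{z_m}{m\in\mathbb{N}})$ is a valid $\ustar{f}$-instance: any choice of solution-names $\sequence{q_i}{i\in\mathbb{N}}$ makes $\Phi_w(\pairing{q_i}_{i<n})(0)$ converge in $n$ steps (we may pad the step bound to be safe, using the remark that any computable bound works). A solution to this $\ustar{f}$-instance is a finite sequence $\sequence{q_i}{i<n}$ with $\repmap{Y}(q_i)\in f(z_i) = f(x_i)$ for $i<n$, and the backward functional reads off $\sequence{\repmap{Y}(q_i)}{i<n}$, a solution to $f^*(\sequence{x_i}{i<n})$. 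Again no direct access to the input is needed.

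For $\ustar{f}\weireducible\parallelization{f}$: given an $\ustar{f}$-instance $(w,\sequence{x_n}{n\in\mathbb{N}})$, the forward functional just forgets $w$ and passes $\sequence{x_n}{n\in\mathbb{N}}$ to $\parallelization{f}$; a $\parallelization{f}$-solution is a sequence $\sequence{q_n}{n\in\mathbb{N}}$ with $\repmap{Y}(q_n)\in f(x_n)$ for all $n$. By the instance condition on $\ustar{f}$, there is some (least) $k$ with $\Phi_w(\pairing{q_i}_{i<k})(0)\downarrow$ in $k$ steps; the backward functional searches for this $k$ — which it can do effectively because it has $w$ (accessible via the Weihrauch reduction's direct access to the original input) and it sees arbitrarily long prefixes of the $q_i$'s — and outputs $\sequence{q_i}{i<k}$. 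This is a genuine (non-strong) Weihrauch reduction since the backward functional must consult $w$. I expect this last step to be the only one requiring a moment's care, and it is mild: one must note that the search for $k$ terminates (guaranteed by the domain condition of $\ustar{f}$) and that inspecting $\Phi_w(\pairing{q_i}_{i<k})$ for $k$ steps only requires a finite prefix of the relevant columns, which is available from the $\parallelization{f}$-output. Everything else is bookkeeping with the s-m-n theorem and the definitions.
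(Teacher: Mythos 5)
Your proof is correct and follows essentially the same route as the paper: the first two reductions are handled by the obvious constructions (the paper even declines to spell them out, calling them ``obvious'' and ``straightforward by definition''), and for $\ustar{f}\weireducible\parallelization{f}$ you use the same backward functional the paper uses, searching for the least $k$ with $\Phi_w(\pairing{q_i}_{i<k})(0){\downarrow}$ in $k$ steps. The only place you add content beyond the paper is the explicit description of $\Phi_w$ in the $f^*\strongweireducible\ustar{f}$ step, and your appeal to the remark that the step bound may be padded is exactly the right justification there.
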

\begin{proof}
	The first reductions is obvious and the second one is straightforward by definition. To prove the last reduction, fix $(w,\sequence{x_n}{n\in\mathbb{N}}) \in \dom(\ustar{f})$ and, for each $n$, let $p_n$ be a $\repmap{X}$-name for $x_n$. Let also $\sequence{y_n}{n\in\mathbb{N}} \in \parallelization{f}(\sequence{x_n}{n\in\mathbb{N}})$ and let $q_n$ be a $\repmap{Y}$-name for $y_n$. 
	
	The backward functional of the reduction is given by the map $\Psi$ such that, when executed with input $(w,\sequence{p_n}{n\in\mathbb{N}},\sequence{q_n}{n\in\mathbb{N}})$, it returns $\pairing{q_n}_{n<k}$ where $k$ is minimum s.t.\ $\Phi_w(\pairing{q_n}_{n<k})(0)$ converges in $k$ steps. The existence of such $k$ is guaranteed by the definition of $\ustar{f}$.
\end{proof}

Similarly to the case of the first-order part, the definition of $\ustar{f}$ can be equivalently given by replacing $w\in\Baire$ with a couple of indexes $e,i\in\mathbb{N}$ for Turing functionals (see the comments after \thref{def:first_order_part}).

We can characterize the Weihrauch degree of $\ustar{f}$ using a reduction game. 

\begin{definition}
	Let $f\pmfunction{X}{Y}$, $g\pmfunction{A}{B}$ be two partial multi-valued functions. We define the reduction game $U(f\to g)$ as the following two-player game: Player 1 starts by playing a $\repmap{A}$-name $p_a$ for some $a\in \dom(g)$ and Player 2 answers with an index $e\in\mathbb{N}$ s.t.\ $\Phi_e(p_a)=\pairing{p_i}_{i\in\mathbb{N}}$ where, for every $i$, $p_i$ is a $\repmap{X}$-name for $x_i\in\dom(f)$. If Player 2 does not have a valid move then Player 1 wins, otherwise the game continues.
	
	On the $(n+1)$-th move Player $1$ produces a $\repmap{Y}$-name $q_n$ for some $y_{n}\in f(x_{n})$, and Player 2 either declares victory and plays a $\pairing{p_a,q_0,\hdots, q_{n}}$-computable $\repmap{B}$-name for $b\in g(a)$, or passes his turn. 
	
	Player 1 wins if Player 2 never declares victory.
\end{definition}

\begin{proposition}
	\thlabel{thm:game-def-ustar}
	For every $f\pmfunction{X}{Y}$, $g\pmfunction{A}{B}$, 
	\[ g\weireducible \ustar{f} \iff \text{Player }2\text{ has a computable winning strategy for }U(f\to g). \]
\end{proposition}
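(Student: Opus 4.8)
The proof is an equivalence, so I will establish the two directions separately.

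For the forward direction, suppose $g \weireducible \ustar{f}$ via computable functionals $\Phi, \Psi$. I will describe Player 2's winning strategy in $U(f \to g)$. When Player 1 opens with a $\repmap{A}$-name $p_a$ for $a \in \dom(g)$, the functional $\Phi$ produces a name $\Phi(p_a)$ for some instance $(w, \sequence{x_n}{n\in\mathbb{N}})$ of $\ustar{f}$. Player 2 extracts from this an index $e$ such that $\Phi_e(p_a)$ enumerates $\repmap{X}$-names $p_i$ for the $x_i$; this is legal since the forward functional for $\ustar{f}$ yields precisely a sequence of $f$-instances together with the integer parameter $w$. As Player 1 feeds in $\repmap{Y}$-names $q_0, q_1, \dots$ for solutions $y_n \in f(x_n)$, Player 2 waits until he sees a $k$ with $\Phi_w(\pairing{q_i}_{i<k})(0)\downarrow$ in $k$ steps — by the definition of $\dom(\ustar{f})$ such a $k$ must appear. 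At that stage $\pairing{q_i}_{i<k}$ is a valid $\ustar{f}$-solution, so $\Psi$ applied to $p_a$ and this solution produces a $\repmap{B}$-name for some $b \in g(a)$; moreover everything Player 2 has done is $\pairing{p_a, q_0, \dots, q_{k-1}}$-computable, so he may legitimately declare victory. Hence Player 2 wins.

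For the reverse direction, suppose Player 2 has a computable winning strategy $\tau$ for $U(f \to g)$. I want to build $\Phi, \Psi$ witnessing $g \weireducible \ustar{f}$. Given a $\repmap{A}$-name $p_a$ for $a \in \dom(g)$, simulate $\tau$'s first move to obtain the index $e$ and hence the sequence $\sequence{x_i}{i\in\mathbb{N}}$ of $f$-instances that $\tau$ commits to. The subtle point is the parameter $w$: I must produce, from $p_a$ alone, an index $w$ for a Turing functional whose convergence on $\pairing{q_i}_{i<k}$ exactly signals the stage at which $\tau$ declares victory. The idea is to let $\Phi_w(\pairing{q_i}_{i<k})$ run $\tau$, feeding it $q_0, \dots, q_{k-1}$ as Player 1's moves, and halt (outputting, say, $0$) precisely when $\tau$ first declares victory having seen no more than these $k$ names; since $\tau$ is computable and $p_a$ can be hard-coded into $w$, this is a legitimate index, obtainable uniformly by the s-m-n theorem. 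Because $\tau$ is winning, for every play of Player 1 — i.e., every choice of $\repmap{Y}$-names $q_n$ for solutions $y_n \in f(x_n)$ — victory is declared at some finite stage, which is exactly the condition $(w, \sequence{x_i}{i\in\mathbb{N}}) \in \dom(\ustar{f})$. The forward functional $\Phi$ outputs (a name for) $(w, \sequence{x_i}{i\in\mathbb{N}})$. For the backward functional, given $p_a$ and an $\ustar{f}$-solution $\pairing{q_i}_{i<k}$, we re-run $\tau$ with these moves; $\tau$ declares victory and outputs a $\pairing{p_a, q_0, \dots, q_{k-1}}$-computable name for $b \in g(a)$, and $\Psi$ simply reconstructs it from its inputs.

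The main obstacle, and the reason the computable (rather than arbitrary) step bound in the definition of $\ustar{f}$ matters, is the synchronization between the halting condition of $\Phi_w$ and the victory condition of $\tau$: I need $\Phi_w(\pairing{q_i}_{i<k})(0)$ to converge \emph{in $k$ steps} exactly when $\tau$, having consumed $q_0, \dots, q_{k-1}$, declares victory. Since $k$ is uniformly computable from $\pairing{q_i}_{i<k}$ and the step bound can be any computably-in-$k$ function, one builds $w$ so that $\Phi_w$ first computes $k$, simulates $\tau$ on the appropriate prefix, and then pads its own running time up to (at least) $k$ steps if it halts early; conversely if $\tau$ has not yet declared victory after reading all $k$ names, $\Phi_w$ diverges on that input. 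One must also check the converse consistency direction: if $\Phi_w$ \emph{does} converge on $\pairing{q_i}_{i<k}$ then $\tau$ genuinely declares victory on that prefix, so that every $\ustar{f}$-solution really does feed $\Psi$ a legitimate victory announcement. Both checks are routine once the coding is set up, and the remaining verifications — that the $q_i$ appearing in an $\ustar{f}$-solution are legal Player 1 moves, and that composing $\Phi$ and $\Psi$ with an arbitrary realizer of $\ustar{f}$ yields a realizer of $g$ — follow directly from unwinding the definitions.
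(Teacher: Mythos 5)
Your proof is correct and follows essentially the same approach as the paper: the forward direction reads off Player 2's first move and victory test from the reduction functionals, and the reverse direction builds $\Phi_w$ by simulating the winning strategy with the original input $p_a$ hard-coded into $w$ (so $w$ need not itself be computable, which is fine). The padding you propose is unnecessary, since ``converges in $k$ steps'' means ``within $k$ steps'', so an early halt is harmless; the only two requirements are that some sufficiently large $k$ satisfies the halting condition (domain membership) and that every $\ustar{f}$-solution yields a prefix on which $\tau$ has genuinely declared victory, and both hold without any padding.
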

\begin{proof}
	Assume $g\weireducible \ustar{f}$ via $\Phi,\Psi$. For every $\repmap{A}$-name $p_a$ for a $g$-instance $a$, $\Phi(p_a)$ produces the index $w\in\Baire$ for a continuous functional, and a name $\pairing{p_i}_{i\in\mathbb{N}}$ for an instance of $\parallelization{f}$. In particular, letting $\pi_2:= \pairing{p,q}\mapsto q$, the index of $\pi_2\circ \Phi$ is a valid first move for Player $2$.
	On the $k$-th move, Player $2$ checks whether 
	$\Phi_w( \pairing{q_i}_{i<k})(0)\downarrow$ in $k$ steps. If yes then Player 2 declares victory and returns 
	$\Psi(p_a,\pairing{q_i}_{i<k})$, otherwise he passes. The fact that $\Phi,\Psi$ witness a Weihrauch reduction implies that there is $k$ s.t.\ Player 2 declares victory at stage $k$, and therefore he has a computable winning strategy for $U(f\to g)$.
	
	Assume now that Player 2 has a computable winning strategy $\sigma$ for $U(f\to g)$. We claim that the reduction $g\weireducible \ustar{f}$ is witnessed by the maps $\Gamma$ (forward) and $\Delta$ (backward), defined as follows: the functional $\Gamma$ maps a name $p_a$ for $a\in\dom(g)$ to $(w,\Phi_e(p_a))$, where $e\in\mathbb{N}$ is the first move played by Player 2 (i.e.\ $\Phi_e(p_a)$ is a name for a valid input of $\parallelization{f}$) and $w\in\Baire$ is an index for the continuous functional that, upon input $\pairing{q_i}_{i<k}$, checks whether Player 2 declares victory on his $k$-th move (when playing following $\sigma$). The backward functional $\Delta$ is to the map that sends $p_a, q_0,\hdots, q_{k-1}$ to a name for $b\in g(a)$. 
	
	Since $\sigma$ is computable then so are $\Gamma$ and $\Delta$. Notice that $w$ need not be computable, as the strategy $\sigma$ also has access to the original input $p_a$. Moreover, since $\sigma$ is a winning strategy for Player 2, there is a finite stage $k$ in which Player 2 declares victory, i.e.\ $(w,\Phi_e(p_a))$ is a valid input for $\ustar{f}$ and $\Delta$ computes a (name for a) valid solution of $g(a)$.
\end{proof}

We now show that $\ustar{(\cdot)}$ respects both Weihrauch and strong Weihrauch reductions.

\begin{theorem}
	For every $f\pmfunction{X}{Y}$, $g\pmfunction{A}{B}$, 
	\begin{enumerate}
		\item $f\weireducible g \Rightarrow \ustar{f}\weireducible \ustar{g}$;
		\item $\ustar{(\ustar{f})} \weiequiv \ustar{f}$.
	\end{enumerate}
	In particular, $\ustar{(\cdot)}$ is a closure operator. The above properties hold also if we replace $\weireducible$ with $\strongweireducible$.
\end{theorem}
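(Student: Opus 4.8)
The plan is to deduce the $\weireducible$-versions of (1) and (2) from the reduction-game characterization \thref{thm:game-def-ustar}, working (as usual) with problems on Baire space so as to suppress the representation maps. For (1), assume $f\weireducible g$ via computable $\Phi,\Psi$; by \thref{thm:game-def-ustar} it suffices to give Player~2 a computable winning strategy in $U(g\to\ustar{f})$. When Player~1 opens with a name $p_a$ for an instance $a=(w,\sequence{x_n}{n\in\mathbb{N}})\in\dom(\ustar{f})$, Player~2 reads off $w$ and names $r_n$ for the $x_n$ and plays the index of the computable functional sending $p_a$ to $\pairing{\Phi(r_n)}_{n\in\mathbb{N}}$, a legitimate input for $\parallelization{g}$. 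Each time Player~1 reveals a $g$-solution name $q_n$, Player~2 forms $s_n:=\Psi(r_n,q_n)$, which by a standard closure property of Weihrauch reductions is a name for an element of $f(x_n)$, so that $\sequence{s_n}{n\in\mathbb{N}}$ is a valid sequence of $f$-solution names; at his $k$-th move he declares victory with $\pairing{s_i}_{i<k}$ if $\Phi_w(\pairing{s_i}_{i<k})(0)$ converges in $k$ steps, and passes otherwise. Since $a\in\dom(\ustar{f})$ such a $k$ exists, so the strategy wins, and it is clearly computable (each decision uses only finitely much of the $q_n$'s).

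For (2), the reduction $\ustar{f}\weireducible\ustar{(\ustar{f})}$ follows at once from \thref{thm:u*<=parallelization} applied with $\ustar{f}$ in place of $f$, and in fact holds strongly. For the converse it again suffices, by \thref{thm:game-def-ustar}, to give Player~2 a computable winning strategy in $U(f\to\ustar{(\ustar{f})})$. An instance of $\ustar{(\ustar{f})}$ is a pair $(v,\sequence{z_n}{n\in\mathbb{N}})$ with $z_n=(w_n,\sequence{x^{(n)}_m}{m\in\mathbb{N}})$; Player~2's first move collates all the $x^{(n)}_m$ into a single sequence of $f$-instances, placing $x^{(n)}_m$ in position $\pairing{n,m}$. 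As Player~1 reveals $f$-solution names $q_{\pairing{n,m}}$, Player~2 searches at each turn for a tuple $(k,(k_n)_{n<k})$ for which all the relevant $q_{\pairing{n,i}}$ have been revealed, $\Phi_{w_n}(\pairing{q_{\pairing{n,i}}}_{i<k_n})(0)$ converges in $k_n$ steps for every $n<k$, and, with $t_n:=\pairing{q_{\pairing{n,i}}}_{i<k_n}$ (a name for an element of $\ustar{f}(z_n)$), $\Phi_v(\pairing{t_i}_{i<k})(0)$ converges in $k$ steps; when one is found he declares victory with $\pairing{t_i}_{i<k}$. The domain clauses defining $\ustar{(\ustar{f})}$ — applied first to each $w_n$ and then to $v$ — ensure such a tuple exists and eventually becomes available, so the computable strategy wins.

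The $\strongweireducible$-versions are proved the same way, the only subtlety being that a strong reduction's backward functional cannot consult the original instance and hence cannot by itself recover the truncation point $k$ (which depends on $w$, resp.\ on $v$ and the $w_n$). This is handled by letting the forward functional choose the Turing index $w'$ of the reduced instance so that $\Phi_{w'}$ converges only on inputs whose length has the form $\pairing{k,c}$, and only once the intended computation (for (1): $\Phi_w(\pairing{\Psi(q_i)}_{i<k})(0)\downarrow$ in $k$ steps) has been verified within $\pairing{k,c}$ steps; since that verification costs only finitely many steps for a fixed instance, a suitable padding parameter $c$ always exists, so the reduced instance lies in the domain of $\ustar{g}$ (resp.\ $\ustar{f}$). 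The backward functional then reads $k$ off the length of the sequence it receives and returns the corresponding truncation (post-composed with $\Psi$ in case (1)), using nothing about the original instance. I expect this reconciliation of the ``converges in $k$ steps'' clause with the strong-reduction constraint to be the one genuinely delicate point; the rest is routine given \thref{thm:game-def-ustar} and \thref{thm:u*<=parallelization}.

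Finally, $\ustar{(\cdot)}$ is inflationary by \thref{thm:u*<=parallelization} (namely $f\weireducible\ustar{f}$), monotone by (1), and idempotent by (2); by monotonicity it respects $\weiequiv$, hence descends to the Weihrauch degrees, on which it is therefore a closure operator — and likewise on the strong Weihrauch degrees.
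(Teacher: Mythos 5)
Your proof of the $\weireducible$-versions of (1) and (2) via the game characterization \thref{thm:game-def-ustar} is correct and is a genuinely different route from the paper's. The paper writes out explicit indices $r$ (and $\bar r$) for the reduced instances and argues by direct computation; your game-theoretic proof replaces this index-chasing with a description of Player~2's winning strategy, which is cleaner to state. The correct observation that \thref{thm:game-def-ustar} only yields ordinary Weihrauch reducibility (because the last move of Player~2 may consult $p_a$) is exactly the reason the paper does not pass through the game at all: a single explicit construction handles both the $\weireducible$ and $\strongweireducible$ cases uniformly.

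However, your treatment of the $\strongweireducible$-case has a misdiagnosis. You write that the backward functional ``cannot by itself recover the truncation point $k$,'' but the $\ustar{g}$-solution handed to the backward functional is a \emph{finite} sequence $\sequence{q_n}{n<k}$, and its length $k$ is directly available — no recovery from $w$ (or $v,w_n$) is needed. The paper's strong backward functional is simply $\sequence{q_n}{n<k}\mapsto\sequence{\Psi(q_n)}{n<k}$ (resp.\ the unpacking of indices $\pairing{i,j}<N$ into $h$ blocks of lengths $k_i$, all read off $N$), with the forward functional choosing $\bar r$ so that $\Phi_{\bar r}(\pairing{q_n}_{n<k})=\Phi_w(\pairing{\Psi(q_n)}_{n<k})$. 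The only genuine subtlety is the ``converges in $k$ steps'' clause — a simulation of $\Phi_w$ through $\Psi$ is at least as slow as $\Phi_w$, so if $\Phi_{\bar r}$ halts in $k$ steps then $\Phi_w$ on the translated input does too; the paper flags this via the remark after the definition of $\ustar{(\cdot)}$ that the step bound may be any fixed computable function of $k$. Your padding scheme (encoding a slack parameter $c$ into the length as $\pairing{k,c}$) would also work, but it builds in machinery the paper does not need: after the length adjustment your backward functional must \emph{discard} all coordinates beyond position $k$, and you must verify that the $\parallelization{g}$-instance you build has enough coordinates to realize lengths of the form $\pairing{k,c}$ for the witnessing $c$, which requires a padding of the instance sequence as well. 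Both are repairable, but they make the strong case noticeably heavier than the paper's one-line observation. In short: the game argument is an elegant shortcut for the ordinary case, but since you must supply a direct construction for the strong case anyway, the paper's unified direct construction is more economical overall.
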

\begin{proof}
	\begin{enumerate}
		\item Assume $f\weireducible g$ via $\Phi, \Psi$. Let $(w,\sequence{x_n}{n\in\mathbb{N}})$ be a valid input for $\ustar{f}$ and, for each $n$, let $p_n$ be a name for $x_n$. We can uniformly compute the pair $(r, \sequence{\Phi(p_n)}{n\in\mathbb{N}})$, where $r\in\Baire$ is s.t. for every $k\in\mathbb{N}$
		\[ \Phi_r( \pairing{q_n}_{n<k}) = \Phi_w (\pairing{\Psi(p_n, q_n)}_{n<k}  ).  \]
		Notice that, since $\Phi$ and $\Psi$ witness the reduction $f\weireducible g$, the pair $(r, \sequence{\Phi(p_n)}{n\in\mathbb{N}})$ is a valid input for $\ustar{g}$. Moreover, if $\sequence{q_n}{n<k}$ is a solution of $\ustar{g}(r, \sequence{\Phi(p_n)}{n\in\mathbb{N}})$, then $\sequence{\Psi(p_n,q_n)}{n<k}$ is a valid solution for $\ustar{f}(w,\sequence{x_n}{n\in\mathbb{N}})$. The same argument shows that $f\strongweireducible g$ implies $\ustar{f}\strongweireducible\ustar{g}$, the only difference is that, instead of $r$, we consider $\bar r$ s.t.\ $\Phi_{\bar r}( \pairing{q_n}_{n<k}) = \Phi_w (\pairing{\Psi(q_n)}_{n<k})$.
		
		\item The proof is essentially a definition-chasing exercise. Let $(w,\sequence{\mathbf{t}_i}{i\in\mathbb{N}})$ be valid input for $\ustar{(\ustar{f})}$, where, for every $i$, $\mathbf{t}_i= (w_i, \sequence{\repmap{X}(p^i_j)}{j\in\mathbb{N}})$ is a valid input for $\ustar{f}$. A solution for $\ustar{(\ustar{f})}(w,\sequence{\mathbf{t}_i}{i\in\mathbb{N}})$ is a finite sequence $\sequence{\mathbf{z}_i}{i<h}$, where $\mathbf{z}_i = \sequence{q^i_j}{j<k_i}$ is a valid output for $\ustar{f}(w_i, \sequence{\repmap{X}(p^i_j)}{j\in\mathbb{N}})$.
		
		We can uniformly compute an index $r\in\Baire$ s.t.\ $\Phi_r$ behaves as follows: given in input $\pairing{v_{\pairing{i,j}}}_{i,j<N}$, let $h:=\max \{ n \st \pairing{n,0}<N \}$ and, for each $i<h$, let $k_i:=\max \{ m \st \pairing{i,m}<N \}$. For every $i<h$ and $j<k_i$, define $q^i_j:=v_{\pairing{i,j}}$. The functional $\Phi_r$ simulates $\Phi_w(\pairing{ \pairing{q^i_j}_{j<k_i} }_{i<h})$ for $h$ steps and $\Phi_{w_i}( \pairing{q^i_j}_{j<k_i})$ for $k_i$ steps. If all of the simulations converge then $\Phi_r$ converges (with dummy output), otherwise it diverges.
		
		We then consider the pair $(r, \sequence{\repmap{X}(\bar p_{\pairing{i,j}})}{i,j\in\mathbb{N}})$, where $\bar p_{\pairing{i,j}}:=p^i_j$. It is straightforward to show that $(r, \sequence{\repmap{X}(\bar p_{\pairing{i,j}})}{i,j\in\mathbb{N}})$ is a valid input for $\ustar{f}$. Indeed, for each $i\in\mathbb{N}$, there is a finite sequence $\sequence{q^i_j}{j<k_i}$ s.t.\ for every $j<k_i$, $\repmap{Y}(q^i_j)\in f(\repmap{X}(p^i_j))$ and $\Phi_{w_i}( \pairing{q^i_j}_{j<k_i})$ converges in $k_i$ steps. Similarly, there is $h$ s.t.\ $\Phi_w(\pairing{\pairing{q^i_j}_{j<k_i} }_{i<h})$
		converges in $h$ steps. Moreover, every solution $\sequence{v_{\pairing{i,j}}}{\pairing{i,j}<N}$ for $\ustar{f}(r, \sequence{\repmap{X}(\bar p_{\pairing{i,j}})}{i,j\in\mathbb{N}})$ can be unpacked (as explained above) into the solution $\sequence{\mathbf{z}_i}{i<h}$ for $\ustar{(\ustar{f})}(w,\sequence{\mathbf{t}_i}{i\in\mathbb{N}})$. 
		
		This also proves that $\ustar{(\ustar{f})} \strongweiequiv \ustar{f}$ (as the reduction is, in fact, a strong Weihrauch reduction).\qedhere
	\end{enumerate}
\end{proof}

Notice that, whenever $f$ is a first-order problem, $\ustar{f}$ is strongly Weihrauch equivalent to a problem with range $\baire$ (it has to produce finitely many natural numbers). With a small abuse of notation, it is convenient to identify $\ustar{f}$ with a strong Weihrauch equivalent first-order problem. 

The operator $\ustar{(\cdot)}$ allows us to characterize the degree of the first-order part of a parallelizable problem.

\begin{proposition}
	\thlabel{thm:FOP(parallelization)=FOP(u*)}
	For every multi-valued function $f$, $\firstOrderPart{(\parallelization{f})} \weiequiv \firstOrderPart{(\ustar{f})}.$
\end{proposition}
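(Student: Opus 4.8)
The plan is to prove the equivalence in two directions, using the characterization of the first-order part as a maximum (\thref{thm:fop_max}) together with the game characterization of $\ustar{(\cdot)}$ (\thref{thm:game-def-ustar}). The easy direction is $\firstOrderPart{(\ustar{f})} \weireducible \firstOrderPart{(\parallelization{f})}$: since $\ustar{f} \weireducible \parallelization{f}$ by \thref{thm:u*<=parallelization}, monotonicity of the first-order part immediately gives the reduction. So the whole content is in the reverse direction $\firstOrderPart{(\parallelization{f})} \weireducible \firstOrderPart{(\ustar{f})}$.

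For that direction, the key observation is that $\firstOrderPart{(\ustar{f})}$ is itself a first-order problem, so by \thref{thm:fop_max} it suffices to show $\firstOrderPart{(\parallelization{f})} \weireducible \ustar{f}$ — indeed then $\firstOrderPart{(\parallelization{f})}$ is a first-order problem reducible to $\ustar{f}$, hence reducible to $\firstOrderPart{(\ustar{f})}$. Wait, more carefully: I want to show $\firstOrderPart{(\parallelization{f})}$ is first-order (true, by definition) and reduces to $\ustar{f}$; then since $\firstOrderPart{(\ustar{f})}$ is the maximum first-order problem below $\ustar{f}$, we get $\firstOrderPart{(\parallelization{f})} \weireducible \firstOrderPart{(\ustar{f})}$. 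So the crux is establishing $\firstOrderPart{(\parallelization{f})} \weireducible \ustar{f}$. An instance of $\firstOrderPart{(\parallelization{f})}$ is a pair $(w, \sequence{x_n}{n\in\mathbb{N}})$ such that $\sequence{x_n}{n\in\mathbb{N}} \in \dom(\parallelization{f})$ and $\Phi_w$ halts (with its first bit) on every name of every solution of $\parallelization{f}$; such a solution is a name $\pairing{q_i}_{i\in\mathbb{N}}$ for a $\parallelization{f}$-solution. The point is that by continuity, $\Phi_w(\pairing{q_i}_{i\in\mathbb{N}})(0)$ only depends on finitely many bits, hence (after passing to a suitable equivalent index) on a finite prefix $\pairing{q_i}_{i<k}$ of finitely many of the columns. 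So I would build from $w$ a new index $w'$ such that $\Phi_{w'}(\pairing{q_i}_{i<k})(0)$ halts in $k$ steps exactly when $\Phi_w$ halts on the infinite join within the information available — essentially by dovetailing: $\Phi_{w'}(\pairing{q_i}_{i<k})$ runs $\Phi_w$ on the partial oracle given by the prefixes of $q_0,\dots,q_{k-1}$ visible within $k$ steps, padded arbitrarily, for $k$ steps, and converges iff that converges. Then $(w', \sequence{x_n}{n\in\mathbb{N}})$ is a valid instance of $\ustar{f}$, and from any $\ustar{f}$-solution $\pairing{q_i}_{i<k}$ one recovers $\Phi_{w'}(\pairing{q_i}_{i<k})(0) = \Phi_w(\pairing{q_i}_{i\in\mathbb{N}})(0)$ for the corresponding infinite extension, which is a valid solution of $\firstOrderPart{(\parallelization{f})}(w,\sequence{x_n}{n\in\mathbb{N}})$.

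The main obstacle — and the step that needs the most care — is the bookkeeping around the ``$k$ steps'' convergence bound built into the definition of $\ustar{f}$. The definition of $\ustar{f}$ requires $\Phi_w(\pairing{q_i}_{i<k})(0)$ to converge \emph{in $k$ steps}, and dually that for every sequence of solutions there \emph{exists} such a $k$. I need to arrange $w'$ so that: (i) for every sequence $\sequence{q_n}{n\in\mathbb{N}}$ of names of $f$-solutions, the induced infinite join $\pairing{q_i}_{i\in\mathbb{N}}$ is a name for a $\parallelization{f}$-solution, so $\Phi_w$ halts on it in some finite time $t$, reading only finitely many columns and finitely many bits — and then for $k$ large enough (depending on $t$ and on how the pairing function spreads bits across columns) the simulation inside $\Phi_{w'}(\pairing{q_i}_{i<k})$ has enough time and enough oracle to reproduce that halting computation within $k$ steps; and (ii) conversely if $\Phi_{w'}(\pairing{q_i}_{i<k})(0)\downarrow$ in $k$ steps, it genuinely reflects a halting computation of $\Phi_w$ on some infinite extension, so the output is legitimate. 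Making the simulation monotone and padding-insensitive so that a larger $k$ only helps is the delicate point; concretely one can have $\Phi_{w'}$ on input $\pairing{q_i}_{i<k}$ spend $k$ steps trying, for each $k' \le k$, to run $\Phi_w$ for $k'$ steps on the join of $q_0[k'], \dots, q_{k'-1}[k']$ followed by zeros, and converge as soon as one such run converges. I would also remark that the whole argument is uniform and in fact gives a strong Weihrauch reduction in the reverse-easy direction and an ordinary one in the hard direction, which is all that is claimed.
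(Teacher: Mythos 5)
Your proposal is correct and follows essentially the same route as the paper: the easy direction is dispatched by $\ustar{f}\weireducible\parallelization{f}$ and monotonicity, and the substantive direction builds from $w$ a new index ($w'$ for you, $r$ in the paper) that simulates $\Phi_w$ on the finite prefix of the join padded with $0^\mathbb{N}$ for a bounded number of steps, with the key observation — which you also make — that a $k$-step computation reads only cells $\coding{i,j}<k$, so the padding columns are never touched and the output agrees with $\Phi_w$ on a genuine $\parallelization{f}$-solution name. The only cosmetic difference is that you reduce to $\ustar{f}$ and invoke \thref{thm:fop_max}, whereas the paper packages the same $r$ directly into an input $(r,(r,\seq{x}))$ for $\firstOrderPart{(\ustar{f})}$.
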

\begin{proof}
	The right-to-left reduction follows trivially from $\ustar{f}\weireducible \parallelization{f}$ (\thref{thm:u*<=parallelization}) and the fact that $\firstOrderPart{(\cdot)}$ is degree-theoretic. To prove the left-to-right reduction, let $(w,\seq{x})$, with $\seq{x}:=\sequence{x_i}{i\in\mathbb{N}}$, be an input for $\firstOrderPart{\parallelization{f}}$. If $\pairing{q_i}_{i\in\mathbb{N}}$ is a name for a solution of $\parallelization{f}(\seq{x})$, then $\Phi_w(\pairing{q_i}_{i\in\mathbb{N}})(0)\downarrow$. Let $r\in\Baire$ be an index for the functional that, upon input $\pairing{p_i}_{i<k}$ simulates 
	\[\Phi_w(\pairing{p_0,\hdots,p_{k-1},0^\mathbb{N},0^\mathbb{N},\hdots})\]
	for $k$ steps, and enters an endless loop if $\Phi_w$ does not converge on $0$.
	It is straightforward to check that $(r,(r,\seq{x}))$ is a valid input for $\firstOrderPart{(\ustar{f})}$ and that
	\[ \firstOrderPart{(\ustar{f})}(r,(r,\seq{x})) \subset   \firstOrderPart{(\parallelization{f})}(w,\seq{x}).\qedhere \]
\end{proof} 

\begin{theorem}
	\thlabel{thm:FOP(parallelization)=u*(FOP)}
	For every first-order $f$,
	\[ \firstOrderPart{(\parallelization{f})} \weiequiv \ustar{f} \weiequiv \ustar{(\firstOrderPart{f})}. \]	
\end{theorem}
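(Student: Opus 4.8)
The statement has two equivalences to prove, namely $\firstOrderPart{(\parallelization{f})} \weiequiv \ustar{f}$ and $\ustar{f} \weiequiv \ustar{(\firstOrderPart{f})}$, for $f$ first-order. Since $f$ is first-order, $f \weiequiv \firstOrderPart{f}$, and the second equivalence is then immediate from the fact that $\ustar{(\cdot)}$ respects Weihrauch equivalence (the theorem just before this statement). So the real content is $\firstOrderPart{(\parallelization{f})} \weiequiv \ustar{f}$. By \thref{thm:FOP(parallelization)=FOP(u*)} we have $\firstOrderPart{(\parallelization{f})} \weiequiv \firstOrderPart{(\ustar{f})}$, so it suffices to show $\firstOrderPart{(\ustar{f})} \weiequiv \ustar{f}$ when $f$ is first-order.

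One inequality, $\ustar{f} \weireducible \firstOrderPart{(\ustar{f})}$, follows from maximality of the first-order part (\thref{thm:fop_max}): since $f$ is first-order, $\ustar{f}$ is (strongly Weihrauch equivalent to) a first-order problem — it produces finitely many natural numbers, as remarked right after the closure theorem — and $\ustar{f} \weireducible \ustar{f}$, hence $\ustar{f} \weireducible \firstOrderPart{(\ustar{f})}$. The other inequality, $\firstOrderPart{(\ustar{f})} \weireducible \ustar{f}$, is where the work lies. Here I would unwind the definitions: an input for $\firstOrderPart{(\ustar{f})}$ is a pair $(v,(w,\seq{x}))$ where $(w,\seq{x})$ is an input for $\ustar{f}$ and $v$ codes a c.e.\ condition on names of $\ustar{f}$-solutions; a solution is $\Phi_v(\pairing{q_i}_{i<k})(0)$ for some $\ustar{f}$-solution $\pairing{q_i}_{i<k}$ of $(w,\seq{x})$. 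The key point, exploiting that $f$ is first-order, is that the columns $q_i$ of an $\ustar{f}$-solution are (up to the fixed computable step bound, which is exactly why the bound in the definition of $\ustar{(\cdot)}$ needs to be computable) essentially just natural numbers, so I can fold the outer functional $\Phi_v$ together with the "stopping condition" $\Phi_w$ into a single continuous functional $\Phi_r$ that, on input $\pairing{q_i}_{i<k}$, simulates $\Phi_w(\pairing{q_i}_{i<k})(0)$ for $k$ steps to detect that $k$ suffices, and then outputs $\Phi_v$ applied to the corresponding $\ustar{f}$-solution. Then $(r,\seq{x})$ is a valid input for $\ustar{f}$ and any solution of $\ustar{f}(r,\seq{x})$ yields, via the backward functional that re-runs $\Phi_v$, a solution of $\firstOrderPart{(\ustar{f})}(v,(w,\seq{x}))$. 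A reduction-game argument via \thref{thm:game-def-ustar} is an alternative route for this direction and may be cleaner to write, but the direct construction should work.

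The main obstacle I anticipate is bookkeeping around the step-count conventions: the definition of $\ustar{f}$ requires $\Phi_w(\pairing{q_i}_{i<k})(0)$ to converge \emph{in $k$ steps}, and when composing functionals one must ensure the new index $r$ still meets a computable step bound of the same shape — this is the reason the remark after the definition of $\ustar{(\cdot)}$ emphasizes that the bound be uniformly computable from $k$, and it is exactly the kind of detail that can break a naive composition. A second, subtler point is verifying that the folded functional $\Phi_r$ genuinely has the property that \emph{every} name-sequence for solutions of $\parallelization{f}$ eventually triggers convergence (so $(r,\seq{x})$ is a legitimate $\ustar{f}$-instance), which follows from $(w,\seq{x})$ being a legitimate $\ustar{f}$-instance together with $(v,\cdot)$ being defined on all $\ustar{f}$-solutions, but needs to be stated carefully. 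Everything else is routine definition-chasing of the sort already carried out in \thref{thm:FOP(parallelization)=FOP(u*)} and the closure theorem.
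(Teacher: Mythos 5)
Your high-level decomposition matches the paper's exactly: both reduce the theorem to \thref{thm:FOP(parallelization)=FOP(u*)} plus the observation that $\ustar{f}$ is itself (strongly equivalent to) a first-order problem when $f$ is, and both dispose of the second equivalence via $f \weiequiv \firstOrderPart{f}$. The paper's actual proof is the one-liner: ``This follows directly from \thref{thm:FOP(parallelization)=FOP(u*)}, as if $f$ is first-order then so is $\ustar{f}$.''

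Where you go astray is in identifying ``where the work lies.'' The direction $\firstOrderPart{(\ustar{f})} \weireducible \ustar{f}$ is \emph{not} the hard part --- it is trivially true for \emph{any} problem $g$ that $\firstOrderPart{g} \weireducible g$, since the first-order part is an interior operator (this is stated in the paper right after \thref{thm:fop_max}, and is implicit in \thref{thm:fop_max} itself: $\firstOrderPart{g}$ belongs to $\{h \in \mathcal{F} : h \weireducible g\}$). Combined with the maximality argument you gave for $\ustar{f} \weireducible \firstOrderPart{(\ustar{f})}$, this yields $\firstOrderPart{(\ustar{f})} \weiequiv \ustar{f}$ with no construction at all. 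Your proposed explicit reduction is therefore an unnecessary re-derivation of a general fact, and moreover it is the kind of explicit construction that is actually more delicate than the abstract argument, for exactly the reason you flag: when you fold $\Phi_v$ into $\Phi_w$ to form $\Phi_r$, the ``converges in $k$ steps'' bound on $\Phi_r$ won't automatically accommodate the time $\Phi_v$ needs, and the join $\pairing{q_i}_{i<k}$ is not a prefix of $\pairing{q_i}_{i<k'}$ for $k'>k$, so growing $k$ doesn't straightforwardly buy you more time to run $\Phi_v$. These issues are surmountable but moot --- the whole step is free. All the genuine content of the theorem lives in \thref{thm:FOP(parallelization)=FOP(u*)}, which you correctly treated as a black box.
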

\begin{proof}
	This follows directly from \thref{thm:FOP(parallelization)=FOP(u*)}, as if $f$ is first-order then so is $\ustar{f}$.
\end{proof}

This result characterizes the first-order part of all the problems that are Weihrauch-equivalent to the parallelization of some first-order problem. Several applications of this result will be made explicit in Section~\ref{sec:applications}. 

Notice that not every parallelizable problem is the parallelization of some first-order problem. As a counterexample, consider the problem $\CBaire$. As mentioned, it is known that $\firstOrderPart{\CBaire}\weiequiv \codedChoice{\boldfaceSigma^1_1}{}{\mathbb{N}}$. If $\CBaire\weiequiv \parallelization{f}$ for some first-order $f$ then $f \weireducible \codedChoice{\boldfaceSigma^1_1}{}{\mathbb{N}}$ and hence $\CBaire\weireducible \parallelization{\codedChoice{\boldfaceSigma^1_1}{}{\mathbb{N}}}$, against \cite[Thm.\ 3.30]{KiharaADauriacChoice}.

\medskip

\thref{thm:FOP(parallelization)=u*(FOP)} can be restated saying that $\ustar{(\cdot)}$ and $\firstOrderPart{(\cdot)}$ commute for first-order problems. The same statement, however, holds for a much wider class of problems. Observe for example that the unbounded finite parallelization and the first-order part commute also for every $f\pfunction{\Baire}{\Baire}$.
Indeed, for every $j\in\mathbb{N}$, let $w_j$ be s.t.\ $\Phi_{w_j}(p)$ returns the prefix of length $j$ of $p$. The idea is that, since $f$ is single-valued, given the input $(w,\sequence{x_i}{i\in\mathbb{N}})$ for $\firstOrderPart{(\parallelization{f})}$, for every $i$ we can compute longer and longer prefixes of $f(x_i)$ by considering the input $\sequence{w_j,x_{\pairing{i,j}}}{\pairing{i,j}\in\mathbb{N}}$ for $\parallelization{\firstOrderPart{f}}$. To obtain the reduction $ \firstOrderPart{(\parallelization{f})} \weireducible \ustar{(\firstOrderPart{f})}$, it is therefore enough to map $(w,\sequence{x_i}{i\in\mathbb{N}})$ to the input $(r,\sequence{w_j,x_{\pairing{i,j}}}{\pairing{i,j}\in\mathbb{N}})$ for $\ustar{(\firstOrderPart{f})}$, where $r$ is s.t.\ $\Phi_r(\pairing{y_{i,j}}_{\pairing{i,j}<k})=\Phi_w(\pairing{y_{i,k_i}}_{\pairing{i,k_i}<k})$ and $k_i$ is the greatest $j$ s.t.\ $\pairing{i,j}<k$. Intuitively: $\Phi_r$ simulates $\Phi_w$ on the longest prefixes of solutions available. This argument can be adapted to show that, whenever $f\pmfunction{\Baire}{\Baire}$ is finitely valued (for every $p\in\dom(f)$, $|f(p)|<\aleph_0$), we have $\firstOrderPart{(\parallelization{f})} \weiequiv \ustar{(\firstOrderPart{f})}$.

However, the following counterexample shows that $\ustar{(\cdot)}$ and $\firstOrderPart{(\cdot)}$ do not commute in general.

\begin{proposition}
	There is $f$ with $\firstOrderPart{(\parallelization{f})} \not \weireducible \ustar{(\firstOrderPart{f})}$.
\end{proposition}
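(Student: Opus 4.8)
The plan is to exhibit a parallelizable-type obstruction: a problem $f$ whose full parallelization, viewed through the first-order-part lens, can ask a c.e.\ question about infinitely many solutions \emph{simultaneously}, whereas $\ustar{(\firstOrderPart{f})}$ is forced to commit to finitely many columns before seeing the solutions, and on each such column $\firstOrderPart{f}$ only returns a single natural number — losing the information that made the parallel question answerable. The natural candidate is a problem built on top of the sets $\sequence{A_n}{n\in\mathbb{N}}$, $\sequence{B_n}{n\in\mathbb{N}}$ furnished by \thref{thm:countable_sparse_splitting}. Concretely, I would let $f$ be a constant-valued problem that, on the $n$-th instance, returns (a name for) the pair $(\chi_{A_n},\chi_{B_n})$ — or, more carefully, a problem whose $\firstOrderPart{f}$ is equivalent to ``given $w$, produce a sufficiently long prefix of $B_n$,'' in the spirit of the proof of \thref{thm:algebraic_rules_fop}(\ref{itm:fop_alg_counterexample2}). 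The point of clause (3) of \thref{thm:countable_sparse_splitting} is precisely that it is phrased for a \emph{computable sequence} $\sequence{e_i}{i\in\mathbb{N}}$ of indices reading prefixes of the $B_i$'s in parallel, together with a single backward functional $\Psi$ having access to all the $A_j$'s — which is exactly the shape of an attempted reduction $\firstOrderPart{(\parallelization{f})}\weireducible\ustar{(\firstOrderPart{f})}$.

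The key steps, in order, are: (i) fix $f$ so that $\parallelization{f}$ hands over the characteristic functions of all the $A_n$ and $B_n$, and check that $\chi_{\emptyset'}\weireducible\firstOrderPart{(\parallelization{f})}$ — this uses only clause (2), $\emptyset'\turingreducible A_n\oplus B_n$ uniformly, so from a name of a $\parallelization{f}$-solution one computably reads off $\emptyset'$, hence in particular its first bit, giving a valid $\firstOrderPart{(\parallelization{f})}$ input; (ii) analyze $\ustar{(\firstOrderPart{f})}$: an instance consists of an index $w$ and a sequence of $\firstOrderPart{f}$-instances, and a solution is a finite tuple of outputs of $\firstOrderPart{f}$, each of which is (equivalent to) a finite prefix of the corresponding $B_i$; crucially the backward functional of any reduction only sees these finitely many prefixes plus, via the forward functional's side information, the $A_j$'s; (iii) suppose for contradiction $\chi_{\emptyset'}\weireducible\firstOrderPart{(\parallelization{f})}\weireducible\ustar{(\firstOrderPart{f})}$ via $\Phi,\Psi$; unwind the reduction uniformly to extract a computable sequence $\sequence{e_i}{i\in\mathbb{N}}$ of indices such that $\{e_i\}^{B_i}(x)$ produces the prefix $\gamma_i$ of $B_i$ that $\Phi$'s query uses, and a computable $\Psi$ with $\emptyset'(x)=\Psi(x,\pairing{\gamma_i}_{i},\pairing{A_j}_{j})$; (iv) invoke clause (3) of \thref{thm:countable_sparse_splitting} to find the diagonalizing $x$, contradicting correctness of the reduction on input $x$.

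The main obstacle, and the step I expect to need the most care, is step (iii): matching the somewhat rigid bookkeeping in \thref{thm:countable_sparse_splitting}(3) — a \emph{single} computable sequence $\sequence{e_i}{i}$, a \emph{single} $\Psi$, and the hypothesis ``if for every $i$, $\{e_i\}^{B_i}(x)\downarrow=\gamma_i\prefix B_i$'' — with what a general Weihrauch reduction through $\ustar{(\firstOrderPart{f})}$ actually provides. In particular one must handle the fact that $\ustar{(\firstOrderPart{f})}$ only asks for \emph{finitely many} columns: for the fixed diagonalizing $x$ the reduction terminates after querying some finite $k=k(x)$ of the $B_i$'s, so only $\gamma_0,\dots,\gamma_{k-1}$ are genuinely produced; but clause (3) is robust to this, since for $i\ge k$ one can take $e_i$ to be (say) an index producing the empty prefix, so that $\{e_i\}^{B_i}(x)\downarrow=\gamma_i\prefix B_i$ holds trivially, and $\Psi$ simply ignores those coordinates. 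One also has to be mildly careful that the $w$-component of the $\ustar{}$-instance produced by $\Phi$ need not be computable (it may depend on the original input $x$), but since $x\in\mathbb{N}$ here it is harmless — $w$ is computable from $x$, and clause (3) already quantifies over $x$ after fixing the computable data. Modulo these matchings, everything else is routine definition-chasing of the kind already carried out in \thref{thm:algebraic_rules_fop}(\ref{itm:fop_alg_counterexample2}).
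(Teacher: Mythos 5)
Your overall strategy — use the sets $\sequence{A_n}{n}$, $\sequence{B_n}{n}$ from \thref{thm:countable_sparse_splitting}, show $\chi_{\emptyset'}\weireducible\firstOrderPart{(\parallelization{f})}$ from clause (2), and diagonalize against $\chi_{\emptyset'}\weireducible\ustar{(\firstOrderPart{f})}$ using clause (3) — matches the paper's. But the concrete $f$ you propose does not work, and the gap is not cosmetic. If $f$ is the constant single-valued map $f(n)=(\chi_{A_n},\chi_{B_n})$, then a \emph{single} call to $\firstOrderPart{f}$ with input $(w_x,0)$ already suffices to compute $\chi_{\emptyset'}(x)$: $\Phi_{w_x}$ has both $\chi_{A_0}$ and $\chi_{B_0}$ as oracle, $\emptyset'\turingreducible A_0\oplus B_0$, and $x\mapsto w_x$ is computable. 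Hence $\chi_{\emptyset'}\weireducible\firstOrderPart{f}\weireducible\ustar{(\firstOrderPart{f})}$ and there is nothing left to separate. Your hedged alternative (``$\firstOrderPart{f}$ produces a sufficiently long prefix of $B_n$'') does not specify how the $A_n$'s become available, and if you split them into separate instances (say $f(2n)=\{\chi_{A_n}\}$, $f(2n+1)=\{\chi_{B_n}\}$) you run into a different problem: the $\ustar{}$-input may contain many columns querying the \emph{same} $B_i$, and clause (3) only gives you one finite prefix $\gamma_i$ of $B_i$ per $i$, so the single $e_i$ would have to produce, from $B_i$ and $x$ alone, a prefix long enough for all relevant queries — but ``which queries are relevant'' (i.e., $k$) is not $B_i$-decidable.

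The ingredient you are missing is the multi-valuedness that the paper builds in: it takes $f\mfunction{\mathbb{N}}{\Cantor}$ with $f(0):=\{B_n \st n\in\mathbb{N}\}$ and $f(n+1):=\{A_n\}$. This does two jobs at once. First, $f(0)$ hands back \emph{some} $B_j$ with the caller unable to control which, so no single $\firstOrderPart{f}$-call can pair an $A_i$ with its matching $B_i$; nevertheless $\parallelization{f}$ on $(0,1,2,\dots)$ returns one $B_j$ (with $j$ readable off its prefix $1^j0$) together with \emph{all} $A_n$, giving $\chi_{\emptyset'}$. Second, the multi-valuedness lets the adversary assign a \emph{distinct} $B_n$ to the $n$-th $0$-column, so each $B_i$ is queried by exactly one functional; this matches clause (3)'s shape exactly (one $e_i$ per $B_i$). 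The paper then packages this by reducing $\ustar{(\firstOrderPart{f})}$ to an intermediary problem $F$ that returns all $A_n$'s and one ``sufficiently long'' prefix $\eta_i$ of $B_i$ for every $i$, and diagonalizes against $\emptyset'\weireducible F$. That also dissolves your worry about ``finitely many columns'': you do not need the $e_i$'s to detect $i\ge k$ (which they cannot do from $B_i$ and $x$ alone); you let every $e_i$ produce its prefix, and $\Psi$ — which sees all $\gamma_i$'s and all $A_j$'s — reconstructs the $\firstOrderPart{f}$-outputs, determines $k$ itself, and runs the purported backward functional.
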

\begin{proof}
	Let $\sequence{A_n}{n\in\mathbb{N}}$ and $\sequence{B_n}{n\in\mathbb{N}}$ be as in \thref{thm:countable_sparse_splitting}, namely:
	\begin{enumerate}
		\item for every $n$, $1^n0 \prefix B_n$;
		\item for every $n$, $\emptyset' \not\turingreducible A_n$, $\emptyset' \not\turingreducible B_n$, but $\emptyset' \turingreducible A_n \oplus B_n$;
		\item for every computable sequence $\sequence{e_i}{i\in\mathbb{N}}\subset \mathbb{N}$ and every computable functional $\Psi$ there is $x\in\mathbb{N}$ s.t.\ if for every $i$, $\{e_i\}^{B_i}(x)\downarrow = \gamma_i \prefix B_i$ then 
		\[ \emptyset'(x) \neq \Psi(x, \pairing{\gamma_i}_{i\in\mathbb{N}}, \pairing{A_j}_{j\in\mathbb{N}} ). \]
	\end{enumerate}
	Let $f\mfunction{\mathbb{N}}{\Cantor}$ be defined as $f(0):=\{ B_n \st n\in\mathbb{N}\}$ and $f(n+1):=\{A_n\}$. We show that $\emptyset' \weireducible \firstOrderPart{(\parallelization{f})}$, while $\emptyset'\not\weireducible \ustar{(\firstOrderPart{f})}$, where we are identifying $\emptyset'$ with its characteristic function. The reduction is straightforward: given $n$, consider the input $\sequence{i}{i\in\mathbb{N}}$ for $\parallelization{f}$. The result of $f(0)$ is $B_j$ for some $j\in\mathbb{N}$ (uniformly computable from $B_j$ by checking the first bits), hence using $B_j$ and $A_j=f(j+1)$ we can compute $\emptyset'$.
	
	Recall that, by definition, an input for $\ustar{(\firstOrderPart{f})}$ is a pair $(w, \mathbf{z})$, where
	\begin{itemize}
		\item $\mathbf{z}=\sequence{w_i,x_i}{i\in\mathbb{N}}$, with $w_i\in\Baire$ and $x_i\in\mathbb{N}$,
		\item for every $i$ and every $y_i\in f(x_i)$, $\Phi_{w_i}(y_i)(0)\downarrow=:\gamma_{x_i}$,
		\item for some $k\in\mathbb{N}$, $\Phi_w(\pairing{\gamma_{x_i}}_{i<k})(0)\downarrow$.
	\end{itemize}
	Observe that $ \ustar{(\firstOrderPart{f})} \weireducible F$, where $F$ is the problem that takes in input a sequence $\sequence{v_j}{j\in\mathbb{N}}$ of indexes for continuous functionals and returns both $\pairing{A_n}_{n\in\mathbb{N}}$ and a sequence $\sequence{\eta_i}{i\in\mathbb{N}}$ of finite strings where, for every $i$, $\eta_i$ is a sufficiently long prefix of $B_i$ such that $\Phi_{v_i}(\eta_i)(0)\downarrow$. Intuitively, this can be shown by letting $\sequence{v_j}{j\in\mathbb{N}}$ be the subsequence $\sequence{w_{i_n}}{n\in\mathbb{N}}$ of $\sequence{w_i}{i\in\mathbb{N}}$ s.t.\ $x_{i_n}=0$. Since the output of $F$ contains all the $A_n$'s, it is clear that we can uniformly compute a correct solution of $\ustar{(\firstOrderPart{f})}$ by properly rearranging the columns.
	
	Assume $\emptyset'\weireducible F$ via $\Phi, \Psi$. In particular, for every $n$, $\Phi(n)$ produces a sequence $\sequence{\Phi_i(n)}{i\in\mathbb{N}}$ of (indexes for) computable functionals. Let $\sequence{e_i}{i\in\mathbb{N}}$ be the computable sequence defined as $\Phi_{e_i}(X)(n):=\Phi_{\Phi_i(n)}(X)(0)$. Let also $x\in\mathbb{N}$ be a witness of the property $(3)$ of the sets $A_n$, $B_n$ for the sequence $\sequence{e_i}{i\in\mathbb{N}}$ and the functional $\Psi$.
	
	A valid solution for $F(\Phi(x))$ is the pair $(\pairing{A_j}_{j\in\mathbb{N}},\pairing{\gamma_i}_{i\in\mathbb{N}})$, where $\gamma_i$ is a prefix of $B_i$ s.t.\ $\Phi_{\Phi_i(x)}(\gamma_i)(0)\downarrow$. We have therefore reached a contradiction, as 
	\[ \emptyset'(x) \neq \Psi(x, \pairing{\gamma_i}_{i\in\mathbb{N}}, \pairing{A_j}_{j\in\mathbb{N}} ), \]
    against the definition of Weihrauch reducibility.
\end{proof}

Notice that this implies that the reduction $\firstOrderPart{(\parallelization{f})} \weireducible \parallelization{\firstOrderPart{f}}$ fails in general (by \thref{thm:FOP(parallelization)=u*(FOP)}).

\begin{open}
	\label{q:commutativity}
	Can we characterize exactly the class of problems for which $\firstOrderPart{(\cdot)}$ and $\ustar{(\cdot)}$ commute?
\end{open}

\subsection{Algebraic properties of $\ustar{(\cdot)}$ }
\label{sec:alg_u*}

\begin{theorem}\thlabel{thm:alg_u*}
	${}$
	\begin{enumerate}
		\item $\ustar{f} \sqcup \ustar{g} \weireducible \ustar{(f \sqcup g)} $;
		\item $ \ustar{(f \sqcap g)}\weiequiv \ustar{f} \sqcap \ustar{g} $;
		\item $\ustar{f} \times \ustar{g} \weireducible \ustar{(f \times g)} $;
		\item $\ustar{(f^*)}\weiequiv (\ustar{f})^* \weiequiv \ustar{f}$;
		\item $\ustar{(\parallelization{f})}\weiequiv \parallelization{(\ustar{f})} \weiequiv \parallelization{f}$;
		\item $\ustar{(f\compproduct g)} \weireducible \ustar{f} \compproduct \parallelization{g}$.
	\end{enumerate}
	None of the above reduction can be reversed. In fact, 	
	\begin{enumerate}
		\setcounter{enumi}{6}
		\item \label{itm:counterex1} there are $f,g$ s.t.\ $\ustar{f} \compproduct \ustar{g}\not\weireducible \ustar{(f \times g)}$; 
		\item \label{itm:counterex2} there are $f,g$ s.t.\ $\ustar{(f \times g)} \not \weireducible \ustar{f} \compproduct \ustar{g}$.
	\end{enumerate}
	
\end{theorem}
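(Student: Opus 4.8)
The plan is to obtain (1)--(6) by direct realizer manipulation, essentially lifting the arguments of \thref{thm:algebraic_rules_fop} from $\firstOrderPart{(\cdot)}$ to $\ustar{(\cdot)}$, and then to exhibit the two separating pairs (7)--(8) together with the remaining non-reversals. The one recurring nuisance in (1)--(6) is the clause ``$\Phi_w$ halts in $k$ steps'': since (as remarked right after the definition of $\ustar{(\cdot)}$) this bound may be replaced by any function uniformly computable from $k$, whenever a newly built index $r$ must inspect long prefixes of its columns or run several auxiliary simulations I would silently pass to such an enlarged bound. With that convention: for (1), map an $\ustar{f}$-instance $(w,\sequence{x_n}{n})$ to the $\ustar{(f\sqcup g)}$-instance with $n$-th component $(0,x_n)$ and with $r$ simulating $\Phi_w$ after deleting the leading tag from each column (symmetrically for $\ustar{g}$). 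For (3), feed the $f$- and $g$-instances coordinatewise and let $\Phi_r$ halt once \emph{both} $\Phi_w$ and $\Phi_v$ have converged on suitable prefixes. Parts (4) and (5) are chains of trivialities: $\ustar{f}\weireducible\ustar{(f^*)}\weireducible\ustar{f}$ by monotonicity and by flattening a sequence of $f^*$-instances into one sequence of $f$-instances (reassembling the finitely many sub-solutions inside $r$); $\ustar{f}\weireducible(\ustar{f})^*\weireducible\ustar{f}$ by $h\weireducible h^*$ and by interleaving finitely many $\ustar{f}$-instances; $\parallelization{f}\weireducible\ustar{(\parallelization{f})}\weireducible\parallelization{(\parallelization{f})}\weiequiv\parallelization{f}$ and $\parallelization{f}\weireducible\parallelization{(\ustar{f})}\weireducible\parallelization{(\parallelization{f})}\weiequiv\parallelization{f}$ using \thref{thm:u*<=parallelization} and idempotence of ${}^*$ and $\parallelization{}$. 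For (6), given $(w,\sequence{(p_n,z_n)}{n})$ I would first feed $\sequence{z_n}{n}$ to $\parallelization{g}$, then from a $\parallelization{g}$-solution $\sequence{t_n}{n}$ have the forward functional build the $\ustar{f}$-instance whose $n$-th component is $\Phi_{p_n}$ evaluated at (a name of) $t_n$ and whose index simulates $\Phi_w$ on the assembled pairs $\sequence{(t_i,y_i)}{i<k}$; here $\parallelization{g}$ rather than $\ustar{g}$ is unavoidable, because the number of columns of $g$-solutions that $\Phi_w$ will read cannot be detected by a c.e.\ condition on the $g$-side alone.

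The one place in (1)--(6) needing a real idea is the reduction $\ustar{f}\sqcap\ustar{g}\weireducible\ustar{(f\sqcap g)}$ in (2) (its converse is the meet property plus monotonicity of $\ustar{(\cdot)}$). Given inputs $(w,\sequence{x_n}{n})$ and $(v,\sequence{z_n}{n})$, I would feed $\ustar{(f\sqcap g)}$ \emph{all} pairs $(x_n,z_m)$, and let $\Phi_r$ halt as soon as it either attaches an $f$-solution to each of $x_0,\dots,x_{m-1}$ for some $m$ on which $\Phi_w$ converges, or the symmetric all-$z$ pattern for $\Phi_v$. A pigeonhole argument gives validity: in any $\parallelization{(f\sqcap g)}$-solution, either some row $\{(x_{n_0},z_m):m\in\mathbb{N}\}$ is entirely made of $g$-solutions---yielding $g$-solutions for all $z_m$---or no row is ``$g$-committed'', in which case each $x_n$ has received an $f$-solution; together with the validity of the two given instances this forces $\Phi_r$ to halt, and the backward functional reads off the appropriate summand. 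For the non-reversals: (1) fails already for constant $f,g$ whose outputs $A,B$ are Turing-incomparable with $A\oplus B\turingequiv\emptyset'$ (so $\ustar{(f\sqcup g)}$ computes $A\oplus B$ by asking for an $f$-column and a $g$-column, whereas $\ustar{f}\sqcup\ustar{g}$ only lets one ask for $A$ or for $B$); (3) fails a fortiori by (8), since $\ustar{f}\times\ustar{g}\weireducible\ustar{f}\compproduct\ustar{g}$; and (6) fails because $\parallelization{g}$ on the right is genuinely infinitary, e.g.\ with $f=\id$, $g=\LPO$ one has $\ustar{\id}\compproduct\parallelization{\LPO}\weiequiv\mflim$ while $\ustar{(\id\compproduct\LPO)}\weiequiv\ustar{\LPO}\weiequiv\CNatural$, and $\mflim\not\weireducible\CNatural$.

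For (7) take $f=g=\mflim$. By (5) and $\mflim\weiequiv\parallelization{\LPO}$ one gets $\ustar{\mflim}\weiequiv\mflim$, hence $\ustar{\mflim}\compproduct\ustar{\mflim}\weiequiv\mflim\compproduct\mflim\weiequiv\mflim^{[2]}\weiequiv\mflim'$; also $\mflim\times\mflim\weiequiv\mflim$, so $\ustar{(\mflim\times\mflim)}\weiequiv\ustar{\mflim}\weiequiv\mflim$, and $\mflim'\not\weireducible\mflim$. Part (8) is the delicate one, and is where I expect the real work. The only structural asymmetry to exploit is that in $\ustar{f}\compproduct\ustar{g}$ the number of $\ustar{g}$-queries is pinned down by a c.e.\ condition that never sees any $\ustar{f}$-answer, whereas in $\ustar{(f\times g)}$ the common stopping condition reads both. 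Concretely I would take $g=\LPO$ and $f=\TChoice{\mathbb{N}}$ (so a realizer of $f$ on an ill-posed instance returns an arbitrary, adversarially large natural), and separate them by the problem $T$ which, on input a (possibly empty) $\boldfacePi^0_1$ set $P\subseteq\mathbb{N}$, asks for $\emptyset'\rst a$ for some $a\in\TChoice{\mathbb{N}}(P)$. Then $T\weireducible\ustar{(f\times g)}$ by running the $f$-query equal to $P$ on column $0$, the $g$-query on column $c$ encoding ``$c\in\emptyset'$?'', and letting the joint stopping condition read the value $a$ off column $0$ and wait for $a$ further $\LPO$-answers. For $T\not\weireducible\ustar{f}\compproduct\ustar{g}$ I would run a finite-extension construction against all pairs of computable functionals: since $\LPO$ is single-valued the $\ustar{\LPO}$-stage is non-adaptive, so a putative reduction must commit to a finite number $k$ of $\emptyset'$-bits after reading only a finite prefix of a (deliberately slowly enumerated, e.g.\ simple-complement) code of $P$; one then extends $P$ so that $\min P>k$, at which point $\TChoice{\mathbb{N}}$ can return some $a\in P$ with $a>k$ and the backward functional is stranded---it has neither enough $\emptyset'$-bits to output $\emptyset'\rst a$ nor any confirmed element of $P$ below $k$.

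The main obstacle is thus (8): one must set up $T$ so that the joint stopping condition of $\ustar{(f\times g)}$ genuinely recovers ``$a$ columns of $\LPO$-answers'' once $a$ is revealed, while simultaneously arranging a $\boldfacePi^0_1$ set $P$ whose elements a continuous forward functional cannot anticipate, so that no reduction to $\ustar{f}\compproduct\ustar{g}$ can precompute how many $\LPO$-queries will be needed. Everything else reduces to bookkeeping once the enlarged-step-budget convention and the ``all pairs plus pigeonhole'' trick for the meet are in hand.
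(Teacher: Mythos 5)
Your treatment of (1)--(7) and the accompanying non-reversals is sound and essentially parallels the paper (the paper uses $f=\WKL$, $g=\COH$ for the non-reversal of (1) and $f=g=\CNatural$ for the non-reversal of (6), but your alternative counterexamples also work; your dichotomy argument for the meet in (2) is the same pigeonhole observation the paper uses with its all-pairs matrix). The serious problem is (8), which you yourself flag as the crux, and there your proposed counterexample collapses.

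You take $f = \TChoice{\Nb}$ and $g = \LPO$ and claim that the problem $T$ (given a $\boldfacePi^0_1$ set $P$, produce $\emptyset'\rst a$ for some $a \in \TChoice{\Nb}(P)$) witnesses $\ustar{(f\times g)} \not\weireducible \ustar f \compproduct \ustar g$. But by the paper's own results we have $\ustar{\TChoice{\Nb}} \weiequiv \CNatural'$ (\thref{thm:tcn^u*=cn'}) and $\ustar{\LPO} \weiequiv \CNatural$ (\thref{thm:fop_lim}); moreover $\CNatural' \weiequiv (\CNatural')^\diamond$ (\thref{thm:fop_lim} again), so $\CNatural'$ is closed under compositional product and $\ustar f \compproduct \ustar g \weiequiv \CNatural' \compproduct \CNatural \weiequiv \CNatural'$. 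On the other hand the set of valid outputs of $T$ on input $P$ is the $\boldfacePi^0_2$ set $\{(a,\sigma) : a \in \TChoice{\Nb}(P) \land \sigma = \emptyset'\rst a\}$ (relative to the name of $P$), which is always nonempty; hence $T \weireducible \codedChoice{\boldfacePi^0_2}{}{\Nb} \weiequiv \CNatural'$. So $T$ \emph{does} reduce to $\ustar f \compproduct \ustar g$, and your finite-extension argument cannot succeed: the flaw is that you only argue that the first ($\ustar\LPO$) stage is pinned down after reading a finite prefix of the input, but the second ($\ustar{\TChoice{\Nb}}$) stage already carries the full strength of $\codedChoice{\boldfacePi^0_2}{}{\Nb}$ and can compute the required initial segment of $\emptyset'$ by itself once $a$ is available, so the backward functional is never ``stranded.''

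The paper's proof of (8) instead takes both $f$ and $g$ to be \emph{constant} maps, with values $A := A_0$ and $B := B_0$ coming from the tailored construction of \thref{thm:countable_sparse_splitting}. The crucial property exploited there is much finer than $A \turingincomparable B$ with $A \oplus B \turingequiv \emptyset'$: one needs that any procedure of the form ``first $B$-computably produce a prefix of $B$, then compute $\emptyset'(x)$ from $A$ together with that prefix'' must fail, and this is exactly what clause (3) of \thref{thm:countable_sparse_splitting} encodes. With constant $f,g$ one has $\ustar f \compproduct \ustar g$ equivalent to exactly that two-stage procedure, while $\ustar{(f\times g)}$ gives simultaneous unrestricted access to $A$ and $B$. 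If you want to repair your argument you would need either to choose problems $f,g$ such that $\ustar f \compproduct \ustar g$ is \emph{not} the diamond closure of a known choice principle (so there is some slack to diagonalize into), or to switch to the constant-function setup with a bespoke pair $(A,B)$; the off-the-shelf $\TChoice{\Nb}$/$\LPO$ pair simply lands both sides of the would-be separation in $\CNatural'$.
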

\begin{proof}
	\begin{enumerate}
		\item The reduction is straightforward from the fact that $\ustar{f}\weireducible\ustar{(f \sqcup g)}$ and $\ustar{g}\weireducible\ustar{(f \sqcup g)}$, as $\sqcup$ is the join in the Weihrauch lattice. An explicit reduction can be obtained by considering the map $(i,(w,\seq{x}))\mapsto (w,(i,\seq{x}))$ for every $\seq{x}:=\sequence{x_n}{n\in\mathbb{N}}$.
		
		A counterexample for the right-to-left reduction can be obtained by choosing any $f$ and $g$ s.t.\ $f\not\weireducible \parallelization{g}$ and $g\not\weireducible \parallelization{f}$. This condition is equivalent to  $\parallelization{f} \weiincomparable \parallelization{g}$, and, as example, we can choose $f=\WKL$ and $g=\COH$. Any forward functional for the reduction, upon input $(w, \sequence{i_n,x_n}{n\in\mathbb{N}})$, should commit, after finite time, to some $b\in\{0,1\}$ specifying whether it is producing an input for $\ustar{f}$ or $\ustar{g}$. Assume without loss of generality that $b=0$ (the proof in case $b=1$ is obtained by swapping the roles of $f$ and $g$). By changing 
		the input so that $i_n = 1-b = 1$ for every sufficiently large $n$, we can exploit the reduction $\ustar{(f \sqcup g)} \weireducible \ustar{f} \sqcup \ustar{g}$ to obtain a reduction $\ustar{g}\weireducible \ustar{f} \weireducible \parallelization{f}$, against $g \not\weireducible \parallelization{f}$.
		
		\item The reduction follows from $\ustar{(f \sqcap g)}\weireducible \ustar{f}$ and $\ustar{(f \sqcap g)}\weireducible \ustar{g}$, as $\sqcap$ is the meet in the Weihrauch lattice. An explicit reduction can be obtained by considering the map $(w,\sequence{x_n,z_n}{n\in\mathbb{N}}))\mapsto ((w,\sequence{x_n}{n\in\mathbb{N}}),(w,\sequence{z_n}{n\in\mathbb{N}}))$. 
		
        To prove the right-to-left reduction, let $((w_f,\sequence{x_n}{n\in\mathbb{N}}),(w_g,\sequence{z_n}{n\in\mathbb{N}}))$ be a valid input for $\ustar{f}\sqcap \ustar{g}$. Observe that, for every solution $\sequence{b_{\pairing{i,j}},y_{\pairing{i,j}}}{\pairing{i,j}\in\mathbb{N}}$ for $\parallelization{f\sqcap g}(\sequence{x_i,z_j}{\pairing{i,j}\in\mathbb{N}})$, either $(\forall i)(\exists j)(b_{\pairing{i,j}}=0)$, or $(\forall j)(\exists i)(b_{\pairing{i,j}}=1)$. We consider the input $(w,\sequence{x_i,z_j}{\pairing{i,j}\in\mathbb{N}})$, where $w\in\Baire$ is an index for the functional that, upon input $\sequence{b_{\pairing{i,j}},y_{\pairing{i,j}}}{\pairing{i,j}<k}$, works as follows: by exhaustive search, it produces two sequences $\seq{s}=\sequence{s_n}{n}$ and $\seq{t}=\sequence{t_m}{m}$ of maximal length s.t.\ $s_n=y_{\pairing{n,j}}$ for some $j$ s.t.\ $b_{\pairing{n,j}}=0$, and symmetrically, $t_m=y_{\pairing{i,m}}$ for some $i$ s.t.\ $b_{\pairing{i,m}}=1$. It then simulates $\Phi_{w_f}(\seq{s})$ and $\Phi_{w_g}(\seq{t})$ for $k$ steps, and halts iff one of the two halts. The definition of $\ustar{(\cdot)}$ guarantees that $\Phi_w$ halts whenever the input (and hence one between $\seq{s}$ and $\seq{t}$) is sufficiently long.
				
		\item The left-to-right reduction is straightforward: given $(w_f, \sequence{x_i}{i\in\mathbb{N}})$ and $(w_g, \sequence{z_i}{i\in\mathbb{N}})$ we can uniformly compute the input $(w, \sequence{x_i,z_i}{i\in\mathbb{N}})$ for $\ustar{(f\times g)}$, where $w$ is s.t.\ $\Phi_w(\pairing{(a_i,b_i)}_{i<k})$ simulates $\Phi_{w_f}(\pairing{a_i}_{i<k})$ and $\Phi_{w_g}(\pairing{b_i}_{i<k})$. The fact that the right-to-left reduction can fail follows from point (\ref{itm:counterex2}).
		
		\item It follows from the fact that $f^* \weireducible \ustar{f}$ and that $\ustar{(\cdot)}$ is a closure operator. 
		
		\item It follows from the fact that $\ustar{f} \weireducible \parallelization{f}$ and that the parallelization is a closure operator. 
		
		\item This is essentially a definition-chasing exercise. Let $(w,\sequence{w_i,z_i}{i\in\mathbb{N}})$ be an input for $\ustar{(f\compproduct g)}$. By definition, for every name $\pairing{p_i}_{i\in\mathbb{N}}$ of a solution of $\parallelization{f\compproduct g}(\sequence{w_i,z_i}{i\in\mathbb{N}})$ there is $k\in\mathbb{N}$ s.t.\ $\Phi_w(\pairing{p_i}_{i<k})(0)\downarrow$. For every $i$, $p_i$ is a name for a pair $(v_i, u_i)$, where $v_i\in g(z_i)$ and $u_i\in f(\Phi_{w_i}(v_i))$. Given $(w,\sequence{w_i,z_i}{i\in\mathbb{N}})$, we can uniformly compute $r\in\Baire$ s.t.\
		\[ \Phi_r( \pairing{t_i}_{i\in\mathbb{N}} )= \pairing{s, \pairing{\Phi_{w_i}(t_i)}_{i\in\mathbb{N}}}, \]
		where $s\in \Baire$ is s.t.
		\[ \Phi_s(\pairing{y_i}_{i<k})=\Phi_w(\pairing{ t_i, y_i}_{i<k}). \]
		Intuitively: we do not lose any computational power if the sequence $\sequence{v_i}{i\in\mathbb{N}}$ of $g$-solutions is passed as input to $\ustar{f}$. It is routine to check that $(r,\sequence{z_i}{i\in\mathbb{N}})$ is a valid input for $\ustar{f}\compproduct \parallelization{g}$ and that a solution for $\ustar{(f\compproduct g)})(w,\sequence{w_i,z_i}{i\in\mathbb{N}}$ can be uniformly computed from a solution $(\ustar{f}\compproduct \parallelization{g})(r,\sequence{z_i}{i\in\mathbb{N}})$ by truncating the sequence of solutions for $\parallelization{g}(\sequence{z_i}{i\in\mathbb{N}}$.

		A counterexample for the right-to-left reduction is readily obtained choosing $f=g=\CNatural$. Indeed, $\CNatural$ is closed under composition and under $\ustar{(\cdot)}$ (see e.g.\ \thref{thm:fop_lim}), hence $\ustar{(\CNatural\compproduct\CNatural)}\weiequiv \CNatural$, while $\mflim \weireducible \CNatural \compproduct\parallelization{\CNatural}$.
		
		\item As a counterexample, it is enough to take $f=g=\mflim$.  
		
		\item The proof follows the one of \thref{thm:algebraic_rules_fop}(\ref{itm:fop_alg_counterexample2}). Let $\sequence{A_n}{n\in\mathbb{N}}$ and $\sequence{B_n}{n\in\mathbb{N}}$ be as in \thref{thm:countable_sparse_splitting} and let $A:=A_0$, $B:=B_0$. In particular we have 
		\begin{itemize}
			\item $\emptyset'\not\turingreducible A$, $\emptyset'\not\turingreducible B$, $\emptyset'\turingreducible A \oplus B$
			\item for every $e$, if $\{e\}^{A\oplus B} = \emptyset'$ then the map sending $x$ to the prefix of $B$ used in the computation $\{e\}^{A\oplus B}(x)$ is not $B$-computable.
		\end{itemize}
		
		If we identify each set with its characteristic function then clearly $\emptyset'\weireducible \ustar{(A\times B)}$. On the other hand $\emptyset'\not\weireducible\ustar{A}\compproduct \ustar{B}$. Notice indeed that, by definition, an input for $\ustar{A}$ is a pair $(w,\mathbf{x})$, with $\mathbf{x}=\sequence{x_i}{i\in\mathbb{N}}$ being a sequence of natural numbers. Without loss of generality, we can assume that $x_i=i$ and that $\Phi_w$ is just computing a (finite) prefix of $A$. In other words, we can assume that the input of $\ustar{A}$ is the index of a continuous function that, given $A$, produces a finite prefix of $A$ (the same argument applies also to $B$).
		
		Assume there is a reduction $\emptyset'\weireducible \ustar{A}\compproduct\ustar{B}$ as witnessed by $\Phi$,$\Psi$. The input for $\ustar{A}\compproduct \ustar{B}$ is a pair $(e, w)$ where $w$ is an input for $\ustar{B}$ and $\Phi_e(\ustar{B}(w))$ is an input for $\ustar{A}$. 
		
		For every $n$, $\Phi(n)=\coding{\Phi_1(n),\Phi_2(n)}$ is s.t.\ $\Phi_2(n)$ is the index for a computable functional that, given $n$ and $B$, produces a prefix $B[m]$ of $B$ and, $\Phi_{\Phi_1(n)}(B[m])$ is the index of a computable functional that, given $A$, computes $\emptyset'$. Since all computations are done uniformly, this corresponds to the existence of two indexes $e$ and $i$ for computable functions s.t.\ $\{e\}^{B}(n)= B[m]$ and $\{i\}^{A\oplus B[m]}(n)=\emptyset'(n)$. Hence, the existence of a Weihrauch reduction contradicts the properties of $A$ and $B$.\qedhere
	\end{enumerate}
\end{proof}

\begin{theorem}
	\thlabel{thm:ustar_jumps}
	For every multi-valued function $f$, $\ustar{(f')} \strongweireducible (\ustar{f})'$.	Moreover, if $f$ is a cylinder then $\ustar{(f')}\strongweiequiv(\ustar{f})'$.
\end{theorem}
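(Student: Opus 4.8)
The first reduction unfolds the definitions, in the same spirit as the first part of \thref{thm:fop_jumps}. Let $(w,\seq{x})$ be an instance of $\ustar{(f')}$, where $\seq{x}=\sequence{x_n}{n\in\mathbb{N}}$ and each $x_n$ is an $f'$-instance carrying a name $\sequence{p_{n,s}}{s\in\mathbb{N}}$ that converges to a $\repmap{X}$-name $p_n$. I would send $(w,\seq{x})$ to the sequence $\sequence{(w,\sequence{\repmap{X}(p_{n,s})}{n\in\mathbb{N}})}{s\in\mathbb{N}}$, viewed as an instance of $(\ustar{f})'$: its $s$-th term uses the constant controlling index $w$ together with the $s$-th approximation of each column. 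Since, for every $n$, $\sequence{p_{n,s}}{s}$ converges to $p_n$ and the first coordinate is constant, this sequence of $\ustar{f}$-instances converges to $(w,\sequence{\repmap{X}(p_n)}{n\in\mathbb{N}})$, and the latter is a valid $\ustar{f}$-instance because $f'(x_n)=f(\repmap{X}(p_n))$, so that the convergence condition on $w$ in the definition of $\ustar{(f')}$ coincides verbatim with the one for $\ustar{f}$. Hence $(\ustar{f})'$ on this instance is exactly $\ustar{f}(w,\sequence{\repmap{X}(p_n)}{n\in\mathbb{N}})$, whose solutions are precisely the solutions of $\ustar{(f')}(w,\seq{x})$; the backward functional is the identity, so the reduction is strong.

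For the ``moreover'' part I would first establish $(\ustar{f})'\strongweireducible\ustar{(f\compproduct\mflim)}$ for arbitrary $f$. Let $\sequence{(w^s,\sequence{y^s_n}{n\in\mathbb{N}})}{s\in\mathbb{N}}$ be an instance of $(\ustar{f})'$ converging to a valid $\ustar{f}$-instance $(w,\sequence{y_n}{n\in\mathbb{N}})$. For each $n$ let $z_n:=\sequence{\pairing{w^s,y^s_n}}{s\in\mathbb{N}}$; this is a $\mflim$-instance converging to $\pairing{w,y_n}$. Fixing an index $\hat{p}$ with $\Phi_{\hat{p}}(\pairing{a,b})=b$, the pair $(\hat{p},z_n)$ is a valid $(f\compproduct\mflim)$-instance, and its solutions are of the form $(\pairing{w,y_n},u_n)$ with $u_n$ a name for an element of $f(\repmap{X}(y_n))$. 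I would feed $\sequence{(\hat{p},z_n)}{n\in\mathbb{N}}$ as the $\parallelization{(f\compproduct\mflim)}$-component of an $\ustar{(f\compproduct\mflim)}$-instance, paired with the controlling index $W$ that, on input $\pairing{(\pairing{w,y_i},u_i)}_{i<k}$, reads $w$ off the first coordinate of one of the pairs and halts (with dummy output) as soon as $\Phi_w(\pairing{u_i}_{i<k'})(0)$ converges in $k'$ steps for some $k'\le k$; as usual, any computable bound on the number of simulation steps will do, so this causes no trouble. Validity of $(W,\sequence{(\hat{p},z_n)}{n\in\mathbb{N}})$ follows from validity of $(w,\sequence{y_n}{n\in\mathbb{N}})$; and from a solution $\pairing{(\pairing{w,y_i},u_i)}_{i<k}$ one reads $w$, finds the least suitable $k'$, and outputs $\pairing{u_i}_{i<k'}$, which solves $\ustar{f}(w,\sequence{y_n}{n\in\mathbb{N}})$, that is, the value of $(\ustar{f})'$ on the given instance, using only the solution. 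Hence the reduction is strong.

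Now if $f$ is a cylinder, then (as recalled in the proof of \thref{thm:fop_jumps}) $f\compproduct\mflim\strongweiequiv f'$, and since $\ustar{(\cdot)}$ respects strong Weihrauch reductions we get $\ustar{(f\compproduct\mflim)}\strongweiequiv\ustar{(f')}$; combining this with the previous paragraph yields $(\ustar{f})'\strongweireducible\ustar{(f')}$, and together with the first statement we conclude $\ustar{(f')}\strongweiequiv(\ustar{f})'$.

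The main obstacle is the reduction $(\ustar{f})'\strongweireducible\ustar{(f\compproduct\mflim)}$: the controlling index $w$ of the target $\ustar{f}$-instance is only the limit of the $w^s$'s and is therefore not available to a forward functional, which is exactly why the jump on the left cannot be absorbed into a fixed controlling index (and why, without the cylinder hypothesis, one should expect $(\ustar{f})'\weireducible\ustar{(f')}$ to fail, by a diagonalisation against the finitely-many-step commitment to $w$, just as in the discussion after \thref{thm:fop_jumps}). Routing this limit information through the $\mflim$-factor of $f\compproduct\mflim$ is what makes each column's solution deliver a name for $w$ alongside an $f$-solution, and resolves the difficulty.
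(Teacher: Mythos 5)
Your proof is correct, and for the \emph{moreover} part you take a genuinely different route from the paper's. The first reduction is essentially the same definition-chasing argument as in the paper. For the cylinder case, the paper proves $(\ustar{f})'\strongweireducible\ustar{\bigl((\id\times f)'\bigr)}$ directly, and concludes using $\id\times f\strongweiequiv f$ (immediate from the cylinder hypothesis) together with the fact that $\ustar{(\cdot)}$ and the jump are strong-degree-theoretic. You instead isolate a lemma valid for \emph{arbitrary} $f$, namely $(\ustar{f})'\strongweireducible\ustar{(f\compproduct\mflim)}$, and then invoke $f\compproduct\mflim\strongweiequiv f'$ for cylinders. Both routes exploit the cylinder hypothesis in the same structural way: they arrange for the limiting controlling index $w$ to be recoverable from the $\ustar{}$-solutions themselves, so that the controlling index of the target instance (the paper's $r$, your $W$) can read $w$ back off any column and then run $\Phi_w$ on the $f$-parts. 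The paper routes $w$ through the surviving $\id$-factor of $(\id\times f)'$; you route it through the $\mflim$-factor of $f\compproduct\mflim$. The paper's version is marginally lighter on machinery, since it avoids the compositional product altogether and only needs the closure of strong equivalence under $\id\times(\cdot)$ and $(\cdot)'$; your version yields a reusable lemma not tied to cylinders, but it relies on the strong equivalence $f'\strongweiequiv f\compproduct\mflim$ for cylinders (which the paper does recall, in the proof of \thref{thm:fop_jumps}). Your handling of the step-count bookkeeping for $W$ by appealing to the flexibility of the computable bound in the definition of $\ustar{(\cdot)}$ is exactly the same device the paper relies on for its $r$; no gap there.
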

\begin{proof}
	The first statement follows from the definitions: notice indeed that the domains of $\ustar{(f')}$ and $(\ustar{f})'$ share the same underlying set ($\Baire\times \infStrings{X}$), the only difference being in the respective representations. Let $(w,\sequence{x_n}{n\in\mathbb{N}})$ be an input for $\ustar{(f')}$. Given a double sequence $\sequence{p_{n,m}}{n,m\in\mathbb{N}}$ where $\lim_{m\to\infty} p_{n,m}=:p_n$ is a name for $x_n$, we can uniformly compute a sequence in $\Baire$ converging to $\pairing{w, \pairing{p_n}_{n\in\mathbb{N}}}$, i.e.\ to a name of the input $(w,\sequence{x_n}{n\in\mathbb{N}})$ for $(\ustar{f})'$.
	
	Assume now that $f$ is a cylinder, so that $f\strongweiequiv \id \times f$ and $f'\strongweiequiv (\id\times f)'$. We show that $(\ustar{f})'\strongweireducible \ustar{((\id \times f)')}$, and the claim follows from the fact that $\ustar{(\cdot)}$ is strong-degree-theoretic. A name for the input $(w,\sequence{x_n}{n\in\mathbb{N}})$ for $(\ustar{f})'$ is (essentially) a sequence $\sequence{w_i}{i\in\mathbb{N}}$ converging to $w\in\Baire$ and a double sequence $\sequence{q_{n,m}}{n,m\in\mathbb{N}}$ where $\lim_{m\to\infty}{q_{n,m}}=: q_n$ is a name for $x_n$. Clearly the sequence $\sequence{\pairing{w_m,q_{n,m}}}{m\in\mathbb{N}}$ converges to a name for $(w,x_n)$. Let $r\in \Baire$ be s.t.\ 
	\[ \Phi_r(\pairing{\pairing{w,t_0},\hdots,\pairing{w,t_{k-1}}}) = \Phi_w(\pairing{t_i}_{i<k}). \]
	It follows that  $\pairing{r, \pairing{w_m,q_{n,m}}_{m\in\mathbb{N}} }$ is a name for a valid input of $\ustar{((\id \times f)')}$ and that, applying $\ustar{((\id \times f)')}$, we obtain a solution for $(\ustar{f})'(w,\sequence{x_n}{n\in\mathbb{N}})= \ustar{f}(w,\sequence{x_n}{n\in\mathbb{N}})$.
\end{proof}

The reduction $(\ustar{f})'\strongweireducible \ustar{(f')}$ can fail if $f$ is not a cylinder, see the comments after \thref{thm:fop_jumps}. An explicit counterexample is the constant function.

We now show that, under the relatively mild assumption that $\parallelization{f}$ is a cylinder, \thref{thm:FOP(parallelization)=u*(FOP)} can be strengthened as follows: 

\begin{theorem}
	\thlabel{thm:FOP(parallelization)=u*(FOP)_strong}
 	For every first-order $f$ and every $n\in\mathbb{N}$, if $\parallelization{f}$ is a cylinder then $\firstOrderPart{((\parallelization{f})^{(n)})}\strongweiequiv (\ustar{f})^{(n)}$.
\end{theorem}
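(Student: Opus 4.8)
The plan is to argue by induction on $n$, with essentially all the work in the base case $n=0$. For the inductive step, suppose $\firstOrderPart{((\parallelization{f})^{(n)})}\strongweiequiv(\ustar{f})^{(n)}$. Since $\parallelization{f}$ is a cylinder and the jump of a cylinder is again a cylinder (as is used in the proof of \thref{thm:fop_jumps}), the iterated jump $(\parallelization{f})^{(n)}$ is a cylinder, so \thref{thm:fop_jumps} applies to it and yields
\[ \firstOrderPart{((\parallelization{f})^{(n+1)})}=\firstOrderPart{(((\parallelization{f})^{(n)})')}\strongweiequiv(\firstOrderPart{((\parallelization{f})^{(n)})})'. \]
By the inductive hypothesis and the fact that the jump lifts to the strong Weihrauch degrees, the right-hand side is strongly Weihrauch equivalent to $((\ustar{f})^{(n)})'=(\ustar{f})^{(n+1)}$, which closes the induction. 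It therefore remains to prove the base case $\firstOrderPart{(\parallelization{f})}\strongweiequiv\ustar{f}$.

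By \thref{thm:FOP(parallelization)=u*(FOP)} we already know $\firstOrderPart{(\parallelization{f})}\weiequiv\ustar{f}$, and, since $\parallelization{f}$ is a cylinder, \thref{thm:fo_cylinder} gives that $\firstOrderPart{(\parallelization{f})}$ is a first-order cylinder. As any two Weihrauch-equivalent first-order cylinders are strongly Weihrauch equivalent (each, being first-order, reduces strongly to the other, which is a first-order cylinder), the base case reduces to the claim that $\ustar{f}$, identified as in Section~\ref{sec:u*} with a strongly equivalent first-order problem, is itself a first-order cylinder.

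To prove this claim, let $g$ be first-order with $g\weireducible\ustar{f}$; we may assume $g$ has codomain $\mathbb{N}$ and fix computable $\Phi,\Psi$ witnessing the reduction, so that $\Phi(p)$ names a $\ustar{f}$-instance $(w_p,\sequence{x^p_n}{n\in\mathbb{N}})$ and every solution $\pairing{q_i}_{i<k}$ of it (with $k$ the least step-witness for $\Phi_{w_p}$) yields the $g$-solution $\Psi(p,\pairing{q_i}_{i<k})(0)$. The idea is to use the hypothesis that $\parallelization{f}$ is a cylinder in order to smuggle $(p,w_p)$ into the sequence of $f$-instances fed to $\ustar{f}$. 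I would fix computable $\Phi_S,\Psi_S$ witnessing $\id\times\parallelization{f}\strongweireducible\parallelization{f}$, let $\seq{z}_p$ be $\Phi_S$ applied to a name of $(\pairing{p,w_p},\sequence{x^p_n}{n\in\mathbb{N}})$, and let $\bar w$ be a fixed computable index for the functional which, on input a finite prefix $\pairing{q'_i}_{i<m}$, runs $\Psi_S$ on it to recover a prefix of $(\pairing{p,w_p},\sequence{u_n}{n\in\mathbb{N}})$, where $\sequence{u_n}{n\in\mathbb{N}}$ is some $\parallelization{f}(\sequence{x^p_n}{n\in\mathbb{N}})$-solution, and then searches for the least $k$ with both $\Phi_{w_p}(\pairing{u_i}_{i<k})(0)\downarrow$ and $\Psi(p,\pairing{u_i}_{i<k})(0)\downarrow$, halting within $m$ steps precisely once all of this has been witnessed. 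The forward functional of the strong reduction then sends $p$ to the $\ustar{f}$-instance $(\bar w,\seq{z}_p)$, and the backward functional, given the returned solution $\pairing{q'_i}_{i<k'}$, re-runs this recovery and outputs $\Psi(p,\pairing{u_i}_{i<k})(0)$; since this depends only on $\pairing{q'_i}_{i<k'}$, the reduction is strong.

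Verifying that $(\bar w,\seq{z}_p)$ is a valid $\ustar{f}$-instance---because $(w_p,\sequence{x^p_n}{n\in\mathbb{N}})$ is one and $\Psi_S$ eventually exposes arbitrarily long prefixes of its output, so $\bar w$ is forced to halt---and that the backward functional is correct---because at the halting stage $k'$ the recovered $\pairing{u_i}_{i<k}$ is a genuine $\ustar{f}(\Phi(p))$-solution---is a routine definition chase, the only care needed being with the ``converges in at most $m$ steps'' clauses, handled exactly as in the remarks following the definition of $\ustar{(\cdot)}$. The main obstacle is exactly this claim; the rest is a short induction, and the delicate part is engineering $\bar w$ so that it waits long enough for $\Psi_S$ to expose everything the backward functional needs, but no longer.
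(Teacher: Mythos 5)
Your overall scaffold matches the paper's: induction on $n$, with the inductive step handled exactly as you describe via \thref{thm:fop_jumps} and the fact that the jump of a cylinder is a cylinder, and with the base case reduced (via \thref{thm:FOP(parallelization)=u*(FOP)} and \thref{thm:fo_cylinder}) to showing that $\ustar{f}$ is a first-order cylinder. That reduction of the base case is also exactly what the paper does. Where you genuinely diverge is in the proof that $\ustar{f}$ is a first-order cylinder. You stay with the reduction $g \weireducible \ustar{f}$ and, since its backward functional $\Psi$ uses the $g$-instance $p$, you use the cylinder functionals $\Phi_S,\Psi_S$ for $\id\times\parallelization{f}\strongweireducible\parallelization{f}$ to pack $\pairing{p,w_p}$ into the $\parallelization{f}$-instance, then engineer a halting index $\bar w$ that waits until the recovered prefixes of $p$, $w_p$, and the $u_i$ are long enough for $\Psi$ to converge, so that the backward functional can unpack everything it needs from the returned finite block. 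The paper instead observes that $g \weireducible \ustar{f} \weireducible \parallelization{f}$, so (by the cylinder hypothesis) $g \strongweireducible \parallelization{f}$ via some $\Phi',\Psi'$; crucially $\Psi'$ already does not see the $g$-instance, so there is nothing to smuggle: one can take the $\ustar{f}$-instance to be $(w,\seq{x})$ with $w$ an index for $\Psi'$ itself and $\seq{x}$ named by $\Phi'(z)$, and $\Psi'$ directly serves as both the halting criterion and the backward functional. Both uses of the cylinder hypothesis are valid, but the paper's is considerably shorter and avoids the technical care you flag about step bounds and the order in which $\Psi_S$ exposes prefixes. One small point to watch in your version: when $\bar w$ searches for the least $k$, you should search for $k$ with $\Phi_{w_p}(\pairing{u_i}_{i<k})(0)\downarrow$ \emph{in $k$ steps} (not merely $\downarrow$), since otherwise $\pairing{u_i}_{i<k}$ need not be a legal $\ustar{f}(\Phi(p))$-solution and $\Psi(p,\pairing{u_i}_{i<k})(0)$ need not lie in $g(p)$; as you note, this is the same issue the paper's remarks after the definition of $\ustar{(\cdot)}$ are designed to handle.
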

\begin{proof}
	The statement can be proved by induction on $n\in\mathbb{N}$. Fix a first-order problem $f$ such that $\parallelization{f}$ is a cylinder and, for the sake of readability, let $F:=\parallelization{f}$.

	Observe first of all that $\ustar{f}$ is a first-order cylinder. Indeed, let $g$ be a first-order problem s.t.\ $g\weireducible \ustar{f}$. Let $\Phi, \Psi$ be two functionals witnessing the strong reduction $g\strongweireducible F$. For every name $z$ of an input for $g$, let $\seq{x}$ be the input for $F$ named by $\Phi(z)$. For every name $\pairing{y_i}_{i\in\mathbb{N}}$ for a solution of $F(\seq{x})$, there is $k$ s.t.\ $\Psi(\pairing{y_i}_{i<k})(0)\downarrow \in g(z)$. In particular, letting $w$ be an index for $\Psi$, the maps $z\mapsto (w,\seq{x})$ and $\Psi$ witness $g\strongweireducible \ustar{f}$.

    Moreover, $\firstOrderPart{F}$ is a first-order cylinder (\thref{thm:fo_cylinder}), and therefore \thref{thm:FOP(parallelization)=u*(FOP)} implies $\ustar{f}\strongweiequiv \firstOrderPart{F}$. This proves the statement for $n=0$.
    
    Assume now that the claim holds up to $n$. Since $F$ is a cylinder, and the jump of a cylinder is a cylinder (\cite[Prop.\ 6.14]{BGP17}), applying \thref{thm:fop_jumps} we have $\firstOrderPart{(F^{(n+1)})}\strongweiequiv (\firstOrderPart{(F^{(n)})})'$. By the inductive step
	\[ \firstOrderPart{(F^{(n+1)})} \strongweiequiv ((\ustar{f})^{(n)} )' \strongweiequiv (\ustar{f})^{(n+1)}. \qedhere\]
\end{proof}

\begin{corollary}
    \thlabel{thm:FOP(parallelization)=u*(FOP)_strong_cor}
	For every first-order $f$ and every $n\in\mathbb{N}$, if $\parallelization{f}$ is a cylinder then
	\[ \firstOrderPart{((\parallelization{f})^{(n)})}\strongweiequiv (\ustar{f})^{(n)}\strongweiequiv \ustar{(f^{(n)})} .\]     
\end{corollary}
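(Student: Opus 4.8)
The first of the two asserted equivalences, $\firstOrderPart{((\parallelization{f})^{(n)})}\strongweiequiv (\ustar{f})^{(n)}$, is exactly \thref{thm:FOP(parallelization)=u*(FOP)_strong}, so the only thing left to prove is $(\ustar{f})^{(n)}\strongweiequiv \ustar{(f^{(n)})}$. The plan is to deduce this by applying \thref{thm:FOP(parallelization)=u*(FOP)_strong} a second time, now to the problem $f^{(n)}$ rather than to $f$ itself, and then to identify the two first-order parts that arise.

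First I would record two elementary observations. Taking jumps does not alter the codomain of a problem, so $f^{(n)}$ is again first-order. Moreover, parallelization commutes with the jump (a well-known fact), so $\parallelization{(f^{(n)})}\strongweiequiv(\parallelization{f})^{(n)}$; since $\parallelization{f}$ is a cylinder by hypothesis and the jump of a cylinder is a cylinder (\cite[Prop.\ 6.14]{BGP17}), the problem $(\parallelization{f})^{(n)}$ is a cylinder, and being strongly Weihrauch equivalent to it, so is $\parallelization{(f^{(n)})}$. These are precisely the hypotheses needed to feed $f^{(n)}$ into \thref{thm:FOP(parallelization)=u*(FOP)_strong}.

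Now I would run the following chain of strong equivalences. Applying \thref{thm:FOP(parallelization)=u*(FOP)_strong} to the first-order problem $f^{(n)}$ with exponent $0$ gives $\ustar{(f^{(n)})}\strongweiequiv\firstOrderPart{(\parallelization{(f^{(n)})})}$. The problems $\parallelization{(f^{(n)})}$ and $(\parallelization{f})^{(n)}$ are Weihrauch equivalent cylinders, hence their first-order parts are Weihrauch equivalent first-order cylinders (\thref{thm:fo_cylinder}); and any two Weihrauch equivalent first-order cylinders are strongly Weihrauch equivalent (if $g \weireducible h$ with $g$ first-order and $h$ a first-order cylinder then $g\strongweireducible h$, and symmetrically), so $\firstOrderPart{(\parallelization{(f^{(n)})})}\strongweiequiv\firstOrderPart{((\parallelization{f})^{(n)})}$. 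Finally $\firstOrderPart{((\parallelization{f})^{(n)})}\strongweiequiv(\ustar{f})^{(n)}$ is \thref{thm:FOP(parallelization)=u*(FOP)_strong} once more. Composing these gives $\ustar{(f^{(n)})}\strongweiequiv(\ustar{f})^{(n)}$, which together with the first part yields the statement.

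The only genuinely load-bearing ingredient beyond \thref{thm:FOP(parallelization)=u*(FOP)_strong} is the commutation of parallelization with the jump, which must be used in its \emph{strong} form: it is what transports the cylinder property from $(\parallelization{f})^{(n)}$ to $\parallelization{(f^{(n)})}$, and without knowing $\parallelization{(f^{(n)})}$ is a cylinder one cannot invoke \thref{thm:FOP(parallelization)=u*(FOP)_strong} for $f^{(n)}$ at all; everything else is degree-theoretic bookkeeping. (An inductive alternative, mirroring the proof of \thref{thm:FOP(parallelization)=u*(FOP)_strong} via \thref{thm:fop_jumps} and \thref{thm:ustar_jumps}, is also possible, but it relies on the same commutation fact at every step, so the one-shot argument above is cleaner.)
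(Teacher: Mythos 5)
Your proof is correct and takes essentially the same route as the paper: apply \thref{thm:FOP(parallelization)=u*(FOP)_strong} a second time to the first-order problem $f^{(n)}$, using the strong commutation $\parallelization{(f')}\strongweiequiv(\parallelization{f})'$ to verify that $\parallelization{(f^{(n)})}$ is a cylinder and to identify the resulting first-order part with $\firstOrderPart{((\parallelization{f})^{(n)})}$. The paper's proof is very terse and leaves this bookkeeping implicit; you have spelled out the two load-bearing details it omits (the transfer of the cylinder property and the upgrade from Weihrauch to strong Weihrauch equivalence of the two first-order parts via \thref{thm:fo_cylinder}).
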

\begin{proof}
    The first equivalence is \thref{thm:FOP(parallelization)=u*(FOP)_strong}. Since $(\parallelization{f})' \strongweiequiv\parallelization{(f')}$ \cite[Prop.\ 5.7(3)]{BolWei11}, the second equivalence can be obtained by applying \thref{thm:FOP(parallelization)=u*(FOP)_strong} to $g=\parallelization{f^{(n)}}$.
\end{proof}

\section{First-order part and diamond}
\label{sec:fop_diamond}

The diamond operator was introduced in \cite[Def.\ 9]{NP18} using generalized register machines, and intuitively $f^\diamond$ captures the possibility of calling $f$ as oracle an arbitrary but finite number of times during a computation (with the constraint of having a c.e.\ condition that tells you when no more calls will be made). In \cite[Def.\ 4]{Westrick20diamond}, the author gives an alternative definition by means of a ``higher-order'' model of computation, and shows that the diamond operator corresponds to closure under compositional product for pointed problems (\cite[Thm.\ 1]{Westrick20diamond}).

If $f$ is a first-order problem, a clear upper bound for $\firstOrderPart{(\parallelization{f})}$ is given by $f^\diamond$. It is therefore natural to ask what is the relation between the first-order part, the unbounded finite parallelization, and the diamond operators. 

In the following, we will mostly use the game-theoretic definition introduced by \cite[Def.\ 4.1 and def.\ 4.3]{HJ16}. 

\begin{definition}\label{sec:def-diamond}
	Let $f,g\pmfunction{\Baire}{\Baire}$ be two partial multi-valued functions. We define the \textdef{reduction game} $G(f\to g)$ as the following two-player game: on the first move, Player 1 plays $x_0\in \dom(g)$, and Player 2 either plays an $x_0$-computable $y_0\in g(x_0)$ and declares victory, or responds with an $x_0$-computable instance $z_1$ of $f$.
	
	For $n > 1$, on the $n$-th move (if the game has not yet ended), Player 1 plays a solution $x_{n-1}$ to the input $z_{n-1}\in\dom(f)$. Then Player 2 either plays a $\coding{x_0,\hdots,x_{n-1}}$-computable solution to $x_0$ and declares victory, or plays a $\coding{x_0,\hdots,x_{n-1}}$-computable instance $z_n$ of $f$.
	
	If at any point one of the players does not have a legal move, then the game ends with a victory for the other player. Player 2 wins if it ever declares victory (or if Player 1 has no legal move at some point in the game). Otherwise, Player 1 wins.
	
	We say that \textdef{$g$ is Weihrauch reducible to $f$ in the generalized sense}, and write $g \le_{\mathrm{gW}} f$, if Player 2 has a computable winning strategy for the game $G(f\to g)$, i.e.\ there is a Turing functional $\Phi$ s.t.\ Player 2 always plays $\Phi(\coding{x_0,\hdots,x_{n-1}})$, and wins independently of the strategy of Player 1.
	
	We described the game assuming that $f,g$ have domain and codomain $\Baire$. The definition can be extended to arbitrary multi-valued functions, and the moves of the players are \emph{names} for the instances/solutions.
	
	For every $f\pmfunction{X}{Y}$ we define $f^\diamond\pmfunction{\mathbb{N}\times\Baire}{\finStrings{Y}}$ as the following problem:
	\begin{itemize}
		\item $\dom(f^\diamond)$ is the set of pairs $(e,d)$ s.t.\ Player 2 wins the game $G(f\to \id)$ when Player 1 plays $d$ as his first move, and $\Phi_e$ is a winning strategy for Player 2;
		\item a solution is the list of moves of Player $1$ for a run of the game.
	\end{itemize}
\end{definition}

Intuitively, in the reduction game $G(f\to g)$, Player 1 plays the role of the oracle $f$, while Player 2 plays the role of the algorithm trying to compute a solution for $g$, calling the oracle finitely many times. It is easy to see that  $g\weireducible f^\diamond$ if and only if $g \le_{\mathrm{gW}} f$. In particular, this interpretation means that $e$ above can be read as an index of a computation containing a special instruction, namely a call to the oracle $f$.

\begin{remark}
    Observe that if $f$ is first-order then so is $f^\diamond$.
\end{remark}

We can compare the intuition given in the above paragraph with the informal description of $\ustar{f}$: one would expect that, loosely speaking, a problem corresponding to ``an arbitrary number of applications of $f$ in sequence'' always computes ``an arbitrary number of parallel applications of $f$''. The next Proposition shows that this is indeed the case.

\begin{proposition}\thlabel{thm:ustar<=diamond}
	For every multi-valued function $f\pmfunction{X}{Y}$, $\ustar{f}\weireducible f^\diamond$.  
\end{proposition}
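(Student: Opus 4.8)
The plan is to invoke the game-theoretic characterization of the diamond operator recalled above: the reduction $\ustar{f}\weireducible f^\diamond$ holds precisely when $\ustar{f}\le_{\mathrm{gW}}f$, i.e.\ when Player~2 has a computable winning strategy for the reduction game $G(f\to\ustar{f})$. So I would simply exhibit such a strategy; intuitively it feeds Player~1 the $f$-instances in a fixed order while monitoring the halting condition hard-wired into $\ustar{f}$.

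In more detail: on the first move Player~1 plays (a name for) a valid input $(w,\sequence{x_n}{n\in\mathbb{N}})$ of $\ustar{f}$, from which Player~2 computably reads off $w$ together with $\repmap{X}$-names for each $x_n$, and then plays the $f$-instance $x_0$. Inductively, once Player~1 has returned $\repmap{Y}$-names $q_0,\hdots,q_{k-1}$ for solutions $y_i\in f(x_i)$, Player~2 checks whether $\Phi_w(\pairing{q_i}_{i<k})(0)\downarrow$ in at most $k$ steps: if so, it declares victory and outputs the finite sequence $\pairing{q_i}_{i<k}$; otherwise it plays the $f$-instance $x_k$. The $f$-instances played by Player~2 depend only on the original input, and the victory output depends only on the solutions already observed, so this strategy is computable; and each $x_k\in\dom(f)$ since $\sequence{x_n}{n\in\mathbb{N}}\in\dom(\parallelization{f})$, so every move is legal.

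It remains to check that the strategy wins. If the game never ended, Player~1 would produce an infinite sequence $\sequence{q_n}{n\in\mathbb{N}}$ with $\repmap{Y}(q_n)\in f(x_n)$ for all $n$; but $(w,\sequence{x_n}{n\in\mathbb{N}})\in\dom(\ustar{f})$ forces some $k$ with $\Phi_w(\pairing{q_i}_{i<k})(0)\downarrow$ in $k$ steps, so Player~2 would have declared victory no later than the stage where it has seen $q_0,\hdots,q_{k-1}$, a contradiction. Moreover, whenever Player~2 declares victory, $\pairing{q_i}_{i<k}$ genuinely is a solution of $\ustar{f}(w,\sequence{x_n}{n\in\mathbb{N}})$: the memberships $\repmap{Y}(q_i)\in f(x_i)$ hold by the rules of the game, and the halting clause holds because Player~2 explicitly verified it.

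I expect no real difficulty here; the points that need care are purely bookkeeping. One must line the indices up with the definition of $G(f\to g)$ — Player~2's $(k+1)$-st response is allowed to depend on Player~1's history $\coding{x_0,\hdots,x_{k-1}}$, which in the present situation consists of the original $\ustar{f}$-input followed by $q_0,\hdots,q_{k-1}$ — and one must ensure that the ``in $k$ steps'' clock run by Player~2 is exactly the one appearing in the definition of $\ustar{f}$; this is precisely where the uniform computability of the step bound (as noted after the definition of $\ustar{(\cdot)}$) is used.
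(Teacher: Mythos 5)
Your proof is correct and follows essentially the same approach as the paper's. The paper applies its game-theoretic characterization of $\ustar{f}$ (\thref{thm:game-def-ustar}) with $g=\ustar{f}$ to obtain a computable winning strategy for $U(f\to\ustar{f})$ and then observes that the same strategy wins $G(f\to\ustar{f})$; you simply unfold that strategy explicitly (play the $x_n$ in order, monitor the clocked halting clause carried in $w$), which amounts to the same argument.
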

\begin{proof}
	It is enough to notice that, for every partial multi-valued functions $f,g$, if $g\weireducible \ustar{f}$, then by \thref{thm:game-def-ustar}, Player 2 has a computable winning strategy for the game $U(f\rightarrow g)$. The same strategy allows him to win the game $G(f\rightarrow g)$ for every initial move of Player 1. 
\end{proof}

\begin{remark}
	We notice that, if $g \strongweireducible \ustar f$, then a solution to any input $a$ of $g$ can be found just from the list of the moves of Player 1 in the game $U(f\rightarrow g)$. In particular, if $g\strongweireducible \ustar f$ then $g\strongweireducible f^\diamond$, i.e.\ $\ustar{f}\strongweireducible f^\diamond$.
\end{remark}

We now prove a generalization of the aforementioned result of Westrick which is useful to present upper bounds for $f^\diamond$.

\begin{theorem}\thlabel{sec:theo-red-diam}
	For every two multi-valued functions $f,g \pmfunction \Baire \Baire$ such that $g$ is pointed, we have that
	\[
	g\compproduct f \weireducible g \Rightarrow f^\diamond \weireducible g.
	\]
\end{theorem}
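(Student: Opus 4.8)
The plan is to exhibit explicit computable forward and backward functionals witnessing $f^\diamond\weireducible g$ directly, using the game-theoretic description of $f^\diamond$ and the hypothesis $g\compproduct f\weireducible g$ to ``inline'' the continuation of the reduction game $G(f\to\id)$ as the program-component of a $g\compproduct f$-instance. First I would fix computable functionals $\Phi_0,\Psi_0$ witnessing $g\compproduct f\weireducible g$ with respect to the distinguished representative of the compositional product, so that for every legitimate $g\compproduct f$-instance $(p,z)$ (i.e.\ $z\in\dom(f)$ and $\Phi_p$ sends every name of a point of $f(z)$ to a name of a point of $\dom(g)$) and every $g$-solution $s$ of $\Phi_0(p,z)$, the value $\Psi_0((p,z),s)$ is a pair $(w,y)$ with $w\in f(z)$ and $y\in g(\Phi_p(w))$; and, using that $g$ is pointed, fix a computable name $p_\star$ of a point of $\dom(g)$.

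Next, given an input $(e,d)$ of $f^\diamond$ --- so that $\Phi_e$ is a winning strategy for Player~2 in $G(f\to\id)$ with opening move $d$ --- I would build, uniformly in $(e,d)$, a family of (string) indices $\sequence{p_\sigma}{\sigma}$ parametrised by finite sequences $\sigma=\coding{w_1,\hdots,w_n}$ of names of Player~1 moves, exploiting the fact that the program-component of a $g\compproduct f$-instance may be an arbitrary string, so the whole finite history $\sigma$ (together with $d$ and $e$) can be hard-coded into $p_\sigma$. The intended behaviour is that $\Phi_{p_\sigma}(x)$ simulates $\Phi_e$ on the history $\coding{d}\concat\sigma\concat\coding{x}$, outputs $p_\star$ if $\Phi_e$ declares victory, and outputs $\Phi_0(p_{\sigma\concat\coding{x}},z)$ if $\Phi_e$ instead plays the $f$-instance $z$. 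This is made into a legitimate definition by the recursion theorem (\thref{thm:continuous_recursion_theorem}) applied to the computable operation that turns an index for $\sigma\mapsto p_\sigma$ into an index for the map just described (forming $p_{\sigma\concat\coding{x}}$ via the $s$-$m$-$n$ theorem). A routine induction along the game tree below $\sigma$ --- which is well-founded precisely because $\Phi_e$ is winning for Player~2 --- should show that $\Phi_{p_\sigma}$ does send $f$-solutions to names of points of $\dom(g)$, the leaves where $\Phi_e$ declares victory being covered by $p_\star$; hence each $(p_\sigma,z)$ is a legitimate $g\compproduct f$-instance and each $\Phi_0(p_\sigma,z)$ a legitimate $g$-instance.

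The reduction itself would then be: the forward functional sends $(e,d)$ to $\Phi_0(p_{\coding{}},z_1)$, where $z_1$ is the $f$-instance played by $\Phi_e$ on $\coding{d}$ (or to $p_\star$ if $\Phi_e$ declares victory already on $\coding{d}$); and the backward functional, which under Weihrauch reducibility also sees $(e,d)$ and may recompute every $p_\sigma$, unwinds the play from a $g$-solution $y_0$ of this instance. Concretely it keeps a current history $\sigma$, $f$-instance $z$, and $g$-solution $y$, starting from $\coding{}$, $z_1$, $y_0$ (returning the empty list if there was an immediate victory), and repeatedly computes $(w,y'):=\Psi_0((p_\sigma,z),y)$ --- so $w\in f(z)$ is a legal Player~1 move --- and runs $\Phi_e$ on $\coding{d}\concat\sigma\concat\coding{w}$; if $\Phi_e$ declares victory it halts and outputs $\sigma\concat\coding{w}$, otherwise it reads off the new $f$-instance $z'$, uses that $\Phi_{p_\sigma}(w)=\Phi_0(p_{\sigma\concat\coding{w}},z')$ so that $y'$ is a $g$-solution of this instance, and continues with $\sigma\concat\coding{w}$, $z'$, $y'$. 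Since at every stage the $w$'s produced so far form a legal run of Player~1's moves against the winning strategy $\Phi_e$, victory must be declared after finitely many rounds, so the process halts and returns a valid solution of $f^\diamond(e,d)$.

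The main obstacle I expect is getting the recursive family $\sequence{p_\sigma}{\sigma}$ right: each $p_\sigma$ has to serve simultaneously as the ``continuation program'' of a $g\compproduct f$-instance --- so that $\Phi_{p_\sigma}$ provably outputs $g$-instances on all $f$-solutions, which is what legitimacy of the instances fed to $g$ rests on --- and, through exactly that role, to encode the remainder of the game, the self-reference being closed off by the recursion theorem; the remaining verifications (legitimacy of the instances, termination of the unwinding) are the same induction using the winning-strategy hypothesis, with pointedness of $g$ providing a valid $g$-instance at the leaves of the game tree and in the degenerate immediate-victory case. This would strengthen \cite[Thm.\ 1]{Westrick20diamond}, where $g$ is assumed pointed and closed under compositional product, since here only the one-sided absorption $g\compproduct f\weireducible g$ is used.
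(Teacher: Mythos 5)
Your proposal is correct and follows essentially the same strategy as the paper's proof: use the recursion theorem to close off a self-referential ``continuation program'' that hard-codes the game history, verify legitimacy of the resulting $g\compproduct f$-instances by induction on the well-founded tree of plays against Player~2's winning strategy, use pointedness of $g$ only at leaves/immediate-victory positions, and unwind a single $g$-solution via the backward functional $\Psi_0$ to reconstruct the run. The only cosmetic difference is that you index a family of programs $p_\sigma$ by finite histories $\sigma$ (via $s$-$m$-$n$ plus one fixed point for the indexing map), whereas the paper packages the history as an explicit argument $\pairing{y_i}_{i<k}$ of a single fixed-point program $\UTM(n,e,d,\cdot)$; these are equivalent presentations of the same construction.
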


\begin{proof}	
	The proof is essentially the same as the one of \cite[Thm.\ 1]{Westrick20diamond}: we try to follow the same notation as much as is allowed by the other conventions we have fixed so far in this document.
	
	Let $\Delta$ and $\Gamma$ respectively the forward and backward functionals of  $g\compproduct f\weireducible g$. Fix a universal Turing functional $\UTM$ s.t.\ $\UTM(n,m,p,q):= \Phi_n(m,p,q)$. Consider the function $F\function{\mathbb{N}}{\mathbb{N}}$ s.t.\ $\UTM(F(n),e,d,\pairing{y_i}_{i<k})$ works as follows:
	
	\begin{description}
		\item (1): We think of $(e,d)$ as an input for $f^\diamond$. We simulate the computation $\Phi_e(d)$. If Player 2 declares victory then go to (3). Otherwise, Player 2 makes an oracle call. Let $Q(e,d, \pairing{})$ be the instance of $f$ that we are asking to be solved by $f$. If $k=0$ then go to $(4)$, otherwise go to (2-$0$). 
		
		\item (2-$i$): Use $y_i$ as an answer to the $i$-th oracle call. Carry on with the computation until either the computation halts (i.e.\ Player 2 declares victory), in which case we move to (3), or the $(i+1)$-st request to use $f$ is made. If $i+1< k$, we go to (2-$i+1$), otherwise go to (4).
		
		\item (3): We output a computable element of $\dom(g)$, which we can do by the assumption that $g$ is pointed.
		
		\item (4): Let $w_k\in \Baire$ be s.t.\ $\Phi_{w_k}(y_k)=\UTM(n,e,d,\pairing{y_i}_{i\le k})$. Clearly $w_k$ is uniformly computable from $n,e,d,\pairing{y_i}_{i<k}$. The computation $\UTM(F(n),e,d,\pairing{y_i}_{i<k})$ then returns 
		\[ \Delta(w_k,Q(e,d,\pairing{y_i}_{i<k})), \]
		where $Q(e,d,\pairing{y_i}_{i<k})$ is the instance of $f$ submitted upon the $(k+1)$-st request to use $f$.
	\end{description}
	
	Clearly $F$ is total and computable, hence, by the relativized recursion theorem, for every $e,d$ there is a fixed point $n$ for the function $F$. 
		
	We now define the maps $\Phi,\Psi$ which will witness the reduction $f^\diamond \weireducible g$. Define $\Phi:=(e,d)\mapsto \UTM(n,e,d, \pairing{})$. The functional $\Psi$, when executed with input $z_0$, works as follows: we divide the computation in stages, where, at the beginning of each stage $s$ we have defined $z_s\in \Baire$ and a finite sequence $p_s:=\sequence{y_i}{i<s}$. At stage $0$ we define $p_0:=\pairing{}$. At every stage $s$, we run the computation $\UTM(n,e,d,\pairing{y_i}_{i<s})$. If the computation enters condition (3) above then we return $\sequence{y_i}{i<s}$. If the computation enters condition (4), we let  $(y_s,z_{s+1}):=\Gamma(z_s, (w_s,Q(e,d,\pairing{y_i}_{i<s})))$ where $w_s\in \Baire$ is computed by $\UTM(n,e,d,\pairing{y_i}_{i<s})$ as defined above, and we go to stage $s+1$. 
	
	We now show that the functionals $\Phi$ and $\Psi$ witness the reduction $f^\diamond\weireducible g$. Let us first notice that, if for every $k$ and every $i<k$, $y_i\in f(Q(e,d,\pairing{y_j}_{j<i}) )$ then  
	\[ \UTM(n,e, d, \pairing{y_i}_{i<k})\in \dom(g).\]
	To prove this it is enough to notice that all the $(y_0,\hdots, y_{k-1})$ as above ordered by extension form a well-founded tree (possibly with continuum-sized branching). The key observation is that the sequences $(y_0,\dots,y_{k-1})$ that satisfy the hypotheses above are exactly the (possibly partial) runs of the game $G(f\to\id)$ such that Player 1's first move is $d$ and Player 2 plays according to the strategy $e$. The claim can therefore be proved by induction on the rank of this tree (as is done in \cite[Claim on p.\ 5]{Westrick20diamond}). This implies that $\UTM(n,e,d, \pairing{} )=\Phi(e,d)\in\dom(g)$.

	Let us now show that if $z_0\in g(\UTM(n,e,d,\pairing{}))$ then $\Psi(z_0)$ is a valid solution for $f^\diamond(e,d)$. Assume that, at each stage $s$, the sequence $\sequence{y_i}{i<s}$ as defined by $\Psi(z_0)$ is a valid (partial) run of the game $G(f\to \id)$ and $z_s \in g(\UTM(n,e,d,\pairing{y_i}_{i<s}))$. We show that either $\sequence{y_i}{i<s}$ is a solution of $f^\diamond(e,d)$ or $\sequence{y_i}{i< s+1}$ and $z_{s+1}$ satisfy the inductive hypotheses. This suffices to prove the claim as the case $s=0$ is trivially verified. If the computation $\UTM(n,e,d,\pairing{y_i}_{i<s})$ enters case (3) then by construction Player 2 declares victory, hence we are done. Otherwise the computation enters case (4). By the inductive step, 
	\[ z_s\in g(\UTM(n,e,d,\pairing{y_i}_{i<s})) = g(\Delta(w_s,Q(e,d,\pairing{y_i}_{i<s}) )).\]
	Since $\Gamma$ is the backward functional of the reduction $g\compproduct f \weireducible g$, we obtain that the pair  
	\[ (y_s,z_{s+1})=\Gamma(z_s, (w_s,Q(e,d,\pairing{y_i}_{i<s})))\] 
	is s.t.\ $y_s\in f(Q(e,d,\pairing{y_i}_{i<s}))$ and $z_{s+1}\in g(\Phi_{w_s}(y_s))= g(\UTM(n,e,d,\pairing{y_i}_{i<s+1} ))$.	
\end{proof}
		
Notice that we only need the stronger problem $g$ to be pointed, whereas there is no such assumption on $f$.

It is natural to ask whether the implication $f\compproduct g\weireducible g\Rightarrow f^\diamond \weireducible g$ also holds. Interestingly, although $f\compproduct g\weireducible g$ implies that $f^{[n]}\weireducible g$ for any natural number $n$, this is not enough to conclude that $f^\diamond\weireducible g$.

\begin{proposition}
	There are partial multi-valued functions $f,g \pmfunction\Baire \Baire$ such that $g$ is pointed, $f\compproduct g\weireducible g$, but $f^\diamond\not\weireducible g$.
\end{proposition}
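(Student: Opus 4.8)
The plan is to take $f:=\mflim$ and to let $g$ be the \emph{jump hierarchy of unbounded but finite height}: the partial multi-valued function whose instances are pairs $(n,p)$ with $n\in\mathbb{N}$ and $p$ a name for an instance of $\mflim^{(n)}$, with $g(n,p):=\mflim^{(n)}(p)$. Coding an instance of $g$ as a name for $n$ followed by a name for the corresponding $\mflim^{(n)}$-instance, this is a partial multi-valued function $g\pmfunction{\Baire}{\Baire}$, and it is pointed since it has a computable instance (e.g.\ a constant sequence in the $n=0$ component, which is $\mflim$).

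First I would check that $\mflim\compproduct g\weireducible g$. Since $\mflim$ is a cylinder, for every $n$ we have $\mflim^{(n+1)}\weiequiv\mflim\compproduct\mflim^{(n)}$, and the canonical representative of $\mflim\compproduct g$, restricted to instances built over the $n$-th component of $g$, is by definition an instance of $\mflim\compproduct\mflim^{(n)}$; mapping it to the $(n+1)$-st component of $g$ yields, uniformly in $n$, the required reduction. In particular $\mflim^{[n]}\weiequiv\mflim^{(n-1)}$ is a component of $g$ for every $n$, so $\mflim^{[n]}\weireducible g$ — yet, as I argue next, $\mflim^\diamond\not\weireducible g$. This is consistent with \thref{sec:theo-red-diam}: the hypothesis $g\compproduct\mflim\weireducible g$ fails, because after the first application of $\mflim$ one would have to choose, depending on its outcome, which component $\mflim^{(n)}$ of $g$ to invoke, whereas $g$ forces $n$ to be fixed up front.

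The core of the argument is the non-reduction $\mflim^\diamond\not\weireducible g$, and I expect the main obstacle to be that a hypothetical reduction reads Player 2's index in full, so one cannot diagonalise by ``writing the number of needed iterations into the instance'' — the reduction would just read it off. The remedy is to make the strategy refer to the reduction via the recursion theorem. Suppose $\mflim^\diamond\weireducible g$ via a (without loss of generality total computable) forward functional $\Phi$ and a backward functional $\Psi$. For an index $e$, let $n_e$ be the first coordinate of $\Phi(e,0^\mathbb{N})$, a number computable from $e$. By Kleene's recursion theorem there is $e^*$ such that $\Phi_{e^*}$ is the following strategy in the game $G(\mflim\to\id)$: ignoring its input, it first computes $k:=n_{e^*}+2$ from its own index $e^*$, then plays $k$ nested $\mflim$-calls — the $i$-th playing a $\chi_{\emptyset^{(i-1)}}$-computable approximation to $\chi_{\emptyset^{(i)}}$ — and then declares victory, recording the transcript. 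Then $(e^*,0^\mathbb{N})\in\dom(\mflim^\diamond)$ (the only run of the game is forced, since $\mflim$ is single-valued, and Player 2 always reaches the declaration of victory after $k$ rounds), and the unique solution $r$ of $\mflim^\diamond(e^*,0^\mathbb{N})$ computes $\emptyset^{(k)}=\emptyset^{(n_{e^*}+2)}$. On the other hand $\Phi(e^*,0^\mathbb{N})=(n_{e^*},p)$ with $p$ computable, so every solution of $g(n_{e^*},p)=\mflim^{(n_{e^*})}(p)$ is $\turingreducible\emptyset^{(n_{e^*}+1)}$ (because $\mflim^{(m)}\weiequiv\mflim^{[m+1]}$ and each of the $m+1$ composed copies of $\mflim$ contributes one Turing jump over a computable start), hence so is $r=\Psi((e^*,0^\mathbb{N}),v)$ for any such solution $v$. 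Therefore $\emptyset^{(n_{e^*}+2)}\turingreducible\emptyset^{(n_{e^*}+1)}$, contradicting the strictness of the Turing jump hierarchy. The routine points left to check are the coding of $g$ as a problem over $\Baire$, the uniformity in $n$ of the reduction $\mflim\compproduct g\weireducible g$, and the standard fact that a nested chain of $\mflim$-instances approximating $\emptyset',\emptyset'',\dots$ can be played computably by Player 2's strategy.
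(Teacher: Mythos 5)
Your approach is genuinely different from the paper's, though it follows the same structural template. The paper builds $f$ from a family of pairwise Turing-incomparable sets $\{q_i\}$, sets $g := \bigsqcup_{n\in\mathbb{N}} f^{[n]}$, and cites a Pauly–Yoshimura observation that (the constant map) $q_0\weireducible f^\diamond$ while $q_0\not\weireducible f^{[n]}$ for every $n$; since $q_0$ is a \emph{total constant} map, every point of $\Baire$ names its unique instance, so any supposed reduction $q_0\weireducible g$ forces the forward functional to converge on all of $\Baire$, and continuity then makes the first coordinate $n$ locally constant, yielding $q_0\weireducible f^{[n]}$ for a single $n$, a contradiction. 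You instead take $f:=\mflim$ and $g:=\bigsqcup_n\mflim^{(n)}$ and try to derive the contradiction directly via a recursion-theorem diagonalisation. Your verification of $\mflim\compproduct g\weireducible g$ and of pointedness is fine modulo the uniformity check you flag.

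The recursion-theorem half has a genuine gap, and it is exactly the ``without loss of generality total $\Phi$'' step. The forward functional of a Weihrauch reduction is a priori only a partial computable functional that must converge on names of elements of $\dom(\mflim^\diamond)$, and this domain is complicated (it involves a $\Pi^1_1$-type game-theoretic condition). Partial continuous functionals on $\Baire$ do not in general extend to total ones, and pointedness of $g$ does not repair this: you cannot continuously detect that $\Phi(p)$ is stalling in order to switch to outputting a name for the computable $g$-instance. Concretely: after the recursion theorem you obtain $e^*$ whose strategy, by construction, first waits for $\Phi(e^*,0^\mathbb{N})(0)$ to converge and only then plays $\Phi(e^*,0^\mathbb{N})(0)+2$ rounds. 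If $\Phi(e^*,0^\mathbb{N})(0)$ diverges, then $\Phi_{e^*}$ never declares victory, so $(e^*,0^\mathbb{N})\notin\dom(\mflim^\diamond)$, and nothing obliges $\Phi$ to converge there. The two conditions ``$\Phi(e^*,0^\mathbb{N})(0)$ converges'' and ``$(e^*,0^\mathbb{N})\in\dom(\mflim^\diamond)$'' collapse into an equivalence, and it is perfectly consistent for both to fail simultaneously; your argument then produces no contradiction. (Adding a ``fallback'' declaration of victory after a fixed number of rounds does not help either, since it weakens the lower bound on what the transcript computes and the inequality $n_{e^*}+2\le n_{e^*}+1$ never materialises.) The paper's use of the constant map $q_0$ sidesteps exactly this trap: its domain is all of $\Baire$, so the forward functional is forced to be genuinely total, and the self-reference is not needed because continuity alone pins down a single component $f^{[n]}$.
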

\begin{proof}			
	Let $\{q_i\st i\in\mathbb{N}\}$ be a set of Turing-incomparable sets. We define $f\pmfunction{\Cantor}{\Cantor}$ as $f(0^\mathbb{N}):=\{\str{i}\concat q_i\st i\in\mathbb{N}\}$ and $f(q_{i+1}):=q_i$ for every $i$. We also let $g:=\bigsqcup_{n\in\mathbb{N}}f^{[n]}$.
	
	Let us identify $q_0$ with the constant map $x\mapsto q_0$. Pauly and Yoshimura \cite{Dagstuhl18} observed that $q_0\weireducible f^\diamond$ but for every $n$, $q_0\not\weireducible f^{[n]}$. This, in particular, implies that $q_0\not\weireducible g$, hence $f^\diamond \not \weireducible g$. On the other hand, to show that $f\compproduct g\weireducible g$ it is enough to notice that an input $(p_{n-1},(n,(\sequence{p_i}{i<n-1},x)))$ for $f\compproduct g$ can be uniformly mapped to $(n+1, (\sequence{p_i}{i< n},x))\in \dom(g)$ and that $(f\compproduct g)(p_{n-1},(n,(\sequence{p_i}{i<n-1},x))) = g(n+1, (\sequence{p_i}{i< n},x))$.
\end{proof}
		
\thref{sec:theo-red-diam} will be a useful tool in various proofs concerning the question of when it is the case that $\ustar f\weiequiv f^\diamond$. The first result we give is essentially a direct proof of $\ustar{\CNatural}\weiequiv \CNatural^\diamond$ (which holds by \thref{thm:FOP(parallelization)=u*(FOP)}, see \thref{thm:fop_lim}). The main idea behind the proof is that, in order to compute $\CNatural^\diamond (e,x)$, we can simulate a run of the game $G(\CNatural\to \id) $ (i.e.\ we can simulate a computation with oracle $\CNatural$) and ``guess'' the possible moves of Player 1 (i.e.\ guess the results of the oracle calls). This generates a subtree of $\baire$ corresponding to the possible sequences of moves for Player $1$. Observe that, if the guess is a legal move for Player 1 then we are guaranteed (by the definition of diamond) that Player $2$ will declare victory after finitely many moves. On the other hand, if the guess is not a legal move for Player 1 then we have no information on the behavior of $\Phi_e$, which may fail to produce a valid input for $\CNatural$ (it may produce a name for an empty set, or also fail to produce an infinite string). However, since the graph of $\CNatural$ is closed, it is c.e.\ (relatively to the input) to check whether we are guessing a wrong answer to an oracle call. Since every finite string is extendible to a valid input for $\CNatural$, this guarantees that we can always compute a valid input for $\CNatural$, and therefore obtain a reduction $\CNatural^\diamond\weireducible \ustar{\CNatural}$. 

These ideas can be formally presented as follows.

\begin{proposition}
	\thlabel{thm:total-func-ustar-diam}
	If $f\mfunction{\Baire}{\mathbb{N}}$ is s.t.\ $\{(n,x) \st n\in f(x) \}\in \lightfacePi^0_1$ then $\ustar{f}\weiequiv f^\diamond$.
\end{proposition}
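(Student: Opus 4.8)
The reduction $\ustar{f}\weireducible f^\diamond$ holds for every $f$ by \thref{thm:ustar<=diamond}, so the plan is to prove the converse, $f^\diamond \weireducible \ustar{f}$. I would work with the game characterization of reducibility to $\ustar f$ from \thref{thm:game-def-ustar}, i.e.\ I would exhibit a computable winning strategy for Player~2 in the game $U(f\to f^\diamond)$; this conveniently hides the bookkeeping about the step‑bound built into the definition of $\ustar f$, and it reduces the task to describing Player~2's first move (an instance of $\parallelization f$) together with a ``branch‑tracing'' rule for declaring victory.

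Fix an input $(e,d)\in\dom(f^\diamond)$, played by Player~1. Consider the tree $T$ of finite sequences $\sigma\in\finStrings{\mathbb N}$ of candidate answers: simulating $\Phi_e$ on $d$ and feeding it the answers listed by $\sigma$, $\sigma$ is a node of $T$ exactly when $\Phi_e$ makes a further oracle call, and we let $z_\sigma$ denote the $f$-instance submitted at that call. The set of such nodes is c.e.\ in $(e,d)$, so we enumerate it as $\sequence{\sigma_n}{n}$, padding with a fixed computable $z^*\in\Baire$ to keep the sequence infinite if $T$ is finite. The only place the hypothesis is needed is the following: we cannot feed the raw $z_{\sigma_n}$ to $\parallelization f$, because along an \emph{illegal} history $\Phi_e$ is unconstrained and $z_{\sigma_n}$ need not even be total. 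Instead, since $\{(n,x)\st n\in f(x)\}\in\lightfacePi^0_1$, the predicate ``$\sigma$ is illegal'' — meaning some proper initial segment of $\sigma$ is illegal, or $\sigma=\tau\concat\str{a}$ with $a\notin f(\bar z_\tau)$ — is c.e.\ (by induction on $\length{\sigma}$, using that $\bar z_\tau$ is computable), so I define $\bar z_{\sigma_n}$ to copy the bits $\Phi_e$ produces for $z_{\sigma_n}$, switching to an all‑$0$ tail if and when illegality of $\sigma_n$ is detected. Since a winning strategy for Player~2 in $G(f\to\id)$ must output total $f$-instances along legal histories, and since $f$ is total (so the all‑$0$ fallback is a legitimate input of $f$), an induction on $\length{\sigma}$ shows each $\bar z_{\sigma_n}$ is total and that $\bar z_{\sigma_n}=z_{\sigma_n}$ whenever $\sigma_n$ is legal. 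Thus $\sequence{\bar z_{\sigma_n}}{n}\in\dom(\parallelization f)$ is a legal first move for Player~2.

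Player~1 then reveals, one at a time, solutions $q_0,q_1,\dots$ to $\bar z_{\sigma_0},\bar z_{\sigma_1},\dots$. Player~2's rule is to trace a branch of $T$: start at $\sigma_0=\str{}$; at the current node $\sigma_m$ use $q_m(0)$ as Player~1's answer and pass to the corresponding child; if, while doing so, $\Phi_e$ ever declares victory, Player~2 declares victory in $U(f\to f^\diamond)$ and outputs the list of answers along the traced run, which is precisely a solution of $f^\diamond(e,d)$; otherwise Player~2 passes. Every revealed answer is legal: $q_m$ solves $\bar z_{\sigma_m}$, and whenever $\sigma_m$ is actually visited by the branch it is a legal history, hence $\bar z_{\sigma_m}=z_{\sigma_m}$. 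So the traced branch is a genuine play of $G(f\to\id)$ with Player~2 following $\Phi_e$; as $(e,d)\in\dom(f^\diamond)$, this play is won by Player~2 after finitely many moves, so only finitely many $q_m$ are consulted and Player~2 declares victory at a finite stage of $U(f\to f^\diamond)$. Hence Player~2 has a computable winning strategy, and by \thref{thm:game-def-ustar}, $f^\diamond\weireducible\ustar f$; combined with \thref{thm:ustar<=diamond} this gives $\ustar f\weiequiv f^\diamond$.

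The main obstacle is exactly the point isolated above: guaranteeing that the sequence handed to $\parallelization f$ really lies in its domain despite $\Phi_e$'s unconstrained behaviour off the legal subtree. This is what the $\lightfacePi^0_1$ assumption provides — recognizability of bad histories — together with totality of $f$, which makes the corrupted dummy instances legitimate; it is the general‑$f$ analogue of the closed‑graph / extendibility argument sketched for $\CNatural$ before the statement. The remaining steps are routine, noting that for first‑order $f$ both $\ustar f$ and $f^\diamond$ may be treated as problems with range $\baire$, so the whole argument stays literally first‑order.
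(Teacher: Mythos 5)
Your proof is correct, but it takes a genuinely different route from the paper's. The paper first shows $\ustar f\compproduct f\weireducible\parallelization f$ (interleaving the simulation of $\Phi_p(n)$ with a c.e.\ test for $n\notin f(x)$, exactly the ``sanitization'' trick you isolate), upgrades this to $\ustar f\compproduct f\weireducible\ustar f$ via \thref{thm:FOP(parallelization)=u*(FOP)}, and then invokes the recursion-theoretic machinery of \thref{sec:theo-red-diam} to conclude $f^\diamond\weireducible\ustar f$. That approach only handles \emph{one} level of the oracle-call tree, outsourcing the iteration to the fixed-point argument inside \thref{sec:theo-red-diam}. You instead build a winning strategy for Player~2 in $U(f\to f^\diamond)$ directly, by materializing the \emph{entire} tree of potential oracle calls of $\Phi_e$ as an instance of $\parallelization f$, using the $\lightfacePi^0_1$ graph and totality of $f$ to replace instances along illegal histories with harmless total fallbacks, and then tracing the unique legal branch as Player~1 reveals solutions. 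Both routes use the hypotheses in the same place and for the same reason. What the paper's route buys is modularity: \thref{sec:theo-red-diam} is reused elsewhere (e.g.\ in \thref{thm:complete_problems_u*_diamond}); it also keeps the argument at the level of a single compositional step, so there is no tree bookkeeping to check. What your route buys is self-containedness, avoiding the recursion theorem and transfinite-rank argument entirely; the price is more delicate combinatorics (c.e.\ enumeration order of the tree versus the branch indices $m_0,m_1,\dots$; dovetailing the search for the child node with detection of $\Phi_e$ declaring victory; the degenerate case where the tree is finite or empty and padding with $z^*$ is needed). These details are all surmountable — and you identify the genuine crux correctly — but they deserve to be spelled out, in particular the fact that Player~2 must pass while hunting for the index of $\sigma\concat\str{q_m(0)}$ in the enumeration, and that this hunt terminates because $(e,d)\in\dom(f^\diamond)$ forces $\Phi_e$ to either make a further call or declare victory along the (legal) traced branch.
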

\begin{proof}
	The reduction $\ustar{f}\weireducible f^\diamond$ follows from \thref{thm:ustar<=diamond}. To prove that $f^\diamond\weireducible \ustar{f}$, we show that $\ustar f\compproduct f\weireducible \ustar f$ (\thref{sec:theo-red-diam}). Since $\ustar f\compproduct f$ is a first-order problem, it is enough to show that $\ustar f\compproduct f\weireducible \parallelization f$ (\thref{thm:FOP(parallelization)=u*(FOP)}).
	
	We define the forward functional $\Phi$ as the map that sends an input $(p,x)\in \dom(\ustar f\compproduct f)$ to $\sequence{x_n}{n\in\mathbb{N}\cup \{-1\}}$ defined as follows: first of all we define $x_{-1}:=x$. 
	
	Before the formal definition of $x_n$ with $n\ge 0$, we give an intuition of how the functional works. Intuitively, if $n\in f(x)$, then $\Phi_p(n)$ is (a name of) $(w_n, \sequence{z^n_i}{i\in\mathbb{N}})$. We would like to run $\Phi_p(n)$ for every guess $n$ of a possible solution to $f$, and define $x_{\pairing{n,i}}:=z^n_i$. However, there is one major obstacle that needs to be overcome in order to make this work: there is no guarantee that, upon being given an $n$ such that $n\not\in f(x)$, $\Phi_p(n)$ will actually return a valid input for $\ustar f$. 
	
	To address this issue, we build $\Phi(p,x)$ in stages. Let $\varphi$ be a $\Delta^0_1$ formula such that 
	\[ n\in f(x) \iff (\forall k)(\varphi(n,x,k)),\] 
	which exists by our assumptions on $f$.

	At every stage $s$, for every unmarked $n<s$, we check whether $(\forall k<s)(\varphi(n,x,k))$ holds: if it does, we run $\Phi_p(n)$ for $s$ steps. We interpret its output as a prefix of $\pairing{q^n_i}_{i\in\mathbb{N}}$ and write the prefix of $q^n_{i+1}$ as prefix of $x_{\pairing{n,i}}$. If there is $k<s$ s.t.\ $\varphi(n,x,k)$ does not hold, then $n\notin f(x)$. We label $n$ as marked and we extend $x_{\pairing{n,i}}$ concatenating with $0^\mathbb{N}$. After having checked every $n$ we go to the next stage. 	
	
	We claim that $\Phi(p,x)$ as defined above is an instance of $\parallelization f$: indeed, suppose for a contradiction that this were not the case, then there are minimal $n$ and $i$ such that $x_{\pairing{n,i}} \notin\dom(f)$. Notice that, by the totality of $f$, this means that $x_{\pairing{n,i}}$ is a finite string. This implies that $n\notin f(x)$, otherwise, by definition of compositional product, $x_{\pairing{n,i}}=\pi_{\langle n,i\rangle}(\Phi(p,x))=\pi_{i+1}(\Phi_p(n))$, i.e.\ $x_{\pairing{n,i}}\in \dom(f)$. But if $n\not\in f(x)$, there is a stage $s$ such that $\neg\varphi(n,x,s)$ holds, and hence, by construction, $x_{\pairing{n,i}}$ is an infinite string, which is a contradiction.  
	This proves that $\Phi(p,x)\in\dom\parallelization f$.
	
	We now define the return functional $\Psi$: given $(p,x)$ and $\langle n_j\rangle_{j\in\Nb\cup \{-1\}}\in \parallelization f(\Phi(p,x))$, we run
	\[
	\UTM(\pi_0 \Phi_p(n_{-1}),\langle n_{\langle n_{-1},i\rangle}:i\in\Nb\rangle),
	\]
	where $\UTM$ is a fixed universal Turing functional. We are guaranteed that this computation will converge 
	by definition of unbounded parallelization, and it will do so after finitely many computational steps, say $k$. The map $\Psi$ then outputs the pair $(n_{-1},\langle n_{\langle n_{-1},i\rangle} : i< k \rangle)$, which clearly is in $\ustar f\compproduct f(p,x)$, thus concluding the proof.
\end{proof}

\begin{proposition}
	\thlabel{thm:star=diamond}
	For every $f\mfunction{\Baire}{k}$ s.t.\ $\{(x,n) \st n\in f(x) \}\in \lightfacePi^0_1$ we have  $f^*\weiequiv f^\diamond$. 
\end{proposition}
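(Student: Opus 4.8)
The plan is to deduce the statement from the two reductions $f^{*}\weireducible f^\diamond$ and $f^\diamond\weireducible f^{*}$. The first is immediate: $f^{*}\weireducible \ustar{f}\weireducible f^\diamond$ by \thref{thm:u*<=parallelization} and \thref{thm:ustar<=diamond}. For the second I would invoke \thref{sec:theo-red-diam} with $g:=f^{*}$ — which is pointed, since $f$ is total and hence the empty sequence is a computable point of $\dom(f^{*})$ — so that everything reduces to proving the single reduction
\[ f^{*}\compproduct f\weireducible f^{*}. \]
(Alternatively one could note that $f^\diamond\weiequiv\ustar{f}$ by \thref{thm:total-func-ustar-diam} and then prove $\ustar{f}\weireducible f^{*}$ directly, but the route through \thref{sec:theo-red-diam} seems cleanest.)

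For $f^{*}\compproduct f\weireducible f^{*}$, fix an instance $(p,x)$: thus $x\in\Baire$ and, for every name $q$ of an element of $f(x)$, the string $\Phi_{p}(q)$ is total and codes a finite sequence of reals, i.e.\ a valid $f^{*}$-instance. Fix a $\Delta^0_1$ formula $\varphi$ with $n\in f(x)\iff(\forall s)\varphi(n,x,s)$, and a computable name $p_{j}$ of $j$ for each $j<k$. The forward functional I have in mind proceeds in stages: for each $j<k$ it runs $\Phi_{p}(p_{j})$ while searching for some $s$ with $\neg\varphi(j,x,s)$, and \emph{resolves} $j$ the first time one of the two occurs. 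If $\Phi_{p}(p_{j})(0)\downarrow$ first, it reads off the number $m_{j}$ of reals coded by $\Phi_{p}(p_{j})$; if a witness to $\neg\varphi(j,x,s)$ is found first, it sets $m_{j}:=0$. Crucially, every $j<k$ is resolved at a finite stage (if $j\in f(x)$ then $p_{j}$ names a solution, so $\Phi_{p}(p_{j})$ is total; if $j\notin f(x)$ then some $\neg\varphi(j,x,s)$ holds), and there are only finitely many $j$'s, so after finitely many stages the functional can commit to the $f^{*}$-instance having exactly $1+\sum_{j<k}m_{j}$ components: the string $x$, followed, for each $j<k$ in turn, by the $m_{j}$ reals coded by $\Phi_{p}(p_{j})$. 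These reals are filled in bit by bit as $\Phi_{p}(p_{j})$ produces output, \emph{with $0$'s padding the rest whenever, at a later stage, a witness to $\neg\varphi(j,x,s)$ turns up}, so that every component is a total element of $\Baire$; since $f$ is total, this is a legitimate $f^{*}$-instance.

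The backward functional then recomputes the $m_{j}$'s by re-running the (deterministic) resolution process on $(p,x)$, reads the value $w\in f(x)$ from the first coordinate of the $f^{*}$-solution it receives, and returns $w$ together with the block of $m_{w}$ solutions sitting in the positions allotted to $j=w$. Since $w\in f(x)$, that block equals the $f^{*}$-instance coded by $\Phi_{p}(p_{w})$ (no padding can have occurred there, $\Phi_{p}(p_{w})$ being total), so the output is a valid solution of $f^{*}\compproduct f(p,x)$. This establishes $f^{*}\compproduct f\weireducible f^{*}$, and with it the Proposition.

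The main obstacle — and essentially the only place where the hypotheses are used — is the construction of the forward functional. Finiteness of the codomain is what allows the number of components of the $f^{*}$-instance to be fixed after finitely many stages (there are only $k$ branches $j$ to resolve); the $\lightfacePi^0_1$ condition on the graph is what makes the ``wrong'' branches $j\notin f(x)$ detectable in finite time, both so that resolution terminates and so that the partial reals produced along such branches can be safely completed; and totality of $f$ is what makes these completed (possibly meaningless) reals admissible instances of $f^{*}$. Once this is in place, the rest is routine bookkeeping.
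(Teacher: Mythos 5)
Your proof is correct and follows essentially the same route as the paper's: it invokes \thref{sec:theo-red-diam} with $g := f^*$ and reduces the problem to showing $f^* \compproduct f \weireducible f^*$, using the finite codomain to bound the number of branches to resolve and the $\lightfacePi^0_1$ graph to detect spurious branches in finite time. Your write-up is a bit more explicit than the paper's (which is phrased as a refinement of \thref{thm:total-func-ustar-diam} and in particular leaves implicit the inclusion of $x$ itself as an extra parallel instance, needed so that the backward functional can recover $w \in f(x)$), but the underlying argument is identical.
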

\begin{proof}
	We refine the proof of \thref{thm:total-func-ustar-diam} and show that $f^*\compproduct f \weireducible f^*$. This follows from the fact, since $f$ has codomain $k$, given $(p,x)\in \dom(f^*\compproduct f)$ we can uniformly compute a bound $b\in \mathbb{N}$ s.t.\ for every $n<k$, either $\Phi_p(n)(0)\downarrow < b$ or we find a witness $<b$ of the fact that $n\notin f(x)$. Letting $C:=\{ n<k \st \Phi_p(n)(0)\downarrow$ in $b$ steps$\}$, we can define
	\[ N:= \sum_{n\in C} \Phi_p(n)(0). \] 
	Since $f$ is total, it is not hard to show that a solution for $(f^*\compproduct f)(p,x)$ can be uniformly computed using $f^N$.
\end{proof}

The hypotheses of \thref{thm:total-func-ustar-diam} are sufficient for $\ustar{f}\weiequiv f^\diamond$, but not necessary. The following theorem is based on the same ideas, but uses weaker hypotheses and therefore is more general.
Before we can state it, we need to recall what a complete problem is.

\begin{definition}
	We adopt the following convention: for every $p\in\Baire$, we denote by $p-1\in \Baire\cup \baire$ the (finite or infinite) string $(p-1)(n):=p(i_n-1)$, where $i_n$ is the $n$-th non-zero element of $p$.
	
	Let $(X,\delta_X)$ be a represented space. The \emph{completion} $(\ol X,\delta_{\ol X})$ of $(X,\delta_X)$ is defined as follows: $\ol X:= X\cup \{\bot\}$ (with the understanding that $\bot\not\in X$), and $\delta_{\ol X}:\Baire \to \ol X$ is given by
	\[ \delta_{\ol X}(p):=
		\begin{cases}
			\delta_X(p-1) & \text{ if } p-1\in \dom(\delta_X)\\
			\bot & \text{ otherwise.}
		\end{cases}\]
	Let $f \pmfunction{X}{Y}$ be a problem. Then, the \emph{completion of $f$} is the problem $\ol f \mfunction{\ol X}{\ol Y}$ defined as
	\[ \ol f(x):= \begin{cases}
			f(x) & \text{ if } x\in \dom(f)\\
			\ol Y & \text{ otherwise.}
		\end{cases} \]
	We say that a problem $f$ is \emph{complete} if it is Weihrauch equivalent to its completion: $f\weiequiv\ol f$.
\end{definition}

\begin{theorem}
	\thlabel{thm:complete_problems_u*_diamond}
	For every complete problem $f\pmfunction{X}{\Nb}$, $\ustar{f}\weiequiv f^\diamond$. 
\end{theorem}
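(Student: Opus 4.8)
The plan is to establish the non-trivial inequality $f^\diamond\weireducible\ustar f$ (the reduction $\ustar f\weireducible f^\diamond$ is \thref{thm:ustar<=diamond}). I would obtain it from \thref{sec:theo-red-diam} applied with $g:=\ustar f$, which requires checking two things: that $\ustar f$ is pointed, and that $\ustar f\compproduct f\weireducible\ustar f$. Pointedness is where completeness first enters: since $f\weiequiv\ol f$ and $\bot\in\dom(\ol f)$ has the computable name $0^{\Nb}$, the forward functional of the reduction $\ol f\weireducible f$ maps $0^{\Nb}$ to a computable instance of $f$; repeating this instance in every column, together with an index $w$ for a functional that halts immediately, yields a computable instance of $\ustar f$.

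For the inclusion $\ustar f\compproduct f\weireducible\ustar f$ I would first note that $\ustar f\compproduct f$ is a first-order problem: its codomain is computably isomorphic to $\Nb\times\baire$, hence to $\Nb$. By \thref{thm:FOP(parallelization)=u*(FOP)} we have $\firstOrderPart{(\parallelization f)}\weiequiv\ustar f$, so by maximality of the first-order part it is enough to exhibit a reduction $\ustar f\compproduct f\weireducible\parallelization f$. Since $\parallelization{(\cdot)}$ is degree-theoretic and $f\weiequiv\ol f$, we have $\parallelization f\weiequiv\parallelization{\ol f}$, and it therefore suffices to produce a reduction $\ustar f\compproduct f\weireducible\parallelization{\ol f}$.

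This reduction refines the construction in the proof of \thref{thm:total-func-ustar-diam}. Given $(p,x)\in\dom(\ustar f\compproduct f)$ -- so that $x\in\dom(f)$ and, for every solution $y\in f(x)\subseteq\Nb$ and every name of $y$, $\Phi_p$ outputs a name of an $\ustar f$-instance $(w_y,\langle z^y_i\rangle_{i\in\Nb})$ -- the forward functional returns the $\parallelization{\ol f}$-instance $\langle x_m\rangle_{m\in\Nb\cup\{-1\}}$ where $x_{-1}$ is the image of $x$ under the canonical computable embedding of $X$ into $\ol X$, and, for $m=\langle n,i\rangle$, the column $x_{\langle n,i\rangle}$ is obtained by running $\Phi_p$ on the guess $n$ and computably massaging its $(i+1)$-st output column into a genuine $\delta_{\ol X}$-name. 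By the very design of the completion this massaging can be carried out uniformly so that $x_{\langle n,i\rangle}$ is always a legitimate $\delta_{\ol X}$-name, naming $z^n_i$ when $n\in f(x)$ and naming $\bot$ otherwise. On the way back, given a solution of this $\parallelization{\ol f}$-instance, the backward functional reads the solution of the $(-1)$-st column, computably recovers from it the natural number $y\in f(x)$ it names (possible since that element lies in $\Nb$, not at $\bot$), recomputes $w_y$ from $\Phi_p$, simulates $\Phi_{w_y}$ on the solutions of the columns $x_{\langle y,i\rangle}$ until it halts at some stage $k$ (which exists by the definition of $\ustar{(\cdot)}$), and returns the pair $(y,\langle y_i : i<k\rangle)$, where $y_i$ is the solution of $x_{\langle y,i\rangle}$; by construction this pair lies in $(\ustar f\compproduct f)(p,x)$.

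The heart of the argument -- and its main obstacle -- is exactly the handling of the ``wrong guesses'' $n\notin f(x)$: for such $n$ there is no guarantee that $\Phi_p$ produces a well-formed $\ustar f$-instance, so one cannot simply copy its output. In \thref{thm:total-func-ustar-diam} this was resolved using the $\lightfacePi^0_1$-ness of the graph to detect bad guesses and patch the relevant columns; here the weaker hypothesis of completeness is used instead, the point being that passing to $\ol f$ makes the target problem total with a total representation on $\ol X$, so every partial computable attempt to name a point of $X$ can be turned into a bona fide $\delta_{\ol X}$-name -- correct when the attempt succeeds, and naming $\bot$ otherwise -- which makes the whole $\parallelization{\ol f}$-instance automatically legitimate (see the comments in \cite{BGCompOfChoice19} and around the definition of the completion). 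The remaining points -- that $\Phi_p$ may be assumed to depend only on the value of its natural-number argument, so that feeding the canonical name of each guess is consistent with what $\parallelization{\ol f}$ eventually hands back, and the routine verification that the backward functional lands in the compositional product -- are bookkeeping.
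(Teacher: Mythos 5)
Your proof is correct and relies on the same central device as the paper's: pass to the completion $\ol f$, so that the forward functional can massage every (possibly garbage) output of $\Phi_p$ on a wrong guess into a legitimate $\delta_{\ol X}$-name (naming the intended point of $X$ when the guess is right, and $\bot$ otherwise), and then invoke \thref{sec:theo-red-diam} for the nontrivial inequality. The only real difference is the packaging of the key reduction: the paper reduces $\ustar f\compproduct f$ directly to $\ustar{\ol f}$, and therefore has to build the index $w$ of the target $\ustar{\ol f}$-instance, whereas you target the simpler problem $\parallelization{\ol f}$, observe that $\ustar f\compproduct f$ is first-order, and finish via maximality of the first-order part together with $\firstOrderPart{(\parallelization f)}\weiequiv\ustar f$ (\thref{thm:FOP(parallelization)=u*(FOP)}). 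This route is actually closer to the template of \thref{thm:total-func-ustar-diam} than the paper's own proof is, and it spares you the construction of $w$ at the cost of a small amount of extra machinery. You also explicitly verify the pointedness of $\ustar f$, which \thref{sec:theo-red-diam} requires but the paper invokes without comment; your derivation of pointedness from completeness (via the computable name $0^{\Nb}$ of $\bot\in\dom(\ol f)$ and the forward functional of $\ol f\weireducible f$) is correct and a worthwhile addition.
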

\begin{proof}
	The right-to-left reduction follows from \thref{thm:ustar<=diamond}, hence we now show the left-to-right one. Since the unbounded finite parallelization is degree-theoretic, by \thref{sec:theo-red-diam}, it is enough to show that $\ustar{f}\compproduct f\weireducible \ustar{\ol f}$.

	Intuitively speaking, the proof is similar to the one of  \thref{thm:total-func-ustar-diam}: given a certain instance $(p,x )$ of $\ustar{ f}\compproduct f$, the forward functional $\Phi$ of the reduction uses the first position of the input to $\ustar{f}$ to store $x$, whereas the other positions will contain strings corresponding to a guess of what $f(x)$ may be. The difference is that now it is not c.e.\ to determine whether a guess is wrong. However, we can force the computation to produce valid instances of $\ol f$ even when we are guessing a wrong solution for $f(x)$. Outputs corresponding to wrong guesses will not be used in the computation of a solution for $\ustar{f}\compproduct f$. Without loss of generality, we prove the theorem for $X=\Baire$. 

	Let $\Phi_C,\Psi_C$ be a pair of functionals witnessing the reduction $f\strongweireducible \ol f$. The forward functional $\Phi$ is the map that, given an input $(p,x)$ for $\ustar{f}\compproduct f$, produces a pair $(w, \sequence{x_k}{k\in\mathbb{N}})$ defined in stages as follows: we first let $x_{0}:=\Phi_C(x)$. To define $x_k$ for $k>0$, recall that, for every $n\in f(x)$, $\Phi_p(\str{n}\concat 0^\mathbb{N}) = \pairing{w^n, \pairing{q^n_i}_{i\in\mathbb{N}}}$ for some $(w^n,\sequence{q^n_i}{i\in\mathbb{N}})\in \dom(\ustar{f})$. Observe however that there is a computable functional $\Delta$ s.t., for every $n$, $\Delta(p, \str{n}\concat 0^\mathbb{N}) = \pairing{\ol{w^n}, \pairing{\ol{q^n_i}}_{i\in\mathbb{N}}}$, where $\ol{w^n}, \ol{q^n_i} \in \Baire$ and, if $n\in f(x)$, then for every $i$, $\repmap{\ol \Baire}(\ol{q^n_i}) = q^n_i$ (intuitively, this can be done simulating $\Phi_C(\Phi_p(q))$ and interleaving the output with zeroes to guarantee that we produce an infinite string). We then define $x_{\pairing{n,i}+1}:=\ol {q^{n}_{i}}$.
	
	We also let $w\in\Baire$ be an index for the continuous functional that works as follows: given in input $\pairing{y_i}_{i<k}$, let $m_{0}:=\Psi_C (y_{0})(0)$. We run $\Phi_p(\str{m_{0}}\concat 0^\mathbb{N})$, so to obtain (a name for) a pair $(v,\sequence{z_k}{k\in\mathbb{N}})$. The functional $\Phi_w$ returns $\str{m_{0}}\concat \Phi_v(\pairing{m_{i+1}}_{i<h})$ where $m_{i+1}:=\Psi_C(y_{\pairing{m_{0}, i} +1} )(0)$ and $h$ is largest so that, for every $i<h$, $\pairing{m_0, i}+1<k$. Clearly, the convergence of $\Phi_w(\pairing{y_i}_{i<k})$ depends on the convergence of $\Phi_v(\pairing{m_{i+1}}_{i<h})$.
 	The backward functional $\Psi$ simply outputs $\Psi_C(y_0)$ and $\Psi_C(y_{\pairing{m_0,i}+1})$ for sufficiently many $i$.
	
	It is a definition-chasing exercise to verify that, for every $(p,x)\in \dom(\ustar{f}\compproduct f)$, $\Phi$ produces a valid input for $\ustar{\ol f}$. Indeed, for every $k\in\mathbb{N}$, clearly $x_k\in \dom(\ol f)$. Moreover, the functional $\Phi_w$ essentially just looks at the columns corresponding to $m_0$ which, by construction, is a valid solution to $f(x)$. The claim therefore follows from the fact that $\Phi_p$ produces a valid input for $\ustar{f}$ when executed on valid solution for $f(x)$. 
	
	This also implies that $\Psi$ correctly computes a valid solution for the compositional product, hence concluding the proof.
\end{proof}

Notice that the completion of a first-order problem is not necessarily Weihrauch-equivalent to a first-order problem. In other words, the previous theorem does not imply that, for every first-order problem $f$, $\ustar{\completion{f}}\weiequiv\completion{f}^\diamond$.

\begin{proposition}
	$\ustar{\completion{\CNatural}}\strictlyweireducible \completion{\CNatural}^\diamond$, and therefore $\completion{\CNatural}$ is not Weihrauch-equivalent to any first-order problem.
\end{proposition}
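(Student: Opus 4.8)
The plan is to establish the strict separation $\completion{\CNatural}^\diamond\not\weireducible\ustar{\completion{\CNatural}}$ — the reduction $\ustar{\completion{\CNatural}}\weireducible\completion{\CNatural}^\diamond$ being \thref{thm:ustar<=diamond} — and then to derive the second assertion from \thref{thm:complete_problems_u*_diamond}.

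For the separation, the feature to exploit is that $\completion{\CNatural}$, being a completion, tolerates ``broken'' instances: a call to $\completion{\CNatural}$ on a name that does not code a nonempty $\Pi^0_1$ set returns an arbitrary element, so in a $\diamond$-computation a call that overshoots is harmless, whereas an input for $\ustar{\completion{\CNatural}}$ fixes the whole sequence of instances non-adaptively before any solution is seen. I would exhibit a first-order witness $h$ (with codomain $\mathbb{N}$), built in the spirit of the Pauly–Yoshimura example \cite{Dagstuhl18} used above: an instance of $h$ codes a nonempty $\Pi^0_1$ set together with, for every potential solution, a further ``descent instruction'', so that computing an $h$-solution amounts to calling $\completion{\CNatural}$, reading off a number $n$, and then performing $n$ further $\completion{\CNatural}$-calls whose instances depend on the previous answers, with the failsafe of $\completion{\CNatural}$ guaranteeing that the ``wrong'' branches generated along the way do not abort the computation. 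By definition of the reduction game $G(\completion{\CNatural}\to\id)$ this yields $h\weireducible\completion{\CNatural}^\diamond$.

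The main obstacle is the lower bound $h\not\weireducible\ustar{\completion{\CNatural}}$. By \thref{thm:game-def-ustar} this is equivalent to Player 2 having no computable winning strategy in $U(\completion{\CNatural}\to h)$. In that game Player 2 must commit, on his first move, to the entire sequence $\pairing{p_i}_{i\in\mathbb{N}}$ of $\completion{\CNatural}$-instances, and the c.e.\ index $w$ he supplies forces him to declare victory after seeing only finitely many of the solutions $q_0,q_1,\dots$. Playing as Player 1, one reveals solutions so as to push $h$'s required descent past whatever finite stage Player 2 has committed to, while keeping every instance Player 2 chose equal to a genuine nonempty $\Pi^0_1$ set — so that the failsafe cannot legitimately be triggered to shortcut the computation. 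Making this diagonalization precise — in particular controlling the adversarial choice of names for the $\completion{\CNatural}$-solutions and its interplay with the convergence of $w$ — is the delicate step; \thref{thm:countable_sparse_splitting}, or the ``stretching'' idea behind its proof, is the natural device for ensuring that no bounded non-adaptive strategy can recover the data that the adaptive descent relies on.

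Finally, the ``therefore'' is formal. The completion of a problem is idempotent, so $\completion{\CNatural}$ is complete; since the completion operator respects Weihrauch equivalence (\cite{BGCompOfChoice19}), completeness is a Weihrauch invariant. Thus if $\completion{\CNatural}\weiequiv g$ with $g$ first-order, then $g$ is complete, and — replacing $g$ by a strongly equivalent problem — we may assume its codomain is $\mathbb{N}$. Then \thref{thm:complete_problems_u*_diamond} gives $\ustar{g}\weiequiv g^\diamond$. Since $\ustar{(\cdot)}$ and $(\cdot)^\diamond$ are degree-theoretic we have $\ustar{g}\weiequiv\ustar{\completion{\CNatural}}$ and $g^\diamond\weiequiv\completion{\CNatural}^\diamond$, whence $\ustar{\completion{\CNatural}}\weiequiv\completion{\CNatural}^\diamond$, contradicting the separation established above.
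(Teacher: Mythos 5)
Your reduction $\ustar{\completion{\CNatural}}\weireducible\completion{\CNatural}^\diamond$ and your treatment of the ``therefore'' clause are both fine; in fact the final paragraph spells out more carefully than the paper does why completeness transfers along Weihrauch equivalence, which is a worthwhile clarification. The gap is in the separation itself. You propose to diagonalize against Player 2 in $U(\completion{\CNatural}\to h)$ for some hand-built descent-type problem $h$, and you acknowledge that ``making this diagonalization precise \ldots is the delicate step.'' That step is not merely delicate: it is the entire content of the separation, and the sketch as given would not easily go through. You would need an $h$ with $h\weireducible\completion{\CNatural}^\diamond$ but $h\not\weireducible\ustar{\completion{\CNatural}}$, and since $\completion{\CNatural}\weireducible\mflim$, one has $\ustar{\completion{\CNatural}}\weireducible\parallelization{\completion{\CNatural}}\weireducible\mflim$, so $\ustar{\completion{\CNatural}}\weireducible\CNatural$; hence any such $h$ must lie strictly above $\CNatural$. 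A descent-chain problem built from $\completion{\CNatural}$-calls is certainly $\mflim$-hard to verify but showing it actually escapes $\CNatural$ is not something the $\Dagstuhl$-style or \thref{thm:countable_sparse_splitting}-style construction gives you for free (those are tailored to artificial constant functions with prescribed Turing-incomparable outputs, whereas here the oracle is a fixed concrete problem with limit-computable solutions).

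The paper sidesteps all of this by taking a known witness: once you observe $\ustar{\completion{\CNatural}}\weireducible\mflim$, it suffices to find any first-order problem below $\completion{\CNatural}^\diamond$ but not below $\mflim$, and $\Choice{2}'$ does the job immediately. The remaining (and only nontrivial) work is an explicit two-stage construction showing $\Choice{2}'\weireducible\completion{\CNatural}\compproduct\completion{\CNatural}$: one call of $\completion{\CNatural}$ on the $\Pi^0_1$ set of positions after which the binary input is constant, a second call playing the role of an $\LPO$-test on whether the returned name is a genuine $\repmap{\ol{\mathbb{N}}}$-name, with the failsafe catching the $A_x=\emptyset$ case. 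This is both shorter and fully explicit. If you want to salvage your route, I would recommend abandoning the bespoke $h$ entirely and instead proving directly that $\Choice{2}'\not\weireducible\ustar{\completion{\CNatural}}$ via the $\mflim$ upper bound, and then carrying out the $\Choice{2}'\weireducible\completion{\CNatural}\compproduct\completion{\CNatural}$ reduction; as written, the proposal asserts the key separation rather than proving it.
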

\begin{proof}
	The reduction follows trivially from \thref{thm:ustar<=diamond}. To prove the separation, observe that $\completion{\CNatural}\weireducible\mflim$ (\cite[Prop.\ 8.2]{BGCompOfChoice19}). Hence $\ustar{\completion{\CNatural}} \weireducible \mflim$ (\thref{thm:u*<=parallelization}) and, in particular, $\Choice{2}'\not\weireducible\ustar{\completion{\CNatural}}$. 

	We show that $\Choice{2}'\weireducible \completion{\CNatural}\compproduct\completion{\CNatural} \weireducible \completion{\CNatural}^\diamond$ instead, therefore proving the separation. We think of an input for $\Choice{2}'$ as a sequence $x\in\Cantor$, and $\Choice{2}'(x)=\{ b<2  \st (\forall i)(\exists j>i)(x(j)=b) \}$. 

	For every $x\in \Cantor$, we first define the set 
	\[ A_x:= \{ n\in \mathbb{N} \st (\forall k>n)(x(k)=x(n)) \}. \]
	Applying $\completion{\CNatural}$ to $A_x$ we obtain some $p\in \Baire$ s.t.\ if $A_x\neq \emptyset$ then $p$ is a $\repmap{\completion{\mathbb{N}}}$-name for some $n\in A_x$, otherwise $p$ is just an arbitrary string.

	Since $\LPO\weireducible \completion{\CNatural}$, we can use $\completion{\CNatural}$ to check whether $(\exists i)(p(i)\neq 0)$. Observe that the existence of such $i$ does not guarantee that $p$ is a valid name for some $n\in A_x$ (valid names need to have infinitely many non-$0$ coordinates). If there is no such $i$, we return $0$, otherwise we let $m$ be the least $i$ s.t.\ $p(i)\neq 0$ and return $x(p(m)-1)$.

	We now show that this procedure computes an element of $\Choice{2}'(x)$ using $\completion{\CNatural}$ twice. Observe that if $A_x\neq \emptyset$ then $p$ is a $\repmap{\completion{\mathbb{N}}}$-name for $n\in A_x$. This implies that there is a least $m$ s.t.\ $p(m)-1=n$, and therefore the procedure returns a correct solution. Otherwise, if $A_x=\emptyset$ then $\Choice{2}'(x)=\{0,1\}$, and therefore any answer is correct (so we only need to be sure that the procedure returns some $b<2$, which is easily checked).

	Finally observe that, if $\completion{\CNatural}$ were a first-order problem then, by \thref{thm:complete_problems_u*_diamond}, $\ustar{\completion{\CNatural}} \weiequiv \completion{\CNatural}^\diamond$, against $\ustar{\completion{\CNatural}}\strictlyweireducible \completion{\CNatural}^\diamond$.
\end{proof}

\section{The first-order part of known problems}
\label{sec:applications}

In this section, we apply the previously obtained results to characterize the first-order part of several problems that are well-known in the literature. To this end, we also prove some theorems providing sufficient conditions for proving that $\ustar{f}\not\weireducible f^*$.

We start by characterizing the first-order part of $\WKL$. As already mentioned, the equivalence $\firstOrderPart{\WKL}\weiequiv \Choice{2}^*$ was already proved in \cite{DSYFirstOrder}. However, the technique used is ad-hoc, while the same result follows from a more general argument.

\begin{proposition}
	\thlabel{thm:fop_wkl}
	$\firstOrderPart{\WKL} \strongweiequiv \Choice{2}^\diamond \strongweiequiv \ustar{\Choice{2}} \strongweiequiv \Choice{2}^*$.
\end{proposition}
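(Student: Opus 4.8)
The plan is to chain together three structural results proved earlier. First, recall the standard facts $\WKL\strongweiequiv\CCantor\strongweiequiv\parallelization{\Choice{2}}$ and that $\WKL$ is a cylinder (see \cite{BGP17}). Since $\Choice{2}$ is a first-order problem and $\parallelization{\Choice{2}}$ is a cylinder, \thref{thm:FOP(parallelization)=u*(FOP)_strong} applied with $n=0$ gives $\firstOrderPart{\parallelization{\Choice{2}}}\strongweiequiv\ustar{\Choice{2}}$; as $\firstOrderPart{\WKL}$ and $\firstOrderPart{\parallelization{\Choice{2}}}$ are first-order cylinders (\thref{thm:fo_cylinder}) that are Weihrauch equivalent (the first-order part is degree-theoretic), they are strongly equivalent, so $\firstOrderPart{\WKL}\strongweiequiv\ustar{\Choice{2}}$. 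Next, $\Choice{2}$ is complete (\cite{BGCompOfChoice19}), so \thref{thm:complete_problems_u*_diamond} yields $\ustar{\Choice{2}}\weiequiv\Choice{2}^\diamond$; this upgrades to $\strongweiequiv$ because $\ustar{\Choice{2}}\strongweireducible\Choice{2}^\diamond$ by \thref{thm:ustar<=diamond} and the remark following it, while $\Choice{2}^\diamond$ is first-order and $\ustar{\Choice{2}}$ is a first-order cylinder, so $\Choice{2}^\diamond\weireducible\ustar{\Choice{2}}$ automatically refines to a strong reduction.

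The substantive step is to prove $\Choice{2}^\diamond\weiequiv\Choice{2}^*$; only the reduction $\Choice{2}^\diamond\weireducible\Choice{2}^*$ is non-trivial, and it amounts to compressing an a priori unbounded number of sequential binary choices into a single finite parallel batch. Since $\Choice{2}^*$ is pointed, by \thref{sec:theo-red-diam} it suffices to show $\Choice{2}^*\compproduct\Choice{2}\weireducible\Choice{2}^*$, and I would prove this by adapting the proof of \thref{thm:star=diamond}. Given $(p,x)\in\dom(\Choice{2}^*\compproduct\Choice{2})$, run in parallel, for each $n<2$, the computation $\Phi_p(n)$ and the enumeration of the complement of $\Choice{2}(x)$; since $x$ names a non-empty set and for $n\in\Choice{2}(x)$ the string $\Phi_p(n)$ is a genuine $\Choice{2}^*$-instance, after finitely many (hence computably boundably many) steps we obtain, for each $n<2$, either the length $k_n$ of $\Phi_p(n)$ or a witness that $n\notin\Choice{2}(x)$; let $C\subseteq\{0,1\}$ collect the former values. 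We then call $\Choice{2}^*$ on the single instance $x$ followed by, for every $n\in C$ and every $j<k_n$, the $j$-th component of $\Phi_p(n)$ pushed through the self-completion of $\Choice{2}$ (here we use that $\Choice{2}$ is complete, so each such string becomes a legitimate $\Choice{2}$-instance coding the same set whenever it was already valid). From a solution $\tilde n\in\Choice{2}(x)$ of the first block together with the bits answering the block indexed by $\tilde n$, we recover a solution of the compositional product, and validity holds because $\tilde n\in\Choice{2}(x)$ guarantees that $\Phi_p(\tilde n)$ was a real $\Choice{2}^*$-instance whose components code exactly the completed ones. The main obstacle is precisely this bookkeeping: without completeness one cannot legally feed $\Choice{2}^*$ the guessed instances belonging to the ``wrong'' value of $n$, and without the finite codomain of $\Choice{2}$ one could not enumerate the finitely many candidate values in the first place — this is exactly the pair of features (finite codomain $+$ completeness) that replaces the ``total single-valued with $\lightfacePi^0_1$ graph'' hypothesis of \thref{thm:star=diamond}.

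Putting everything together gives $\firstOrderPart{\WKL}\weiequiv\ustar{\Choice{2}}\weiequiv\Choice{2}^\diamond\weiequiv\Choice{2}^*$ as Weihrauch degrees, with $\firstOrderPart{\WKL}\strongweiequiv\ustar{\Choice{2}}\strongweiequiv\Choice{2}^\diamond$ already established strongly. The reductions $\Choice{2}^*\strongweireducible\ustar{\Choice{2}}\strongweireducible\Choice{2}^\diamond$ are immediate (\thref{thm:u*<=parallelization} and the remark after \thref{thm:ustar<=diamond}), and $\Choice{2}^*\weireducible\ustar{\Choice{2}}$ is strong since its target is a first-order cylinder; to close the loop one needs the remaining strong reduction $\ustar{\Choice{2}}\strongweireducible\Choice{2}^*$, which I expect to follow by a direct inspection of the reduction constructed in the previous paragraph (equivalently, by checking that $\Choice{2}^*$ is itself a first-order cylinder so that the weak reduction $\ustar{\Choice{2}}\weireducible\Choice{2}^*$ can be promoted). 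I regard this last point as routine relative to the genuinely new mathematical content, which is the compression lemma $\Choice{2}^*\compproduct\Choice{2}\weireducible\Choice{2}^*$.
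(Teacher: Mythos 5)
Your proof is correct and arrives at the same conclusion, but it takes a somewhat more circuitous route than the paper's. The paper handles the entire chain $\Choice{2}^* \weiequiv \ustar{\Choice{2}} \weiequiv \Choice{2}^\diamond$ in one stroke: since $\Choice{2}\weiequiv\totalization{\Choice{2}}$, \thref{thm:star=diamond} directly gives $\Choice{2}^*\weiequiv\Choice{2}^\diamond$, and since $\ustar{\Choice{2}}$ is sandwiched between them (\thref{thm:u*<=parallelization}, \thref{thm:ustar<=diamond}), all three coincide. You instead invoke \thref{thm:complete_problems_u*_diamond} via the completeness $\Choice{2}\weiequiv\completion{\Choice{2}}$ to get $\ustar{\Choice{2}}\weiequiv\Choice{2}^\diamond$, which leaves $\Choice{2}^\diamond\weireducible\Choice{2}^*$ to be proven separately; your ``compression lemma'' $\Choice{2}^*\compproduct\Choice{2}\weireducible\Choice{2}^*$ is exactly the content of the proof of \thref{thm:star=diamond} applied to $\totalization{\Choice{2}}$, so you end up re-deriving a special case of a result that the paper already provides. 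The argument is sound — both the finite codomain (to enumerate candidate solutions and compute a uniform bound) and the $\Pi^0_1$ graph / completeness (to make bad guesses harmless) are indeed the operative features — but citing \thref{thm:star=diamond} would save the work. On the upgrading to strong equivalences, you correctly identify that the missing ingredient is showing $\Choice{2}^*$ is a first-order cylinder, but you defer this as ``routine.'' The paper does supply this proof (it again uses the finite codomain and the c.e.-ness of ``$\sigma(i)\notin\Choice{2}(A_i)$'' to compute a bound on the prefix of the input that $\Psi$ actually uses, then invokes $\id_{\baire}\strongweireducible\Choice{2}^*$ and $\Choice{2}^*\times\Choice{2}^*\strongweiequiv\Choice{2}^*$); it is short but it is the genuinely load-bearing step for the claimed strong equivalences, so it should not be waved away. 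Your first step, deriving $\firstOrderPart{\WKL}\strongweiequiv\ustar{\Choice{2}}$ via \thref{thm:FOP(parallelization)=u*(FOP)_strong} at $n=0$, is a clean alternative to the paper's use of \thref{thm:FOP(parallelization)=u*(FOP)} followed by the first-order-cylinder observation.
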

\begin{proof}
	The equivalence $\firstOrderPart{\WKL} \weiequiv \ustar{\Choice{2}}$ is just an application of \thref{thm:FOP(parallelization)=u*(FOP)}, whereas $\Choice{2}^\diamond \weiequiv \ustar{\Choice{2}} \weiequiv \Choice{2}^*$ follows from \thref{thm:star=diamond} and the fact that $\Choice{2}\weiequiv \totalization{\Choice{2}}$ (\cite[Prop.\ 6.3]{BGCompOfChoice19}).
	To complete the proof it is enough to show that $\Choice{2}^*$ is a first-order cylinder, as $\Choice{2}^* \strongweireducible \ustar{\Choice{2}} \strongweireducible \Choice{2}^\diamond$ is trivial, and if $\Choice{2}^*$ is a first-order cylinder then both the equivalences $\firstOrderPart{\WKL} \weiequiv \Choice{2}^*$ and $\Choice{2}^\diamond \weiequiv \Choice{2}^*$ are strong.
	
	Let $g$ be a first-order problem s.t.\ $g\weireducible \Choice{2}^*$ via $\Phi, \Psi$. For every name $z$ of some input for $g$, we can uniformly compute a bound $b\in\mathbb{N}$ s.t., letting $\sequence{A_i^z}{i<k}$ be the input for $\Choice{2}^*$ named by $\Phi(z)$, for every name $y$ of a solution for $\Choice{2}^*(\sequence{A_i^z}{i<k})$, we have 
	\[ \Psi(z, y)(0) =  \Psi(z[b],y)(0).\]
	This follows from the fact that, for every finite string $\sigma \in 2^k$, either $\Psi(z,\sigma)(0)\downarrow$ (and hence it converges after reading a finite prefix of $z$) or $\Psi(z,\sigma)(0)\downarrow$. In the latter case, there is $i<k$ s.t.\ $\sigma(i)\notin \Choice{2}(A_i)$, which is a c.e.\ condition (relatively to $A_i$). This is the same argument exploited in the proof of \thref{thm:star=diamond}.	Since $\id_{\baire}\strongweireducible\Choice{2}^*$ and $\Choice{2}^*\times \Choice{2}^*\strongweiequiv \Choice{2}^*$ we have $g\strongweireducible \Choice{2}^*$.
\end{proof}
		
\begin{theorem}
	\thlabel{thm:fop_lim}
	For every $n\in\mathbb{N}$, $\firstOrderPart{(\mflim^{(n)}) } \strongweiequiv \CNatural^{(n)} \strongweiequiv (\CNatural^{(n)})^\diamond \strongweiequiv \ustar{(\LPO^{(n)})} \strongweiequiv (\LPO^{(n)})^\diamond. $
\end{theorem}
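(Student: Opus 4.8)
The plan is to prove the five-term chain in four moves, using $\LPO$ as the first-order ``seed'' whose parallelization is $\mflim$, and then reducing everything to the case $n=0$ together with jumps. First, recall that $\parallelization{\LPO}\strongweiequiv\mflim$, and that $\mflim$ is a cylinder; hence $(\parallelization{\LPO})^{(n)}\strongweiequiv\mflim^{(n)}$. Since $\LPO$ is first-order, \thref{thm:FOP(parallelization)=u*(FOP)_strong_cor} applied with $f=\LPO$ yields
\[ \firstOrderPart{(\mflim^{(n)})}\strongweiequiv\firstOrderPart{((\parallelization{\LPO})^{(n)})}\strongweiequiv(\ustar{\LPO})^{(n)}\strongweiequiv\ustar{(\LPO^{(n)})}, \]
where the first step uses $\parallelization{\LPO}\strongweiequiv\mflim$. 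This already settles two of the links of the chain and reduces the remaining work to identifying $\ustar{(\LPO^{(n)})}$ with $\CNatural^{(n)}$ and to collapsing the two diamonds.

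For the second move, \thref{thm:FOP(parallelization)=u*(FOP)} (together with $\parallelization{\LPO}\weiequiv\mflim$) gives $\ustar{\LPO}\weiequiv\firstOrderPart{\mflim}\weiequiv\CNatural$, the last equivalence being classical \cite{BolWei11}. To promote this to a strong equivalence I would observe that $\firstOrderPart{\mflim}$ is a first-order cylinder (by \thref{thm:fo_cylinder}, since $\mflim$ is a cylinder) and that $\CNatural$ is a first-order cylinder; more generally $\CNatural^{(n)}\strongweiequiv\codedChoice{\boldfacePi^0_{n+1}}{}{\mathbb{N}}$ is a first-order cylinder for every $n$. Indeed, if a first-order $g$ with codomain $\Nb$ reduces to $\CNatural^{(n)}$ via $\Phi,\Psi$, one replaces the instance $B=\Phi(p)$ by $A_p:=\{\coding{k,\Psi(p,k)(0)}\st k\in B\}$; this is again a nonempty $\boldfacePi^0_{n+1}$ set with a $p$-computable code (for $n=0$ one uses that $k\in B$ forces $\Psi(p,k)(0)$ to converge, so the complement of $A_p$ is c.e.), and outputting the second coordinate of the answer is a backward functional needing no access to $p$. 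Hence $\ustar{\LPO}\strongweiequiv\CNatural$, and since the jump lifts to strong degrees, $\ustar{(\LPO^{(n)})}\strongweiequiv(\ustar{\LPO})^{(n)}\strongweiequiv\CNatural^{(n)}$.

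For the third move I would invoke the generalized Westrick theorem \thref{sec:theo-red-diam}. The problem $\CNatural^{(n)}$ is pointed ($\Nb$ is a computable instance) and closed under compositional product -- the latter because closed choice for a pointclass closed under finite conjunctions and substitution is closed under $\compproduct$, by the same argument as for $\CNatural$ \cite[Cor.\ 7.12]{BGP17}. Thus $\CNatural^{(n)}\compproduct\CNatural^{(n)}\weireducible\CNatural^{(n)}$, and \thref{sec:theo-red-diam} gives $(\CNatural^{(n)})^\diamond\weireducible\CNatural^{(n)}$; the converse is immediate. As $(\CNatural^{(n)})^\diamond$ is first-order (the diamond preserves first-orderness) and $\CNatural^{(n)}$ is a first-order cylinder, this is in fact a strong equivalence $(\CNatural^{(n)})^\diamond\strongweiequiv\CNatural^{(n)}$. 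Finally $\LPO^{(n)}\weireducible\CNatural^{(n)}$ (monotonicity of the jump applied to $\LPO\weireducible\CNatural$), so $(\LPO^{(n)})^\diamond\weireducible(\CNatural^{(n)})^\diamond\strongweiequiv\CNatural^{(n)}$, and again this is a strong reduction because $(\LPO^{(n)})^\diamond$ is first-order and $\CNatural^{(n)}$ is a first-order cylinder; the reverse strong reduction is $\CNatural^{(n)}\strongweiequiv\ustar{(\LPO^{(n)})}\strongweireducible(\LPO^{(n)})^\diamond$ by \thref{thm:ustar<=diamond} and the remark following it. Assembling the four moves closes the chain.

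The step carrying the real content is the third one: recognizing that $\CNatural^{(n)}$ is pointed and closed under compositional product, so that \thref{sec:theo-red-diam} collapses the diamond onto it, and then keeping careful track of which problems in play are first-order cylinders so that every Weihrauch equivalence gets upgraded to the strong equivalence demanded by the statement. Everything else is bookkeeping around \thref{thm:FOP(parallelization)=u*(FOP)_strong_cor}, \thref{thm:FOP(parallelization)=u*(FOP)}, \thref{thm:fo_cylinder}, and the classical identity $\firstOrderPart{\mflim}\weiequiv\CNatural$.
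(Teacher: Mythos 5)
Your proof is correct and takes essentially the same route as the paper: reduce to $n=0$ via \thref{thm:FOP(parallelization)=u*(FOP)_strong_cor}, identify $\ustar{\LPO}$ with $\CNatural$, establish that $\CNatural^{(n)}$ is a first-order cylinder, and collapse the diamonds. Two small points distinguish the write-ups. You prove directly that $\CNatural^{(n)}$ is a first-order cylinder via the instance-replacement $A_p:=\{\coding{k,\Psi(p,k)(0)}\st k\in\Phi(p)\}$, whereas the paper obtains this from \thref{thm:fo_cylinder} applied to the cylinder $\mflim^{(n)}$ once the equivalence $\CNatural^{(n)}\weiequiv\firstOrderPart{(\mflim^{(n)})}$ has been established en route; your version is a bit more self-contained and nicely sidesteps that apparent dependency. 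You also make explicit the Weihrauch reduction $(\CNatural^{(n)})^\diamond\weireducible\CNatural^{(n)}$ via \thref{sec:theo-red-diam} together with closure of $\CNatural^{(n)}$ under compositional product; the paper's proof is terse here and only cites first-order-ness of the diamond plus the first-order-cylinder fact, which on their own merely upgrade an already-known Weihrauch reduction to a strong one, so your invocation of the Westrick-style theorem is exactly the ingredient the paper is implicitly leaning on. Both arguments are sound and reach the same chain of strong equivalences.
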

\begin{proof}
    	The fact that $\mflim^{(n)}\weiequiv \parallelization{\LPO^{(n)}} \weiequiv \parallelization{\CNatural^{(n)}}$ is well-known in the literature, and can easily be proved using $\mflim \strongweiequiv \parallelization{\LPO} \strongweiequiv \parallelization{\CNatural}$ (\cite[Thm.\ 6.7]{BGP17} and \cite[Cor.\ 3.11]{BG11}) and $(\parallelization{f})' \strongweiequiv\parallelization{(f')}$ \cite[Prop.\ 5.7(3)]{BolWei11}. Applying \thref{thm:FOP(parallelization)=u*(FOP)_strong_cor}, we immediately obtain 
    	\[ \firstOrderPart{(\mflim^{(n)}) } \strongweiequiv (\ustar{\CNatural})^{(n)}\strongweiequiv \ustar{(\CNatural^{(n)})} \strongweiequiv (\ustar{\LPO})^{(n)}\strongweiequiv \ustar{(\LPO^{(n)})}.\] 
    	    	
    	We now prove that $(\ustar{\CNatural})^{(n)} \strongweiequiv \CNatural^{(n)}$. Observe first that $\ustar{\CNatural} \weireducible \CNatural$: indeed, a solution for $\ustar{\CNatural}(w,\sequence{A_n}{n\in\mathbb{N}})$ can be easily obtained choosing an element from the set
    	\[ \{ \sigma \in \baire \st (\forall i<\length{\sigma})( \sigma(i) \in A_i) \land \Phi_w(\sigma)(0)\downarrow \text{ in } \length{\sigma} \text{ steps} \}. \]
    	Since $\CNatural$ is a first-order cylinder (\thref{thm:fo_cylinder}), we obtain $\ustar{\CNatural} \strongweireducible \CNatural$. Moreover, given that $\CNatural \strongweireducible \ustar{\CNatural}$ (\thref{thm:u*<=parallelization}) and that the jump is strong-degree theoretic, the claim follows.

        To conclude the proof, observe that the reductions $\CNatural^{(n)} \strongweireducible (\CNatural^{(n)})^\diamond$ and $\ustar{(\LPO^{(n)})} \strongweireducible (\LPO^{(n)})^\diamond$ are straightforward, while the converse reductions $(\CNatural^{(n)})^\diamond \strongweireducible \CNatural^{(n)}$ and $(\LPO^{(n)})^\diamond\strongweireducible \ustar{(\LPO^{(n)})}$ follow from the facts that the diamond of a first-order problem is first-order and that both $\CNatural^{(n)}$ and $\ustar{(\LPO^{(n)})}$ are first-order cylinders (\thref{thm:fo_cylinder}).
\end{proof}

\begin{corollary}
	\thlabel{thm:fop_CR}
	$\firstOrderPart{\Choice{\mathbb{R}}} \weiequiv \CNatural$.
\end{corollary}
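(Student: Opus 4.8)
The plan is to decompose $\Choice{\mathbb{R}}$ as a compositional product and then apply the algebraic rules of the previous sections together with the characterization $\firstOrderPart{\WKL}\weiequiv\Choice{2}^*$ from \thref{thm:fop_wkl}.

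First I would establish the reduction $\Choice{\mathbb{R}} \weireducible \WKL \compproduct \CNatural$. Given a name for a nonempty closed $A\subseteq\mathbb{R}$ (an enumeration of rational open intervals whose union is $\mathbb{R}\setminus A$), the set $B_A := \{n\geq 1 \st A\cap[-n,n]\neq\emptyset\}$ is nonempty and co-c.e.\ in the name of $A$: by compactness of $[-n,n]$, the condition ``$[-n,n]\subseteq \mathbb{R}\setminus A$'' is $\Sigma^0_1$. Hence $B_A$ is a valid $\CNatural$-instance, and from any $n\in B_A$ one uniformly computes a name for the nonempty closed subset $A\cap[-n,n]$ of the compact interval $[-n,n]$; closed choice on $[-n,n]$ (computably homeomorphic to $[0,1]$) reduces to $\WKL$ by the standard equivalence $\Choice{[0,1]}\weiequiv\WKL$. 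Running $\CNatural$ first and $\WKL$ afterwards witnesses $\Choice{\mathbb{R}} \weireducible \WKL\compproduct\CNatural$.

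Next I would chain the inequalities
\[
\firstOrderPart{\Choice{\mathbb{R}}} \weireducible \firstOrderPart{(\WKL\compproduct\CNatural)} \weireducible \firstOrderPart{\WKL}\compproduct\CNatural \weiequiv \Choice{2}^*\compproduct\CNatural \weireducible \CNatural\compproduct\CNatural \weiequiv \CNatural,
\]
using monotonicity of $\firstOrderPart{(\cdot)}$, then \thref{thm:algebraic_rules_fop}(4), then \thref{thm:fop_wkl}, then $\Choice{2}^*\weireducible\CNatural^*\weiequiv\CNatural$ (since $\Choice{2}\weireducible\CNatural$ and $\CNatural$ absorbs finite parallel products), and finally that $\CNatural$ is closed under compositional product. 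For the converse, $\CNatural\weireducible\Choice{\mathbb{R}}$ is standard (every subset of $\mathbb{N}$ is closed in $\mathbb{R}$, and a Cauchy name of a natural number computes that number), and since $\CNatural$ is first-order, \thref{thm:fop_max} gives $\CNatural\weireducible\firstOrderPart{\Choice{\mathbb{R}}}$. The two reductions together yield $\firstOrderPart{\Choice{\mathbb{R}}}\weiequiv\CNatural$.

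The only step requiring genuine work is the decomposition $\Choice{\mathbb{R}}\weireducible\WKL\compproduct\CNatural$; everything else is a bookkeeping application of results already available. If one prefers, this decomposition can instead be quoted from the literature on choice principles, and the statement then follows as an immediate corollary of \thref{thm:fop_wkl} and \thref{thm:algebraic_rules_fop}.
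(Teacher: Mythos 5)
Your proof is correct but takes a genuinely different route from the paper's. The paper simply observes the squeeze $\CNatural \weireducible \Choice{\mathbb{R}} \weireducible \mflim$ and applies $\firstOrderPart{\mflim}\weiequiv\CNatural$ (\thref{thm:fop_lim}) together with the monotonicity of $\firstOrderPart{(\cdot)}$; both sides of the squeeze collapse to $\CNatural$ at once. You instead decompose $\Choice{\mathbb{R}}\weireducible\WKL\compproduct\CNatural$ (a standard fact, essentially the known equivalence $\Choice{\mathbb{R}}\weiequiv\CNatural\times\CCantor$) and then push the first-order part through \thref{thm:algebraic_rules_fop}(4), \thref{thm:fop_wkl}, $\Choice{2}^*\weireducible\CNatural^*\weiequiv\CNatural$, and the closure of $\CNatural$ under compositional product. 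Both arguments are sound; the paper's is more economical because the crude upper bound $\Choice{\mathbb{R}}\weireducible\mflim$ needs no structural decomposition at all, whereas your route exhibits the roles of $\WKL$ and $\CNatural$ explicitly and would still go through if one had \thref{thm:fop_wkl} but not \thref{thm:fop_lim} available.
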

\begin{proof}
	Since $\firstOrderPart{\mflim} \weiequiv \CNatural$ (\thref{thm:fop_lim}), the statement follows easily from $\CNatural\weireducible\Choice{\mathbb{R}}\weireducible \mflim$ and the fact that the first-order part is a degree-theoretic operator.
\end{proof}

Observe that, as a corollary of \thref{thm:fop_lim}, we obtain $\CNatural\weiequiv\CNatural^*\weiequiv \ustar{\CNatural}$. This is not the case for $\LPO$: indeed $\LPO\strictlyweireducible \LPO^* \strictlyweireducible \ustar{\LPO}$. The fact that the reduction $ \LPO^* \strictlyweireducible \ustar{\LPO}$ is strict follows from the fact that $\ustar{\LPO}$ can compute the problem ``given $A\in\boldfaceSigma^0_1(\mathbb{N})$, say if $A$ is empty and, if not, produce its minimum''. The same problem cannot be solved by $\LPO^*$. 

More generally, the following \thref{thm:f*<fu*} provides a sufficient condition for $f^* \strictlyweireducible \ustar{f}$. In particular, it can be used to show that, for every $n>0$, $(\LPO^{(n)})^* \strictlyweireducible \ustar{(\LPO^{(n)})}$. 

We first prove the following technical lemma.

\begin{lemma}
	\thlabel{thm:LPO_fixed_point}
	Let $f$ be a multi-valued function s.t.\ $\LPO\weireducible f$. Assume for simplicity that $f$ has codomain $\Baire$. For every $w\in\Baire$ we can uniformly compute $x\in\dom(f)$ and a computable functional $\Psi_\Sigma$ s.t.\ 
	\begin{gather*}
		\Phi_w(x)(0)\downarrow \iff (\forall y \in f(x))(\Psi_\Sigma(y)(0)=1),\\
		\Phi_w(x)(0)\uparrow \iff (\forall y \in f(x))(\Psi_\Sigma(y)(0)=0).
	\end{gather*}
\end{lemma}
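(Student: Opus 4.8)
The plan is to exploit the hypothesis $\LPO \weireducible f$ together with the continuous recursion theorem (\thref{thm:continuous_recursion_theorem}) to produce a "self-referential" instance of $f$ whose solutions encode the truth value of the $\Sigma^0_1$ statement $\Phi_w(x)(0)\!\downarrow$. Fix computable functionals $\Phi_L,\Psi_L$ witnessing $\LPO\weireducible f$, so that for every $p\in\Cantor$, $\Phi_L(p)\in\dom(f)$ and for every $y\in f(\Phi_L(p))$ we have $\Psi_L(p,y)(0)=\LPO(p)$. The idea is to feed $\LPO$ an instance $p_x$ that searches for a halting of $\Phi_w(x)(0)$ — i.e.\ $p_x$ is a binary string which is $0^\mathbb{N}$ if $\Phi_w(x)(0)\!\uparrow$ and has a single $1$ (at the step of convergence) otherwise — but the catch is that $x$ itself must be the $f$-instance $\Phi_L(p_x)$, creating a circularity that the recursion theorem resolves.

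First I would describe the functional that, given a candidate index, builds everything. Define a total continuous map $F\function{\Baire}{\Baire}$ so that $\Phi_{F(v)}$ behaves as follows on input $x$: it simulates $\Phi_w(x)$ and, in parallel, interprets $\Phi_v(\coding{})$ (or more precisely the relevant component) as producing the "diagonal" $f$-instance; concretely $\Phi_{F(v)}$ should be arranged so that the value it commits to is $\Phi_L(p^{v}_x)$, where $p^v_x\in\Cantor$ is the binary sequence defined by $p^v_x(s)=1$ iff $\Phi_w(x)(0)$ halts in exactly $s$ steps (this is computable uniformly in $w$ and $x$, hence uniformly in $v$ once we know $v$ codes $x$). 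Apply \thref{thm:continuous_recursion_theorem} to obtain $v_0$ with $\Phi_{F(v_0)}=\Phi_{v_0}$, found uniformly from $F$, and hence uniformly from $w$. Then set $x:=\Phi_{v_0}(\text{the appropriate argument})=\Phi_L(p_x)$, where now $p_x:=p^{v_0}_x$ is a genuine element of $\Cantor$ with $p_x=0^\mathbb{N}$ iff $\Phi_w(x)(0)\!\uparrow$, and $p_x$ has a (unique) $1$ iff $\Phi_w(x)(0)\!\downarrow$. By construction $x\in\dom(f)$ since $\Phi_L$ always outputs valid $\LPO$-instances.

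Next I would define $\Psi_\Sigma$. Given $y$, we want $\Psi_\Sigma(y)(0)=\Psi_L(p_x,y)(0)$. But $\Psi_L$ needs access to $p_x$, and $p_x$ is itself uniformly computable from $w$ (via the fixed point $v_0$), so $\Psi_\Sigma$ can hard-code a procedure that reconstructs longer and longer prefixes of $p_x$ by simulating $\Phi_w(x)$ for more steps, feeding these to $\Psi_L$ alongside $y$, and outputting the first bit $\Psi_L$ commits to. Since for every $y\in f(x)=f(\Phi_L(p_x))$ we have $\Psi_L(p_x,y)(0)=\LPO(p_x)$, and $\LPO(p_x)=1$ exactly when $\Phi_w(x)(0)\!\downarrow$ and $=0$ exactly when $\Phi_w(x)(0)\!\uparrow$, both displayed biconditionals follow. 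All of this is uniform in $w$, as required.

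The main obstacle I expect is making the circular definition of $x$ precise: one must be careful that $p_x$ depends on $x$ only through the halting behavior of $\Phi_w(x)(0)$, which is computable from $x$, so that after the fixed point $v_0$ is fixed, $p_x$ becomes an honest computable (hence $\Cantor$-valued) object with no remaining circularity — the recursion theorem is exactly what licenses substituting the index of the whole construction back into itself. A secondary point to handle cleanly is that $\Psi_L$ may converge only after reading a finite prefix of its $\LPO$-instance; since $p_x$ is computable, $\Psi_\Sigma$ can supply arbitrarily long prefixes, so convergence of $\Psi_L(p_x,y)(0)$ is guaranteed and $\Psi_\Sigma$ is a genuine (total-on-$f(x)$) computable functional.
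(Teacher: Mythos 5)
Your approach is genuinely different from the paper's: the paper never invokes the recursion theorem, and its argument is a short, direct continuity argument rather than a self-reference construction. Moreover, the way you invoke the recursion theorem has a gap you gesture at but do not close. You are after a point $x\in\Baire$ with $x=\Phi_L(p_x)$, where $p_x\in\Cantor$ records the halting of $\Phi_w(x)(0)$; this is a fixed-point equation for the total continuous self-map $G(x):=\Phi_L(p_x)$ of $\Baire$, and \thref{thm:continuous_recursion_theorem} does \emph{not} give fixed points of arbitrary total continuous self-maps of $\Baire$ --- those can simply fail to exist (bitwise complement is total computable and fixed-point-free). What it gives are fixed points at the level of \emph{indices}, $\Phi_{F(v_0)}=\Phi_{v_0}$. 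Your description of $F$ conflates the two levels: you say ``$\Phi_{F(v)}$ behaves as follows on input $x$ \ldots\ the value it commits to is $\Phi_L(p^v_x)$,'' but then $x$ is both the functional's argument and the instance being constructed, and the role of $v$ (``producing the diagonal instance'') is never actually specified. To make this honest you must stay at the index level: arrange that $\Phi_{F(v)}(0^\Nb)$ is always a \emph{total} element of $\Baire$ (in particular $p^v_x$ must keep emitting $0$'s even while $\Phi_v(0^\Nb)$ stalls), take the fixed point $v_0$, set $p:=$ the tape built from $\Phi_{v_0}(0^\Nb)$ and $x:=\Phi_L(p)$, and then verify $p=p_x$. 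This can be made to work, but the totality bookkeeping that makes it sound is precisely what is missing from your write-up, and ``the appropriate argument'' at which you evaluate $\Phi_{v_0}$ is left unspecified.

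The paper's proof is simpler and explains why no self-reference is needed. Let $z:=\Phi_L(0^\Nb)$, the image of the all-$0$ $\LPO$-instance, and simulate $\Phi_w(z)(0)$. If it never halts, take $p:=0^\Nb$ and $x:=z$: then $\LPO(p)=0$ and $\Phi_w(x)(0)\uparrow$. If it halts after reading $z[s]$, use continuity of $\Phi_L$ to choose $t$ large enough that $z[s]\prefix \Phi_L(0^t\concat 1^\Nb)$, and take $p:=0^t\concat 1^\Nb$, $x:=\Phi_L(p)$: then $\Phi_w(x)(0)\downarrow$ because $\Phi_w$ already committed on $z[s]\prefix x$, and $\LPO(p)=1$. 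The reason one can dodge the fixed point is that both ``$\LPO(p)=1$'' and ``$\Phi_w(x)(0)\downarrow$'' are monotone in finite prefixes, so appending $1$'s to $p$ later cannot undo the commitment that $\Phi_w$ has already made on $z[s]$. Taking $\Psi_\Sigma:=y\mapsto\Psi_L(p,y)$, which is computable since $p$ is computable from $w$, yields both displayed equivalences.
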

\begin{proof}
	Assume $\LPO\weireducible f$ via $\Phi$, $\Psi$ and let $z= \Phi(0^\mathbb{N})$. Define $p\in \Cantor$ as follows: we search for the least $s\in\mathbb{N}$ s.t.\ $\Phi_w(z[s])(0)\downarrow$ in $s$ steps. If we find such $s$, we let $p:=	0^t\concat 1^\mathbb{N}$ for some $t$ sufficiently large so that $z[s]\prefix \Phi(0^t\concat 1^\mathbb{N})$. Otherwise, we let $p:=0^\mathbb{N}$.

	It is clear that $p$ is uniformly computable from $w$: we just need to simulate $\Phi_w(z)$. If the computation never halts we keep concatenating zeros to obtain $0^\mathbb{N}$, otherwise we can compute a sufficiently large $t$ that satisfies the second condition.
	
	Let $x:=\Phi(p)$ and $\Psi_\Sigma:=y\mapsto \Psi(p,y)$. Notice that if $\Phi_w(z)(0)\uparrow$ then $x=z$ and therefore $\Phi_w(x)(0)\uparrow$. Moreover, for every $y\in f(x)$, $\Psi_\Sigma(y)(0)=\Psi(0^\mathbb{N}, y)(0)=0$, as $\Psi$ is the backward functional of a Weihrauch reduction. Similarly, if $\Phi_w(z)(0)\downarrow$ then, by the continuity of $\Phi_w$, $\Phi_w(x)(0)\downarrow$. In this case, for every solution $y$ of $f(x)$ we obtain $\LPO(p)=1=\Psi_\Sigma(y)(0)$. 
\end{proof}

\begin{theorem}
	\thlabel{thm:f*<fu*}
	Let $f\pmfunction{\Baire}{k}$ be s.t.\ $\LPO\weireducible f$. If there is $x\in\dom(f)$ s.t.\ 
	\[ (\forall i\in f(x))(\forall n\in\mathbb{N})(\exists z\in\dom(f))( z[n]\prefix x \land i\notin f(z))  \] 
	then $f^*\strictlyweireducible \ustar{f}$.
\end{theorem}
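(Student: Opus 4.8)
Since $f^{*}\weireducible\ustar{f}$ already holds — in fact $f^{*}\strongweireducible\ustar{f}$ by \thref{thm:u*<=parallelization} — the plan is to prove the strictness, i.e.\ $\ustar{f}\not\weireducible f^{*}$. I would argue by contradiction: assume $\ustar{f}\weireducible f^{*}$ via computable functionals $\Phi$ (forward) and $\Psi$ (backward), and build a single \emph{valid} instance of $\ustar{f}$ on which this pair fails. Morally this generalizes the observation that $\ustar{\LPO}$ solves ``given an enumeration of $A\subseteq\mathbb{N}$, decide whether $A=\emptyset$ and otherwise return $\min A$'' while $\LPO^{*}$ does not: the point is that $f^{*}$ must commit in advance to how many $f$-calls it makes. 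The two tools are the hypothesis — which says that the special instance $x$ can be perturbed arbitrarily late so as to exclude any prescribed value, i.e.\ for every $i$ and every $n$ there is $z\in\dom(f)$ with $z[n]\prefix x$ and $i\notin f(z)$ — and \thref{thm:LPO_fixed_point}, available because $\LPO\weireducible f$, which manufactures $f$-instances whose solutions uniformly certify whether a chosen functional halts on them. The target instance will have the form $(v,\sequence{x_{n}}{n\in\mathbb{N}})$ with $x_{0}:=x$; the index $v$ of the stopping functional is chosen self-referentially via the relativized recursion theorem (\thref{thm:continuous_recursion_theorem}), so that $\Phi_{v}$ may refer to $v$, to $\Phi$, to $\Psi$ and to the construction, and the columns $x_{n}$ with $n\ge 1$ are fixed lazily as perturbations of $x$ furnished by the hypothesis.

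The construction proceeds as follows. Run $\Phi$ on the lazily revealed instance $(v,\sequence{x_{n}}{n\in\mathbb{N}})$: after reading a finite prefix it must commit to the number $M$ of its $f$-queries and to finite prefixes of the queried instances $a_{0},\dots,a_{M-1}$, and this prefix involves only $x_{0},\dots,x_{r}$ (and finite prefixes of them) for some $r$. The columns $x_{n}$ with $n>r$ are still free: I would let each of them agree with $x$ on a long prefix while arranging, through the hypothesis, that its solution set misses the value $\Psi$ is forced to place in that column, and I would use \thref{thm:LPO_fixed_point} on one designated late column $x_{n^{*}}$ (with $n^{*}$ beyond both $r$ and $M$) so that its solutions certify whether $\Phi_{v}$ has already halted on the solutions seen so far. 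The functional $\Phi_{v}$ is then defined so that, on the one hand, $(v,\sequence{x_{n}}{n\in\mathbb{N}})$ is a valid $\ustar{f}$-instance — for every infinite sequence of solutions $\sequence{b_{n}}{n}$, the value $b_{0}$ together with the certificate carried by column $x_{n^{*}}$ eventually triggers the stopping rule — while, on the other hand, the halting stage $\ell$ is always forced to exceed $M$. Then any valid $\ustar{f}$-solution must in particular exhibit $b_{n}\in f(x_{n})$ for every untouched column $M\le n<\ell$; since $f$ has codomain $k$, the value $\Psi$ writes in such a column lies in the finite set $k$, and since the oracle answer $\vec c$ ranges over the finite set $\prod_{j<M}f(a_{j})$ there are only finitely many strings $\Psi$ can output, so the perturbation we put into $x_{n}$ kills exactly that value. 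Hence $\Psi\big((v,\sequence{x_{n}}{n\in\mathbb{N}}),\vec c\big)$ is an illegal $\ustar{f}$-solution for every oracle answer $\vec c$, contradicting the assumed reduction; therefore $\ustar{f}\not\weireducible f^{*}$, and with $f^{*}\strongweireducible\ustar{f}$ we obtain $f^{*}\strictlyweireducible\ustar{f}$.

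The main obstacle is precisely the simultaneous bookkeeping in the previous paragraph: one must choose the uncommitted columns $x_{n}$, the certifying column $x_{n^{*}}$, and the stopping functional $\Phi_{v}$ at the same time, respecting the finitely many committed prefixes of $x_{0},\dots,x_{r}$ and of $a_{0},\dots,a_{M-1}$, while guaranteeing both that the resulting instance stays inside $\dom(\ustar{f})$ and that no choice of oracle answers lets $\Psi$ recover a legal output. Keeping the instance in the domain is where \thref{thm:LPO_fixed_point} and the self-reference are essential (a naive diagonalization against $\Psi$ would leave the instance invalid and hence outside the scope of the reduction), and forcing the output illegal is where the two quantitative features of the hypothesis combine: the finite codomain $k$ bounds, for each column, the finitely many values $\Psi$ can be coerced to produce, and the perturbation property of $x$ lets us reroute that column, arbitrarily late and thus after $\Phi$ has committed, to an instance whose solution set avoids exactly those values. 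Checking that the step-count convention in the definition of $\ustar{f}$ (and the flexibility noted after it) accommodates the decodings performed by $\Phi_{v}$ is routine and can be handled as in \thref{thm:game-def-ustar}.
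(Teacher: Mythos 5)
Your overall plan coincides with the paper's --- assume $\ustar{f}\weireducible f^{*}$, build a self-referential instance via the recursion theorem, keep it in $\dom(\ustar{f})$ via \thref{thm:LPO_fixed_point}, and diagonalize against $\Psi$ using the perturbation hypothesis and the fact that $f$ has codomain $k$ --- but one step is mis-specified in a way that matters. The certificate that \thref{thm:LPO_fixed_point} provides is a fixed property of a single manufactured $f$-instance $x_{0}$: the lemma hands you $x_{0}$ and $\Psi_{\Sigma}$ so that every solution $y\in f(x_{0})$ reveals whether $\Phi_{w}(x_{0})(0)$ halts. In the paper's proof the certificate therefore records whether the \emph{forward functional} $\Phi$ commits on the instance $(w,\mathbf x)$; this is exactly what lets the stopping functional halt immediately if $\Phi$ would diverge (making the instance trivially valid, hence forcing $\Phi$ to in fact converge) and otherwise wait for the committed bound $m$. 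You instead want a late column $x_{n^{*}}$ whose solutions ``certify whether $\Phi_{v}$ has already halted on the solutions seen so far.'' That is not a property of a fixed $f$-instance --- it depends on the variable oracle answers $b_{0},\dots,b_{n^{*}-1}$ --- so it is not something the lemma can manufacture; and since $\Phi_{v}$'s own halting is precisely what you are arranging, that reading is circular in a way the recursion theorem does not untangle. (Relatedly, with $x_{0}:=x$ rather than the manufactured certificate instance, $b_{0}$ carries no exploitable information.)

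A second, smaller gap: the diagonalization cannot be dispatched in one round, because the value $\Psi$ places in a given column depends on the whole instance including the perturbed columns, so you cannot simultaneously perturb $x_{n}$ ``to kill exactly that value'' for all relevant $n$ without a staging argument. The paper iterates over the at most $k^{m}$ oracle answers: at each stage it runs $\Psi$ on the current instance $\mathbf z_{t}$ with one unmarked answer, extends a fresh column beyond $\Psi$'s finite use so that the relevant entry of $\Psi$'s output falls outside $f$ of that column, and marks the answer; only after all stages is every oracle answer killed, and the finite-use bookkeeping is what keeps marked answers killed as later columns are modified. Your sketch identifies all the right ingredients, but the argument does not close without this staging.
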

Before formally proving the theorem, let us sketch the idea of the proof: intuitively, the reason why the reduction fails is that any forward functional witnessing $\ustar{f}\weireducible f^*$ eventually commits to some (finite) number $N$ of instances of $f$ to solve in parallel. Since $f$ has codomain $k$, $N$ instances of $f$ can only result in $k^N$ possible different solutions. Since $\ustar{f}$ does not have such a bound, we can diagonalize against all $k^N$ possible different solutions. However, since the input of $\ustar{f}$ includes the index $w$ of a functional giving the condition on the number of columns in the output of $\ustar{f}$, we need to be sure that we can pick a suitable input for $\ustar{f}$ with ``more columns'' than what the hypothetical reduction $\ustar{f}\weireducible f^*$ would require. This cannot be done, for example, whenever $f$ is parallelizable.
\begin{proof}
	Assume that there is a reduction $\ustar{f}\weireducible f^*$ witnessed by the functionals $\Phi$, $\Psi$. 
	Let also $x\in\dom(f)$ be as in the hypotheses. 
	
	By \thref{thm:LPO_fixed_point}, given $w$ and $x$ we can uniformly compute $x_0\in\dom(f)$ and an index for the computable functional $\Psi_\Sigma$ s.t.
	\begin{gather*}
		\Phi(w,\pairing{x_0,x,\hdots})(0)\downarrow \iff (\forall y \in f(x_0))(\Psi_\Sigma(y)(0)=1),\\
		\Phi(w,\pairing{x_0,x,\hdots})(0)\uparrow \iff (\forall y \in f(x_0))(\Psi_\Sigma(y)(0)=0),
	\end{gather*}
	where $\pairing{x_0,x,\hdots}$ is the join of countably many strings, the first one being $x_0$ and all other ones being $x$.
	Consider the functional $F\function{\Baire}{\Baire}$ defined as $F(w):=v$, where $v\in \Baire$ is s.t.\ $\Phi_v(\pairing{y_i}_{i<n})$ works as follows: it first simulates $\Psi_\Sigma(y_0)$ until it converges in $0$ (if this never happens then $\Phi_v(\pairing{y_i}_{i<n})\uparrow$). Then 
	\begin{itemize}
		\item if $\Psi_\Sigma(y_0)(0)=0$ then it immediately halts and returns $0$;
		\item if $\Psi_\Sigma(y_0)(0)>0$ then it waits until $\Phi(w,\pairing{x_0,x,\hdots})(0)$ commits to some $m$, and halts iff $n > k^m+1$.		
	\end{itemize}
	Clearly $F$ is total and continuous (in fact it is $x$-computable), hence, by the recursion theorem (\thref{thm:continuous_recursion_theorem}), there is $w$ s.t.\ $\Phi_{F(w)}=\Phi_w$. 
	
	Consider the pair $(w,\mathbf{x})$ where $\mathbf{x}:=(x_0,x,\hdots)$, and $x_0$ is obtained applying \thref{thm:LPO_fixed_point} to the fixed point $w$. We first show that it is a valid input for $\ustar{f}$. Clearly $\mathbf{x}\in\dom(\parallelization{f})$. If $\sequence{y_i}{i\in\mathbb{N}}$ is a sequence of $f$-solutions for $\mathbf{x}$, then $\Phi_w(\pairing{y_i}_{i<n})$ works as follows: it first simulates $\Psi_\Sigma(y_0)$. By construction, $\Psi_\Sigma(y_0)(0)\downarrow = b$. If $b=0$ then $\Phi_w(\pairing{y_i}_{i<n})$ halts immediately, otherwise it simulates $\Phi(w,\pairing{x_0,x,\hdots})$. However, again by construction, $b=1$ implies that $\Phi(w,\pairing{x_0,x,\hdots})(0)\downarrow = m$, and therefore $\Phi_w(\pairing{y_i}_{i<n})$ halts whenever $n$ is sufficiently large. This proves that $(w,\mathbf{x})$ is a valid input for $\ustar{f}$. 
	
	Let $s_0$ be sufficiently large so that, upon input $(w,\mathbf{x})$, the forward functional $\Phi$ commits to some $m$ after $s_0$ steps. In particular, since $f$ has codomain $k$, there are only $k^m$ possible answers for $f^*(\Phi(w,\mathbf{x}))$. Recall also that, by construction, a valid answer for $\ustar{f}(w,\mathbf{x})$ is a (finite) sequence $\sequence{y_i}{i<n}$ with $n>k^m+1$. We iteratively diagonalize against every possible outcome of $f^*(\Phi(w,\mathbf{x}))$. We proceed as follows: for the sake of readability, let $\mathbf{z}_0:=\mathbf{x}$. At each stage $t<k^m$, $\mathbf{z}_t$ is of the form $(x_0,z_0,\hdots, z_{t-1},x,\hdots)$. We check if there is an unmarked finite sequence $b_0,\hdots,b_{m-1}$, with $b_i<k$, s.t.\ 
	\[ \Psi(w,\mathbf{z}_t, \pairing{b_i}_{i<m}) \]
	produces a (name for a) valid solution $\sequence{y^t_i}{i<n}$ of $\ustar{f}(w,\mathbf{z}_t)$ (with a small abuse of notation, we are identifying $b_i$ with its name).	If there is none then we are done. Otherwise, by continuity, there is $s\in\mathbb{N}$ s.t.\ $ \Psi(w,\mathbf{z}_t, \pairing{b_i}_{i<m})$ produces such a solution in $s$ steps. 
	
	Let $s_{t+1}:= s_t + s + 1$ and choose $z_t \extends x[s_{t+1}]$ so that $y^t_t\notin f(z_t)$. We then define $\mathbf{z}_{t+1}:=(x_0,z_0,\hdots,z_t, x,\hdots)$, label the sequence $\sequence{b_i}{i<m}$ as marked and go the next stage. Notice that, at each stage, $(w,\mathbf{z}_t)$ is still a valid input for $\ustar{f}$.  
	
	After finitely many stages ($k^m$ in the worst case), we find a valid input for $\ustar{f}$ for which the forward functional $\Phi$ produces (a name for) $m$ instances of $f$ but such that the backward functional cannot compute a correct solution, against the fact that $\Phi$, $\Psi$ witness a Weihrauch reduction.
\end{proof}

\begin{corollary}\thlabel{cor:fop-wkl}
	For every $k\ge 2, n>0$, $(\Choice{k}^{(n)})^* \strictlystrongweireducible \ustar{(\Choice{k}^{(n)})}$, and therefore  
	\[ (\Choice{2}^{(n)})^* \strictlystrongweireducible (\Choice{2}^*)^{(n)}\strongweiequiv \ustar{(\Choice{2}^{(n)})} \strongweiequiv (\ustar{\Choice{2}})^{(n)}  \strongweiequiv\firstOrderPart{(\WKL^{(n)})}.\]
\end{corollary}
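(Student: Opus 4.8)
The plan is to obtain the strict reduction $(\Choice{k}^{(n)})^{*}\strictlystrongweireducible\ustar{(\Choice{k}^{(n)})}$ as an instance of \thref{thm:f*<fu*} applied to $f:=\Choice{k}^{(n)}$, and then to read off the chain of equivalences by combining \thref{thm:fop_wkl} with \thref{thm:FOP(parallelization)=u*(FOP)_strong_cor}.

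First I would verify the hypotheses of \thref{thm:f*<fu*} for $f=\Choice{k}^{(n)}$, working with the name-level presentation of the problem as usual. The jump leaves the codomain unchanged, so $\Choice{k}^{(n)}$ has codomain $k$. For $\LPO\weireducible\Choice{k}^{(n)}$, since $n>0$ it suffices to check $\LPO\weireducible\Choice{2}'$: given $q\in\Cantor$ one computes a convergent sequence of names for nonempty closed subsets of $2$ that converges to a name of $\{0\}$ when $q\equiv 0$ and switches (permanently) to a name of $\{1\}$ as soon as some $q(j)=1$ appears, so that the limit set is $\{\LPO(q)\}$. Since $\Choice{2}\weireducible\Choice{k}$ and $\Choice{k}'\weireducible\Choice{k}^{(n)}$ (as $f\weireducible f'$ always), this gives $\LPO\weireducible\Choice{k}^{(n)}$. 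For the density condition I would take $x$ to be the name that, at every level of the $n$-fold jump, is the constant sequence whose value is the name enumerating nothing out of $\{0,\dots,k-1\}$; then $\Choice{k}^{(n)}(x)=k$, and for every $i<k$ and $m\in\mathbb{N}$ one perturbs $x$ on a sufficiently late tail so that it eventually enumerates $i$ out of the set, producing $z\in\dom(\Choice{k}^{(n)})$ with $z[m]\prefix x$ and $i\notin\Choice{k}^{(n)}(z)$ (here $k\ge 2$ is used to keep $k\setminus\{i\}$ nonempty). \thref{thm:f*<fu*} then yields $(\Choice{k}^{(n)})^{*}\strictlyweireducible\ustar{(\Choice{k}^{(n)})}$; combined with the reduction $(\Choice{k}^{(n)})^{*}\strongweireducible\ustar{(\Choice{k}^{(n)})}$ of \thref{thm:u*<=parallelization}, this upgrades to $(\Choice{k}^{(n)})^{*}\strictlystrongweireducible\ustar{(\Choice{k}^{(n)})}$.

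For the displayed chain I would specialize to $k=2$ and use that $\WKL\strongweiequiv\parallelization{\Choice{2}}\strongweiequiv\CCantor$ is a cylinder, a fact already implicitly used in \thref{thm:fop_wkl}. Applying \thref{thm:FOP(parallelization)=u*(FOP)_strong_cor} with the first-order problem $\Choice{2}$ gives $\firstOrderPart{(\WKL^{(n)})}\strongweiequiv(\ustar{\Choice{2}})^{(n)}\strongweiequiv\ustar{(\Choice{2}^{(n)})}$. Since $\ustar{\Choice{2}}\strongweiequiv\Choice{2}^{*}$ by \thref{thm:fop_wkl} and the jump lifts to the strong Weihrauch degrees, $(\ustar{\Choice{2}})^{(n)}\strongweiequiv(\Choice{2}^{*})^{(n)}$, completing the chain $(\Choice{2}^{*})^{(n)}\strongweiequiv\ustar{(\Choice{2}^{(n)})}\strongweiequiv(\ustar{\Choice{2}})^{(n)}\strongweiequiv\firstOrderPart{(\WKL^{(n)})}$; the first part of the corollary (with $k=2$) then gives $(\Choice{2}^{(n)})^{*}\strictlystrongweireducible(\Choice{2}^{*})^{(n)}$.

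The only point I expect to require genuine care is the verification of the density hypothesis of \thref{thm:f*<fu*} for the $n$-fold jump: one has to be precise about the coding of names in the jump spaces so that agreeing with $x$ on a prefix of length $m$ is compatible with changing the limit set. This is routine once one observes that a length-$m$ prefix of the code of a convergent sequence constrains only finitely many (in fact the first few) of its terms, so a late-enough tail may be freely modified. Everything else is a direct assembly of results already established in the paper.
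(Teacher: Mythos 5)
Your proposal is correct and follows essentially the same route as the paper: invoke \thref{thm:f*<fu*} for the strict reduction, then combine \thref{thm:FOP(parallelization)=u*(FOP)_strong_cor} with \thref{thm:fop_wkl} (lifted through the $n$-th jump) to get the chain of strong equivalences. You supply some detail the paper elides — explicit verification of the hypotheses of \thref{thm:f*<fu*} and the step that combines $\strictlyweireducible$ with $\strongweireducible$ to conclude $\strictlystrongweireducible$ — and you replace the paper's induction by a direct appeal to the fact that the jump lifts to strong degrees, but the substance is the same.
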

\begin{proof}
	Fix $k\ge 2$ and $n>0$. The strict reduction $(\Choice{k}^{(n)})^* \strictlystrongweireducible \ustar{(\Choice{k}^{(n)})}$ is a simple application of \thref{thm:f*<fu*}. Moreover, since $\WKL^{(n)}\strongweiequiv \parallelization{\Choice{2}^{(n)}}$ and $\parallelization{\Choice{2}}$ is a cylinder, applying \thref{thm:FOP(parallelization)=u*(FOP)_strong_cor} we obtain 
	\[ \ustar{(\Choice{2}^{(n)})} \strongweiequiv (\ustar{\Choice{2}})^{(n)}  \strongweiequiv\firstOrderPart{(\WKL^{(n)})}. \]

	To conclude the proof it is enough to show that, for every $n\ge 0$, $(\Choice{2}^*)^{(n)}\strongweiequiv (\ustar{\Choice{2}})^{(n)}$. This can be proved by induction on $n$: the base step was already proved in \thref{thm:fop_wkl}. If the claim holds up to $n$, then 
	\[ (\Choice{2}^*)^{(n+1)} \strongweiequiv  ((\Choice{2}^*)^{(n)})' \strongweiequiv ((\ustar{\Choice{2}})^{(n)})' \strongweiequiv (\ustar{\Choice{2}})^{(n+1)}. \qedhere \]
\end{proof}

\begin{corollary}${}$
	\begin{enumerate}
		\item $(\codedChoice{\boldfaceSigma^1_1}{}{2})^*\strictlyweireducible \ustar{(\codedChoice{\boldfaceSigma^1_1}{}{2} ) }\weiequiv (\codedChoice{\boldfaceSigma^1_1}{}{2})^\diamond \weiequiv \firstOrderPart{\SigmaWKL} $;
		\item $(\codedChoice{\boldfacePi^1_1}{}{2})^* \strictlyweireducible \ustar{(\codedChoice{\boldfacePi^1_1}{}{2})} \weiequiv (\codedChoice{\boldfacePi^1_1}{}{2})^\diamond \weiequiv \firstOrderPart{\UCBaire}$.
	\end{enumerate}
\end{corollary}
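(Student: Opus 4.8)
The plan is to obtain both chains by assembling three facts already available: the parallelization presentations $\SigmaWKL\weiequiv\parallelization{(\codedChoice{\boldfaceSigma^1_1}{}{2})}$ and $\UCBaire\weiequiv\parallelization{(\codedChoice{\boldfacePi^1_1}{}{2})}$ (known, see \cite{KiharaADauriacChoice}); the structural equivalences \thref{thm:FOP(parallelization)=u*(FOP)} and \thref{thm:complete_problems_u*_diamond}; and \thref{thm:f*<fu*} for the strict separations. Note first that $\codedChoice{\boldfaceSigma^1_1}{}{2}$ and $\codedChoice{\boldfacePi^1_1}{}{2}$ are first-order, their codomain being $\{0,1\}$. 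For part~(1), \thref{thm:FOP(parallelization)=u*(FOP)} applied to $\codedChoice{\boldfaceSigma^1_1}{}{2}$ together with $\SigmaWKL\weiequiv\parallelization{(\codedChoice{\boldfaceSigma^1_1}{}{2})}$ gives $\firstOrderPart{\SigmaWKL}\weiequiv\firstOrderPart{(\parallelization{(\codedChoice{\boldfaceSigma^1_1}{}{2})})}\weiequiv\ustar{(\codedChoice{\boldfaceSigma^1_1}{}{2})}$. For the equivalence $\ustar{(\codedChoice{\boldfaceSigma^1_1}{}{2})}\weiequiv(\codedChoice{\boldfaceSigma^1_1}{}{2})^\diamond$ I would invoke \thref{thm:complete_problems_u*_diamond}, using that $\codedChoice{\boldfaceSigma^1_1}{}{2}$ is complete; alternatively — avoiding completeness — the nontrivial direction $(\codedChoice{\boldfaceSigma^1_1}{}{2})^\diamond\weireducible\ustar{(\codedChoice{\boldfaceSigma^1_1}{}{2})}$ follows from \thref{sec:theo-red-diam}, since $\SigmaWKL$ is pointed and closed under compositional product and $\codedChoice{\boldfaceSigma^1_1}{}{2}\weireducible\SigmaWKL$, whence $(\codedChoice{\boldfaceSigma^1_1}{}{2})^\diamond\weireducible\SigmaWKL$; being first-order, $(\codedChoice{\boldfaceSigma^1_1}{}{2})^\diamond$ then reduces to $\firstOrderPart{\SigmaWKL}\weiequiv\ustar{(\codedChoice{\boldfaceSigma^1_1}{}{2})}$, while the converse is \thref{thm:ustar<=diamond}.

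For the strict reduction $(\codedChoice{\boldfaceSigma^1_1}{}{2})^*\strictlyweireducible\ustar{(\codedChoice{\boldfaceSigma^1_1}{}{2})}$ I would apply \thref{thm:f*<fu*} with $f=\codedChoice{\boldfaceSigma^1_1}{}{2}$ and $k=2$. Its first hypothesis, $\LPO\weireducible\codedChoice{\boldfaceSigma^1_1}{}{2}$, holds by sending $p$ to the non-empty set $A_p\subseteq\{0,1\}$ with ``$1\in A_p$'' iff $(\exists n)(p(n)=1)$ (a $\boldfaceSigma^0_1\subseteq\boldfaceSigma^1_1$ predicate) and ``$0\in A_p$'' iff $(\forall n)(p(n)=0)$ (a $\boldfacePi^0_1\subseteq\boldfaceSigma^1_1$ predicate), so that $\codedChoice{\boldfaceSigma^1_1}{}{2}(A_p)=\{\LPO(p)\}$. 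For the second hypothesis, I need an instance $x$ with $(\forall i\in\{0,1\})(\forall n)(\exists z\in\dom(\codedChoice{\boldfaceSigma^1_1}{}{2}))(z[n]\prefix x\wedge i\notin A_z)$, where $A_z$ is the set coded by $z$: take $x$ to be a name for $\{0,1\}$ in which neither membership fact is confirmed by any finite prefix of the name — concretely, each of the two coordinates is presented by a name for an ill-founded tree whose infinite branch lies ``arbitrarily far out'', so that every finite prefix of the name is still compatible with well-foundedness. Given such $x$, $n$, $i$, one completes $x[n]$ to a name $z$ whose $i$-th coordinate codes a well-founded tree (excluding $i$) and whose other coordinate codes an ill-founded tree, so $z$ codes $\{1-i\}\neq\emptyset$. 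Thus \thref{thm:f*<fu*} applies, and chaining with the equivalences of the previous paragraph completes part~(1).

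Part~(2) is entirely parallel: use $\UCBaire\weiequiv\parallelization{(\codedChoice{\boldfacePi^1_1}{}{2})}$ in place of $\SigmaWKL\weiequiv\parallelization{(\codedChoice{\boldfaceSigma^1_1}{}{2})}$, apply \thref{thm:complete_problems_u*_diamond} with the completeness of $\codedChoice{\boldfacePi^1_1}{}{2}$ (or, via \thref{sec:theo-red-diam}, use that $\UCBaire$ is pointed and closed under compositional product, as recalled in Section~\ref{sec:background}, together with $\codedChoice{\boldfacePi^1_1}{}{2}\weireducible\UCBaire$), and apply \thref{thm:f*<fu*} to $\codedChoice{\boldfacePi^1_1}{}{2}$ — here $\LPO\weireducible\codedChoice{\boldfacePi^1_1}{}{2}$ by the same encoding (now ``$1\in A_p$'' is $\boldfaceSigma^0_1\subseteq\boldfacePi^1_1$ and ``$0\in A_p$'' is $\boldfacePi^0_1\subseteq\boldfacePi^1_1$), and the dodging instance $x$ is a name for $\{0,1\}$ whose two membership facts (``both coordinate-trees well-founded'') are verified only in the limit, so that any finite prefix of the name can be completed to a name for a proper non-empty subset. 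I expect the only genuinely delicate point to be the verification of the second hypothesis of \thref{thm:f*<fu*}, which forces one to unwind the concrete representations of $\boldfaceSigma^1_1(2)$ and $\boldfacePi^1_1(2)$; the remainder is a routine assembly of \thref{thm:FOP(parallelization)=u*(FOP)}, \thref{thm:complete_problems_u*_diamond} (or \thref{sec:theo-red-diam}), and the cited presentations of $\SigmaWKL$ and $\UCBaire$, together with the fact that the relevant choice problems are complete (resp.\ that $\SigmaWKL$ and $\UCBaire$ are closed under compositional product).
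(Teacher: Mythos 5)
Your argument follows the paper's own proof in all essentials: the parallelization presentations of $\SigmaWKL$ and $\UCBaire$, \thref{thm:FOP(parallelization)=u*(FOP)} to identify $\firstOrderPart{(\parallelization{f})}$ with $\ustar{f}$, and \thref{thm:f*<fu*} for the strict separation, assembled in the same way. For the diamond equivalence, the paper uses precisely your ``alternative'' route --- \thref{sec:theo-red-diam} together with the closure of $\SigmaWKL$ (resp.\ $\UCBaire$) under compositional product --- rather than \thref{thm:complete_problems_u*_diamond}. That choice matters: your first route would require that $\codedChoice{\boldfaceSigma^1_1}{}{2}$ and $\codedChoice{\boldfacePi^1_1}{}{2}$ are complete, a fact that is neither established in the paper nor obvious, so you are right to hedge and supply the second route as the safe one. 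One place where your write-up is actually more careful than the paper's is in applying \thref{thm:f*<fu*}, which the paper cites without spelling out its hypotheses; your verifications --- that $\LPO$ reduces to the two-valued choice problem via the singleton encoding, and that a name for $\{0,1\}$ with the required dodging property exists (every finite prefix extendible to a name for $\{1-i\}$) --- are both correct, and constitute the genuinely non-routine part of the application.
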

\begin{proof}
	\begin{enumerate}
		\item : The first strict reduction is an application of \thref{thm:f*<fu*}, whereas the equivalence $\ustar{(\codedChoice{\boldfaceSigma^1_1}{}{2} ) }\weiequiv \firstOrderPart{\SigmaWKL} $ follows from $\SigmaWKL \weiequiv \parallelization{\codedChoice{\boldfaceSigma^1_1}{}{2}}$ (\cite[Lem.\ 4.6]{KMP20}) using \thref{thm:FOP(parallelization)=u*(FOP)}. Finally, the reduction $(\codedChoice{\boldfaceSigma^1_1}{}{2})^\diamond \weireducible \ustar{(\codedChoice{\boldfaceSigma^1_1}{}{2})}$ follows from the fact that $\SigmaWKL$ is closed under compositional product (\cite[Prop.\ 4.8]{KMP20}).
		\item : Similarly to $(1)$, the first strict reduction follows from \thref{thm:f*<fu*}, while the equivalence $\ustar{(\codedChoice{\boldfacePi^1_1}{}{2})} \weiequiv \firstOrderPart{\UCBaire}$ follows from \thref{thm:FOP(parallelization)=u*(FOP)} and \cite[Thm.\ 3.11]{KMP20} (the principle $\parallelization{\codedChoice{\boldfacePi^1_1}{}{2}}$ corresponds the principle named $\boldfaceSigma^1_1\text{-}\mathsf{Sep}$ in \cite{KMP20}). Finally, the reduction $(\codedChoice{\boldfacePi^1_1}{}{2})^\diamond \weireducible\ustar{(\codedChoice{\boldfacePi^1_1}{}{2})}$ follows from the fact that $\UCBaire$ is closed under compositional product. \qedhere
	\end{enumerate}
\end{proof}
		
\begin{corollary}
	$\chiPi^* \strictlyweireducible \ustar{ \chiPi}  \weiequiv  \chiPi^\diamond \weiequiv \firstOrderPart{\PiCA}$.
\end{corollary}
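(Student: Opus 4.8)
The plan is to mimic the structure of the preceding corollaries, splitting the statement into three pieces: the identification of $\firstOrderPart{\PiCA}$ with $\ustar{\chiPi}$, the strict reduction $\chiPi^*\strictlyweireducible\ustar{\chiPi}$, and the equivalence $\ustar{\chiPi}\weiequiv\chiPi^\diamond$. The first two go through exactly as before. The third is where the argument must differ: unlike $\SigmaWKL$ and $\UCBaire$, the problem $\PiCA$ is \emph{not} closed under compositional product — iterating $\boldfacePi^1_1$-comprehension produces sets properly above $\mathcal{O}$ in the hyperdegrees — so the reduction $\chiPi^\diamond\weireducible\ustar{\chiPi}$ cannot be obtained from \thref{sec:theo-red-diam} as in the $\SigmaWKL$/$\UCBaire$ cases. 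Instead I would appeal to \thref{thm:complete_problems_u*_diamond}, which reduces the task to checking that $\chiPi$ is a complete problem.

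The equivalence $\firstOrderPart{\PiCA}\weiequiv\ustar{\chiPi}$ is immediate from \thref{thm:FOP(parallelization)=u*(FOP)}, since $\chiPi$ is first-order and $\PiCA\strongweiequiv\parallelization{\chiPi}$. The strict reduction $\chiPi^*\strictlyweireducible\ustar{\chiPi}$ is an application of \thref{thm:f*<fu*}: $\chiPi$ has finite codomain, $\LPO\weireducible\chiPi$, and the structural hypothesis is met by taking $x$ to be a name for a well-founded tree (so that the unique value of $\chiPi(x)$ is $1$); for every $n$ the length-$n$ prefix of $x$ extends to a name $z$ of an ill-founded tree, since one can add an infinite branch using only strings whose codes exceed $n$, thereby leaving the first $n$ bits of the tree code untouched, and then $1\notin\chiPi(z)$. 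Together with $\chiPi^*\strongweireducible\ustar{\chiPi}$ (\thref{thm:u*<=parallelization}) this gives the strict reduction.

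The heart of the proof is $\ustar{\chiPi}\weiequiv\chiPi^\diamond$. The right-to-left reduction is \thref{thm:ustar<=diamond}; for the converse, by \thref{thm:complete_problems_u*_diamond} it suffices to show that $\chiPi$ is complete. The reduction $\chiPi\weireducible\completion{\chiPi}$ is trivial. For $\completion{\chiPi}\weireducible\chiPi$, given a $\repmap{\completion{\Cantor}}$-name $p$, observe that the statement ``$p-1$ is (the characteristic function of) a well-founded tree'' is $\boldfacePi^1_1$ in $p$: it conjoins the arithmetic conditions that $p-1$ is total, binary, and downward closed with the $\boldfacePi^1_1$ condition that the coded tree is well-founded. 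Hence one can uniformly compute from $p$ a tree $r$ with $\chiPi(r)=1$ precisely when this holds; the backward functional then returns $\chiPi(r)$, viewed in $\completion{\Nb}$. This is correct: if $\chiPi(r)=1$, the input named by $p$ is not $\bot$ and has $\chiPi$-value $1$; if $\chiPi(r)=0$, then either that input is $\bot$ (and every answer is admissible) or it is a total binary string coding an ill-founded tree, or not coding a tree at all, in each of which cases $0$ is a correct output. I expect the main obstacle to be precisely this completeness verification — in particular getting the $\boldfacePi^1_1$-encoding to correctly handle the borderline cases where $p-1$ is a total binary string that fails to code a tree (so the completed input is not $\bot$ yet still has $\chiPi$-value $0$), and matching the exact conventions for the completion representation $\repmap{\completion{\Cantor}}$ and $p\mapsto p-1$. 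Once completeness is in hand, everything assembles routinely into $\chiPi^*\strictlyweireducible\ustar{\chiPi}\weiequiv\chiPi^\diamond\weiequiv\firstOrderPart{\PiCA}$.
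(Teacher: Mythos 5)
Your proof follows the same decomposition as the paper: \thref{thm:FOP(parallelization)=u*(FOP)} gives $\ustar{\chiPi}\weiequiv\firstOrderPart{\PiCA}$, \thref{thm:f*<fu*} gives the strictness of $\chiPi^*\strictlyweireducible\ustar{\chiPi}$, and \thref{thm:complete_problems_u*_diamond} together with the completeness of $\chiPi$ gives $\ustar{\chiPi}\weiequiv\chiPi^\diamond$; your observation that the $\SigmaWKL$/$\UCBaire$ route via \thref{sec:theo-red-diam} is unavailable because $\PiCA$ is not closed under compositional product is exactly the right reason to switch to \thref{thm:complete_problems_u*_diamond}. The only difference is that the paper simply cites $\chiPi$'s completeness from \cite[Cor.\ 11.3(1)]{BGCompOfChoice19}, whereas you supply a direct (and correct) verification; one small imprecision is the phrase ``coding an ill-founded tree'' for the witness $z$ in the \thref{thm:f*<fu*} step, since the $z$ you construct is typically not the characteristic function of a tree at all, but that does not matter because $\chiPi(z)=0$ in either case.
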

\begin{proof}
	The fact that $\chiPi^*\strictlyweireducible \ustar{\chiPi }$ is a straightforward application of \thref{thm:f*<fu*}. The equivalence $\ustar{\chiPi }\weiequiv \firstOrderPart{\PiCA}$ follows from \thref{thm:FOP(parallelization)=u*(FOP)}. Finally, the equivalence $\ustar{\chiPi} \weiequiv \chiPi^\diamond$ follows from \thref{thm:complete_problems_u*_diamond}, as $\chiPi$ is complete \cite[Cor.\ 11.3(1)]{BGCompOfChoice19}.
\end{proof}

\thref{thm:f*<fu*} is far from being a characterization of the problems $f$ s.t.\ $f^*\strictlyweireducible \ustar{f}$. Another sufficient condition is given by the following theorem. 

\begin{definition}[{\cite[Def.\ 4.10]{BGP17}}]
	A problem $f\pmfunction{\Baire}{\Baire}$ is called \textdef{fractal} if, for every clopen $A\subset \Baire$ with $\dom(f)\cap A \neq \emptyset$,  $f\restrict{A} \weiequiv f$.
\end{definition}

\begin{theorem}
	\thlabel{thm:*<u*_fractals}
	For every first-order pointed fractal $f$ s.t.\ $\LPO\weireducible f$ and, for every $n\in\mathbb{N}$, $f^{n+1}\not\weireducible f^{n}$, we have $f^*\strictlyweireducible \ustar{f}$.
\end{theorem}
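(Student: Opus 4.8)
The plan is to prove the non-reduction $\ustar{f}\not\weireducible f^*$, the reduction $f^*\strongweireducible\ustar{f}$ being \thref{thm:u*<=parallelization}. So I would assume, towards a contradiction, that $\ustar{f}\weireducible f^*$ via computable functionals $\Phi,\Psi$, and reuse the self-referential scaffolding of the proof of \thref{thm:f*<fu*}. Fix a computable $x\in\dom(f)$, which exists since $f$ is pointed. Applying \thref{thm:LPO_fixed_point} to the functional $x_0\mapsto\Phi(w,\pairing{x_0,x,x,\hdots})$ and then the recursion theorem (\thref{thm:continuous_recursion_theorem}), I would obtain computable $w\in\Baire$, $x_0\in\dom(f)$ and a computable $\Psi_\Sigma$ such that the stopping condition $\Phi_w$, on input $\pairing{y_i}_{i<n}$, first computes $b:=\Psi_\Sigma(y_0)(0)$ — which by \thref{thm:LPO_fixed_point} is constant over $y_0\in f(x_0)$ and equals $1$ exactly when $\Phi(w,\pairing{x_0,x,x,\hdots})$ commits to a (finite) number of $f$-instances — then halts immediately if $b=0$, and if $b=1$ waits for $\Phi(w,\pairing{x_0,x,x,\hdots})$ to commit to some $m$ and halts iff $n>m+1$. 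Exactly as in \thref{thm:f*<fu*}, $(w,\mathbf{x})$ with $\mathbf{x}:=\pairing{x_0,x,x,\hdots}$ is then a valid instance of $\ustar{f}$ whichever case occurs; and since $\Phi,\Psi$ witness a reduction to $f^*$, the value $\Phi(w,\mathbf{x})$ is a genuine (hence finite) $f^*$-instance, so $b=1$ and $\Phi$ commits to some $m\in\mathbb{N}$ after inspecting at most the first $s_0$ bits of each column of the input, for some $s_0\in\mathbb{N}$.

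Here the argument departs from \thref{thm:f*<fu*}: lacking a finite codomain, I cannot diagonalise against the outputs of $f^*$; instead I would overwrite the tail of $\mathbf{x}$ by an arbitrary instance of $f^{m+1}$. Let $A:=\{z\in\Baire\st x[s_0]\prefix z\}$, a clopen set meeting $\dom(f)$ at $x$; since $f$ is a fractal, $f\restrict{A}\weiequiv f$, and hence $(f\restrict{A})^{m+1}\weiequiv f^{m+1}$ because the parallel product respects $\weiequiv$. I claim $(f\restrict{A})^{m+1}\weireducible f^m$, which is the contradiction sought. Given $(z_1,\hdots,z_{m+1})\in(A\cap\dom(f))^{m+1}$, consider the instance $I:=(w,\pairing{x_0,z_1,\hdots,z_{m+1},x,x,\hdots})$ of $\ustar{f}$: it is still valid, because $\Phi_w$ only ever consults $y_0\in f(x_0)$ and the commitment of $\Phi$ on the \emph{unchanged} base input $\mathbf{x}$, both untouched by the overwriting. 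Crucially, since each $z_j$ agrees with $x$ on its first $s_0$ bits, the name of $I$ agrees with the name of $(w,\mathbf{x})$ on a prefix long enough that $\Phi$ commits to the \emph{same} $m$; hence $\Phi(I)$ is an $f^m$-instance. From any of its $f^m$-solutions, $\Psi$ returns an $\ustar{f}$-solution $\pairing{y_i}_{i<n}$ to $I$, and since $b=1$ we have $n\ge m+2$, so $(y_1,\hdots,y_{m+1})$ with $y_j\in f(z_j)$ is a solution of $(f\restrict{A})^{m+1}(z_1,\hdots,z_{m+1})$. As $x,x_0,w$ are all computable, this is a genuine Weihrauch reduction. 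Therefore $f^{m+1}\weiequiv(f\restrict{A})^{m+1}\weireducible f^m$, contradicting the hypothesis when $m\ge1$; and $m=0$ would make $f\weireducible f^0$, i.e.\ $f$ computable, against $\LPO\weireducible f$. This contradiction yields $\ustar{f}\not\weireducible f^*$, hence $f^*\strictlyweireducible\ustar{f}$.

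The step I expect to require the most care is the control of the commitment number: one must argue that $\Phi$ still commits to the \emph{same} $m$ once the tail of $\mathbf{x}$ has been rewritten, which is precisely why the rewriting must be confined to the clopen neighbourhood $A$ pinned down by the finite use $s_0$ of $\Phi$ up to its commitment, and why one needs fractality in the form $(f\restrict{A})^{m+1}\weiequiv f^{m+1}$ to upgrade ``an arbitrary tuple in $A^{m+1}$'' to ``an arbitrary instance of $f^{m+1}$''. Making the self-referential $w$ precise — so that $(w,\mathbf{x})$ is provably a valid $\ustar{f}$-instance whether or not $\Phi$ commits — is the other technical point, but it is handled exactly as in \thref{thm:f*<fu*}, the only changes being that the halting threshold is $m+1$ rather than $k^m+1$ and that the final diagonalisation is replaced by the reduction $(f\restrict{A})^{m+1}\weireducible f^m$ above.
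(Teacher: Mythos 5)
Your proposal is correct and follows essentially the same route as the paper's proof: fix a computable base instance $\mathbf{x}=(x_0,x,x,\dots)$, combine \thref{thm:LPO_fixed_point} with the recursion theorem to build a self-referential halting condition $w$ demanding $m+1$ output columns once the forward functional $\Phi$ of a putative reduction $\ustar f\weireducible f^*$ has committed to $m$, then overwrite columns $1,\dots,m+1$ with arbitrary $f$-instances compatible with $\Phi$'s commitment and use fractality to turn this into a reduction $f^{m+1}\weireducible f^m$. The only cosmetic difference is that you restrict all replaced columns to a single clopen $A$ pinned by a uniform use bound $s_0$ and invoke fractality once via $f\restrict{A}\weiequiv f$, whereas the paper keeps a separate prefix $\tau_i$ for each column and applies the fractal equivalences $f\weiequiv f\restrict{\tau_i}$ columnwise; both are correct, and you are also slightly more explicit about the degenerate case $m=0$.
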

\begin{proof}
	Assume there is a reduction $\ustar{f}\weireducible f^*$ as witnessed by the functionals $\Phi,\Psi$. Fix a computable point $x\in\dom(f)$. The proof is similar to the one of \thref{thm:f*<fu*}, the difference is that now we use the fractality of $f$ to show that a reduction $\ustar{f}\weireducible f^*$ would yield a reduction $f^{n+1}\weireducible f^n$ for some $n$.
	
	Using \thref{thm:LPO_fixed_point} we know that, given $w$ and $x$ we can uniformly compute $x_0\in\dom(f)$ and an index for the computable functional $\Psi_\Sigma$ s.t.
	\begin{gather*}
		\Phi(w,\pairing{x_0,x,\hdots})(0)\downarrow \iff (\forall y \in f(x_0))(\Psi_\Sigma(y)(0)=1),\\
		\Phi(w,\pairing{x_0,x,\hdots})(0)\uparrow \iff (\forall y \in f(x_0))(\Psi_\Sigma(y)(0)=0),
	\end{gather*}
	where $\pairing{x_0,x,\hdots}$ is the join of countably many strings, the first one being $x_0$ and all other ones being $x$. 
	
	Consider the functional $F\function{\Baire}{\Baire}$ defined as $F(w):=v$, where $v\in \Baire$ is s.t.\ $\Phi_v(\pairing{y_i}_{i<k})$ works as follows: it first simulates $\Psi_\Sigma(y_0)$ until it converges in $0$ (if this never happens then $\Phi_v(\pairing{y_i}_{i<k})\uparrow$). Then 
	\begin{itemize}
		\item if $\Psi_\Sigma(y_0)(0)=0$ then it immediately halts and returns $0$;
		\item if $\Psi_\Sigma(y_0)(0)>0$ then wait until $\Phi(w,\pairing{x_0,x,\hdots})(0)$ commits to some $n$, and halts iff $k>n+1$. 
	\end{itemize}
	Clearly $F$ is total and computable, hence, by the recursion theorem (\thref{thm:continuous_recursion_theorem}), there is a computable $w\in\Baire$ s.t.\ $\Phi_{F(w)}=\Phi_w$. 

	Consider the pair $(w,\mathbf{x})$ where $\mathbf{x}:=(x_0,x,\hdots)$, and $x_0$ is obtained applying \thref{thm:LPO_fixed_point} to the fixed point $w$. Is it not hard to see that $(w,\mathbf{x})\in\dom(\ustar{f})$ (again, see the proof of \thref{thm:f*<fu*}). Let $\sigma$ be a sufficiently long prefix of the name of $(w,\mathbf{x})$ s.t.\ $\Phi(\sigma)$ commits to a number $n$ of instances of $f$ needed to solve $\ustar{f}(w,\mathbf{x})$. By the continuity of $\Phi$, every instance of $\ustar{f}$ whose name extends $\sigma$ can be solved with $f^n$. Since $\sigma$ is a finite string, it only commits to a finite prefix on a finite number of columns of the input. More precisely, we think of $\sigma$ as the prefix of an infinite string obtained by joining countably many strings $p_i$. In particular, let $\tau_i$ be the prefix of $p_i$ contained in $\sigma$, and let $h$ be s.t.\ for every $i\ge h$, $\tau_i=\str{}$. 
	
	Since $f$ is a fractal, for every $0<i<h$ we have $f \weiequiv f\restrict{\tau_i}$, where $f\restrict{\tau_i}$ indicates the restriction of $f$ to inputs with names that begin with $\tau_i$. Let $\varphi_i, \psi_i$ be the forward and backward functionals of the reduction $f\weireducible f\restrict{\tau_i}$.
		
	By construction, for every $\sequence{z_i}{i<n+1}\in\dom(f^{n+1})$, a solution for $f^{n+1}(\sequence{z_i}{i<n+1})$ can be uniformly computed from a solution to $\ustar{f}(w,\str{x_0,\varphi_0(z_0),\hdots, \varphi_n(z_n),x,\hdots})$: just take the first $n+2$ columns, ignore the first one, and apply the correspondent $\psi_i$. This shows that a reduction $\ustar{f}\weireducible f^*$ yields a reduction $f^{n+1}\weireducible f^n$, against the hypotheses.
\end{proof}

We can use \thref{thm:*<u*_fractals} to show e.g.\ that $\tcn^*\strictlyweireducible \ustar{\tcn}$. In the following \thref{thm:tcn^u*=cn'}, we give a better characterization of the degree of $\ustar{\tcn}$. We first prove the following lemma.

\begin{lemma}
	\thlabel{thm:tcn^n<tcn^n+1}
	For every $n\in\mathbb{N}$, $\tcn^{n+1} \not \weireducible \tcn^n$. 
\end{lemma}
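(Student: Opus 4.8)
The plan is to argue by contradiction. For $n=0$ the statement merely asserts that $\tcn$ is not computable, which follows from $\CNatural\weireducible\tcn$ and \thref{thm:cn_finite_codomain}; so fix $n\ge 1$ and assume $\Phi,\Psi$ witness $\tcn^{n+1}\weireducible\tcn^n$. I will build a single instance $\pairing{p_0,\dots,p_n}$ of $\tcn^{n+1}$ — where $p_i$ names a set $A_i\in\boldfacePi^0_1(\Nb)$, i.e.\ enumerates $\Nb\setminus A_i$ — with every $A_i$ nonempty, together with a valid solution $\pairing{y_0,\dots,y_{n-1}}$ of the $\tcn^n$-instance $\pairing{q_0,\dots,q_{n-1}}:=\Phi(\pairing{p_0,\dots,p_n})$, on which $\Psi$ returns a wrong answer.

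The core diagonalization is parallel to the proof of \thref{thm:cn_finite_codomain}. Begin with the instance in which every $p_i$ enumerates nothing, so $A_i=\Nb$ for all $i$; this is a legal $\tcn^{n+1}$-instance. Since $\tcn$ is total, the instance $\pairing{q_j}_{j<n}=\Phi(\pairing{p_i}_i)$ admits a valid solution $\pairing{y_j}_{j<n}$ (take $y_j\in A_{q_j}$ when $A_{q_j}\ne\emptyset$ and $y_j$ arbitrary otherwise). As $\pairing{y_j}_j$ is a legal $\tcn^n$-solution and $\pairing{p_i}_i$ a legal $\tcn^{n+1}$-instance, $\Psi(\pairing{p_i}_i,\pairing{y_j}_j)$ converges on all $n+1$ output coordinates, to some $(z_0,\dots,z_n)$, reading only a finite prefix of the input and of $\pairing{y_j}_j$. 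Now extend each $p_i$ \emph{beyond} that prefix so that it additionally enumerates $z_i$: the new set $A_i=\Nb\setminus\{z_i\}$ is cofinite and hence nonempty, so the output $(z_0,\dots,z_n)$ is now incorrect on every coordinate, while $\Psi$, reading the unchanged prefix, still returns $(z_0,\dots,z_n)$. The reason this is bound to succeed is a dimension count: for inputs with $A_i=\{a_i\}$ and the $a_i$ chosen very large and mutually independent, no finite prefix of the input carries information about the $a_i$'s, so $\Psi$'s $(n+1)$-tuple output is forced to be reconstructed from the $n$ numbers $y_0,\dots,y_{n-1}$ alone, and these cannot pin down $n+1$ free parameters once one accounts for the validity constraint that the fixed $\Phi$ imposes on $\pairing{y_j}_j$.

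The delicate point — which I expect to be the main obstacle — is that extending the $p_i$'s changes $\pairing{q_j}_j=\Phi(\pairing{p_i}_i)$, so the solution $\pairing{y_j}_j$ fixed above may cease to be valid: some $q_j$ could begin enumerating $y_j$ without enumerating all of $\Nb$. I would resolve this via a finite-extension construction that exploits the totality of $\tcn$ together with the recursion theorem (\thref{thm:continuous_recursion_theorem}) to untangle the circular dependence between the diagonalizing extensions of the $p_i$'s (which are read off from $\Psi$'s outputs) and the choice of $\pairing{y_j}_j$ (which constrains, via $\Phi$, what those extensions may enumerate): one commits in advance — using an index for the construction itself — to all the numbers that will ever be enumerated into the $p_i$'s, and only afterwards names each $y_j$, taking $y_j$ arbitrary when $A_{q_j}=\emptyset$ (valid by totality) and $y_j\in A_{q_j}$ otherwise (so, by the prior commitment, never enumerated by $q_j$). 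Checking that every $A_i$ remains nonempty throughout then completes the contradiction with $\Psi$ being a correct backward functional.
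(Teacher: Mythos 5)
You have the right strategic shape---diagonalize by growing a computable input for $\tcn^{n+1}$, use continuity to lock in $\Psi$'s output, then kill it---and you correctly flag the central obstacle: extending the $p_i$'s perturbs $\Phi(\seq{p})$, so the tuple $\pairing{y_j}_j$ you fed to $\Psi$ may cease to be a valid $\tcn^n$-solution. But the repair you sketch does not actually close this gap.

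The trouble is that ``commit in advance to everything that will ever be enumerated into the $p_i$'s, then choose each $y_j\in A_{q_j}$ (or arbitrary if $A_{q_j}=\emptyset$)'' hides a $\Pi^0_2$ decision. What gets enumerated into the $p_i$'s is read off from $\Psi(\seq{p},\pairing{y_j}_j)$, which needs $\pairing{y_j}_j$ already in hand; but whether $A_{q_j}$ is empty, and which number is eventually the least surviving one, is not something a continuous/computable procedure can pin down from a finite prefix of $q_j$. The recursion theorem gives you an index for a computable construction; it does not let that construction answer a $\Pi^0_2$ question about its own $\Phi$-image, which is exactly what ``only afterwards name each $y_j$, taking it in $A_{q_j}$ when that is nonempty'' requires. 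The dimension count you invoke is also only heuristic: since $\Nb^n$ and $\Nb^{n+1}$ have the same cardinality, the $n$ numbers $y_0,\dots,y_{n-1}$ could in principle encode $n+1$ parameters, and the promised argument from ``the validity constraint that $\Phi$ imposes'' is precisely the part of the proof that still needs to be written.

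The paper's proof makes that constraint do real work via a finite-injury scheme that avoids the $\Pi^0_2$ decision. Instead of choosing $y_j$ in one shot, it tracks at each stage the \emph{least number not yet enumerated} by each column $q^s_j$; it diagonalizes against that tentative tuple whenever it changes, freezes the last input $p_n$ after stage $0$ so that it can be used to kill $(0,\dots,0)$, and uses the remaining $p_0,\dots,p_{n-1}$ for the later diagonalizations. In the limit, each column either stabilizes---in which case the stable tuple is a genuine $\tcn^n$-solution that was already diagonalized against---or changes infinitely often---in which case that column names the empty set. If all columns are empty, $(0,\dots,0)$ is a valid solution, and that was killed at stage $0$. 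This case split, together with the reservation of one extra input coordinate for the ``all empty'' case, is the missing idea: it is what lets the construction be computable (so the limit input is a legitimate instance) while still outmaneuvering $\Psi$. Without a mechanism of that sort, your single-application-of-recursion-theorem plan doesn't go through.
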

\begin{proof}
	Fix $n>0$ (the case $n=0$ is trivial) and assume that the reduction is witnessed by the maps $\Phi, \Psi$. In the following, for the sake of readability, we identify a closed subset of $\mathbb{N}$ with its name. 

	We want to define an input $\seq{p}=\sequence{p_i}{i<n+1}$ so to diagonalize against $\Phi,\Psi$. For each stage $s$, we define an input $\seq{p}^s=\sequence{p^s_i}{i<n+1}$ for $\tcn^{n+1}$ and a placeholder $t_s$. At stage 0, we first check if there is a sufficiently large $t_0$ so that 
	\[ \Psi(0^{t_0},\hdots,0^{t_0}, 0,\hdots,0)(0)\downarrow = \pairing{a^{(0)}_i}_{i<n+1}, \]
	for some $a^{(0)}_0,\hdots,a^{(0)}_n\in\mathbb{N}$, where $\Psi$ is executed with $(n+1)$-many $0^{t_0}$ and $n$-many $0$. If there is such a $t_0$ then, for every $i<n+1$, define $p^0_i:=0^{t_0}\concat \str{a^{(0)}_i+1}\concat 0^\mathbb{N}$, otherwise define $p^0_i:=0^\mathbb{N}$. We can already define $p^s_n:=p^0_n$ for every $s$.
	
	At stage $s+1$, let $\seq{q}^{s}=\sequence{q^s_i}{i<n}:=\Phi(\seq{p}^s)$ be an input for $\tcn^n$. Let $\seq{b}^s=\sequence{b^s_i}{i<n}$ be the ``least solution of $\tcn^n(\seq{q}^s)$", i.e.\ for every $i<n$, $b^s_i+1$ is the least positive number not enumerated by $q^s_i$ (if $q^s_i$ enumerates every positive number, we let $b^s_i:=b^{s-1}_i$ if $s> 0$, and $0$ otherwise). If, for every $i<n$, $b^{s-1}_i=b^{s}_i$ (i.e.\ if all the minima remained unchainged), then the input $\seq{p}^s$ witness the fact that $\Phi,\Psi$ are not a valid reduction, hence we can stop the construction (i.e.\ define $p^r_i:=p^s_i$ for every $i<n$ and $r>s$). Otherwise, let $t_{s+1}>t_s$ be sufficiently large so that
	\begin{itemize}
		\item the prefix of $\seq{q}^s$ produced by $\Phi(\seq{p}^s[t_{s+1}])$ is long enough so that, for every $i<n$ and every $m<b^s_i$, there is $j$ s.t.\ $q^s_i(j)=m+1$;
		\item $\Psi(p^s_0[t_{s+1}],\hdots,p^s_{n}[t_{s+1}],b^s_0,\hdots,b^s_{n-1})(0)\downarrow=\pairing{a^{(s+1)}_i}_{i<n+1}$, for some $a^{(s+1)}_0,\hdots,a^{(s+1)}_{n}\in \mathbb{N}$.
	\end{itemize}
	For every $i<n$ define $p^{s+1}_i:=p^{s}_i[t_{s+1}]\concat \str{a^{(s+1)}_i+1}\concat 0^\mathbb{N}$ and go to the next stage. 

	For every $i<n+1$ we define $p_i:=\lim_{s\to\infty}p^s_i$, and let $\seq{q}=\sequence{q_i}{i<n}:=\Phi(\seq{p})$. The fact that the sequence $\sequence{t_s}{s}$ is strictly increasing guarantees that the limits are well-defined. 

	Observe that, if for some $i<n$, $b^s_i$ changes infinitely many times then, by the continuity of $\Phi$, $q_i$ is (a name for) the empty set. Let $J\subset n$ be s.t.\ for every $j\in J$, $b^s_j$ changes finitely many times. If $J\neq \emptyset$, then fix $s$ sufficiently large so that, for every $j\in J$ and every $r>s$, $b^s_j=b^r_j$. At stage $s+1$, we diagonalized against the possible solution $\str{b^s_0,\hdots, b^s_{n-1}}$. Since, for every $i\notin J$, $q_i$ is a name for the empty set, we have that $\str{b^s_0,\hdots, b^s_{n-1}}\in \tcn^n(\seq{q})$, against the fact the $\Phi, \Psi$ witness the reduction. 
	This implies that $J=\emptyset$, and hence, for every $i<n$, $q_i$ is the empty set. This implies that $\str{0,\hdots,0}\in\tcn(\seq{q})$. However, we diagonalized against $\str{0,\hdots,0}$ in stage $0$ (using the $(n+1)$-th instance $p_n$ of $\tcn^{n+1}$), hence we obtained again a contradiction with the fact that $\Phi,\Psi$ witness a Weihrauch reduction.
\end{proof}

Notice that, since $\tcn$ and its parallel products $\tcn^n$ are total fractals, combining \cite[Thm.\ 7.15]{BGP17} with the previous result we obtain, as a corollary, that for every $n$, $\tcn^{n+1}\not\weireducible \tcn^n\compproduct\CNatural$.

\begin{theorem}
	\thlabel{thm:tcn^u*=cn'}
	$\tcn \strictlyweireducible \tcn^* \strictlyweireducible \ustar{\tcn} \weiequiv \tcn^\diamond \weiequiv \CNatural'$.
\end{theorem}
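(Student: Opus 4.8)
The plan is to prove the chain in four blocks: the two strict reductions on the left, the upper bound $\tcn^\diamond\weireducible\CNatural'$, and the lower bound $\CNatural'\weireducible\ustar{\tcn}$; all the remaining (non-strict) equivalences then follow by chaining these with \thref{thm:u*<=parallelization} and \thref{thm:ustar<=diamond}. For the two strict reductions, the reductions themselves are instances of \thref{thm:u*<=parallelization}. For $\tcn\strictlyweireducible\tcn^*$, strictness holds because $\tcn^*\weireducible\tcn$ would give $\tcn^2\weireducible\tcn^*\weireducible\tcn$, contradicting \thref{thm:tcn^n<tcn^n+1} with $n=1$. For $\tcn^*\strictlyweireducible\ustar{\tcn}$, strictness is exactly \thref{thm:*<u*_fractals} applied to $f=\tcn$: indeed $\tcn$ is a first-order pointed fractal (it and all its parallel products are total fractals), $\LPO\weireducible\CNatural\weireducible\tcn$, and $\tcn^{n+1}\not\weireducible\tcn^{n}$ for every $n$ is \thref{thm:tcn^n<tcn^n+1}.

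For the upper bound, I would first observe that $\tcn\weireducible\CNatural'$. Identifying $\CNatural'$ with $\codedChoice{\boldfacePi^0_2}{}{\mathbb{N}}$, one maps a $\tcn$-instance $A$ (a name for a possibly empty closed subset of $\mathbb{N}$) to the set $\{n \st A=\emptyset \text{ or } n\in A\}$. This set is nonempty and $\boldfacePi^0_2$ uniformly in the name of $A$ (the disjunct ``$A=\emptyset$'' is $\boldfacePi^0_2$, while ``$n\in A$'' is $\boldfacePi^0_1$), and any of its elements is a valid $\tcn(A)$-solution. Now, $(\CNatural')^\diamond\weiequiv\CNatural'$ by \thref{thm:fop_lim}, so $\CNatural'$ is closed under compositional product; hence $\CNatural'\compproduct\tcn\weireducible\CNatural'\compproduct\CNatural'\weireducible\CNatural'$, and since $\CNatural'$ is pointed, \thref{sec:theo-red-diam} yields $\tcn^\diamond\weireducible\CNatural'$. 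Combined with \thref{thm:ustar<=diamond} this already gives $\ustar{\tcn}\weireducible\tcn^\diamond\weireducible\CNatural'$.

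The main work is the lower bound $\CNatural'\weireducible\ustar{\tcn}$. Starting from a $\boldfacePi^0_2$-code of a nonempty $A\subseteq\mathbb{N}$, write uniformly $A=\{n \st T_n\text{ is infinite}\}$ for a computable sequence of trees $T_n\subseteq\baire$, so that ``$T_n$ has a node of length $j$'' is a $\Sigma^0_1$ property, downward closed in $j$. Build the $\ustar{\tcn}$-instance whose $n$-th component is the $\boldfacePi^0_1$ set $S_n:=\{j \st T_n\text{ has no node of length }j\}$, together with an index $w$ for the functional that, given candidate solutions $r_0,\dots,r_{k-1}$ (with $r_i\in\tcn(S_i)$ when $S_i\neq\emptyset$, arbitrary otherwise), searches for some $n<k$ such that $T_n$ has a node of length $r_n$, and halts as soon as one is found. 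This is a valid instance of $\ustar{\tcn}$: if $n\in A$ then $T_n$ is infinite, hence has a node of every length, so the search for a node of length $r_n$ succeeds no matter what $r_n$ is; since $A\neq\emptyset$, the halting condition is met for all sufficiently large $k$. Conversely, if the functional halts having located $n<k$ with a node of length $r_n$ in $T_n$, then $r_n\notin S_n$, so $S_n=\emptyset$ (otherwise $r_n\in\tcn(S_n)=S_n$), i.e.\ $T_n$ is infinite, i.e.\ $n\in A$; thus the backward functional just re-runs the search to recover this $n$ and outputs it. This establishes $\CNatural'\weireducible\ustar{\tcn}$, and putting everything together we obtain $\ustar{\tcn}\weireducible\tcn^\diamond\weireducible\CNatural'\weireducible\ustar{\tcn}$, so all three are Weihrauch equivalent, completing the chain.

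I expect the lower bound to be the delicate step: it crucially exploits both that $\ustar{(\cdot)}$ allows countably many components and that the components of a $\tcn$-instance are allowed to be empty, with the totalisation ``absorbing'' the garbage answers returned on the empty ones — and the point of choosing the sets $S_n$ in terms of the heights of the $T_n$ is precisely to make those garbage answers harmless. The only other points needing care are the routine verification that the instance built for the lower bound really lies in $\dom(\ustar{\tcn})$ and that the $S_n$ are produced as genuine names of closed subsets of $\mathbb{N}$, and the fact that $\CNatural'$ is closed under compositional product, which we read off \thref{thm:fop_lim} (it can also be cited directly as a known property of $\codedChoice{\boldfacePi^0_2}{}{\mathbb{N}}$).
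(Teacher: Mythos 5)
Your treatment of the two strict reductions and of the upper bound $\ustar{\tcn}\weireducible\tcn^\diamond\weireducible\CNatural'$ follows essentially the same route as the paper (you supply a direct proof of $\tcn\weireducible\CNatural'$ where the paper cites a reference, but the structure — closure of $\CNatural'$ under diamond via \thref{thm:fop_lim}, then \thref{sec:theo-red-diam} — is the same). The lower bound $\CNatural'\weireducible\ustar{\tcn}$ is where you genuinely depart. The paper passes through the characterization $\CNatural'\weiequiv\BWT_\mathbb{N}$ and builds \emph{two} families of closed sets ($A_m$, the bounds after which $m$ stops appearing, and $B_{a,m}$, the positions $>a$ where $x=m$), verifying one $\tcn$-answer by cross-checking it against another. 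You instead go directly through $\codedChoice{\boldfacePi^0_2}{}{\mathbb{N}}$, use a \emph{single} family of closed sets $S_n$ (the set of levels at which $T_n$ has no node), and have the index $w$ verify candidate answers against the c.e.\ relation ``$T_n$ has a node of length $r_n$'' computed directly from the input rather than against a second $\tcn$-answer. Both proofs exploit the same two phenomena — totalization absorbs garbage on empty sets, and $\ustar{(\cdot)}$ allows an unbounded search over the columns — but your construction needs only one family of sets and a direct $\Sigma^0_1$ confirmation, which is a neat simplification. One small imprecision worth fixing: ``$T_n$ is infinite'' should be read as ``$T_n$ has nodes of every length,'' which for infinitely-branching trees on $\baire$ is not literally the same as having infinitely many nodes; this is clearly your intent and is exactly what the normal-form tree $T_n=\{\str{j_0,\dots,j_{k-1}} \st (\forall i<k)R(n,i,j_i)\}$ delivers, so the argument is unaffected.
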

\begin{proof}
	The fact that the first two reductions are strict are corollaries of \thref{thm:tcn^n<tcn^n+1} and \thref{thm:*<u*_fractals} respectively (as $\tcn$ is a pointed fractal). The reduction $\ustar{\tcn} \weireducible \tcn^\diamond$ is trivial (\thref{thm:ustar<=diamond}) and $\tcn^\diamond \weireducible \CNatural'$ follows from $\tcn \weireducible \CNatural'$ \cite[Cor.\ 8.14]{BGCompOfChoice19} and the fact that $\CNatural'$ is closed under diamond (\thref{thm:fop_lim}). To conclude the proof we show that $\CNatural'\weireducible \ustar{\tcn}$. Since $\CNatural'\weiequiv \BWT_\mathbb{N}$ \cite[Fact 2.3(1)]{BRramsey17}, it is enough to show that $\ustar{\tcn}$ suffices to find a number that appears infinitely many often in a given sequence $x\in\Baire$ (provided that such a number exists). We first define a sequence of inputs for $\tcn$. For every $m\in\mathbb{N}$, let $A_m:=\{ a\in\mathbb{N} \st (\forall i>a)(x(i)\neq m) \}$. Moreover, for every $a,m$, define $B_{a,m}:=\{ b\in\mathbb{N} \st b>a \text{ and } x(b) = m\}$. Clearly all the sets $A_m$ and $B_{a,m}$ are closed, hence they can be arranged so to be an instance of $\parallelization{\tcn}$. Given $a_m\in\tcn(A_m)$ and $b_{a,m}\in \tcn(B_{a,m})$, we can compute a cluster point of the sequence as follows: for every $m$, we check if $x(b_{a_m,m})=m$. If yes, then $A_m=\emptyset$ and hence $m$ is a cluster point. If not, then $A_m\neq \emptyset$ and $m$ appears only finitely many times in $x$. The existence of a cluster point guarantees that it can be found by unbounded search using only finitely many columns of the output of $\parallelization{\tcn}$.
\end{proof}

\begin{corollary}
	$\totalization{(\mflim)} \strictlyweireducible \parallelization{\tcn}\weiequiv \mflim'$.
\end{corollary}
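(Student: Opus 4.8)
\emph{Plan of attack.} I would split the corollary into three claims and prove them in this order: $\parallelization{\tcn}\weiequiv\mflim'$; the reduction $\totalization{\mflim}\weireducible\parallelization{\tcn}$; and the strictness $\parallelization{\tcn}\not\weireducible\totalization{\mflim}$.

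\emph{The equivalence $\parallelization{\tcn}\weiequiv\mflim'$.} This is a short chain of facts already available. Since $\mflim\weiequiv\parallelization{\CNatural}$ and $(\parallelization{f})'\weiequiv\parallelization{(f')}$ (\cite[Prop.\ 5.7(3)]{BolWei11}), we get $\mflim'\weiequiv\parallelization{(\CNatural')}$. On one hand, $\tcn\weireducible\CNatural'$ (\cite[Cor.\ 8.14]{BGCompOfChoice19}, as used in \thref{thm:tcn^u*=cn'}) gives $\parallelization{\tcn}\weireducible\parallelization{(\CNatural')}\weiequiv\mflim'$. On the other hand, \thref{thm:tcn^u*=cn'} yields $\CNatural'\weiequiv\ustar{\tcn}$ and \thref{thm:u*<=parallelization} yields $\ustar{\tcn}\weireducible\parallelization{\tcn}$, so $\CNatural'\weireducible\parallelization{\tcn}$, and since $\parallelization{(\cdot)}$ is a closure operator, $\mflim'\weiequiv\parallelization{(\CNatural')}\weireducible\parallelization{(\parallelization{\tcn})}\weiequiv\parallelization{\tcn}$.

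\emph{The reduction $\totalization{\mflim}\weireducible\parallelization{\tcn}$.} By the equivalence just proved and $\codedChoice{\boldfacePi^0_2}{}{\mathbb{N}}\strongweiequiv\CNatural'$, it is enough to reduce $\totalization{\mflim}$ to $\parallelization{(\codedChoice{\boldfacePi^0_2}{}{\mathbb{N}})}$. Given an arbitrary $\seq{p}=\sequence{p_n}{n\in\mathbb{N}}\in(\Baire)^{\mathbb{N}}$, I would handle each column $\sequence{p_n(k)}{n\in\mathbb{N}}$ in parallel: for each $k$, let $B_k$ be the set of $v\in\mathbb{N}$ such that \emph{either} the $k$-th column does not converge \emph{or} $v=\lim_n p_n(k)$. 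Its complement, ``the $k$-th column converges and $v$ occurs only finitely often in it'', is $\boldfaceSigma^0_2$ uniformly in $k$, so each $B_k$ is a nonempty $\boldfacePi^0_2$ subset of $\mathbb{N}$ whose name is uniformly computable from $\seq{p}$. Feeding $\sequence{B_k}{k\in\mathbb{N}}$ to $\parallelization{(\codedChoice{\boldfacePi^0_2}{}{\mathbb{N}})}$ returns $\sequence{v_k}{k\in\mathbb{N}}$ with $v_k\in B_k$ for all $k$; if every column of $\seq{p}$ converges then $\sequence{v_k}{k}=\lim\seq{p}$, and if some column diverges then $\seq{p}\notin\dom(\mflim)$, so $\totalization{\mflim}(\seq{p})=\Baire$ and $\sequence{v_k}{k}$ is vacuously valid. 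The point worth stressing is that outputting the ``junk'' value $v_k$ on a column that happens to converge does no harm, precisely because a single diverging column already makes the whole output arbitrary.

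\emph{Strictness, and the main obstacle.} By \thref{thm:fop_lim} we have $\firstOrderPart{\mflim'}\weiequiv\CNatural'$, so, since $\firstOrderPart{(\cdot)}$ is a degree-theoretic interior operator and $\CNatural'$ is first-order, a hypothetical reduction $\mflim'\weireducible\totalization{\mflim}$ would give $\CNatural'\weireducible\firstOrderPart{(\totalization{\mflim})}$; hence it suffices to show $\CNatural'\not\weireducible\firstOrderPart{(\totalization{\mflim})}$. The plan is to prove the stronger statement $\firstOrderPart{(\totalization{\mflim})}\weiequiv\CNatural$, which closes the argument since $\CNatural\strictlyweireducible\CNatural'$. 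One inequality is immediate: the instances of $\firstOrderPart{\mflim}\weiequiv\CNatural$ (\thref{thm:fop_lim}) are exactly the instances $(w,\seq{p})$ of $\firstOrderPart{(\totalization{\mflim})}$ whose sequence component converges. For the converse I would observe that on a divergent $\seq{p}$ the problem $\firstOrderPart{(\totalization{\mflim})}$ is uniformly computable (every $\Phi_w(q)(0)$, e.g.\ $\Phi_w(0^\mathbb{N})(0)$, is a legal answer), while on a convergent $\seq{p}$ it is $\firstOrderPart{\mflim}\weiequiv\CNatural$, and amalgamate these into one reduction to $\CNatural$ by describing a uniformly-$\boldfacePi^0_1$ instance of $\CNatural$ that never rules out a legal answer. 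This amalgamation \emph{is} the hard part: from $(w,\seq{p})$ one cannot decide whether $\seq{p}$ converges, so the construction of the $\CNatural$-instance must remain correct under both eventualities; what makes it go through is that the ``divergent regime'' of $\totalization{\mflim}$ contributes only computationally inert junk, so a premature commitment can always be undone. As a fallback route for the strictness, instead of pinning down $\firstOrderPart{(\totalization{\mflim})}$ exactly one can diagonalize directly against a putative pair of realizers of $\BWT_{\mathbb{N}}\weiequiv\CNatural'\weireducible\totalization{\mflim}$, exploiting that whenever the forward functional outputs a divergent sequence every value is a valid answer, and so the backward functional can be defeated by keeping that sequence moving.
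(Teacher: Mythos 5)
The equivalence $\parallelization{\tcn}\weiequiv\mflim'$ and the reduction $\totalization{\mflim}\weireducible\parallelization{\tcn}$ in your proposal are fine, and the latter is a mild variant of the paper's: the paper works column-by-column directly with $\tcn$ (noting $\tcn\weiequiv\totalization{(\mflim_\Nb)}$), while you route through $\codedChoice{\boldfacePi^0_2}{}{\Nb}\weiequiv\CNatural'$; both are correct.

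The strictness argument, however, has a genuine gap: the lemma you plan to prove, $\firstOrderPart{(\totalization{\mflim})}\weiequiv\CNatural$, is \emph{false}, and in fact even $\firstOrderPart{(\totalization{\mflim})}\weireducible\CNatural$ fails. Observe that $\tcn\weireducible\totalization{(\mflim_\Nb)}\weireducible\totalization{\mflim}$ (this is precisely the column argument from the other half of the corollary), and $\tcn$ is a first-order problem, so $\tcn\weireducible\firstOrderPart{(\totalization{\mflim})}$. But $\tcn\not\weireducible\CNatural$: if it were, then since $\CNatural\weireducible\tcn$ trivially, we would get $\tcn\weiequiv\CNatural$ and hence $\tcn^*\weiequiv\CNatural^*\weiequiv\CNatural\weiequiv\tcn$, contradicting $\tcn\strictlyweireducible\tcn^*$ (\thref{thm:tcn^u*=cn'}, via \thref{thm:tcn^n<tcn^n+1}). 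Your intuition that the ``divergent regime contributes only inert junk'' is exactly what fails: the totalization's divergent regime is where $\tcn$ gets its extra power over $\CNatural$ (it is how one handles the empty set), so the two regimes cannot be amalgamated into a single $\CNatural$-reduction. The fallback you sketch — diagonalizing directly against $\CNatural'\weireducible\totalization{\mflim}$ — also aims at a statement strictly stronger than what is needed, and the sketch is not close to complete; in particular, since every instance of $\BWT_\Nb$ has a computable solution (a natural number), the obvious Turing-degree argument does not apply to it. The paper's actual proof of strictness sidesteps all of this with a direct Turing-degree observation about $\mflim'\weiequiv\mflim\compproduct\mflim$ itself: it has a computable instance whose unique solution computes $\emptyset''$, whereas every computable instance of $\totalization{\mflim}$ has a $\emptyset'$-computable solution (the limit if it exists, and anything otherwise). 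That closes the gap in one line.
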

\begin{proof}
	It is not hard to check that $\totalization{(\mflim)}\weireducible \parallelization{\tcn}$. Indeed, it suffices to notice that, given a sequence $\sequence{p_n}{n\in\mathbb{N}}$ in $\Baire$, the $i$-th instance of $\tcn$ can be used to find the limit of $\sequence{p_n(i)}{n\in\mathbb{N}}$ if it exists. In other words, $\tcn \weiequiv \totalization{(\mflim_\mathbb{N})}$. The equivalence $\parallelization{\tcn} \weiequiv \mflim'$ follows from \thref{thm:tcn^u*=cn'}, as $\parallelization{\tcn} \weiequiv \parallelization{\CNatural'}$. Finally, $\parallelization{\tcn}\not\weireducible \totalization{(\mflim)}$ because $\mflim'$ has a computable input with solution $\emptyset''$, whereas every computable input for $\totalization{(\mflim)}$ has a solution computable in $\emptyset'$. 
\end{proof}

We now show that $\PiBound^*\strictlyweireducible \ustar{\PiBound}$. Unfortunately, $\PiBound$ does not have finite range, and therefore we cannot apply \thref{thm:f*<fu*} directly. Moreover, it is closed under product (as we show in the proof \thref{thm:pibound*<u*}), hence \thref{thm:*<u*_fractals} cannot be applied either. We will exploit instead the fact that $\PiBound$ is upwards closed (namely if $b\in \PiBound(A)$ then every $b'>b$ is a valid solution as well).

We first prove the following technical lemmas. Without loss of generality, we will assume that an input for $\PiBound$ is an initial segment of $\mathbb{N}$. This follows from the fact that if $X\in\dom(\PiBound)$ then $Y:=\{ n \st (\exists m \ge n)(m\in X)\}$ is a finite $\boldfacePi^1_1$ initial segment of $\mathbb{N}$ and $\PiBound(Y) = \PiBound(X)$. In particular, we will use the fact that if $X\in\dom(\PiBound)$ and $x\notin X$ then $x\in \PiBound(X)$.

\begin{lemma}
	\thlabel{thm:tcn<pibound}
	$\totalization{\CNatural} \weireducible \PiBound$. Moreover, the reduction is witnessed by total functionals. 
\end{lemma}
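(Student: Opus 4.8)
The plan is to turn a $\boldfacePi^0_1$-name for the (possibly empty) set $A\subseteq\mathbb{N}$ into a $\boldfacePi^1_1$-code for a suitable \emph{initial segment} of $\mathbb{N}$, whose length records the stage at which a canonical approximation of $\min A$ has stabilized. Concretely, I would read a name $p$ for an instance of $\tcn=\totalization{\CNatural}$ as an enumeration $\sequence{e_s}{s\in\mathbb{N}}$ of $S:=\mathbb{N}\setminus A$, and for $s\in\mathbb{N}$ set $c_p(s):=\min(\mathbb{N}\setminus\{e_0,\dots,e_s\})$. Then $c_p$ is non-decreasing; if $A\neq\emptyset$ it is eventually constant with value $\min A$, and the least stage $s_0$ with $c_p(s_0)\in A$ satisfies $c_p(s)=\min A$ for all $s\ge s_0$ and $c_p(s)<\min A$ for $s<s_0$; if $A=\emptyset$ then $c_p(s)\in S$ for every $s$ and $\lim_s c_p(s)=\infty$.

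For the forward functional I would let $\Phi(p)$ be a $\boldfacePi^1_1$-code for
\[ X_p:=\{\,s\in\mathbb{N}\st c_p(s)\in S\,\}\cap\{\,s\in\mathbb{N}\st A\neq\emptyset\,\}. \]
Membership in $X_p$ is arithmetic in $p$ — "$c_p(s)\in S$" is $\boldfaceSigma^0_1$ and "$A\neq\emptyset$" is $\boldfaceSigma^0_2$ — so a code for it can be produced from $p$ by a \emph{total} functional. The key point is that $X_p$ is a finite initial segment of $\mathbb{N}$ in \emph{both} cases: if $A\neq\emptyset$ then $X_p=\{s:c_p(s)\notin A\}=\{0,\dots,s_0-1\}$ (using that $c_p$ is non-decreasing and stabilizes at $\min A$), and if $A=\emptyset$ then the second conjunct fails, so $X_p=\emptyset$. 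Hence $\Phi(p)$ is always a legitimate input for $\PiBound$ (recall the convention, just fixed before the lemma, that $\PiBound$-instances are initial segments of $\mathbb{N}$).

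For the backward functional I would take $\Psi(p,b):=c_p(b+1)$, where $b$ is the number coded by the $\PiBound$-answer; this is total, since every point of $\Baire$ has a defined first coordinate. If $A\neq\emptyset$, then $b$ is a bound for $X_p=\{0,\dots,s_0-1\}$, so $b+1\ge s_0$ and $c_p(b+1)=\min A\in A$, a correct $\CNatural(A)$- (hence $\tcn(p)$-)solution. If $A=\emptyset$, then $\tcn(p)=\mathbb{N}$ and any natural number — in particular $c_p(b+1)$ — is a valid answer. Thus $\Phi,\Psi$ witness $\tcn\weireducible\PiBound$ and both are total.

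The main obstacle is precisely the totalization: for $p$ with $A=\emptyset$ the naïve set $\{\,s:c_p(s)\in S\,\}$ equals all of $\mathbb{N}$, hence is not a valid $\PiBound$-instance, and a $\PiBound$-realizer would be free to diverge on it, breaking totality of the composite realizer $p\mapsto\Psi(p,G\Phi(p))$. This is resolved by the $\boldfacePi^1_1$ guard "$A\neq\emptyset$", and it is exactly here that plain $\boldfaceSigma^0_1$ information is insufficient, so the passage through $\boldfacePi^1_1$ is genuinely used. A minor technical point to check is the off-by-one in the meaning of "bound" for $\{0,\dots,s_0-1\}$, which is why $\Psi$ evaluates $c_p$ at $b+1$ rather than at $b$; everything else is routine definition-chasing together with the standard fact that an arithmetic-in-$p$ subset of $\mathbb{N}$ admits a uniformly $p$-computable $\boldfacePi^1_1$-code.
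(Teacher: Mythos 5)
Your proof is correct and takes essentially the same approach as the paper: read $p$ as an enumeration, track the non-decreasing approximant $c_p(s)$ to $\min A$, and hand $\PiBound$ an arithmetic (hence $\boldfacePi^1_1$) finite initial segment recording the pre-stabilization stages, guarded by the $\boldfaceSigma^0_2$ condition ``$A\neq\emptyset$'' to make the forward functional total. The only cosmetic differences from the paper's proof are that it tracks ``stages $s$ with $c_p(s)\notin A$'' rather than ``stages after which $c_p$ still changes'', and you evaluate at $b+1$ rather than $b$ (which, if anything, handles the off-by-one more defensively).
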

\begin{proof}
	We think of a name for a closed subset $X$ of $\mathbb{N}$ as a string $q\in\Baire$ s.t.\ $n\notin X$ iff $(\exists i)(q(i)=n+1)$. In particular, every $q\in\Baire$ can be seen as the name for some closed $X\in\boldfacePi^0_1(\mathbb{N})$. Consider the computable function $g\function{\Baire\times \mathbb{N}}{\mathbb{N}}$ that maps $q\in\Baire$ and $s\in\mathbb{N}$ to the smallest $n\in\mathbb{N}$ s.t.\ $(\forall i<s)(q(i)\neq n+1)$, i.e.\ $n$ is the smallest number not enumerated by $q$ by stage $s$. Given a name $q$ for $X\in \boldfacePi^0_1(\mathbb{N})$, we can uniformly compute a $\boldfacePi^1_1$ name for the set 
	\[ A:=\{ n \in\mathbb{N}\st (\exists x\in\mathbb{N})(x+1\notin \ran(q)) \land (\exists i,j>n)( g(q,i)\neq g(q,j)) \}. \]
	Notice that such a set is finite (it can be empty) and that, for every $b$ bounding $A$, $g(q,b)\in \totalization{\CNatural}(X)$. In particular, the maps $q\mapsto A$ and $g$ are total maps witnessing the reduction $\totalization{\CNatural} \weireducible \PiBound$.
\end{proof}
		
\begin{lemma}
	\thlabel{thm:fixed_point_tcn}
	Given a name for a continuous functional $D\function{\Baire}{\Baire}$ we can uniformly compute $p_A\in\Baire$ and (an index for) a computable function $\Gamma\function{\mathbb{N}}{\mathbb{N}}$ s.t.\ $p_A$ is a $\boldfacePi^1_1$ name for a finite set $A\subset \mathbb{N}$ (i.e.\ $A\in\dom(\PiBound)$) and, letting $X$ be the closed subset of $\mathbb{N}$ named by $D(p_A)$ we have 
	\[ X\neq \emptyset \Rightarrow (\forall b\notin A)(\Gamma(b)\in X).\]
\end{lemma}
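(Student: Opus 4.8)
The plan is to reduce the statement to \thref{thm:tcn<pibound} and to dispose of the circularity it hides --- the set $A$ should be read off from $D(p_A)$, but $D(p_A)$ depends on $p_A$, which is supposed to name $A$ --- by means of the relativized recursion theorem \thref{thm:continuous_recursion_theorem}. First I would fix, via \thref{thm:tcn<pibound}, a pair of \emph{total} computable functionals $\Phi_{\mathrm{fw}},\Phi_{\mathrm{bw}}$ witnessing $\totalization{\CNatural}\weireducible\PiBound$. Spelling this out: for every $q\in\Baire$, read as a name of a closed set $X_q\subseteq\mathbb{N}$, the string $\Phi_{\mathrm{fw}}(q)$ is a $\boldfacePi^1_1$-name of a \emph{finite} set $A_q\subseteq\mathbb{N}$ (the finiteness is part of the conclusion of \thref{thm:tcn<pibound}), and for every solution $b\notin A_q$ of $\PiBound(A_q)$ the value $\Phi_{\mathrm{bw}}(q,b)$ lies in $\totalization{\CNatural}(X_q)$, which equals $X_q$ as soon as $X_q\neq\emptyset$.

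Next I would build the self-referential $\boldfacePi^1_1$-name. Fix a total computable $\mathrm{code}\function{\mathbb{N}}{\Baire}$ such that $\mathrm{code}(e)$ is always a legitimate $\boldfacePi^1_1$-name and, whenever $\Phi_e$ is total and outputs trees, $\mathrm{code}(e)$ names the set $\{n\st\Phi_e(n)\text{ is well-founded}\}$; such a map exists from the usual representation of $\boldfacePi^1_1$ subsets of $\mathbb{N}$ by sequences of trees. I would then describe a total computable operator $e\mapsto F(e)$, computable uniformly from the given name of $D$, so that $\Phi_{F(e)}$ on input $n$ continuously reads off $D(\mathrm{code}(e))$, applies $\Phi_{\mathrm{fw}}$, and returns the tree by which the $\boldfacePi^1_1$-name $\Phi_{\mathrm{fw}}(D(\mathrm{code}(e)))$ decides membership of $n$. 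Since $D$ and $\Phi_{\mathrm{fw}}$ are total and $\mathrm{code}(e)$ is a total string, $\Phi_{F(e)}$ is total and tree-valued for \emph{every} $e$. Applying \thref{thm:continuous_recursion_theorem} to $F$ gives $e^{*}$, obtained uniformly from $F$ (hence from $D$), with $\Phi_{e^{*}}=\Phi_{F(e^{*})}$; set $p_A:=\mathrm{code}(e^{*})$. Then $\Phi_{e^{*}}$ is total and tree-valued, so $p_A$ names $\{n\st\Phi_{e^{*}}(n)\text{ is well-founded}\}$, which by construction is exactly the set named by $\Phi_{\mathrm{fw}}(D(p_A))$, namely $A_{D(p_A)}$. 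Thus $p_A$ is a $\boldfacePi^1_1$-name of the finite set $A:=A_{D(p_A)}$, so $A\in\dom(\PiBound)$.

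To conclude I would set $\Gamma(b):=\Phi_{\mathrm{bw}}(D(p_A),b)$, a total computable function, uniformly in $D$ since $e^{*}$ --- hence $p_A$ --- is available. Writing $X$ for the closed set named by $D(p_A)$: if $X\neq\emptyset$ then every $b\notin A=A_{D(p_A)}$ is a solution of $\PiBound(A_{D(p_A)})$, whence $\Gamma(b)=\Phi_{\mathrm{bw}}(D(p_A),b)\in\totalization{\CNatural}(X)=X$, which is the required property; and $p_A$ together with an index for $\Gamma$ are produced computably from the given name of $D$. I expect the only real obstacle to be the self-referential construction of $p_A$: it is essential to run the recursion theorem at the level of indices of tree-valued functionals --- where the operator $F$ is unconditionally total and $\mathrm{code}(e)$ is always a valid name --- rather than trying to take a fixed point of the continuous map $q\mapsto\Phi_{\mathrm{fw}}(D(q))$ on $\Baire$ directly, which need not possess a (non-degenerate) fixed point. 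The rest is a routine unwinding of the definitions of $\totalization{\CNatural}$, $\PiBound$, and Weihrauch reducibility.
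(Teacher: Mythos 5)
Your proof is correct, and it takes a genuinely different route from the paper's. The paper constructs the $\totalization{\CNatural}$-instance name $z$ explicitly, stage by stage, using only continuity: at stage $i+1$ it reads off $n_i = D(\Phi_\Pi(z_i[t_{i+1}]))(i)$ from an already-committed prefix and writes $n_i$ into $z$, thereby forcing the closed set named by $z$ to coincide with the one named by $D(\Phi_\Pi(z))$; it then sets $p_A := \Phi_\Pi(z)$. You instead work on the other side of the reduction, taking the fixed point at the level of $\boldfacePi^1_1$-names: via the recursion theorem (\thref{thm:continuous_recursion_theorem}) you obtain $p_A$ directly so that $A$ (the set it names) equals the set named by $\Phi_{\mathrm{fw}}(D(p_A))$. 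The two fixed points are ``dual'': the paper fixes $z \mapsto D(\Phi_\Pi(z))$ up to naming the same $\Pi^0_1$ set, you fix $p \mapsto \Phi_{\mathrm{fw}}(D(p))$ up to naming the same $\boldfacePi^1_1$ set. Your route is shorter, since the self-reference is outsourced to the recursion theorem (which the paper invokes elsewhere in any case, e.g.\ in \thref{thm:f*<fu*} and \thref{thm:pibound*<u*}), and your observation that the fixed point must be taken at the level of indices of tree-valued functionals --- so that $\mathrm{code}(\cdot)$ is always a legitimate $\boldfacePi^1_1$-name and $F$ is unconditionally total --- is exactly the right point to flag. One cosmetic slip: since \thref{thm:continuous_recursion_theorem} is stated for continuous $F\function{\Baire}{\Baire}$ and the whole construction is relative to the name of $D$, your $\mathrm{code}$ and $F$ should be defined on $\Baire$-indices rather than on $\mathbb{N}$; the fix is immediate and changes nothing.
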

\begin{proof}
	Let $\Phi_\Pi, \Psi_\Pi$ be two total maps witnessing the reduction $\totalization{\CNatural}\weireducible \PiBound$. At stage $0$, we define $z_0:=0^\mathbb{N}$ and $t_0:=0$.
	
	At stage $i+1$, we search for some sufficiently large $t_{i+1}>t_i$ s.t.
	\[ D(\Phi_\Pi(z_i[t_{i+1}]))(i)\downarrow = n_i.\]
	Clearly such $t_{i+1}$ exists, as every infinite string can be seen as a name for a valid input of $\totalization{\CNatural}$. We define
	\[ z_{i+1}:= z_i[t_{i+1}]\concat \str{n_i} \concat 0^\mathbb{N}.\]
	Let $z:=\lim_{i\to\infty} z_i$. Observe that the sequence $\sequence{z_i}{i\in\mathbb{N}}$ is convergent and its limit is uniformly computable (as for every $i$,  $z_{i+1}[i]=z_i[i]$), hence $z$ is well-defined. Define $p_A:=\Phi_\Pi(z)$ and $\Gamma:=b\mapsto \Psi_\Pi(p_A,b)(0)$. Let also $A\in \boldfacePi^1_1(\mathbb{N})$ be the set named by $p_A$.
	
	Let $X$ be the closed subset of $\mathbb{N}$ named by $D(p_A)$. Observe that $X$ is exactly the set named by $z$. If $n \notin X$ then there is $i$ s.t.\ $D(p_A)(i)=n+1$. By construction, $z(t_{i+1})=z_{i+1}(t_{i+1})=n+1$, hence,  for every $b\notin A$,  $\Gamma(b)=\Psi_\Pi(p_A,b)\neq n$ (because $\Psi_\Pi$ is the backward functional of a Weihrauch reduction).
\end{proof}

\begin{theorem}
	\thlabel{thm:pibound*<u*}
	$\PiBound \weiequiv \PiBound^* \strictlyweireducible \ustar{\PiBound} \weiequiv \firstOrderPart{\parallelization{\PiBound}}$.
\end{theorem}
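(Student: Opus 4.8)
The plan is to prove the displayed chain one link at a time. The rightmost equivalence $\ustar{\PiBound}\weiequiv\firstOrderPart{\parallelization{\PiBound}}$ is immediate from \thref{thm:FOP(parallelization)=u*(FOP)}: since $\PiBound$ has codomain $\mathbb{N}$ it is first-order, so $\firstOrderPart{\PiBound}\weiequiv\PiBound$ and hence $\ustar{(\firstOrderPart{\PiBound})}\weiequiv\ustar{\PiBound}$. For $\PiBound\weiequiv\PiBound^*$, the reduction $\PiBound\weireducible\PiBound^*$ is trivial, while for the converse I would use upward-closedness: after normalizing each instance to be an initial segment of $\mathbb{N}$, given $A_0,\dots,A_{n-1}$ the set $A:=\bigcup_{i<n}A_i$ is again a finite $\boldfacePi^1_1$ set, and any $b\in\PiBound(A)$ satisfies $b\notin A_i$ for every $i<n$, so $\langle b,\dots,b\rangle$ solves $\PiBound^*(\langle A_i\rangle_{i<n})$; the same computation shows $\PiBound\times\PiBound\weireducible\PiBound$ and, by induction, $\PiBound^{n}\weireducible\PiBound$, which is the promised closure of $\PiBound$ under finite products. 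The reduction $\PiBound^*\weireducible\ustar{\PiBound}$ is \thref{thm:u*<=parallelization}. Hence everything reduces to proving the \emph{strict} part, i.e.\ $\ustar{\PiBound}\not\weireducible\PiBound$.

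For the separation I would argue by contradiction from a hypothetical reduction $\ustar{\PiBound}\weireducible\PiBound$ with forward functional $\Phi$ and backward functional $\Psi$. The intuition is that $\ustar{\PiBound}$ is genuinely stronger because it can iterate: submitting to $\parallelization{\PiBound}$ an outer $\PiBound$-instance together with the $\PiBound$-images — obtained via the total reduction $\tcn\weireducible\PiBound$ of \thref{thm:tcn<pibound} — of all the countably many $\tcn$-instances that the outer answer could spawn, and then halting by a c.e.\ stopping rule keyed to the relevant columns (the mechanism used in the proof of \thref{thm:total-func-ustar-diam}), one sees that $\ustar{\PiBound}$ computes a genuine two-step iteration of $\PiBound$ with $\tcn$, something $\PiBound$ alone cannot do. Concretely, the diagonalization would proceed as in \thref{thm:f*<fu*}, but with upward-closedness of $\PiBound$ playing the role of its missing finite range: using the recursion theorem I would fix the stopping-condition index $w$ so that $\Phi_w$ converges on a solution sequence $\langle b_i\rangle_{i<k}$ only once $k$ exceeds a threshold that is itself read off from the reduction, and I would feed this into \thref{thm:fixed_point_tcn} to produce a self-referential sequence $\seq{A}=(A_0,A_1,\dots)$ of instances; by continuity $\Phi$ commits, on the name of $(w,\seq{A})$, to only finitely much information, after which I can enlarge the later $A_i$'s so that $b_i\in A_i$ for the relevant indices (legitimate, since a $\PiBound$-instance may contain any prescribed natural number while still admitting a larger bound), making $\Psi$'s output an invalid solution although $(w,\seq{A})$ stays a valid instance of $\ustar{\PiBound}$.

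The hard part is exactly this last diagonalization. Unlike the finite-range situation of \thref{thm:f*<fu*}, the object $\Phi(w,\seq{A})$ is an infinite $\boldfacePi^1_1$ set, so there is no a priori bound on ``how many potential answers'' must be defeated; the needed control on the reduction must instead be extracted through the fixed-point machinery of \thref{thm:fixed_point_tcn}, and the construction has to be carried out in stages, re-establishing the fixed point each time the tail of $\seq{A}$ is modified and verifying at the end that the limiting instance is a legitimate input for $\ustar{\PiBound}$ (that each $A_i$ is a finite $\boldfacePi^1_1$ set and that $\Phi_w$ indeed converges on every sufficiently long genuine solution sequence). Once this bookkeeping is in place, the contradiction with $\Psi$ being a backward functional closes the argument, and hence $\PiBound^*\strictlyweireducible\ustar{\PiBound}$.
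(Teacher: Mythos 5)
The first three links of the chain are handled correctly and exactly as in the paper: the union argument for $\PiBound\weiequiv\PiBound^*$, $\PiBound^*\weireducible\ustar{\PiBound}$ from \thref{thm:u*<=parallelization}, and $\ustar{\PiBound}\weiequiv\firstOrderPart{(\parallelization{\PiBound})}$ from \thref{thm:FOP(parallelization)=u*(FOP)}. You also correctly recognize that the strict part requires the recursion theorem together with \thref{thm:fixed_point_tcn}, and that upward-closedness of $\PiBound$ should play the role that finite range plays in \thref{thm:f*<fu*}.

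However, your diagonalization plan for $\ustar{\PiBound}\not\weireducible\PiBound$ has a genuine gap, and it is precisely at the point you flag as ``the hard part.'' Enlarging later $A_i$'s to invalidate $\Psi$'s output cannot be carried through by ``re-establishing the fixed point each time the tail is modified'': the functional $F$ whose fixed point gives $w$ is defined relative to the sequence $\seq{A}$, so changing $\seq{A}$ changes $F$ and hence $w$, which in turn changes the continuity commitment of $\Phi$ on $(w,\seq{A})$ — there is no stage at which the circularity breaks and the construction stabilizes. Moreover, since $\Phi(w,\seq{A})$ names a \emph{cofinite} set, you would have infinitely many $b$'s to defeat and, as you note, no a priori bound. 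The paper's proof avoids all of this by never modifying the sequence: it fixes once and for all a \emph{computable} $\sequence{A_n}{n>0}\in\dom(\parallelization{\PiBound})$ with \emph{no computable solution} (such a sequence exists because $\UCBaire\weireducible\parallelization{\PiBound}$), and constructs only $A_0$ via \thref{thm:fixed_point_tcn}. The uncomputability is then the engine: if $\limsup_b\length{\Psi(w,\seq{A},b)(0)}=\infty$, then collecting $\Psi$'s outputs over $b$ (using upward-closedness to reconcile different outputs columnwise) yields a computable solution to $\seq{A}$, a contradiction; hence the set $X_w$ of bounds on $\Psi$'s output lengths is non-empty. The fixed point then forces any valid $\ustar{\PiBound}$-solution $\pairing{r_i}_{i<k}$ to have $k>\Gamma_w(r_0)$ while $\Gamma_w(r_0)\in X_w$ bounds $k$ from above, giving the contradiction directly. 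Your proposal is missing this crucial use of a non-computably-solvable tail, which is what eliminates the need for any stage-by-stage modification.
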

\begin{proof}
	The equivalence $\PiBound \weiequiv \PiBound^*$ is straightforward: one direction is trivial, while for the other it is enough to notice that, for every $\sequence{A_n}{n<k} \in \dom(\PiBound^*)$ and every $j<k$, we have $\PiBound( \bigcup_{n<k} A_n) \in \PiBound(A_j)$. Moreover, the equivalence $ \ustar{\PiBound} \weiequiv \firstOrderPart{(\parallelization{\PiBound})}$ is an application of \thref{thm:FOP(parallelization)=u*(FOP)}, so we only need to show that $ \PiBound^* \strictlyweireducible \ustar{\PiBound}$.

	The reduction is trivial from \thref{thm:u*<=parallelization}. Assume towards a contradiction that there is a reduction $\ustar{\PiBound} \weireducible \PiBound$ as witnessed by the maps $\Phi,\Psi$. Let $\sequence{A_n}{n>0}$ be a computable input for $\parallelization{\PiBound}$ with no computable solution. The existence of such $\sequence{A_n}{n>0}$ follows e.g.\ from the fact that $\UCBaire \weireducible \parallelization{\PiBound}$ (see the proof of \cite[Prop.\ 5.21]{GPVDescSeq}). We want to define $w\in\Baire$ and $A_0\in\dom(\PiBound)$ so to obtain a valid input $(w,\seq{A}:=\sequence{A_n}{n\in\mathbb{N}})$ for $\ustar{\PiBound}$ that diagonalizes against $\Phi,\Psi$.
	
	Let $F\function{\Baire}{\Baire}$ be the total computable functional that works as follows: given $w\in\Baire$, we can (uniformly) compute an index $u\in\Baire$ for the functional $D_w\function{\Baire}{\Baire}$ that maps $A\in\Baire$ to (a name for) the closed set 
	\[ X_w:= \{ m\in \mathbb{N} \st (\forall b)( \Psi(w,\sequence{A, A_1, A_2, \hdots}{}, b)(0)\downarrow=\pairing{r_i}_{i<h} \Rightarrow h< m)\}. \] 	
	By \thref{thm:fixed_point_tcn}, we can uniformly compute an input $A_w$ for $\PiBound$ and an index for a computable function $\Gamma_w$ s.t.
	\[ X_w\neq\emptyset \Rightarrow (\forall b\notin A_w)(\Gamma_w(b)\in X_w). \]
	The functional $F$ maps $w$ to an index $v\in \Baire$ s.t.\ $\Phi_v(\pairing{b_i}_{i<k})$ halts iff $k> \Gamma_w(b_0)$. 
	
	Clearly $F$ is total and computable, hence, by the recursion theorem (\thref{thm:continuous_recursion_theorem}), it has a computable fixed point. We define $w$ as a fixed point for $F$ (i.e.\ $\Phi_{F(w)}= \Phi_{w}$) and $A_0:=A_w$ as the input for $\PiBound$ obtained applying \thref{thm:fixed_point_tcn} to the functional $D_w$.

	It is straightforward to show that $(w,\seq{A})\in \dom(\ustar{\PiBound})$. Indeed, $\seq{A}\in\dom(\parallelization{\PiBound})$ by definition, and $\Phi_{w}(\pairing{b_i}_{i<k})$ halts whenever $k$ is sufficiently large (recall that $\Gamma_w$ is always defined on solutions of $\PiBound(A_0)$).
	
	Let $B$ be the finite $\boldfacePi^1_1$ subset of $\mathbb{N}$ named by $\Phi(w, \seq{A})$. Observe that, since $(w,\seq{A})$ is a valid input for $\ustar{\PiBound}$, there is $m\in\mathbb{N}$ s.t.\ 
	\begin{equation*}
		(\forall b)( \Psi(w,\seq{A}, b)(0)\downarrow=\pairing{r_i}_{i<h} \Rightarrow h< m).\tag{$\star$}
	\end{equation*}
	Indeed, if $b$ is a bound for $B$ then $\Psi(w,\seq{A}, b)(0)\downarrow=\pairing{r_i}_{i<h}$ and for every $i<h$, $r_i\in A_i$. If $\limsup_{b\to\infty} \length{\Psi(w,\seq{A},b)(0)}= +\infty$ then, simulating $\Psi(w,\seq{A},b)$ for every $b>b_0$ (for some sufficiently large $b_0$), we could obtain a computable solution for $\parallelization{\PiBound}(\seq{A})$, which contradicts the hypotheses.
	
	However, any valid solution for $\ustar{\PiBound}(w,\seq{A})$ is s.t.\ its length is some $m$ satisfying $(\star)$, and therefore, for every $b\in\PiBound(B)$, $\Psi(w,\seq{A},b)$ cannot be a valid solution for $\PiBound(w,\seq{A})$, against the fact that $\Psi$ was the backward functional of a Weihrauch reduction.	
\end{proof}

\subsection{Ramsey's theorem}
		
In this section, we analyze the first-order part of $\SRT{n}{k}$ and $\RT{n}{k}$ (see Section~\ref{sec:background} for their definitions). As already mentioned, the first-order part of the principles $\SRT{1}{k}$ and $\RT{1}{k}$ was already characterized in \cite{BRramsey17}. 

\begin{proposition}
	For every $k\ge 2$ and for $k=\mathbb{N}$ we have $\mflim_k\weiequiv \SRT{1}{k}$ and $\Choice{k}' \weiequiv \RT{1}{k}$.
\end{proposition}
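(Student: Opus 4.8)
The plan is to derive both equivalences from two observations. First, $\SRT{1}{k}$ and $\RT{1}{k}$ are each Weihrauch-equivalent to their own first-order part; second, those first-order parts were computed by Brattka and Rakotoniaina to be $\mflim_k$ and $\Choice{k}'$ respectively. Here $\mflim_k$ denotes the restriction of $\mflim$ to eventually constant sequences with values in $k$ (for $k=\mathbb{N}$, eventually constant sequences of naturals), so $\mflim_k$ returns the eventual value. Combining the two observations immediately yields $\SRT{1}{k}\weiequiv\mflim_k$ and $\RT{1}{k}\weiequiv\Choice{k}'$.

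For the first observation the $\weistronger$ direction is immediate, since the first-order part is a degree-theoretic interior operator. For the $\weireducible$ direction, given a coloring $c$ (stable, resp.\ arbitrary), I would feed to $\firstOrderPart{\SRT{1}{k}}$ (resp.\ $\firstOrderPart{\RT{1}{k}}$) the pair $(w,c)$, where $w$ is an index for the functional that, on a name of a set $S\subseteq\mathbb{N}$, outputs the least element of $S$. This functional halts on every nonempty set, hence on every homogeneous set for $c$ (which is infinite), so $(w,c)$ is a legitimate instance of the first-order part; any solution is the least element $n$ of some homogeneous set $H\subseteq c^{-1}(c(n))$, and from $n$ together with the original coloring $c$ one recovers the homogeneous color class $c^{-1}(c(n))$ — infinite in general, cofinite when $c$ is stable, and in any case computable from $c$. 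Thus $\SRT{1}{k}\weiequiv\firstOrderPart{\SRT{1}{k}}$ and $\RT{1}{k}\weiequiv\firstOrderPart{\RT{1}{k}}$ (the backward functional uses $c$, so only $\weiequiv$, not $\strongweiequiv$, is claimed). For the second observation I would invoke \cite[Prop.\ 3.4]{BRramsey17}: a stable coloring is literally a convergent sequence over $k$ whose limit is the color of every homogeneous set, giving $\firstOrderPart{\SRT{1}{k}}\weiequiv\mflim_k$; and $\{i<k:c^{-1}(i)\text{ infinite}\}$ is nonempty by pigeonhole and $\boldfacePi^0_2$ in $c$, presentable as a $\Choice{k}'$-instance, which gives $\firstOrderPart{\RT{1}{k}}\weiequiv\Choice{k}'$.

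For $k=\mathbb{N}$, the $\SRT{1}{\mathbb{N}}\weiequiv\mflim_{\mathbb{N}}$ argument is verbatim the same, since an eventually constant sequence of naturals uses only finitely many values; and the first observation goes through unchanged. The only point needing care is the reduction $\Choice{\mathbb{N}}'\weireducible\RT{1}{\mathbb{N}}$: the naive coding of a $\Choice{\mathbb{N}}'$-instance by a coloring would use infinitely many colors, which is not a legal $\RT{1}{\mathbb{N}}$-instance, so one must route this through the first observation together with the equality $\firstOrderPart{\RT{1}{\mathbb{N}}}\weiequiv\Choice{\mathbb{N}}'$ from \cite{BRramsey17}. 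I expect the main obstacle, already present in \cite{BRramsey17} but worth checking carefully, to be the reduction $\RT{1}{k}\weireducible\Choice{k}'$: one must arrange that the stagewise approximating names of closed subsets of $k$ actually converge in the Baire topology — which is achieved by the running-maximum bookkeeping "$n$ is currently the largest element seen in $c^{-1}(i)$", an assertion that is eventually stable at each fixed position $n$ — rather than merely being "eventually correct" in an informal sense; this, together with the finite-range constraint on $\RT{1}{\mathbb{N}}$-instances, is the real content beyond the routine reductions.
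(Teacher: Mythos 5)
Your proof is correct and takes essentially the same approach as the paper: both ultimately rest on \cite[Prop.\ 3.4]{BRramsey17} together with the fact $\Choice{k}'\weiequiv\mathsf{BWT}_k$ from \cite[Fact\ 2.3(1)]{BRramsey17}. The additional steps you spell out --- that $\SRT{1}{k}$ and $\RT{1}{k}$ coincide with their own first-order parts via the least-element functional, and the sketches of why the cited equivalences hold --- are accurate but not strictly needed, since the paper's one-line citation already suffices.
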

\begin{proof}
	This is essentially \cite[Prop.\ 3.4]{BRramsey17}, as $\Choice{k}' \weiequiv \mathsf{BWT}_k$ \cite[Fact 2.3(1)]{BRramsey17}.
\end{proof}

In particular, although $\SRT{1}{k}$ and $\RT{1}{k}$ are not first-order problems in their standard formulation (i.e., if we require the output to be an infinite homogeneous set), they are Weihrauch-equivalent to a first-order problem, namely the partial multifunction with the same instances but outputting the color of an infinite homogeneous set.

We now turn our attention to Ramsey's theorem for $n$-tuples and $k$-colors, for $n,k\ge 2$. We first show that, for every $n$ and $k\ge 2$, $\Choice{\mathbb{N}}^{(n)}\not\weireducible \RT{n+1}{k}$. This answers negatively an open question raised by Brattka and Rakotoniaina \cite[Question 7.10]{BRramsey17}.

\begin{lemma}
	\thlabel{thm:sigma0n_bound_jump}
	For every $n>0$, $\CodedBound{\boldfaceSigma^0_{n+1}}\weiequiv \CodedBound{\boldfaceSigma^0_1}^{(n)}$.
\end{lemma}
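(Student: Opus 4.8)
The plan is to reduce the whole statement, by a straightforward induction on $n$, to the single‑jump identity
\[ \CodedBound{\boldfaceSigma^0_{m+1}} \weiequiv (\CodedBound{\boldfaceSigma^0_m})' \qquad (m\ge 1). \]
Indeed, granting this identity for every $m\ge 1$ and using that the jump is monotone with respect to Weihrauch reducibility and lifts to the (strong) Weihrauch degrees, the inductive hypothesis $\CodedBound{\boldfaceSigma^0_n}\weiequiv\CodedBound{\boldfaceSigma^0_1}^{(n-1)}$ gives at once $\CodedBound{\boldfaceSigma^0_{n+1}}\weiequiv (\CodedBound{\boldfaceSigma^0_n})'\weiequiv (\CodedBound{\boldfaceSigma^0_1}^{(n-1)})'=\CodedBound{\boldfaceSigma^0_1}^{(n)}$, the case $n=1$ being the single‑jump identity itself (with the convention $\CodedBound{\boldfaceSigma^0_1}^{(0)}=\CodedBound{\boldfaceSigma^0_1}$). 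This is the ``cofinite'' analogue of the known equivalence $\codedChoice{\boldfacePi^0_{n+1}}{}{\mathbb{N}}\strongweiequiv\codedChoice{\boldfacePi^0_n}{}{\mathbb{N}}{}'$, and large portions of the argument can be copied from (a proof of) that fact, the only genuinely new bookkeeping being that the ``finite set / cofinite set'' structure must be carried along through the relevant code translations.

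For the easy inclusion $(\CodedBound{\boldfaceSigma^0_m})'\weireducible\CodedBound{\boldfaceSigma^0_{m+1}}$, an input for $(\CodedBound{\boldfaceSigma^0_m})'$ is a sequence $\sequence{p_s}{s\in\mathbb{N}}$ converging to a $\boldfaceSigma^0_m$‑code $p$ for a finite set $A\subseteq\mathbb{N}$. Unfolding the representation of $\boldfaceSigma^0_m$ subsets of $\mathbb{N}$, the predicate ``$m'\in A$'' is, read off $p$, a $\boldfaceSigma^0_m$ condition, and since $p$ is recovered coordinatewise as a limit of the given sequence, this becomes a $\boldfaceSigma^0_{m+1}$ condition in $\sequence{p_s}{s\in\mathbb{N}}$: this is exactly the one‑level shift induced by $\mflim$. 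Hence one computes uniformly from $\sequence{p_s}{s\in\mathbb{N}}$ a $\boldfaceSigma^0_{m+1}$‑code for $A$, feeds it to $\CodedBound{\boldfaceSigma^0_{m+1}}$, and returns the resulting bound for $A$. (One may also first replace $A$ by its initial segment $\{0,\dots,\max A\}$, which leaves the bound unchanged and makes the translation cleaner, since $\mathbb{N}$ is discrete.)

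The substance is the reverse reduction $\CodedBound{\boldfaceSigma^0_{m+1}}\weireducible (\CodedBound{\boldfaceSigma^0_m})'$, and this is the step I expect to be the main obstacle. Given a $\boldfaceSigma^0_{m+1}$‑code for a finite set $A$, one has for each $m'$ a sequence $\sequence{c_{m',j}}{j\in\mathbb{N}}$ of $\boldfacePi^0_m$‑codes with $m'\in A\iff \exists j\,P_{m',j}$, where $P_{m',j}$ is the $\boldfacePi^0_m$ predicate coded by $c_{m',j}$. One must manufacture a sequence $\sequence{q_s}{s\in\mathbb{N}}$ converging to a $\boldfaceSigma^0_m$‑code for a finite set $A'$ whose bound dominates $\max A$ in a computably recoverable way, using only a single ``limit'' to descend from level $m+1$ to level $m$. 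The delicate points are: (i) arranging the stagewise approximations so that their codes converge as $\boldfaceSigma^0_m$‑codes and not merely as sets; and (ii) keeping each approximation finite, so that $(\CodedBound{\boldfaceSigma^0_m})'$ really is applicable — this is where the ``cofinite'' restriction has to be propagated through the code translation. Here I would lean on the reductions already witnessing $\codedChoice{\boldfacePi^0_{m+1}}{}{\mathbb{N}}\strongweiequiv\codedChoice{\boldfacePi^0_m}{}{\mathbb{N}}{}'$, verifying that they send (limits of) codes of cofinite subsets of $\mathbb{N}$ to (limits of) codes of cofinite subsets of $\mathbb{N}$, after passing to upward closures where necessary to preserve finiteness of the approximants. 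Once this translation is set up, a bound for $A'$ is converted back into a bound for $A$ by the backward functional of that reduction, completing the argument.
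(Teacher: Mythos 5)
Your reduction to the single‑jump identity $\CodedBound{\boldfaceSigma^0_{m+1}}\weiequiv (\CodedBound{\boldfaceSigma^0_m})'$ is exactly the paper's route, and your treatment of the easy direction $(\CodedBound{\boldfaceSigma^0_m})'\weireducible\CodedBound{\boldfaceSigma^0_{m+1}}$ matches the paper's. But the hard direction $\CodedBound{\boldfaceSigma^0_{m+1}}\weireducible(\CodedBound{\boldfaceSigma^0_m})'$ is where all the content lives, and you do not prove it: you flag it as the step you ``expect'' to be the main obstacle and propose to ``lean on'' the reductions behind $\codedChoice{\boldfacePi^0_{m+1}}{}{\mathbb{N}}\strongweiequiv\codedChoice{\boldfacePi^0_m}{}{\mathbb{N}}'$ without verifying that they can be adapted. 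That is a genuine gap, not merely an omitted routine verification.

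Moreover, one of the two difficulties you forecast is a red herring, which suggests the plan you sketched is aimed at a problem that does not exist. Your concern (ii) --- that every stagewise approximant must itself code a finite set --- misreads the definition of the jump: a name for an input to $(\CodedBound{\boldfaceSigma^0_m})'$ is any convergent sequence $\sequence{q_s}{s}$ whose \emph{limit} names an element of $\dom(\CodedBound{\boldfaceSigma^0_m})$; the individual $q_s$ are unconstrained. There is therefore nothing to ``propagate through the code translation,'' and attempting to keep each approximant a code of a finite set would add work the argument does not need. Your concern (i), convergence at the level of codes rather than of sets, is the real one, and the paper resolves it by a direct elementary construction rather than by importing the choice machinery. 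Given a $\boldfaceSigma^0_{m+1}$‑code $p$ with
\[
x\in B \iff (\exists i_0)(\forall i_1)\cdots(Q i_m)\bigl(p(\coding{x,i_0,\ldots,i_m})=1\bigr),
\]
define for each $k$ the string $p_k$ by truncating the innermost quantifier at $k$:
\[
p_k(\coding{x,i_0,\ldots,i_{m-1}})=1 \iff (Q\, i_m<k)\bigl(p(\coding{x,i_0,\ldots,i_m})=1\bigr).
\]
At each fixed coordinate the value of $p_k$ is monotone in $k$ (non‑decreasing if $Q=\exists$, non‑increasing if $Q=\forall$), so $\sequence{p_k}{k}$ converges, and one checks the limit is a $\boldfaceSigma^0_m$‑code for the same set $B$. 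Feeding this sequence to $(\CodedBound{\boldfaceSigma^0_m})'$ and returning the bound unchanged finishes the reduction. Replacing your sketch with this explicit truncation argument closes the gap.
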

\begin{proof}
	This follows from the fact that, for every $n$, $\CodedBound{\boldfaceSigma^0_{n+1}}\weiequiv \CodedBound{\boldfaceSigma^0_n}'$. 
	Indeed, observe that a name for a $\boldfaceSigma^0_n$ set $B$ can be thought of as a string $p\in\Cantor$ s.t.\ 
	\[ x \in B \iff (\exists i_0)(\forall i_1)\hdots (Q i_{n-1})(p(\coding{x,i_0,\hdots,i_{n-1}})=1), \]
	where $Q=\exists$ if $n$ is odd and $Q=\forall$ otherwise. For every convergent sequence $\sequence{p_i}{i\in\mathbb{N}}$ and every $j\in\mathbb{N}$, $\lim_{i\to\infty} p_i(j)=1$ is a $\Delta^0_2$ property (relatively to the sequence). 
	
	To prove that $\CodedBound{\boldfaceSigma^0_n}' \weireducible \CodedBound{\boldfaceSigma^0_{n+1}}$ notice that, if $\sequence{q_i}{i\in\mathbb{N}}$ is a sequence in $\Cantor$ converging to a name $q$ for a $\boldfaceSigma^0_n$ set $B$, then we can uniformly compute a name for the set 
	\[ \{ x\in\mathbb{N}\st (\exists i_0)(\forall i_1)\hdots (Q i_{n-1})(q(\coding{x,i_0,\hdots,i_{n-1}})=1) \}, \]
	which is clearly $\lightfaceSigma^0_{n+1}$ relatively to $\sequence{q_i}{i\in\mathbb{N}}$.

        We now prove the converse reduction, i.e.\ that $\CodedBound{\boldfaceSigma^0_{n+1}}\weireducible \CodedBound{\boldfaceSigma^0_n}'$. Let $p$ be a name for a $\boldfaceSigma^0_{n+1}$ set $B$, i.e.\ a string $p\in \Cantor$ s.t.\
        \[ x \in B \iff (\exists i_0)(\forall i_1)\hdots (Q i_{n})(p(\coding{x,i_0,\hdots,i_{n}})=1). \]
        For every $m\in\mathbb{N}$, we define the sequence $p_m\in\Cantor$ as follows: for every  $x,i_0,\hdots,i_{n-1}\in \mathbb{N}$, 
        \[ p_{m}(\coding{x,i_0,\hdots,i_{n-1}})=1 \iff (Qi_n<m)(p(\coding{x,i_0,\hdots,i_{n-1},i_n})=1)), \]
        where $Q=\exists$ if $n$ is even and $Q=\forall$ otherwise.

        We claim that the sequence $\sequence{p_m}{m\in\mathbb{N}}$ converges. Let us fix $x,i_0,\hdots,i_{n-1}\in\mathbb{N}$. Suppose first that $Q=\exists$. Then, if there exists $\bar i$ such that $p(\coding{x,i_0,\hdots,i_{n-1},\bar i})=1$, then for every $m>\bar i$, $p_m(\coding{x,i_0,\hdots,i_{n-1}})=1$. Otherwise, for every $m$, $p_m(\coding{x,i_0,\hdots,i_{n-1}})=0$. In either way, we have that $\lim_{m\to\infty}p_m(\coding{x,i_0,\hdots,i_{n-1}})$ exists. A similar verification can be carried out if $Q=\forall$.

        Finally, we show that $p':=\lim_{m\to\infty}p_m$ is a name for the $\boldfaceSigma^0_n$ set $B$. Again, we suppose at first that $Q=\exists$. We then notice that
        \begin{align*}
          x\in B & \iff (\exists i_0)(\forall i_1)\hdots (\exists i_{n})(p(\coding{x,i_0,\hdots,i_{n}})=1) \phantom{\lim_{m\to\infty}} \\
            & \iff (\exists i_0)(\forall i_1)\hdots (\forall i_{n-1})(\exists m)(p_m(\coding{x,i_0,\hdots,i_{n-1}})=1) \phantom{\lim_{m\to\infty}} \\
            & \iff (\exists i_0)(\forall i_1)\hdots (\forall i_{n-1})(\lim_{m\to\infty} p_m(\coding{x,i_0,\hdots,i_{n-1}})=1) \\
            & \iff (\exists i_0)(\forall i_1)\hdots (\forall i_{n-1})(p'(\coding{x,i_0,\hdots,i_{n-1}})=1),  
        \end{align*}
        as we wanted. The verification for $Q=\forall$ is analogous.         
\end{proof}

\begin{lemma}
	\thlabel{thm:LPO^k*WKL}
	For every $k\in\mathbb{N}$,  $\LPO^k \compproduct\WKL \weiequiv \WKL \times   \LPO^k$.
\end{lemma}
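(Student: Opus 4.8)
\emph{The plan.} The inequality $\WKL\times\LPO^k\weireducible\LPO^k\compproduct\WKL$ is immediate: for any problems $h_0,h_1$ one has $h_0\times h_1\weireducible h_0\compproduct h_1$ (take the forward functional of the $h_0$-call to be constant, ignoring the $h_1$-solution), and $\times$ is commutative, so $\WKL\times\LPO^k=\LPO^k\times\WKL\weireducible\LPO^k\compproduct\WKL$. So the entire content is the reduction $\LPO^k\compproduct\WKL\weireducible\WKL\times\LPO^k$. Unfolding the compositional product, an instance of the left-hand side is (a name of) a pair $(T,e)$ with $T$ an infinite binary tree and $\Phi_e$ a functional sending each $p\in[T]$ to $k$ reals $\Phi_e(p)_1,\dots,\Phi_e(p)_k$ (the $k$ instances of $\LPO$); a solution is $(p,b)$ with $p\in[T]$ and $b_i=\LPO(\Phi_e(p)_i)$ for $i\le k$. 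First I would produce, uniformly from $(T,e)$, one $\WKL$-instance and one $\LPO^k$-instance, to be run in parallel, together with a way of recombining their outputs into such a pair.

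\emph{The device.} For $c\in 2^k$ put $T_c:=\{\sigma\in T : \text{for every }i\le k\text{ with }c_i=0,\ \Phi_e(\sigma)_i\text{ exhibits no }1\text{ within }|\sigma|\text{ steps}\}$; this is a binary tree uniformly computable from $(T,e)$ and $[T_c]=\{p\in[T] : b(p)\le c\}$, where $b(p):=(\LPO(\Phi_e(p)_i))_{i\le k}$. Since $T_{(1,\dots,1)}=T$ is infinite, $D:=\{c\in 2^k : [T_c]\neq\emptyset\}$ is a nonempty upward-closed subset of $2^k$, and if $c^{*}$ is minimal in $D$ then every $p\in[T_{c^{*}}]$ satisfies $b(p)=c^{*}$ (because $b(p)\le c^{*}$ and $b(p)\in D$). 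The $\WKL$-part of the reduction forms the (parallel) join of the finitely many trees $\widehat{T_c}$ obtained by padding each $T_c$ to an infinite tree by the standard trick; this join is $\weireducible\WKL$ because $\WKL$ is parallelizable and $\totalization{\WKL}\weiequiv\WKL$, and from a solution one reads, for each $c$, a point $p_c$ that is a genuine path of $T_c$ whenever $T_c$ was infinite. The $\LPO^k$-part is used to pin down a minimal $c^{*}\in D$: ``$T_c$ is finite'' is a $\lightfaceSigma^0_1$ property of $(T,e)$, hence an admissible $\LPO$-question, and $D$ is recovered by a coordinate-by-coordinate minimality search that, suitably unrolled, consumes exactly $k$ yes/no bits. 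Given the $\LPO^k$-output one then computes such a $c^{*}$, reads $p_{c^{*}}$ off the $\WKL$-output, and returns $(p_{c^{*}},c^{*})$; verifying this is a valid solution is routine once the above is in place.

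\emph{The obstacle.} The genuinely delicate point, and the reason the statement is confined to finite $k$, is the independence demanded by the parallel product: the $\LPO^k$-instance must be computed from $(T,e)$ alone and may \emph{not} consult the path that $\WKL$ returns, so one cannot simply feed that path back through $\LPO^k$ to read the answer vector. It is precisely the detour through the finitely many candidates $c\in 2^k$, the parallelizability of $\WKL$, and the closure of $\WKL$ under finite joins and totalization that circumvents this; the argument collapses for the other associativity (e.g. $\mflim\compproduct\WKL$ versus $\WKL\times\mflim$), where $\WKL$ is applied first and has infinite codomain. I expect the main work to be the bookkeeping that compresses the coordinatewise minimality search into $k$ --- rather than $2^{k}$ --- mutually independent $\LPO$-queries, and the check that the padded join of the $T_c$'s is a bona fide $\WKL$-instance whose solutions restrict to genuine paths of every infinite $T_c$.
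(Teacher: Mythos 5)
Your outline gets the easy direction right, and the overall shape of the hard direction is in the same spirit as the paper's proof: bucket the paths of $T$ according to their $\LPO^k$-answer vector, ask $\Sigma^0_1$ finiteness questions about the resulting subtrees, and use $\WKL$'s closure under $\times$ and totalization to run all the candidate path-searches in parallel. But the step you flag as ``the main work'' and defer to ``suitable unrolling'' is exactly where the argument breaks, and the fix is not bookkeeping --- it requires a genuinely different parametrization.

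The issue: a coordinate-by-coordinate minimality search for $c^*\in D$ is inherently \emph{adaptive}. The question you ask when processing coordinate $i+1$ depends on the answers obtained for coordinates $1,\dots,i$ (you test whether the bit of the \emph{current} candidate can be lowered). Unrolling that naively produces $\LPO^{[k]}$, not $\LPO^{k}$, and the parallel product does not allow later queries to depend on earlier answers; unrolling it into non-adaptive queries costs $2^{k}-1$ questions, not $k$. Nor does the $\WKL$-output bail you out: from the padded join of the $\widehat{T_c}$ you get a candidate $p_c$ for each $c\in 2^{k}$, but whether $p_c$ is a genuine path of $T_c$ (equivalently, whether $T_c$ is infinite) is not decidable from $p_c$, so you still cannot locate $c^{*}$ without further $\LPO$-calls.

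What the paper does instead is reindex: rather than one tree per $c\in 2^{k}$, it forms one tree $Z_j$ per $j\in\{1,\dots,k\}$, where $[Z_j]$ collects, for \emph{every} $k$-bit string $\sigma$ with exactly $j$ ones, the strings $\sigma\concat y$ with $y\in[T]$ having $\LPO$-answer $0$ on all coordinates flagged by $\sigma$. The $k$ questions ``Is $Z_j$ well-founded?'' are monotone in $j$ (a threshold family), so the $k$ \emph{parallel} $\LPO$-answers determine the maximal $\bar{\jmath}$ with $Z_{\bar{\jmath}}$ ill-founded, i.e.\ the number of zero-coordinates achievable. Crucially, the identity of \emph{which} coordinates is recovered not from $\LPO$ at all but from the $\WKL$-side: the path produced through $Z_{\bar{\jmath}}$ carries the relevant $\sigma$ as its first $k$ bits, and its tail is the desired path through $T$. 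That coupling --- $\LPO^k$ supplies only the count, while $\WKL$'s own output supplies the set --- is the missing idea; without it your $\LPO$-budget is $2^k$, not $k$.
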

\begin{proof}
	The statement is trivial for $k=0$, hence assume $k>0$. The right-to-left reduction is straightforward. To prove the left-to-right one, we first observe that $\WKL \weiequiv \TCCantor$ \cite[Prop.\ 6.1]{BGCompOfChoice19} and that $\WKL$ is closed under parallel product. We show that $\LPO^k \compproduct\WKL \weireducible \WKL\times  \TCCantor^k \times \LPO^k$.
	Fix  $(w,T)\in \dom(\LPO^k \compproduct\WKL)$. By definition, for every $x\in [T]$, $\Phi_w(x)=\pairing{p^x_i}_{i<k}$, where each $p^x_i$ is an input for $\LPO$. For every $i<k$, we can define a tree $S_i$ s.t.\ 
	\[ [S_i] = \{ x \in [T] \st \LPO(p^x_i)=0 \}. \]
	For every $j\in \{ 1,\hdots, k\}$, fix an enumeration $\sequence{\sigma^j_a}{a< \binom{k}{j}}$ of the strings in $2^k$ with exactly $j$ ones. We can define a tree $Z_j$ s.t.\ 
	\[ [Z_j] = \left\{\sigma^j_a\concat y \in \Baire: a< \binom{k}{j} \land  (\forall i\st \sigma^j_a(i)=1 )(y \in [S_i]) \right\}. \]
    Define also $q_j\in\Baire$ s.t.\ $\LPO(q_j)=1$ iff $Z_j$ is well-founded. Notice that all the $Z_j$ and the $q_j$ are uniformly computable from $(w,T)$.

    Let the forward functional $\Phi$ be the map that sends $(w,T)$ to $(T,\sequence{Z_j}{1\leq j \leq k}, \sequence{q_j}{1\leq j \leq k})$. Let also the backward functional $\Psi$ be the map that, upon input $(x,\sequence{y_j}{1\leq j\leq k}, \sequence{b_j}{1\leq j\leq k})$, works as follows: if $b_i=1$ for every $i<k$ then return $(x,1^k)$. Otherwise, let $\bar j$ be the largest $j$ such that $b_j=0$, and let $y_{\bar j} = \sigma^{\bar j}_a \concat x_{\bar j}$ for some $a< \binom{k}{\bar j}$. Then, on input $(x,\sequence{y_j}{1\leq j\leq k}, \sequence{b_j}{1\leq j\leq k})$, $\Psi$ returns $(x_{\bar j},\tau)$ with $\tau:= i \mapsto 1-\sigma^{\bar j}_a(i)$.
    
    We now show that $(x_{\bar j},\tau)$ is a valid solution to the instance $(w,T)$ of $\lpo^k*\WKL$ (the case of $\Psi$ outputting $(x,1^k)$ being trivial). It is immediate to verify that $x_{\bar j}$ is indeed a path through $T$, so we only have to check that $\tau$ is a valid $\lpo^k$-solution to $\Phi_w(x_{\bar j})$. Observe that, by construction, if $b_j = 1$ for some $j$, then $b_l=1$ for every $l\ge j$. By the definition of $Z_{\bar j}$, we know that, for every $i<k$ s.t.\ $\sigma^{\bar j}_a(i)=1$, $\lpo(p^{x_{\bar j}}_i)=0$. Therefore, we only need to show that if $\sigma^{\bar j}_a(i)=0$ then $\lpo(p^{x_{\bar j}}_i)=1$. Suppose towards a contradiction that this is not the case. Fix $i$ s.t.\ $\sigma^{\bar j}_a(i)=0=\lpo(p^{x_{\bar j}}_i)$. In particular, this implies that $\bar j<k$ (as $\sigma^{\bar j}_a$ has exactly $\bar j$ ones). Let $\rho$ be the string defined as $\rho(i)=1$ and, for $h\neq i$, $\rho(h):=\sigma^{\bar j}_a(h)$. Then clearly, $\rho$ has $\bar j +1$ ones, which implies that $Z_{\bar j+1}$ is ill-founded, against the maximality of $\bar j$: if $Z_{\bar j+1}$ is ill-founded then $b_{\bar j+1}=0$. 
\end{proof}

The following proof was obtained jointly with Arno Pauly.

\begin{theorem}\thlabel{thm:cnnjump-rtn1k}
	For every $n$ and $k\ge 2$, $\CNatural^{(n)} \not \weireducible \RT{n+1}{k}$.
\end{theorem}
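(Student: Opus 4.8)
The plan is to bound the first-order part of $\RT{n+1}{k}$ and then show that $\CNatural^{(n)}$ lies strictly above that bound. Since $\CNatural^{(n)}$ is a first-order problem (indeed a first-order cylinder, by \thref{thm:fop_lim} and \thref{thm:fo_cylinder}), maximality of the first-order part (\thref{thm:fop_max}) reduces the claim to $\CNatural^{(n)}\not\weireducible\firstOrderPart{\RT{n+1}{k}}$. We argue by induction on $n$, carrying along an explicit upper bound $\firstOrderPart{\RT{n+1}{k}}\weireducible P_n$ at each stage. The base case $n=0$ is immediate from the preceding proposition: $\RT{1}{k}\weiequiv\Choice{k}'$ has finite codomain, so $\CNatural\not\weireducible\RT{1}{k}$ by \thref{thm:cn_finite_codomain}.

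For the inductive step I would invoke the Weihrauch incarnation of the Cholak--Jockusch--Slaman analysis (cf.\ \cite{BRramsey17}): an instance of $\RT{n+1}{k}$ is solved by a cohesiveness step, which is densely realized and hence contributes nothing to the first-order part, followed by a stable instance, which in turn is handled by passing to the limit colouring on $[\mathbb{N}]^{n}$, invoking $\RT{n}{k}$, and performing a thinning step that consumes one jump, together with a Weak-König-type step. Pushing the first-order part through the resulting compositional product via \thref{thm:algebraic_rules_fop}(4), substituting the inductive bound $\firstOrderPart{\RT{n}{k}}\weireducible P_{n-1}$, and using $\firstOrderPart{\WKL^{(m)}}\strongweiequiv\ustar{(\Choice{2}^{(m)})}$ (\thref{cor:fop-wkl}), one gets $\firstOrderPart{\RT{n+1}{k}}$ reducible to a compositional product of finite-codomain jumps (powers of $\LPO^{(m)}$), copies of $\WKL$ and its jumps, and problems with trivial first-order part. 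This is exactly where \thref{thm:LPO^k*WKL} and \thref{thm:sigma0n_bound_jump} enter: the former lets us commute each $\WKL$-block past the adjacent $\LPO^k$-block, turning these compositions into parallel products and thereby preventing the iterated composition from blowing up, while the latter identifies the accumulated bounding contribution with $\CodedBound{\boldfaceSigma^0_{n+1}}\weiequiv\CodedBound{\boldfaceSigma^0_1}^{(n)}$. The upshot is a bound $P_n$ that, up to Weihrauch equivalence, is a parallel product of a finite-codomain jump, a jump of $\WKL$, and the bounding principle $\CodedBound{\boldfaceSigma^0_{n+1}}$.

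It remains to verify $\CNatural^{(n)}\not\weireducible P_n$. The point is that $\CNatural^{(n)}\weiequiv\codedChoice{\boldfacePi^0_{n+1}}{}{\mathbb{N}}$ is a genuine \emph{choice} principle over the non-compact space $\mathbb{N}$, whereas each factor of $P_n$ is either of finite codomain, or a choice principle on the \emph{compact} space $\Cantor$ (whose first-order part is the ``finitely branching'' problem $\ustar{(\Choice{2}^{(n)})}$), or the \emph{bounding} principle $\CodedBound{\boldfaceSigma^0_{n+1}}$ — and no combination of these computes $\codedChoice{\boldfacePi^0_{n+1}}{}{\mathbb{N}}$. Concretely, one runs the diagonalization of \thref{thm:cn_finite_codomain} relative to $\emptyset^{(n)}$: a hypothetical reduction must, after finitely much computation, commit to a finite block of instances for the compact and finite-codomain factors (which then admit only finitely many joint answers), while the bounding factor only ever returns an upper bound for a finite $\boldfaceSigma^0_{n+1}$-set; in either regime one keeps modifying a $\boldfacePi^0_{n+1}$-presented singleton so that the backward functional fails. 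The non-reduction $\CNatural\not\weireducible\WKL$ ($\Choice{\mathbb{N}}\not\weireducible\CCantor$, recalled in the introduction) is precisely the $n=0$ instance of this phenomenon.

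The main obstacle is the inductive upper bound: turning the Cholak--Jockusch--Slaman reduction into an honest pair of forward/backward functionals — in particular realizing the thinning step uniformly — and bookkeeping that exactly $n$ jumps are consumed, so that \thref{thm:sigma0n_bound_jump} is applied at the correct level and no extra first-order strength creeps in. Once that is in place, \thref{thm:LPO^k*WKL} is the tool that keeps the first-order part of $P_n$ from reaching $\CNatural^{(n)}$, and the concluding step is a routine stratification of the proof of \thref{thm:cn_finite_codomain}.
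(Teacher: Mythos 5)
Your plan diverges substantially from the paper's, and there are genuine gaps at the two load-bearing points.

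First, the upper bound $P_n$. You assert that iterating the Cholak--Jockusch--Slaman decomposition, pushing $\firstOrderPart{(\cdot)}$ through compositional products via \thref{thm:algebraic_rules_fop}(4), and commuting with \thref{thm:LPO^k*WKL} yields a \emph{parallel product} of a finite-codomain jump, a jump of $\WKL$, and $\CodedBound{\boldfaceSigma^0_{n+1}}$. But \thref{thm:algebraic_rules_fop}(4) only gives $\firstOrderPart{(f\compproduct g)}\weireducible \firstOrderPart{f}\compproduct g$, a compositional product on the right, and \thref{thm:LPO^k*WKL} only lets a $\WKL$ factor and an adjacent $\LPO^k$ factor trade places; nothing in your outline turns the iterated composition into a product, nor keeps the bounding contribution at level exactly $\boldfaceSigma^0_{n+1}$. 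You also admit you have not uniformized the thinning step, which is not a bookkeeping annoyance but the place where the CJS route would need real work. The paper sidesteps all of this: it never decomposes $\RT{n+1}{k}$ inductively. It uses the known fact $\RT{n+1}{k}\weireducible \RT{1}{k}\compproduct \WKL^{(n)}$ directly, strengthens the claim to $\CodedBound{\boldfaceSigma^0_{n+1}}\not\weireducible \RT{1}{k}\compproduct\WKL^{(n)}$, and then makes the crucial observation you are missing: since $\RT{1}{k}$ has codomain $k$ and $\CodedBound{\boldfaceSigma^0_{n+1}}(A)$ is \emph{upward closed}, one may take the maximum of the backward functional's outputs over all $k$ guessed colors. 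This converts the $\RT{1}{k}$ factor into $\LPO^k$, i.e.\ a reduction $\CodedBound{\boldfaceSigma^0_{n+1}}\weireducible \LPO^k\compproduct\WKL^{(n)}$. Only at that point does \thref{thm:LPO^k*WKL} (after jump inversion) apply.

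Second, the concluding non-reduction. You propose to "run the diagonalization of \thref{thm:cn_finite_codomain} relative to $\emptyset^{(n)}$", claiming the compact and finite-codomain factors commit to finitely many joint answers. That theorem handles \emph{only} problems of finite codomain; $\WKL$ is not such a problem, and having compact \emph{domain} or being a choice principle on a compact \emph{space} does not make the set of possible answers finite. The paper's proof does not diagonalize against $\WKL$ directly at all; it first applies \thref{thm:sigma0n_bound_jump} and the jump inversion theorem to drop to the unjumped level, then \thref{thm:LPO^k*WKL} to get $\UCNatural\weireducible^{\emptyset^{(n)}}\WKL\compproduct\LPO^k$, and then invokes the (relativized) \emph{choice elimination principle} to discharge the $\WKL$ factor, leaving $\UCNatural\weireducible^{\emptyset^{(n)}}\LPO^k$, which is what \thref{thm:cn_finite_codomain} actually rules out. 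Without a substitute for the choice elimination step, your argument never removes the $\WKL$ component. You have identified the right auxiliary lemmas (\thref{thm:sigma0n_bound_jump}, \thref{thm:LPO^k*WKL}, \thref{thm:cn_finite_codomain}), but the two steps that actually make them fit together — the upward-closedness trick and the choice elimination theorem — are absent.
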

\begin{proof}
	The claim for $n=0$ follows from \cite[Prop.\ 7.8]{BRramsey17}, hence assume $n>0$. 
	We prove a slightly stronger statement, namely 
	\[ \CodedBound{\boldfaceSigma^0_{n+1}}\not\weireducible \RT{1}{k} \compproduct \WKL^{(n)}. \]
	Observe indeed that $\CodedBound{\boldfaceSigma^0_{n+1}} \weireducible \CNatural^{(n)}$, as $\CNatural^{(n)} \weiequiv  \codedChoice{\boldfacePi^0_{n+1}}{}{\mathbb{N}}$ (hence we can just use $\CNatural^{(n)}$ to choose a bound for a given finite $\boldfaceSigma^0_{n+1}$ set). Moreover, since $\RT{n+1}{k}\weireducible \RT{n}{k}\compproduct \WKL'$ \cite[Cor.\ 4.14]{BRramsey17}, and using the fact that $(\WKL')^{[n]} \weiequiv \WKL^{(n)}$ \cite[Thm.\ 7.53(2)]{BGP17}, we obtain 
	\[ \RT{n+1}{k} \weireducible  \RT{1}{k}\compproduct \WKL^{(n)}. \] 

	We identify $\RT{1}{k}$ with the Weihrauch-equivalent problem of computing the \emph{color} of a homogeneous solution for the given coloring. 

	Assume that $\CodedBound{\boldfaceSigma^0_{n+1}}\weireducible \RT{1}{k} \compproduct \WKL^{(n)}$ via $\Phi,\Psi$. In particular, for every name $p_A$ of some $A\in\dom(\CodedBound{\boldfaceSigma^0_{n+1}})$, $\Phi(p_A)$ is (a name for) a pair $(w,T)\in \dom(\RT{1}{k} \compproduct \WKL^{(n)})$. Observe that, since $\RT{1}{k}$ has codomain $k$, for every path $x\in [T]$, there is $j<k$ s.t.\ $\Psi(p_A,x,j)(0)\downarrow$. Moreover, since $\CodedBound{\boldfaceSigma^0_{n+1}}(A)$ is upwards closed, for every $x \in [T]$ we have 
	\[ \max \{ b\in\mathbb{N} \st (\exists j<k)( \Psi(p_A, x,j)(0)\downarrow = b)\} \in \CodedBound{\boldfaceSigma^0_{n+1}}(A). \]
	In particular, this implies that $\CodedBound{\boldfaceSigma^0_{n+1}}$ can be uniformly computed from $\WKL^{(n)}$ if the backward functional is allowed $k$ mind-changes. This can be made precise writing $\CodedBound{\boldfaceSigma^0_{n+1}}\weireducible \LPO^k\compproduct \WKL^{(n)}$, i.e.
	\[ \LPO^k\to \CodedBound{\boldfaceSigma^0_{n+1}} \weireducible \WKL^{(n)}. \]

	Applying \thref{thm:sigma0n_bound_jump} we obtain
	\[ \LPO^k\to \CodedBound{\boldfaceSigma^0_1}^{(n)} \weireducible \WKL^{(n)}. \]
	Moreover, by \cite[Thm.\ 3.13]{BP16}, a representative of $\LPO^k\to \CodedBound{\boldfaceSigma^0_1}^{(n)}$ is the following problem: given in input $x\in\dom(\CodedBound{\boldfaceSigma^0_1}^{(n)})$, $\LPO^k\to \CodedBound{\boldfaceSigma^0_1}^{(n)}$ produces a pair $(p,\pairing{y_i}_{i<k})\in \Baire \times \Baire$ s.t.\ $\Phi_p(\pairing{\LPO(y_i)}_{i<k})\in \CodedBound{\boldfaceSigma^0_1}^{(n)}(x)$. In particular, 
	\[ \LPO^k\to \CodedBound{\boldfaceSigma^0_1}^{(n)} =(\LPO^k\to \CodedBound{\boldfaceSigma^0_1})^{(n)}.\] 
	By the jump inversion theorem \cite[Thm.\ 11]{BHK2017MonteCarlo}, we have 
	\[ \LPO^k\to \CodedBound{\boldfaceSigma^0_1} \weireducible^{\emptyset^{(n)}} \WKL, \]
	where $\weireducible^{\emptyset^{(n)}}$ denotes Weihrauch-reducibility relatively to the $n$-th Turing jump of $\emptyset$. We therefore have 
	\[  \UCNatural \weiequiv \CodedBound{\boldfaceSigma^0_1} \weireducible^{\emptyset^{(n)}} \LPO^k \compproduct\WKL \weireducible \WKL \compproduct \LPO^k, \]
	where the second reduction follows from \thref{thm:LPO^k*WKL}.

	Observe that the choice elimination principle \cite[Thm.\ 7.25]{BGP17} relativizes to any oracle (see also \cite[Thm.\ 3.9]{GPVDescSeq}), hence we obtain
	\[ \UCNatural \weireducible^{\emptyset^{(n)}} \LPO^k,\]
	which contradicts \thref{thm:cn_finite_codomain} as $\CNatural \weiequiv \UCNatural$.
\end{proof}

We are now ready to prove the following bounds for the first-order parts of $\SRT{n}{k}$ and $\RT{n}{k}$. We underline that, for every $k>2$, $(\Choice{2})^{(n)} \strictlyweireducible (\Choice{k})^{(n)}$. This is well-known in the literature and can be easily proved using e.g.\ \cite[Prop.\ 7.18]{BGP17} and the jump inversion theorem \cite[Thm.\ 11]{BHK2017MonteCarlo}.

\begin{theorem}
	\thlabel{thm:fop_rtnk}
    For every $n,k\geq 2$,
    \[
        (\Choice{k})^{(n)}\strictlyweireducible\firstOrderPart{(\SRT{n}{k})}\weireducible \firstOrderPart{(\rt^n_k)}\strictlyweireducible(\Choice{2}^*)^{(n)}.
    \]
\end{theorem}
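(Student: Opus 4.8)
The plan is to prove the four-term chain by establishing the two reductions and the two strictness statements separately; I expect the leftmost strictness to be by far the hardest step.

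\emph{The two reductions.} For the rightmost one, I would start from the decomposition $\RT{n}{k}\weireducible\RT{1}{k}\compproduct\WKL^{(n-1)}$, obtained by iterating $\RT{m+1}{k}\weireducible\RT{m}{k}\compproduct\WKL'$ \cite[Cor.\ 4.14]{BRramsey17} and collapsing the iterated compositional product via $(\WKL')^{[m]}\weiequiv\WKL^{(m)}$ \cite[Thm.\ 7.53(2)]{BGP17} (exactly as in the proof of \thref{thm:cnnjump-rtn1k}). Since $\RT{1}{k}\weiequiv\Choice{k}'$ and $\Choice{k}\weireducible\WKL$, monotonicity of the jump gives $\RT{1}{k}\weireducible\WKL'$, whence $\RT{n}{k}\weireducible\WKL'\compproduct\WKL^{(n-1)}\weiequiv\WKL^{(n)}$. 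Applying monotonicity of $\firstOrderPart{(\cdot)}$ and \thref{cor:fop-wkl}, $\firstOrderPart{(\RT{n}{k})}\weireducible\firstOrderPart{(\WKL^{(n)})}\strongweiequiv(\Choice{2}^*)^{(n)}$. The middle reduction $\firstOrderPart{(\SRT{n}{k})}\weireducible\firstOrderPart{(\RT{n}{k})}$ is immediate from $\SRT{n}{k}\weireducible\RT{n}{k}$ and monotonicity. For the leftmost reduction, note $(\Choice{k})^{(n)}\weiequiv(\RT{1}{k})^{(n-1)}$ (from $\RT{1}{k}\weiequiv\Choice{k}'$ and monotonicity of the jump), which is first-order; so by \thref{thm:fop_max} it suffices to show $(\RT{1}{k})^{(n-1)}\weireducible\SRT{n}{k}$.

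\emph{Reducing $(\RT{1}{k})^{(n-1)}$ to $\SRT{n}{k}$.} An instance of $(\RT{1}{k})^{(n-1)}$ is an $(n-1)$-fold convergent sequence whose iterated limit is a colouring $e$ of $\mathbb{N}$, and a solution is any colour occurring infinitely often in $e$; present the data as a family $\sequence{g_{\vec s}}{\vec s\in\mathbb{N}^{n-1}}$ of colourings of $\mathbb{N}$ with the prescribed nested-limit structure. Given this, I would define $c\colon[\mathbb{N}]^n\to k$ by $c(x_1<\dots<x_n):=g_{(x_2,\dots,x_n)}(x_1)$. Unwinding the nested limits (with the indices of $g$ matched so that the limit over $x_n$ is the innermost one) shows $c$ is stable, and its successive limit colourings are obtained by collapsing one index of $g$ at a time, the bottom level being $e$. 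If an infinite set $H$ is $c$-homogeneous for colour $i$, then it is homogeneous for $i$ at each successive level: being homogeneous for $i$ at a given level, for every tuple from $H$ the relevant limit exists and is attained infinitely often along $H$, hence equals $i$, which gives homogeneity one level down; at the bottom this forces $e\restriction H\equiv i$, so $i$ occurs infinitely often in $e$. Since the colour of a homogeneous set is uniformly computable from it, $(\RT{1}{k})^{(n-1)}\weireducible\SRT{n}{k}$, and therefore $(\Choice{k})^{(n)}\weireducible\firstOrderPart{(\SRT{n}{k})}$.

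\emph{Strictness on the right.} Here $\CNatural^{(n-1)}$ is first-order with unbounded codomain, and $\CNatural\weireducible\mflim\weireducible\WKL'$ gives, by monotonicity of the jump, $\CNatural^{(n-1)}\weireducible\WKL^{(n)}$, so $\CNatural^{(n-1)}\weireducible\firstOrderPart{(\WKL^{(n)})}\weiequiv(\Choice{2}^*)^{(n)}$ by \thref{thm:fop_max}. On the other hand, \thref{thm:cnnjump-rtn1k} with $n$ replaced by $n-1\ge 1$ gives $\CNatural^{(n-1)}\not\weireducible\RT{n}{k}$, hence $\CNatural^{(n-1)}\not\weireducible\firstOrderPart{(\RT{n}{k})}$. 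Therefore $\firstOrderPart{(\RT{n}{k})}\strictlyweireducible(\Choice{2}^*)^{(n)}$.

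\emph{Strictness on the left --- the main obstacle.} One must produce a first-order problem $h$ with $h\weireducible\SRT{n}{k}$ but $h\not\weireducible(\Choice{k})^{(n)}$. The guiding principle is that an $\SRT{n}{k}$-solution is an infinite homogeneous \emph{set}, which carries strictly more first-order content than its colour --- along a homogeneous set one recovers, in the limit, the penultimate limit colouring, so $\SRT{n}{k}$ computes genuinely more than $(\RT{1}{k})^{(n-1)}$. I expect the proof to isolate such an $h$ reducible to $\SRT{n}{k}$ and then show $h\not\weireducible(\Choice{k})^{(n)}$ by a direct diagonalization against all pairs of candidate forward/backward functionals, exploiting the combinatorics of stable colourings. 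The delicate point is that the obvious candidates fail: ``output the least element of a homogeneous set'' reduces back to $(\RT{1}{k})^{(n-1)}\weiequiv(\Choice{k})^{(n)}$, while any composition with $\mflim$ would already compute $\CNatural$ and hence cannot reduce to $\SRT{n}{k}$ at all; so the separating problem must be chosen with care. This is the step I expect to require the most work.
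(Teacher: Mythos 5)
The rightmost reduction $\firstOrderPart{(\RT{n}{k})}\weireducible(\Choice{2}^*)^{(n)}$, its strictness via $\CNatural^{(n-1)}$ and \thref{thm:cnnjump-rtn1k}, and the middle reduction are all correct and essentially the paper's. Your leftmost reduction takes a different route (encoding the $(n-1)$-fold limit directly into a stable colouring of $n$-tuples), where the paper simply observes that the proof of \cite[Thm.\ 3.5]{BRramsey17} generalizes to $k$ colours; your combinatorial construction looks workable, though note that the jump is only monotone with respect to \emph{strong} Weihrauch reducibility, so passing from $\RT{1}{k}\weiequiv\Choice{k}'$ to $(\RT{1}{k})^{(n-1)}\weiequiv(\Choice{k})^{(n)}$ requires the strong version of that equivalence.

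The genuine gap is the leftmost strictness $(\Choice{k})^{(n)}\strictlyweireducible\firstOrderPart{(\SRT{n}{k})}$, which you do not actually prove, and your discussion of why it should be hard rests on a false premise. You write that ``any composition with $\mflim$ would already compute $\CNatural$ and hence cannot reduce to $\SRT{n}{k}$ at all'', but in fact $\CNatural\weireducible\SRT{2}{2}$ (\cite[Prop.\ 7.8 and Cor.\ 3.25]{BRramsey17}), hence $\CNatural\weireducible\SRT{n}{k}$ for all $n,k\ge2$. Far from being a dead end, $\CNatural$ is exactly the separating witness the paper uses: it is first-order and reduces to $\SRT{n}{k}$, hence to $\firstOrderPart{(\SRT{n}{k})}$; on the other side, $(\Choice{k})^{(n)}$ has finite codomain (the jump does not change the codomain), and by \thref{thm:cn_finite_codomain} $\CNatural$ reduces to no problem with finite codomain, so $\CNatural\not\weireducible(\Choice{k})^{(n)}$. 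No diagonalization against candidate functionals is required; the step you expected to be the hardest is in fact the easiest in the chain.
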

\begin{proof}
    The reduction $(\Choice{k})^{(n)}\weireducible\firstOrderPart{(\SRT{n}{k})}$ can be proved observing that the proof of \cite[Theorem 3.5]{BRramsey17} generalizes to any number of colors. To prove that the reduction is strict, recall that $\CNatural$ is not computed by any partial multi-valued function with finite codomain (\thref{thm:cn_finite_codomain}). On the other hand, using \cite[Prop.\ 7.8 and Cor.\ 3.25]{BRramsey17} we have $\CNatural\weireducible\SRT{2}{2}$, and hence clearly $\CNatural\weireducible\SRT{n}{k}$ for every $n,k\geq 2$.
    
    Finally, $\firstOrderPart{(\RT{n}{k})}\weireducible(\Choice{2}^*)^{(n)}$ follows from $\RT{n}{k}\weireducible \WKL^{(n)}$ \cite[Corollary 4.18]{BRramsey17}, $\firstOrderPart{(\WKL^{(n)})}\weiequiv (\Choice{2}^*)^{(n)}$ (\thref{cor:fop-wkl}) and the Weihrauch-degree theoreticity of the first-order part operator. The fact that the reduction is strict follows from  $\CNatural^{(n-1)} \not \weireducible \RT{n}{k}$ (\thref{thm:cnnjump-rtn1k}) and $\CNatural^{(n-1)} \weireducible (\Choice{2}^*)^{(n)}$ (by \thref{cor:fop-wkl}, as $\CNatural^{(n-1)} \strongweireducible \WKL^{(n)}$).
\end{proof}

In the case of Ramsey's theorem for pairs, we can refine the result as follows:

\begin{corollary}
	For every $k\geq 2$,
    \[ (\Choice{k})''\strictlyweireducible\firstOrderPart{(\SRT{2}{k})}\strictlyweireducible \firstOrderPart{(\RT{2}{k})}\strictlyweireducible(\Choice{2}^*)''. \]
\end{corollary}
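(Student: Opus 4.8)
The statement essentially follows from the $n=2$ case of \thref{thm:fop_rtnk}. Indeed, that theorem already yields $(\Choice{k})'' \strictlyweireducible \firstOrderPart{(\SRT{2}{k})} \weireducible \firstOrderPart{(\RT{2}{k})} \strictlyweireducible (\Choice{2}^*)''$, so the only new ingredient is the strictness of the central reduction, i.e.\ $\firstOrderPart{(\RT{2}{k})} \not\weireducible \firstOrderPart{(\SRT{2}{k})}$. By the maximality of the first-order part (\thref{thm:fop_max}), it is enough to produce a first-order problem $h$ with $h\weireducible \RT{2}{k}$ and $h\not\weireducible \SRT{2}{k}$.

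The plan for obtaining such an $h$ is to exploit the uniform version of the classical decomposition of $\RT{2}{k}$ via cohesiveness and stability. Concretely, given a $k$-colouring $c$ of $[\mathbb{N}]^2$ one computably forms the associated sequence of sets (for $k=2$, $R_i:=\{x \st c(\{i,x\})=0\}$, with the obvious analogue for larger $k$), applies $\COH$ to obtain a set $G$ cohesive for that sequence, and observes that $c\restrict{[G]^2}$ is stable; applying $\SRT{2}{k}$ to a copy of $c\restrict{[G]^2}$ and relabelling the resulting homogeneous set inside $G$ produces a $c$-homogeneous set. This shows $\RT{2}{k}\weireducible \SRT{2}{k}\compproduct \COH$. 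Since $\COH$ is densely realized, its first-order part is uniformly computable, and \thref{thm:algebraic_rules_fop}(4) gives $\firstOrderPart{(\RT{2}{k})} \weireducible \firstOrderPart{(\SRT{2}{k}\compproduct\COH)} \weireducible \firstOrderPart{(\SRT{2}{k})}\compproduct\COH$. Together with $\firstOrderPart{(\SRT{2}{k})}\weireducible\firstOrderPart{(\RT{2}{k})}$, this sandwiches $\firstOrderPart{(\RT{2}{k})}$ between $\firstOrderPart{(\SRT{2}{k})}$ and $\firstOrderPart{(\SRT{2}{k})}\compproduct\COH$, and reduces the task to showing that composing with $\COH$ (i.e.\ having access to an arbitrary cohesive set) genuinely increases the first-order strength of $\firstOrderPart{(\SRT{2}{k})}$.

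The hard part is the construction of the separating first-order $h$ and the proof that $h\not\weireducible \SRT{2}{k}$. The idea is to design $h$ so that its answer can be read off, via a c.e.\ condition on a prefix of the output, from a homogeneous set for the stable colouring $c\restrict{[G]^2}$ obtained after cohesive thinning, while any putative reduction $h\weireducible\SRT{2}{k}$ would force the forward functional to commit, after finitely many steps, to a fixed stable colouring; one then diagonalizes by further modifying the colouring past that stage in a way that cohesive thinning still detects but a single application of $\SRT{2}{k}$ cannot anticipate, keeping the colouring stable throughout (and, if convenient, using the fractality of $\SRT{2}{k}$ as in the proof of \thref{thm:*<u*_fractals}). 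Checking that the modified instances remain stable and that the cohesiveness-based reduction still succeeds while the $\SRT{2}{k}$-reduction fails is the delicate bookkeeping, and this is the step I expect to be the main obstacle. Should a first-order problem separating $\RT{2}{2}$ from $\SRT{2}{2}$ already be available in the literature, it can be cited directly, after which the corollary follows at once by maximality of the first-order part.
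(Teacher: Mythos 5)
Your reduction of the task is correct: by maximality of the first-order part, proving $\firstOrderPart{(\RT{2}{k})}\not\weireducible\firstOrderPart{(\SRT{2}{k})}$ comes down to exhibiting a first-order $h$ with $h\weireducible\RT{2}{k}$ but $h\not\weireducible\SRT{2}{k}$. However, you never actually produce such an $h$; the cohesive-thinning sketch you outline is a plan, not an argument, and you yourself flag its central diagonalization as "the main obstacle" you expect to be hard. Moreover, the intermediate observation that $\firstOrderPart{(\RT{2}{k})}\weireducible\firstOrderPart{(\SRT{2}{k})}\compproduct\COH$ gives only an \emph{upper} bound on $\firstOrderPart{(\RT{2}{k})}$ and cannot, by itself, help prove a non-reduction against $\firstOrderPart{(\SRT{2}{k})}$; the sandwich does not reduce the problem at all. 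So this is a genuine gap.

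The paper closes the gap with a much lighter-weight argument, and the separating first-order problem you were hoping to find in the literature does indeed exist: it is (the color-selector version of) $\RT{1}{\mathbb{N}}$. Brattka and Rakotoniaina show $\RT{1}{\mathbb{N}}\weireducible\RT{2}{2}$ and, independently, $\RT{1}{k+1}\not\weireducible\SRT{2}{k}$ for all $k$. Since $\RT{1}{k+1}\weireducible\RT{1}{\mathbb{N}}$, a reduction $\firstOrderPart{(\RT{2}{k})}\weireducible\firstOrderPart{(\SRT{2}{k})}$ would give $\RT{1}{k+1}\weireducible\SRT{2}{k}$, a contradiction. This replaces your entire $\COH$-plus-diagonalization machinery with two citations. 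If you want to salvage your approach, you should recognize that the $\COH$ detour is not needed: the point is simply that $\RT{2}{k}$ uniformly encodes unboundedly many colours in a first-order way (via $\RT{1}{\mathbb{N}}$), while $\SRT{2}{k}$ cannot solve $\RT{1}{k+1}$; no new construction is required.
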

\begin{proof}
	In light of \thref{thm:fop_rtnk}, we only need to show that $\firstOrderPart{(\RT{2}{k})}\not\weireducible \firstOrderPart{(\SRT{2}{k})}$. This follows from the fact that $\RT{1}{\mathbb{N}}\weireducible\RT{2}{2}$ \cite[Cor.\ 3.25]{BRramsey17}, while $\RT{1}{k+1}\not\weireducible\SRT{2}{k}$ \cite[Prop.\ 3.4(2) and Cor.\ 6.6]{BRramsey17}. 
\end{proof}

We now refine the lower bounds on the degree of $\firstOrderPart{(\srt^2_k)}$. In fact, $\srt^2_2$ computes several first-order problems that are not below $(\Choice{k})''$, for every $k\geq 2$. To be more precise, we will see that these problems are actually solved by a multi-valued function weaker than $\srt^2_2$.
We summarize the results concerning $\firstOrderPart{(\srt^2_k)}$ and $\firstOrderPart{(\rt^2_k)}$ in Figure~\ref{fig:summary}. 

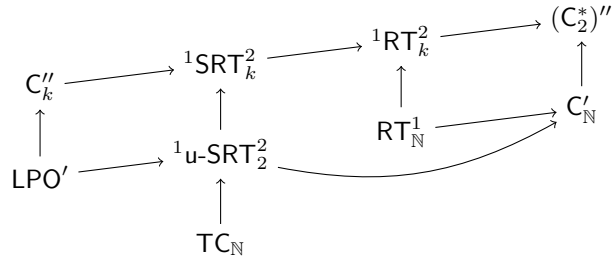
\begin{figure}[ht]
	\begin{center}
		\begin{tikzpicture}[scale=0.6]
			\node (ck) at (-4, -1.5) {$\Choice{k}''$};
			\node (srt) at (0, -1) {$\firstOrderPart{\SRT{2}{k}}$};
			\node (rt2k) at (4, -0.5) {$\firstOrderPart{\RT{2}{k}}$};
			\node (c2*) at (8, 0) {$(\Choice{2}^*)''$};
			\node (lpo) at (-4, -3.5) {$\LPO'$};
			\node (usrt) at (0, -3) {$\firstOrderPart{\usrt^2_2}$};
			\node (rt1n) at (4, -2.5) {$\RT{1}{\mathbb{N}}$};
			\node (cn') at (8, -2) {$\CNatural'$};
			\node (tcn) at (0, -5) {$\tcn$};

			\draw [->] (ck) to (srt);
			\draw [->] (srt) to (rt2k);
			\draw [->] (rt2k) to (c2*);
			\draw [->] (tcn) to (usrt);
			\draw [->] (lpo) to (usrt);
			\draw [->] (usrt) to (srt);
			\draw [->] (lpo) to (ck);
			\draw [->] (rt1n) to (rt2k);
			\draw [->] (cn') to (c2*);
			\draw [->] (rt1n) to (cn');
			\draw [->] (usrt) to [bend right=20] (cn');
			\end{tikzpicture}
		\caption{Reductions and non-reductions concerning the first-order parts of $\srt^2_k$ and $\rt^2_k$ for any $k\ge 2$. The arrows indicate strict Weihrauch reduction in the direction of the arrow. Every reduction which is not in the transitive closure of the diagram is known to not hold.}
		\label{fig:summary}
	\end{center}
\end{figure}

\begin{definition}
	$\usrt^2_k$ is the restriction of $\srt^2_k$ to \emph{unbalanced colorings}, i.e.\ to colorings $c\function{[\Nb]^n}{k}$ such that there exists an $i<k$ such that every infinite $c$-homogeneous set has color $i$.
\end{definition}
		
As already mentioned, it is known that $\CNatural\weireducible\srt^2_2$. The next result is an improvement in this reduction.
		
\begin{proposition}\thlabel{thm:tcn-usrt}
	$\tcn\weireducible\usrt^2_2$.
\end{proposition}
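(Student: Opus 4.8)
The plan is to exhibit an explicit Weihrauch reduction, encoding a $\tcn$-instance into a stable, unbalanced $2$-colouring of pairs whose infinite homogeneous sets reveal the required answer. Recall that an instance of $\tcn=\totalization{\CNatural}$ is a name for an arbitrary closed set $A\subseteq\Nb$, presented through an enumeration of $\Nb\setminus A$; for every stage $s$ let $A_s$ be the set of numbers not yet enumerated out by stage $s$, and put $m_s:=\min A_s$. Then $\sequence{m_s}{s\in\Nb}$ is non-decreasing with $m_0=0$, and either $A\neq\emptyset$, in which case $\sequence{m_s}{s\in\Nb}$ is eventually constant with value $\min A$, or $A=\emptyset$, in which case $m_s\to\infty$.

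The forward functional maps such a name to the colouring $c\function{[\Nb]^2}{2}$ defined, for $x<y$, by $c(x,y):=1$ if $m_x=m_y$ and $c(x,y):=0$ otherwise; this is uniformly computable from the name. First I would verify that $c$ is always a valid instance of $\usrt^2_2$. For a fixed $x$, as $y\to\infty$ the values $m_y$ converge to $\min A$ (if $A\neq\emptyset$) or diverge to $\infty$ (if $A=\emptyset$), so $\lim_{y\to\infty}c(x,y)$ exists and equals $1$ precisely when $m_x=\min A$ (and is $0$ throughout when $A=\emptyset$); hence $c$ is stable. Writing $d$ for its limit colouring, when $A\neq\emptyset$ we get $d^{-1}(0)=\{0,\dots,s^\ast-1\}$, where $s^\ast$ is the least stage with $m_{s^\ast}=\min A$, while $d\equiv 0$ when $A=\emptyset$. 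In both cases one of $d^{-1}(0),d^{-1}(1)$ is finite, so every infinite $c$-homogeneous set has one fixed colour, i.e.\ $c$ is unbalanced.

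For the backward functional, given the original name together with a solution $H$ of $\usrt^2_2(c)$ (an infinite $c$-homogeneous set), compute $h_0:=\min H$ and return $m_{h_0}$, a finite computation from the name. If $A=\emptyset$ then $\totalization{\CNatural}(A)=\Nb$ and any output is correct. If $A\neq\emptyset$ then $H$, being infinite, cannot be contained in the finite set $d^{-1}(0)$, so $H$ is homogeneous for colour $1$; by definition of $c$ this means $m_x=m_y$ for all $x<y$ in $H$, so $\sequence{m_s}{s\in\Nb}$ is constant on $[h_0,\infty)$, whence $m_{h_0}=\lim_{s\to\infty}m_s=\min A\in A$, as required. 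This completes the reduction.

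The one point needing care is that the colouring must land in $\dom(\usrt^2_2)$ for every input, including the degenerate ones where $A=\emptyset$ or where $\min A$ stabilizes only after a long delay; this is automatic because the limit colouring depends only on the eventual (constant, or divergent) behaviour of $\sequence{m_s}{s\in\Nb}$, the two regimes merely swapping which colour is dominant. The crucial modelling choice is to define $c$ through the equality $m_x=m_y$ rather than through an inequality: this is what forces a colour-$1$ homogeneous set to consist of stages at which $\sequence{m_s}{s\in\Nb}$ has already attained its final value, so that the backward functional recovers $\min A$ exactly and not merely an upper bound for it.
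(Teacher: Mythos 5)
Your proof is correct and takes essentially the same approach as the paper: both define, from the enumeration of the complement of the input set, the non-decreasing sequence of stage-by-stage minimum guesses (your $m_s$, the paper's $g(s)$), and use it to build a stable unbalanced 2-coloring of pairs from which $\min H$ recovers a valid answer. The only differences are cosmetic — you give the coloring by a closed-form rule $c(x,y):=[m_x=m_y]$ while the paper builds it in stages, and your colors are the complement of theirs.
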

\begin{proof}
	Let $p\in\Baire$ be an instance of $\tcn$. We define an instance $c\function{[\Nb]^2}{2}$ of $\usrt^2_2$ in stages, in such a way that, at every stage $s$, the color of pairs $(x,y)$ with $x\leq s$ and $y\leq s+1$ is decided.

	Consider the computable function $g\pfunction{\mathbb{N}}{\mathbb{N}}$ that maps $s\in\mathbb{N}$ to the smallest $n\in\mathbb{N}$ s.t.\ $(\forall i<s)(p(i)\neq n+1)$, i.e.\ $n$ is the smallest number not enumerated by $p$ by stage $s$. Observe that $g$ is non-decreasing.
	
	At stage $0$, we set $c(0,1):=0$. At step $s>0$, we check if $g(s)=g(s-1)$. If yes then for every $x<s$ we set $c(x,s+1):=c(x,s)$, and $c(s,s+1):=0$. Otherwise, if $g(s)>g(s-1)$ then, for every $x\leq s$, we set $c(x,s+1):=1$. This concludes the construction.
	
	It is easy to see that the coloring is stable: for every $x$, there exists at most one $y$ such that $c(x,y)\neq c(x,y+1)$. To show that $c$ is unbalanced, we consider two cases: if, for infinitely many steps $s$, it happens that $g(s)>g(s-1)$ then, for every $x$, $\lim_y c(x,y)=1$, which implies that $c$ is unbalanced. If instead there are only finitely many $s$ as above, then for cofinitely many $x$ it holds that, for every $y>x$, $c(x,y)=0$, which again yields that $c$ is unbalanced.
	
	Let $H$ be an infinite $c$-homogeneous set: if it is $c$-homogeneous for color $1$, then, from the discussion above, it follows that $p$ is an enumeration of all of $\Nb$, hence we can output any number. If instead $H$ is $c$-homogeneous for $0$, let $h$ be the minimum element of $H$, and let $m:=g(h)$. By the construction, we know that at no future stage will $m$ be enumerated by $p$, which means that $m$ is a valid solution to $p$, thus proving the reduction. 
\end{proof}
		
Let $\sort\function{\Cantor}{\Cantor}$ be the function that, on input $p$, outputs $0^n\concat 1^\Nb$ if there are exactly $n$ positions $i$ such that $p(i)=1$, and $0^\Nb$ if there are infinitely many such positions $i$. The problem $\sort$ was introduced in \cite[Sec.\ 5]{NP18}, and was studied in \cite{DGHPP18} to calibrate the Weihrauch-degree of the problem $\mathsf{CFI}_{\Delta^0_2}$ of finding an infinite subset of a cofinite set given by a $\Delta^0_2$ approximation. It is known that, for every $k\in\mathbb{N}$, $\sort \not \weireducible \Choice{k}''$: this follows from the fact that $\CNatural \strictlyweireducible \sort$ (see \cite[Prop.\ 24]{NP18}), so $\sort\weireducible\Choice{k}''$ would imply $\CNatural\weireducible\Choice{k}''$, which is impossible since $\CNatural$ reduces to no multi-valued function with finite range (\thref{thm:cn_finite_codomain}).
		
\begin{proposition}\thlabel{lem:sort-usrt}
	$\sort\weireducible\usrt^2_2$.
\end{proposition}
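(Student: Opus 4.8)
The plan is to adapt the construction in the proof of \thref{thm:tcn-usrt}, using the \emph{number of $1$'s seen so far} in place of the smallest number not yet enumerated. Given an instance $p\in\Cantor$ of $\sort$, set $o_s := \length{\{ i<s \st p(i)=1\}}$; the sequence $\sequence{o_s}{s\in\Nb}$ is non-decreasing, and we write $o := \lim_{s\to\infty} o_s \in \Nb\cup\{\infty\}$. The forward functional maps $p$ to the coloring $c\function{[\Nb]^2}{2}$ defined in stages: at stage $1$ we set $c(0,1):=0$; at stage $s\ge 2$, if $p(s-1)=0$ (equivalently $o_s=o_{s-1}$) we set $c(x,s):=c(x,s-1)$ for $x<s-1$ and $c(s-1,s):=0$, while if $p(s-1)=1$ (equivalently $o_s>o_{s-1}$) we set $c(x,s):=1$ for all $x<s$. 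In words: each time $p$ produces a new $1$, all pairs seen so far are recolored $1$; between two such events we keep the old colors and introduce the new point with color $0$. This is clearly computable from $p$.

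I would then verify the following. \textbf{(i)} $c$ is stable: for each $x$ one has $c(x,s)=1$ iff $p(j)=1$ for some $j$ with $x\le j<s$, so the limit $d(x):=\lim_s c(x,s)$ exists and equals $1$ iff there is $j\ge x$ with $p(j)=1$. \textbf{(ii)} $c$ is unbalanced and $\usrt^2_2(c)\neq\emptyset$. If $o=\infty$ then $d(x)=1$ for every $x$, so $d^{-1}(1)=\Nb$ and $d^{-1}(0)=\emptyset$, whence every infinite $c$-homogeneous set has color $1$ and (by the usual greedy argument inside $d^{-1}(1)$) one such set exists. If $o<\infty$, letting $\ell$ be the position of the last $1$ in $p$ (with $\ell:=-1$ if $o=0$), we get $d^{-1}(1)=\{0,\dots,\ell\}$ finite and $d^{-1}(0)=\{\ell+1,\ell+2,\dots\}$ cofinite, so every infinite $c$-homogeneous set has color $0$ and such a set exists. \textbf{(iii)} From an infinite $c$-homogeneous set $H$ and the input $p$, the backward functional computes $\sort(p)$: it first computes the color $j$ of $H$ (as $c(h_0,h_1)$ for the two least elements of $H$); if $j=1$ it outputs $0^\Nb$, which is correct since by (ii) $j=1$ forces $o=\infty$; if $j=0$ it outputs $0^n\concat 1^\Nb$ with $n:=\length{\{ i<\min H \st p(i)=1\}}$.

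The point that needs a small argument is the correctness of the $j=0$ case: since $c$ is stable, every element $x$ of an infinite $c$-homogeneous set of color $0$ satisfies $d(x)=0$ (all but finitely many $y\in H$ have $c(x,y)=0$), so by (ii) we have $\min H>\ell$, i.e.\ every $1$ of $p$ occurs at a position $<\min H$; hence $n=o$ and $\sort(p)=0^n\concat 1^\Nb$. I do not anticipate any genuine difficulty beyond keeping the stagewise definition of $c$ and the off-by-one indexing straight, and noting that in both cases of (ii) exactly one color admits an infinite homogeneous set, so that $c$ is indeed a legitimate instance of $\usrt^2_2$.
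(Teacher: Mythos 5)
Your approach mirrors the paper's: adapt the construction of \thref{thm:tcn-usrt} so that each new ``$1$'' in $p$ triggers a recoloring of all pairs seen so far to color $1$; by stability, the minimum of an infinite color-$0$ homogeneous set then bounds the positions of all $1$'s of $p$, and $\sort(p)$ is recovered by counting $1$'s below that minimum. Your ``number of $1$'s seen so far'' $o_s$ plays the same role as the paper's ``position of the last $1$'', since both increase exactly when a new $1$ appears; so conceptually this is the paper's proof.

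However, there is an off-by-one error in the stagewise construction. You set $c(0,1):=0$ unconditionally at stage $1$ and only from stage $s\ge 2$ onwards do you consult $p(s-1)$, so the bit $p(0)$ is never read. Your claim (i) therefore fails at $x=0$: for $x\ge 1$ one indeed has $c(x,s)=1$ iff $\exists j$ with $x\le j<s$ and $p(j)=1$, but for $x=0$ one only gets $c(0,s)=1$ iff $\exists j$ with $1\le j<s$ and $p(j)=1$. Concretely, with $p=\str{1}\concat 0^\Nb$ (so $\sort(p)=\str{0}\concat 1^\Nb$) your coloring is identically $0$, the set $H=\Nb$ is a valid color-$0$ homogeneous set with $\min H=0$, and your backward functional outputs $0^n\concat 1^\Nb$ with $n=\length{\{i<0\st p(i)=1\}}=0$, i.e.\ $1^\Nb$, which is wrong. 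The fix is minor (e.g.\ set $c(0,1)$ according to $p(0)$ at stage $1$, or shift so that stage $s$ consults $p(s-2)$), but the reduction as written is not correct.
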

\begin{proof}
	Let an instance $p$ of $\sort$ be given. The proof follows closely the one of \thref{thm:tcn-usrt}, except that we define the function $g$ as the map that sends $s$ to the maximum $i<s$ s.t.\ $p(i)=1$. The definition of the coloring $c$ is otherwise identical. 
		
	By the proof above, we know that $c$ is an instance of $\usrt^2_2$. Let $H$ be an infinite $c$-homogeneous set. Again, if $H$ if $c$-homogeneous for color $1$, it means that there are infinitely many positions $i$ such that $p(i)=1$, and hence we can output $0^\Nb$. If instead $H$ is $c$-homogeneous for color $0$, we just have to notice that for every $x\in\Nb$, if $\lim_y c(x,y)=0$, then for every $y>x$ it holds that $c(x,y)=0$. In particular, this means that if $\lim_y c(x,y)=0$, then every position $i\in\Nb$ such that $p(i)=1$ is such that $i<x$. Hence, let $h$ be the minimal element of $H$: since $\lim_yc(h,y)=0$ (as $c$ is stable), we can count the number $n$ of positions $i<h$ such that $p(i)=1$, and we can thus output $0^n\concat 1^\Nb$, proving the reduction.
\end{proof}
		
\begin{proposition}\thlabel{cor:lpo'-usrt}
	$\lpo'\weireducible \usrt^2_2$.
\end{proposition}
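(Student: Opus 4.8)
The plan is to establish $\lpo'\weireducible\usrt^2_2$ by recycling the coloring construction from the proof of \thref{thm:tcn-usrt}. A name for an instance of $\lpo'$ is a sequence $\sequence{p_m}{m\in\mathbb{N}}$ in $\Cantor$ converging to some $p\in\Cantor$, and we must decide whether $(\exists n)(p(n)=1)$. First I would extract, uniformly computably from such a sequence, a non-decreasing function $g\function{\mathbb{N}}{\mathbb{N}}$ that is bounded precisely when $(\exists n)(p(n)=1)$. To this end, for each $m$ let $n^*_m$ be the least $n\le m$ with $p_m(n)=1$ if one exists, and set $n^*_m:=\bot$ otherwise (where $\bot$ is a fresh symbol treated as larger than every natural number); note that $m\mapsto n^*_m$ is computable. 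Then set $g(0):=0$ and let $g(s+1):=g(s)+1$ if $n^*_{s+1}=\bot$, or if $n^*_s$ and $n^*_{s+1}$ are both natural numbers with $n^*_{s+1}>n^*_s$ (the ``bad'' steps, at which the current least apparent witness is destroyed), and $g(s+1):=g(s)$ otherwise.

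The crux of the argument --- and the step I expect to be the only one requiring care --- is the claim that $g$ is bounded if and only if $(\exists n)(p(n)=1)$. For the forward implication, if $n_0$ is the least position with $p(n_0)=1$, then componentwise convergence $p_m\to p$ gives $p_m(i)=p(i)$ for all $i\le n_0$ once $m$ is large, whence $n^*_m=n_0$ for all large $m$ and $g$ is eventually constant. For the converse, if $g$ is bounded then it is eventually constant, so from some stage on there are no bad steps; hence the $n^*_m$ are eventually all natural numbers and non-increasing, so, being bounded below, they stabilize at some $v\in\mathbb{N}$, and $n^*_m=v$ for large $m$ forces $p_m(v)=1$, hence $p(v)=\lim_m p_m(v)=1$. (Note that passing instead through $\tcn\weireducible\usrt^2_2$ with $g$ as input would not suffice directly, since $\tcn$ returns a natural number rather than the single bit we need; it is essential that $\usrt^2_2$ returns a homogeneous \emph{set}, from whose color we read off a genuine $0/1$ answer.)

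Finally, I would feed $g$ into the coloring $c\function{[\mathbb{N}]^2}{2}$ defined exactly as in the proof of \thref{thm:tcn-usrt}: that construction uses only that $g$ is non-decreasing and computable, and it already shows that $c$ is stable and unbalanced, so $c\in\dom(\usrt^2_2)$, and that every infinite $c$-homogeneous set has color $1$ when $g$ is unbounded and color $0$ when $g$ is bounded. The reduction is then witnessed by the forward functional $\sequence{p_m}{m\in\mathbb{N}}\mapsto c$ together with the backward functional that, given an infinite $c$-homogeneous set $H$, computes its color $i\in\{0,1\}$ by inspecting $c$ on the two least elements of $H$ and returns $1-i$; by the claim this equals $1$ exactly when $(\exists n)(p(n)=1)$, so it is a valid $\lpo'$-answer. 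The only point to double-check is that $c$ is a legitimate $\usrt^2_2$-instance for \emph{every} input sequence, including degenerate ones such as $g\equiv 0$, but this is subsumed by the genericity of the construction in \thref{thm:tcn-usrt}.
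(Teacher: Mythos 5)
Your proposal is correct, but it routes through a different reduction of $\lpo'$ to a monotone $g$ than the paper does. The paper first invokes the equivalence $\lpo'\weiequiv\isinf$ (where $\isinf(p)=1$ iff $p\in\Cantor$ has infinitely many $1$s), making the input a single $\Cantor$ string, and then simply observes that the coloring already built in the proof of \thref{lem:sort-usrt} --- with $g(s)$ the largest position $<s$ carrying a $1$ --- has the property that the color of the homogeneous set equals $\isinf(p)$. You instead work directly with the convergent-sequence name $\sequence{p_m}{m}$ of the $\lpo'$ instance and construct a ``mind-change counter'' $g$ that is bounded iff $\LPO(\lim_m p_m)=1$; the three-case analysis of bad steps is sound, and the terminal step (feeding a computable non-decreasing $g$ into the construction of \thref{thm:tcn-usrt}, whose homogeneous color detects boundedness of $g$) is the same as the paper's. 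Your route is a bit more self-contained since it avoids citing $\lpo'\weiequiv\isinf$, at the cost of the extra verification for your bespoke $g$; the paper's is shorter because the needed $g$ is already at hand from \thref{lem:sort-usrt}. Your sign bookkeeping (color $0$ corresponds to $g$ bounded, hence to $\LPO(p)=1$, so the backward functional must output $1-i$) is correct.
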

\begin{proof}
	It is easy to see that $\lpo'$ is Weihrauch equivalent to the problem $\isinf\function{\Cantor}{2}$ which, given an infinite binary string, outputs $1$ if and only if there are infinitely many $n\in \Nb$ such that $p(n)=1$. Observe that, in the proof of \thref{lem:sort-usrt}, the solution $H$ is homogeneous for color $1$ iff there are infinitely many $i$ s.t.\ $p(i)=1$. In other words, the color of any homogeneous solution for the coloring $c$ defined as in the proof of \thref{lem:sort-usrt} is the correct solution for $\isinf(p)$.
\end{proof}

We remark that $\tcn$, $\lpo'$ and $\sort$ are pairwise incomparable, as proved in \cite{NP18}. From this, it follows that so are $\tcn$, $\Choice k''$ and $\sort$: we have already mentioned that $\tcn\not\weireducible\Choice{k}''$ and $\sort\not\weireducible\Choice{k}''$. The fact that $\Choice{k}''\not\weireducible\tcn$ and $\Choice{k}''\not\weireducible\sort$ follows from the known fact that $\lpo'\weireducible\Choice{k}''$ (see e.g. \cite[Thm.\ 7.55]{BGP17}).

It is natural to ask whether $\firstOrderPart{(\usrt^2_k)}\weiequiv\firstOrderPart{(\srt^2_k)}$. We now show that this is not the case, by proving that $\Choice{k}''\not\weireducible\usrt^2_k$. In order to do this, we recall the concept of Erdős-Rad\'o tree associated to a coloring of pairs.

\begin{definition}
    Let $c\function{[\Nb]^2}{k}$ be a coloring. The \emph{Erdős-Rad\'o tree} associated to $c$ is the tree $T_c\subseteq \Nb^{<\Nb}$ defined in stages as follows: at stage $0$, we enumerate $\str{}$ and $\str{0}$ in $T_c$. At stage $s+1$, we look for the longest string $\sigma$ enumerated in $T_c$ up to stage $s$ such that, for every $i<j<|\sigma|$, $c(\sigma(i),\sigma(j))=c(\sigma(i),s+1)$. We then enumerate $\sigma\concat \str{s+1}$ in $T_c$, and move to the next stage.
  \end{definition}

 We remark that, although not immediately apparent, the definition above is well-posed: at every stage $s+1$, only one string $\sigma$ is selected to be extended to $\sigma\concat(s+1)$. Indeed, suppose for a contradiction that there are two maximal strings $\sigma_0$ and $\sigma_1$ eligible to be extended to $\sigma_0\concat(s+1)$ and $\sigma_1\concat(s+1)$, and let $\sigma$ be their longest common initial segment: we know that $\sigma\neq ()$, since $(0)$ is an initial segment of $\sigma$. But then,
  \[ c(\sigma(|\sigma|-1),s+1)=c(|\sigma|-1,\sigma_0(|\sigma|))\neq c(\sigma(|\sigma|-1),\sigma_1(|\sigma|))= c(\sigma(|\sigma|-1),s+1),\]
  which is a contradiction.

Although the definition is somewhat different, the construction above yields the same tree obtained by the usual description of Erdős-Rad\'o tree (see e.g.\ \cite[Lem.\ III.7.4]{Simpson09}): we prefer the construction above because it is, in a sense, simpler to parse than the original, more general definition. For every coloring of pairs $c$, $T_c$ is an infinite finitely-branching tree and it is computable in the coloring. We remark that every number $n\in\Nb$ appears in $T_c$ exactly once. This allows us to prove the following result.

\begin{lemma}\thlabel{thm:erdosradotree}
    Let $c\function{[\Nb]^2}{k}$ be an instance of $\srt^2_k$. Then, $T_c$ has exactly one infinite path.
\end{lemma}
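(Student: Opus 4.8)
The plan is to argue in two parts: first that $T_c$ has \emph{at least} one infinite path, and then that it has \emph{at most} one, crucially using the stability of $c$. For the first part, recall that $T_c$ is an infinite, finitely-branching subtree of $\baire$ (every $n$ appears exactly once, and the branching at a node $\sigma$ is bounded because a child $\sigma\concat\str{m}$ records a color in $c(\sigma(|\sigma|-1),m)$, so at most $k$ immediate successors of $\sigma$ survive past any given stage — in fact the standard bound is that each node has at most $k$ children). Hence by König's lemma $[T_c]\neq\emptyset$. (If one wants to avoid König's lemma for effectiveness reasons it is not needed here, since the statement is purely classical.)

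For uniqueness, I would suppose toward a contradiction that $x_0,x_1\in[T_c]$ are distinct paths, and let $\sigma$ be their longest common initial segment, so that $x_0(|\sigma|)\neq x_1(|\sigma|)$; write $a:=\sigma(|\sigma|-1)$ for the last entry of $\sigma$ (if $\sigma=\str{}$ replace $a$ by $0$, the unique entry at level $1$, using that $\str{0}$ is the unique node at level $1$). The key observation from the construction is: whenever $\tau\concat\str{m}\in T_c$, then for every $i<|\tau|$ we have $c(\tau(i),m)=c(\tau(i),s)$ where $s=m$ is the stage at which $\tau\concat\str{m}$ was enumerated; but more usefully, iterating along a path, for any node $\tau\concat\str{m}$ on $x_b$ the color $c(a,m)$ equals the ``color $a$ committed to'' along that path, namely $c(a,\tau'(|\tau'|))$ for the child $\tau'$ of the node ending in $a$ along $x_b$. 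Concretely: along each $x_b$, all entries $x_b(j)$ with $j>|\sigma|$ satisfy $c(a,x_b(j)) = c(a,x_b(|\sigma|))=:\varepsilon_b$, and the two values $\varepsilon_0,\varepsilon_1$ are the colors that forced the split, so $\varepsilon_0\neq\varepsilon_1$. Thus $c(a,n)=\varepsilon_0$ for all sufficiently large $n$ appearing on $x_0$ and $c(a,n)=\varepsilon_1$ for all sufficiently large $n$ on $x_1$; since both $x_0$ and $x_1$ have infinitely many entries and entries are distinct across the whole tree, this gives two infinite sets of integers on which $c(a,-)$ takes the two different constant values $\varepsilon_0\neq\varepsilon_1$, contradicting stability of $c$ (which demands $\lim_{y\to\infty}c(a,y)$ exist).

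The step I expect to require the most care is making precise the claim that "along a path, the color that $a$ commits to is constant from the split point on." This is exactly the defining property of the Erdős–Radó construction — when a node $\tau$ ending in $a$ gets extended by $s+1$, it is because $c(\tau(i),s+1)=c(\tau(i),\tau(j))$ for all $i<j<|\tau|$, in particular $c(a,s+1)$ agrees with the color $a$ already displayed toward later entries of $\tau$ — but it needs to be stated as a small sublemma and proved by induction along the path: if $\tau_0\pprefix\tau_1\pprefix\cdots$ enumerates the nodes of $x_b$ above the node ending in $a$, then $c(a,\tau_{t+1}(|\tau_{t+1}|-1))$ is the same value for all $t$. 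Granting that sublemma, the contradiction with stability is immediate. I would also remark at the end that this gives a clean uniqueness statement that will be used to feed $T_c$ into $\UCBaire$-type arguments in the sequel.
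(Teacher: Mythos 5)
Your proof is correct and uses essentially the same argument as the paper's: take two hypothetical paths, look at their last common node $\sigma$ with last entry $a=\sigma(|\sigma|-1)$, and observe that along each path $c(a,\cdot)$ is eventually constant with two different constants, contradicting stability. One small note: your parenthetical about the case $\sigma=\str{}$ is unnecessary, since $\str{0}$ is the unique node at level $1$ and hence a common prefix of all paths, so $\sigma$ is automatically nonempty (as the paper observes).
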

\begin{proof}
    Suppose for a contradiction that $T_c$ had two infinite paths $p_0$ and $p_1$, and let $\sigma$ be their longest common segment (by the definition of $T_c$, $\str{0}$ is an initial segment of $\sigma$, so $\sigma$ is non empty). Then, by definition of $T_c$, for every $n\in\Nb$,
	\[ c(\sigma(|\sigma|-1),p_0(|\sigma|+n))\neq c(\sigma(|\sigma|-1),p_1(|\sigma|+n)). \] 
	But this implies that $\lim_i c(\sigma(|\sigma|-1), i)$ does not exist, which contradicts the stability of $c$. 
\end{proof}

The result above allows us to show that $\usrt^2_2$ is $\Sigma^0_3$-measurable.
\begin{proposition}\thlabel{lem:usrtlim}
    $\usrt^2_k\weireducible\mflim'$, and hence $\firstOrderPart{\usrt^2_2}\weireducible \CNatural'$.
\end{proposition}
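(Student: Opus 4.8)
The plan is to exhibit a computation of $\usrt^2_k$ from $\mflim'$, using the Erd\H{o}s--Rad\'o tree $T_c$ and \thref{thm:erdosradotree}. Recall that $\mflim'\strongweiequiv\mflim^{[2]}\strongweiequiv\mflim^{(1)}$, and that $\mflim$ computes the limit of any convergent sequence in $\Baire$; a single application of $\mflim'$ therefore lets us take a limit of a sequence that converges to (a name for) a sequence converging to a name of a point. Equivalently, $\mflim'$ can answer any $\boldfaceSigma^0_3$ (or $\boldfacePi^0_3$) question about the input uniformly in parallel, and more: it can produce an element of $\Baire$ that is $\boldfacePi^0_2$-approximable in a suitable sense. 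I will use it in the following concrete way: given the coloring $c$, form the computable tree $T_c$, which by \thref{thm:erdosradotree} has exactly one infinite path $P$; the path $P$ is computable from a $\boldfacePi^0_2$ oracle (``$\sigma\in T_c$ has infinitely many extensions in $T_c$''), hence uniformly computable from $\emptyset''$, and therefore $P$ itself can be produced by $\mflim'$ (indeed $\CNatural''\weireducible\mflim'$, and $P$ is uniformly computable from $\CNatural$ applied to the relevant $\boldfacePi^0_2$ sets of immediate successors at each level).

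The key steps, in order: (1) From $c$ compute $T_c$. (2) Observe that along the unique infinite path $P=\sequence{P(m)}{m\in\Nb}$, for every $m$ the color $\lim_{y\to\infty}c(P(m),y)$ equals $c(P(m),P(m'))$ for all $m'>m$, by the defining property of $T_c$; call this color $d(m)$. (3) Since $c$ is unbalanced, let $i<k$ be the color such that every infinite homogeneous set has color $i$. I claim $d(m)=i$ for all but finitely many $m$: indeed, by a standard pigeonhole argument on the path $P$, some color $j$ occurs as $d(m)$ for infinitely many $m$, and the corresponding subsequence of $P$ is an infinite set all of whose pairs have color $j$ once we thin it further (take $m_0<m_1<\cdots$ with $d(m_\ell)=j$; then $c(P(m_\ell),P(m_{\ell'}))=d(m_\ell)=j$ for $\ell<\ell'$), so $j=i$; and the same argument shows no color $j\neq i$ can occur infinitely often as $d(m)$. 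Hence there is a threshold $m^*$ with $d(m)=i$ for all $m\geq m^*$. (4) Now $i$ is simply read off: $i=d(m)$ for any $m\geq m^*$, e.g.\ $i=\lim_{m\to\infty}d(m)$. (5) Package this as a $\mflim'$-computation: the sequence $P$ is $\emptyset''$-computable uniformly in $c$ (each $P(m)$ is obtained by a $\CNatural$-instance over a $\boldfacePi^0_2(c)$ set), the function $d(m)=\lim_y c(P(m),y)$ is $\emptyset'$-computable from $P$ uniformly, and $i=\lim_m d(m)$ is one further limit; chaining these through $\mflim^{(1)}$ (which, being a cylinder closed under the relevant operations, computes $\CNatural''$ and hence all of the above) gives $\usrt^2_k\weireducible\mflim'$ via computable forward and backward functionals. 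For the ``hence'' clause, $\firstOrderPart{\usrt^2_2}\weireducible\firstOrderPart{\mflim'}\weiequiv\CNatural'$ follows immediately from \thref{thm:fop_lim} and the fact that $\firstOrderPart{(\cdot)}$ is degree-theoretic.

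The main obstacle is making precise the claim that $P$ and then $d$ and then $i$ are extractable by a \emph{single} jump of $\mflim$, with all the bookkeeping uniform in $c$. The cleanest route is to avoid counting jumps by hand and instead invoke the equivalence $\mflim'\strongweiequiv\CNatural''$ (via $\mflim\strongweiequiv\parallelization{\CNatural}$, $(\parallelization f)'\strongweiequiv\parallelization{(f')}$, and $\parallelization{\CNatural''}\strongweiequiv\mflim''$... — one must be slightly careful here: $\mflim'\weiequiv\mflim^{(1)}$ and $\CNatural'\weiequiv\mflim$, so $\CNatural''\weiequiv\mflim'$), and then note that everything in steps (1)--(4) is carried out by a $\emptyset''$-computable procedure with a computable index, so a single use of $\CNatural''$ (equivalently $\mflim'$) suffices. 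An alternative, more self-contained presentation: produce $P$ level by level using $\parallelization{\CNatural'}\weiequiv\mflim'$ applied to the $\boldfacePi^0_2$ sets ``immediate successors of the current node with infinitely many descendants'', and then observe that $d$ and $i$ are then computable from $P\oplus\emptyset'\leq_T P'\leq_T\emptyset''$, which is again absorbed since $\mflim'$ computes $\CNatural''$. Either way the verification is routine once the tree-theoretic facts in steps (2)--(3) are in place, and those rest only on \thref{thm:erdosradotree} and the definition of unbalanced coloring.
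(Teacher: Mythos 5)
Your overall plan — extract the unique infinite path $P$ through the Erd\H{o}s--Rad\'o tree $T_c$, read off $d(m):=\lim_y c(P(m),y)=c(P(m),P(m+1))$ along it, take $i:=\lim_m d(m)$, and output $\{P(n)\st d(n)=i\}$ — is the same as the paper's. However, the complexity accounting that packs this into a single application of $\mflim'$ rests on the claim $\CNatural'\weiequiv\mflim$, hence $\CNatural''\weiequiv\mflim'$, which you flag as delicate and then get wrong. In fact $\CNatural'\not\weireducible\mflim$: $\CNatural'$ is first-order, and by \thref{thm:fop_lim} the strongest first-order problem below $\mflim$ is $\CNatural$, which is strictly below $\CNatural'$. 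Likewise $\firstOrderPart{(\mflim')}\weiequiv\CNatural'$, so $\CNatural''\not\weireducible\mflim'$. Once this equivalence is removed, your bookkeeping — $P$ is computable in the double jump of $c$, and ``$\mflim'$ computes $\CNatural''$'' absorbs everything else — only yields $\usrt^2_k\weireducible\mflim''$, which is strictly weaker than claimed and would not support the later corollary $\Choice{\ell}''\not\weireducible\usrt^2_k$. The ``self-contained alternative'' has the same problem: $\parallelization{\CNatural'}\weiequiv\mflim'$ does produce $P$, but $i=\lim_m d(m)$ then costs one more limit with nothing left to absorb it.

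The fix, and the route the paper takes, is to see that $P$ is obtainable from $c$ by a \emph{single} use of $\mflim$, not $\mflim'$. Since $T_c$ is computable, infinite, and finitely branching (at most $k$ children per node), the bounding function $q(n):=\max\{\sigma(n)\st \sigma\in T_c,\ \length{\sigma}>n\}$ is the limit of a computable sequence; given $q$, the $q$-bounded subtree of $T_c$ is $q$-computably compact and still has $P$ as its unique path, so $\UCCantor\weiequiv\id$ extracts $P$ computably. Then $d$ is computable outright from $P\oplus c$ (no jump at all), one further application of $\mflim_k$ gives $i$, and the required homogeneous set $\{P(n)\st d(n)=i\}$ — a step your write-up also leaves implicit, since it stops at identifying the color $i$ — is computable from $P\oplus c\oplus i$. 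Altogether this gives $\usrt^2_k\weireducible\mflim_k\compproduct\mflim\weireducible\mflim'$. Your derivation of the ``hence'' clause from \thref{thm:fop_lim} is fine.
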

\begin{proof}
    We prove a slightly stronger statement, namely $\usrt^2_k\weireducible \mflim_k\compproduct\mflim$.
	
	Fix an unbalanced stable coloring $c$. We first claim that, given a path $p$ through $T_c$, we can use $\mflim_k$ to obtain an homogeneous solution for $c$.     For $i<k$, let $L_i:=\{n\in\Nb\st \lim_j c(p(n),j)=i \}$. Notice that exactly one of the $L_i$ is infinite, since otherwise there would be infinite $c$-homogeneous sets for more than one color, contradicting the assumption that $c$ is unbalanced.
    By the definition of $T_c$, for every $n\in\Nb$, $\lim_i c(p(n),i)=c(p(n),p(n+1))$, so we can use $\mflim_k$ to find the $i$ such that $L_i$ is infinite. This shows that $L_i$ is a solution to $c$, proving our claim.
	
	Observe that, even if $T_c$ is finitely branching (in fact, every string has at most $k$ immediate successors), the set $[T_c]$ is not computably compact (i.e.\ the map $n \mapsto \max\{m \st (\forall \sigma \in T)(\sigma(n)\le m)\}$ is not computable). Indeed, if it were computably-compact then, by \cite[Thm.\ 7.23]{BGP17} and \thref{thm:erdosradotree}, we could uniformly compute a path through $T_c$ (as $\UCCantor \weiequiv \id$, see e.g.\ \cite[Cor.\ 7.26]{BGP17}). However, since an homogeneous solution for $c$ can be obtained from a path through $T_c$ using $\mflim_k$, this would imply e.g.\ that $\totalization{\CNatural} \weireducible \mflim_k$, which is clearly a contradiction.

	To conclude the proof, it is enough to notice that $\mflim$ suffices to compute a bound $q$ for a path through $T_c$ ($q$ is a bound if there is $p\in [T_c]$ s.t.\ for every $n$, $p(n)\le q(n)$). Given such a bound $q$, we can compute the subtree $\{ \sigma \in T_c \st (\forall i<\length{\sigma})(\sigma(i)\le q(i)) \}$. Since this tree is infinite and $q$-computably compact, we can now apply $\UCCantor$ to obtain a path, and $\mflim_k$ to obtain a homogeneous solution for $c$. 

	The fact that $\firstOrderPart{\usrt^2_2}\weireducible \CNatural'$ follows from \thref{thm:fop_lim}.
\end{proof}

\begin{corollary}
    For every $\ell,k\geq 2$, $(\Choice{\ell})''\not\weireducible\usrt^2_k$, and hence $\firstOrderPart{\usrt^2_k} \strictlyweireducible \firstOrderPart{\SRT{2}{k}}$.
\end{corollary}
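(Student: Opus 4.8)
The statement has two parts: first, that $(\Choice{\ell})'' \not\weireducible \usrt^2_k$ for every $\ell, k \geq 2$, and second, that this yields the strict reduction $\firstOrderPart{\usrt^2_k} \strictlyweireducible \firstOrderPart{\SRT{2}{k}}$. The plan is to derive the first part by combining two facts established just before the corollary. By \thref{lem:usrtlim} we have $\usrt^2_k \weireducible \mflim'$, hence $\firstOrderPart{\usrt^2_k} \weireducible \firstOrderPart{(\mflim')} \weiequiv \CNatural'$ by \thref{thm:fop_lim} (indeed this last consequence is already spelled out in \thref{lem:usrtlim}). On the other hand, $(\Choice{\ell})''$ is a first-order problem (it has codomain a natural number — recall $\Choice{\ell}$ has finite codomain $\ell$, and jumps do not change the codomain), so if we had $(\Choice{\ell})'' \weireducible \usrt^2_k$ then by the maximality of the first-order part (\thref{thm:fop_max}) we would get $(\Choice{\ell})'' \weireducible \firstOrderPart{\usrt^2_k} \weireducible \CNatural'$. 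But this is impossible: by the choice elimination principle (\cite[Thm.\ 7.25]{BGP17}), applied after observing $\CNatural \weireducible \Choice{\ell}''$ is false to calibrate — more directly, the standard fact that $\CNatural' \not\weistronger \Choice{\ell}''$, equivalently that $\UCNatural \not\weireducible$ anything with the relevant structure. Let me make this precise below.

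Concretely, the cleanest route for the contradiction is: if $(\Choice{\ell})'' \weireducible \CNatural'$, then jumping down (or rather, using the jump inversion theorem \cite[Thm.\ 11]{BHK2017MonteCarlo}) would give $(\Choice{\ell})' \weireducible^{\emptyset'} \CNatural$, and iterating, $\Choice{\ell} \weireducible^{\emptyset''} \CNatural$ — but this holds trivially, so that is not the obstruction. Instead, I would argue that $\CNatural \weireducible \Choice{\ell}'' $ would follow from $\CNatural' \weistronger \Choice{\ell}''$ together with $\CNatural \weireducible \CNatural'$ being... no. The real point is the \emph{reverse}: we want a first-order problem that is known to reduce to $(\Choice{\ell})''$ but not to $\CNatural'$. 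Hmm — but actually $\Choice{\ell}'' \weireducible \CNatural'$ \emph{is true} (since $\Choice{\ell} \weireducible \CNatural$ and the jump is monotone, $\Choice{\ell}'' \weireducible \CNatural''$; but $\CNatural'' \not\weireducible \CNatural'$). So the above strategy is flawed; let me correct the approach.

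The correct approach: we do \emph{not} show $(\Choice{\ell})'' \not\weireducible \CNatural'$ — that is false for large... no wait, it is in fact true that $\Choice{\ell}'' \strictlyweireducible \CNatural''$ and $\CNatural'' \not\weireducible \CNatural'$, but whether $\Choice{\ell}'' \weireducible \CNatural'$ needs the fact that $\Choice{2}'' \not\weireducible \CNatural'$, which holds because $\CNatural' \weiequiv \parallelization{\LPO'}$ is "$\Sigma^0_3$-ish" while $\Choice{2}'' $ requires Weak König's Lemma relativized — precisely, $\WKL' \weireducible \Choice{2}''$ and $\WKL' \not\weireducible \CNatural'$ since $\WKL \not\weireducible \CNatural$ relativizes. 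So the plan is: (i) $(\Choice{\ell})'' \weireducible \usrt^2_k$ would give, via \thref{thm:fop_max} and \thref{lem:usrtlim}, that $(\Choice{\ell})'' \weireducible \CNatural'$; (ii) but $\Choice{2}'' \weireducible (\Choice{\ell})''$ and $\Choice{2}'' \not\weireducible \CNatural'$ because $\WKL^{(2)} \strongweiequiv \parallelization{\Choice{2}''} \weistronger \Choice{2}''$ is not reducible to $\CNatural'$ — indeed $\firstOrderPart{\WKL^{(2)}} \weiequiv (\Choice{2}^*)'' \not\weireducible \CNatural'$ by \thref{cor:fop-wkl} combined with the strictness $(\Choice{2}^*)'' \strictlyweireducible (\Choice{2}^*)''$... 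I will instead invoke directly that $\Choice{2}' \not\weireducible \CNatural$ (hence relativizing, $\Choice{2}'' \not\weireducible \CNatural'$), which is standard. The main obstacle is getting the separation $(\Choice{\ell})'' \not\weireducible \CNatural'$ stated with a clean citation; I would cite the relativized version of $\WKL \not\weireducible \CNatural$ (e.g.\ via \cite[Thm.\ 7.23]{BGP17} and the jump inversion theorem) to conclude $\WKL^{(2)} \not\weireducible \CNatural'$, whence $\Choice{2}'' \not\weireducible \CNatural'$ and \emph{a fortiori} $(\Choice{\ell})'' \not\weireducible \CNatural'$. Finally, for the strictness of $\firstOrderPart{\usrt^2_k} \weireducible \firstOrderPart{\SRT{2}{k}}$: the reduction holds since $\usrt^2_k$ is a restriction of $\srt^2_k$ and $\firstOrderPart{(\cdot)}$ is monotone; strictness follows because $(\Choice{k})'' \weireducible \firstOrderPart{\SRT{2}{k}}$ by \thref{thm:fop_rtnk}, while $(\Choice{k})'' \not\weireducible \usrt^2_k$, hence (by \thref{thm:fop_max} again, as $(\Choice{k})''$ is first-order) $(\Choice{k})'' \not\weireducible \firstOrderPart{\usrt^2_k}$, so the two first-order parts cannot be equivalent.

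\begin{proof}
	Fix $\ell,k\geq 2$. By \thref{lem:usrtlim}, $\firstOrderPart{\usrt^2_k}\weireducible\CNatural'$. Since $(\Choice{\ell})''$ is a first-order problem (the codomain of $\Choice{\ell}$, and hence of each of its jumps, is the finite set $\ell$), if $(\Choice{\ell})''\weireducible\usrt^2_k$ held, then by the maximality of the first-order part (\thref{thm:fop_max}) we would obtain $(\Choice{\ell})''\weireducible\firstOrderPart{\usrt^2_k}\weireducible\CNatural'$. In particular, since $\Choice{2}''\weireducible(\Choice{\ell})''$, we would get $\Choice{2}''\weireducible\CNatural'$. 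This is a contradiction: it is well-known that $\WKL\not\weireducible\CNatural$ (see e.g.\ \cite[Thm.\ 7.23 and Cor.\ 7.26]{BGP17}), and this separation relativizes to any oracle, so by the jump inversion theorem \cite[Thm.\ 11]{BHK2017MonteCarlo} we have $\WKL''\not\weireducible\CNatural'$; since $\WKL''\strongweiequiv\parallelization{(\Choice{2}'')}\weistronger\Choice{2}''$ would then also need to fail to reduce, and more directly $\Choice{2}'\not\weireducible\CNatural$ relativizes to $\Choice{2}''\not\weireducible\CNatural'$, we reach the desired contradiction. Hence $(\Choice{\ell})''\not\weireducible\usrt^2_k$.

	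For the strict reduction: $\usrt^2_k$ is by definition a restriction of $\srt^2_k$, so $\usrt^2_k\weireducible\srt^2_k$ and, by the monotonicity of the first-order part operator, $\firstOrderPart{\usrt^2_k}\weireducible\firstOrderPart{(\SRT{2}{k})}$. To see that this reduction is strict, recall that by \thref{thm:fop_rtnk} we have $(\Choice{k})''\weireducible\firstOrderPart{(\SRT{2}{k})}$. On the other hand, since $(\Choice{k})''$ is first-order and $(\Choice{k})''\not\weireducible\usrt^2_k$ by the previous paragraph, \thref{thm:fop_max} yields $(\Choice{k})''\not\weireducible\firstOrderPart{\usrt^2_k}$. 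Therefore $\firstOrderPart{\usrt^2_k}\not\weiequiv\firstOrderPart{(\SRT{2}{k})}$, and the reduction is strict.
\end{proof}
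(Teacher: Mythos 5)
Your final proof is correct and its overall strategy matches the paper's: both hinge on \thref{lem:usrtlim} to cap $\usrt^2_k$ at the level of $\mflim'$, both then separate $\Choice{\ell}''$ from that level, and the strictness step via \thref{thm:fop_rtnk} is identical. The difference is where the separation comes from. The paper argues directly that $\Choice{\ell}''$ is $\mflim''$-computable but not $\mflim'$-computable, citing an external result for this, and then concludes from $\usrt^2_k\weireducible\mflim'$. You instead pass through the first-order part: $\firstOrderPart{\usrt^2_k}\weireducible\firstOrderPart{(\mflim')}\weiequiv\CNatural'$, combined with the known $\Choice{2}'\not\weireducible\CNatural$ relativized (via jump inversion) to give $\Choice{2}''\not\weireducible\CNatural'$. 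Because $\CNatural'\weiequiv\firstOrderPart{(\mflim')}$ and $\Choice{\ell}''$ is first-order, these two separations carry the same information, so your route is an equally valid and perhaps slightly more self-contained rephrasing (it replaces the outside citation with the jump inversion theorem already used elsewhere in the paper).

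Two editorial points. First, the long exploratory paragraph before the formal proof contains several false starts and a claim that is later retracted (e.g.\ the parenthetical worry that $\Choice{\ell}''\weireducible\CNatural'$ might hold); it should be deleted, as it obscures a proof that is otherwise fine. Second, inside the formal proof, the $\WKL''$ detour is a distraction: $\WKL''\not\weireducible\CNatural'$ does \emph{not} by itself give $\Choice{2}''\not\weireducible\CNatural'$, since $\Choice{2}''$ is on the weak side of $\WKL''$; the sentence you append (``more directly, $\Choice{2}'\not\weireducible\CNatural$ relativizes to $\Choice{2}''\not\weireducible\CNatural'$'') is what actually does the work, and the $\WKL''$ material should be cut.
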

\begin{proof}
    By the fact that $\Choice{\ell}$ is $\mflim$-computable but not computable, it follows that $\Choice{\ell}''$ is $\mflim''$-computable but not $\mflim'$-computable (see also the proof of \cite[Prop.\ 9.1]{prob-comput-brattka-gherardi-holzl}). Hence, by \thref{lem:usrtlim}, the claim follows. The separation $\firstOrderPart{\usrt^2_k} \strictlyweireducible \firstOrderPart{\SRT{2}{k}}$ then follows from \thref{thm:fop_rtnk}.
\end{proof}

For completeness, we conclude this section by studying the relationship between $\rt^1_\Nb$ and the benchmark problems $\lpo'$, $\sort$, and $\tcn$. As a preliminary observation, we notice that the results we gave above already imply that none of $\lpo'$, $\sort$, and $\tcn$ computes $\rt^1_\Nb$, since $\rt^1_\Nb$ is not solved by $\srt^2_k$ for any $k\in\Nb$. We will now show that $\rt^1_\Nb$ is not powerful enough to compute any of the other three problems.

\begin{lemma}\thlabel{lem:lpo'-rt1n}
    $\rt^1_\Nb \weiincomparable \lpo'$.
\end{lemma}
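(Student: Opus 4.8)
The plan is to establish the two non-reductions separately.

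\emph{The reduction $\rt^1_\Nb\weireducible\lpo'$.} This one I would dispose of using results already available: by \thref{cor:lpo'-usrt} we have $\lpo'\weireducible\usrt^2_2$, and since $\usrt^2_2$ is the restriction of $\srt^2_2$ to unbalanced colorings, $\usrt^2_2\weireducible\srt^2_2$; hence $\lpo'\weireducible\srt^2_2$. If $\rt^1_\Nb\weireducible\lpo'$ held, we would obtain $\rt^1_\Nb\weireducible\srt^2_2$, contradicting the fact (recalled above, following from \cite[Prop.\ 3.4(2) and Cor.\ 6.6]{BRramsey17} together with $\rt^1_3\weireducible\rt^1_\Nb$) that $\rt^1_\Nb\not\weireducible\srt^2_k$ for every $k$. (Alternatively: $\CNatural\weireducible\srt^1_\Nb\weireducible\rt^1_\Nb$, so $\rt^1_\Nb\weireducible\lpo'$ would give $\CNatural\weireducible\lpo'$, against \thref{thm:cn_finite_codomain} since $\lpo'$ has finite codomain.)

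\emph{The reduction $\lpo'\weireducible\rt^1_\Nb$.} Here I would argue directly. Identify $\lpo'$ with $\isinf\function{\Cantor}{2}$ and $\rt^1_\Nb$ with the (Weihrauch-equivalent) first-order problem of outputting the \emph{color} of an infinite homogeneous set. Suppose $\Phi$ (forward) and $\Psi$ (backward) witnessed $\isinf\weireducible\rt^1_\Nb$; for $q\in\Cantor$ write $c_q:=\Phi(q)$, a coloring of $\Nb$ with finite range, and note that $\Psi(q,i)=\isinf(q)$ whenever $i$ is a color cofinal in $c_q$. The key point is compactness of $\Cantor$: the map $q\mapsto c_q$ is total and continuous on $\Cantor$, hence for each $n$ there is $L_n\in\Nb$ (which we may take nondecreasing) such that $c_q(n)$ depends only on $q\rst L_n$. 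Note also that $\isinf(\sigma 0^\mathbb{N})=0$ for every finite $\sigma$, so $\Psi(\sigma 0^\mathbb{N},v)\downarrow=0$ for every color $v$ cofinal in $c_{\sigma 0^\mathbb{N}}$.

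I would then build $q$ with infinitely many $1$s defeating $\Phi,\Psi$, as $q=\bigcup_s\sigma_s$ with $\sigma_0:=\str{}$ and $\sigma_{s+1}:=\sigma_s 0^{n_s}\concat\str{1}$. At stage $s$, let $v_s$ be (say) the least color cofinal in $c_{\sigma_s 0^\mathbb{N}}$, fix positions $p_1<\dots<p_s$ all colored $v_s$ by $c_{\sigma_s 0^\mathbb{N}}$, let $\ell_s$ be the use of the halting computation $\Psi(\sigma_s 0^\mathbb{N},v_s)$ on its first argument, and choose $n_s$ so large that $|\sigma_s|+n_s\ge\max(\ell_s,L_{p_s})$ and $n_s>n_{s-1}$. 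Then $q$ has infinitely many $1$s; $q$ agrees with $\sigma_s 0^\mathbb{N}$ on its first $|\sigma_s|+n_s\ge L_{p_s}$ bits, so $c_q(p_j)=c_{\sigma_s 0^\mathbb{N}}(p_j)=v_s$ for all $j\le s$, i.e.\ $c_q$ uses the color $v_s$ at least $s$ times; and $q\rst\ell_s=(\sigma_s 0^\mathbb{N})\rst\ell_s$, so $\Psi(q,v_s)$ runs the same computation as $\Psi(\sigma_s 0^\mathbb{N},v_s)$ and outputs $0$. Since $c_q$ must have finite range (otherwise $\Phi(q)\notin\dom(\rt^1_\Nb)$, contradicting that $\Phi$ is a valid forward functional) and $v_s\in\ran(c_q)$ for all $s$, some color $v$ equals $v_s$ for infinitely many $s$; for that $v$, $c_q$ uses $v$ infinitely often, so $v$ is cofinal in $c_q$, yet $\Psi(q,v)=0\ne 1=\isinf(q)$ — a contradiction.

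The main obstacle, and the reason the construction is nontrivial, is that $q$ must differ from every eventually-$0$ string beyond some point, so a priori the cofinal colors of $c_q$ need bear no relation to those of the approximating colorings $c_{\sigma_s 0^\mathbb{N}}$. This is exactly what the uniform use bounds $L_n$ (obtained from compactness of $\Cantor$) are there to fix: because $L_n$ is nondecreasing, committing a long enough prefix of $q$ freezes an \emph{initial segment} of $c_q$, which lets us transplant a finite but unboundedly long witness of cofinality of $v_s$ into $c_q$ itself. The final dichotomy ``the $v_s$ stabilize vs.\ $c_q$ has infinite range'' then forces the contradiction. I expect the bookkeeping around the two competing requirements on $n_s$ (freezing enough of $c_q$ versus keeping the read prefix of $\Psi$ inside the agreement zone) to be the most delicate routine part.
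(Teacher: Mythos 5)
Both directions of your proof are correct, and the interesting one, $\lpo'\not\weireducible\rt^1_\Nb$, takes a genuinely different route from the paper. The paper parallelizes: $\parallelization{\lpo'}\weiequiv\mflim'$ while $\parallelization{\rt^1_\Nb}\weiequiv\WKL'$, and $\mflim'\not\weireducible\WKL'$ kills the reduction in one line. You instead argue directly on a hypothetical pair $(\Phi,\Psi)$: the forward functional is a total continuous map on the compact space $\Cantor$, so each output bit of the coloring $c_q=\Phi(q)$ has a uniform modulus $L_n$, letting you freeze initial segments of $c_q$ while extending $q$; meanwhile, continuity of $\Psi$ on the eventually-zero inputs $\sigma_s 0^\Nb$ (where $\isinf$ is $0$) gives finite use bounds $\ell_s$. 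Interleaving these two kinds of use and invoking the finite-range dichotomy forces a cofinal color $v$ in $c_q$ with $\Psi(q,v)=0\neq\isinf(q)$. The bookkeeping is sound — the nondecreasing $L_n$ and the choice of $n_s$ correctly guarantee both $c_q(p_j)=v_s$ for $j\le s$ and that $\Psi(q,v_s)$ follows the same computation — so the argument closes. Your version is more elementary and self-contained (in particular it avoids the identity $\parallelization{\rt^1_\Nb}\weiequiv\WKL'$, which itself needs justification), at the cost of a longer diagonalization; the paper's is a one-liner given the surrounding degree-theoretic machinery. For $\rt^1_\Nb\not\weireducible\lpo'$, you and the paper rely on the same earlier results; your alternative via $\CNatural\weireducible\rt^1_\Nb$ and \thref{thm:cn_finite_codomain} is if anything the cleanest way to state it.
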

\begin{proof}
   We already said that $\rt^1_\Nb\not\weireducible\lpo'$. Then, suppose for a contradiction that $\lpo'\weireducible\rt^1_\Nb$. This would then imply that $\mflim'\weiequiv\parallelization{\lpo'} \weireducible \parallelization{\rt^1_\Nb}\weiequiv\WKL'$, which is a contradiction.
\end{proof}

To show that $\rt^1_\Nb$ does not compute neither $\tcn$ nor $\sort$, we will need the following preliminary lemma.

\begin{lemma}\thlabel{lem:lport1n-rt1n}
    $\lpo\compproduct\rt^1_\Nb\weireducible\rt^1_\Nb$.
\end{lemma}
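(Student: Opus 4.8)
## Proof proposal for $\lpo \compproduct \rt^1_\Nb \weireducible \rt^1_\Nb$

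The plan is to show that a single application of $\rt^1_\Nb$ already absorbs a subsequent application of $\lpo$, by building one big coloring whose homogeneous color encodes both the original $\rt^1_\Nb$-answer and the answer to the $\lpo$-question that depends on it. The key point is that $\rt^1_\Nb$ is strongly Weihrauch equivalent to the first-order problem ``given a coloring $c\function{\mathbb N}{k}$ (with $k$ read off from the range), output the color $i<k$ of some infinite $c$-homogeneous set'' (Proposition before, $\Choice{k}'\weiequiv\rt^1_k$), and that the number of colors need not be committed to in advance. So I will work with that first-order representative throughout.

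Here is the construction I would carry out. Fix an instance $(w, c)$ of $\lpo \compproduct \rt^1_\Nb$, where $c\function{\mathbb N}{k}$ is a coloring and, for every name $q$ of a valid $\rt^1_\Nb$-solution of $c$ (i.e.\ a name of a color $i<k$ that is realized by some infinite homogeneous set), $\Phi_w(q)$ is a valid $\lpo$-instance, i.e.\ $\Phi_w(q)\in\Cantor$. Since $\rt^1_\Nb$-solutions are single natural numbers, $\Phi_w$ is really just a map $i\mapsto p_i\in\Cantor$, uniformly in $(w,i)$. For each color $i<k$ that actually occurs as a ``limit color'' — more precisely, for each $i$ that is the color of some infinite $c$-homogeneous set — we want to decide $\lpo(p_i)$. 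I define a new coloring $d\function{[\mathbb N]^2}{2k}$ as follows: on a pair $(x,y)$ with $x<y$, set $d(x,y) := 2\cdot c(x) + b$, where $b=1$ if by stage $y$ the finite initial segment $p_{c(x)}[y]$ already contains a $1$, and $b=0$ otherwise. (Equivalently one can use a coloring of singletons with the same idea by dovetailing; I would use whichever is cleanest, probably the pair version so stability is transparent.) This $d$ is computable from $(w,c)$, and it is stable: for fixed $x$, $\lim_y d(x,y)$ exists, equal to $2c(x)+\lpo(p_{c(x)})$. Moreover an infinite $d$-homogeneous set $H$ with color $j$ is in particular $c$-homogeneous for color $\lfloor j/2\rfloor$, so $i:=\lfloor j/2\rfloor$ is a valid $\rt^1_\Nb$-answer for $c$; and the low-order bit of $j$ equals $\lpo(p_i)$ for that same $i$, because on any infinite homogeneous set the eventual value of $b$ is exactly $\lpo(p_i)$. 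Thus from the color $j$ of a homogeneous set for $d$ we read off $i=\lfloor j/2\rfloor$ and $\lpo(p_i)=j\bmod 2$, which is precisely a solution to $(\lpo\compproduct\rt^1_\Nb)(w,c)$. The forward functional is $(w,c)\mapsto d$ and the backward functional is $j\mapsto (i, j\bmod 2)$ with $i=\lfloor j/2\rfloor$; note we do not even need access to $w$ in the backward direction. Since $d$ is a stable coloring I may even route it through $\srt^1_{2k}\weiequiv\rt^1_{2k}\weireducible\rt^1_\Nb$, but plain $\rt^1_\Nb$ suffices.

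The one subtlety to get right — and the main thing to check carefully — is that $d$ is a \emph{legitimate} instance of $\rt^1_\Nb$ regardless of the behaviour of $\Phi_w$ on colors that are \emph{not} realized by any infinite homogeneous set of $c$. For such a ``bad'' color $i$, $\Phi_w$ need not produce a total infinite binary string, so $p_i$ and hence the bit $b$ may be ill-defined on pairs $(x,y)$ with $c(x)=i$. This is harmless: since only finitely many $x$ have a bad color? — no, that is false, there can be infinitely many. The correct fix is that $d$ only needs to be \emph{total}, and on a pair $(x,y)$ we only ever inspect the finite string $\Phi_w(i)$ for $i=c(x)$ for at most $y$ computation steps (say), outputting $b=0$ if it has not yet produced a $1$ in its first $y$ positions within that many steps; this makes $d$ total and computable no matter what $\Phi_w$ does, and it does not affect the argument above, because for a color $i$ that \emph{is} realized by an infinite homogeneous set, $p_i$ is genuinely an element of $\Cantor$ (by the hypothesis that $(w,c)\in\dom(\lpo\compproduct\rt^1_\Nb)$), so the eventual bit on an infinite $d$-homogeneous set with that underlying color is correctly $\lpo(p_i)$. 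I would state this care explicitly in the write-up, as it is the only place where the domain condition of the compositional product is used. Everything else is routine verification of the Weihrauch-reduction inequalities.
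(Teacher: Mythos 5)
Your core idea — encode in a single Ramsey coloring both the answer color and the bit recording the eventual behaviour of the induced $\lpo$-instance, using the second index to approximate the computation — is sound and indeed close in spirit to the paper's argument. However, as written the proof does not type-check: the object you build, $d\function{[\Nb]^2}{2k}$, is a coloring of \emph{pairs}, and $\rt^1_\Nb$ only accepts colorings of singletons $\function{\Nb}{\Nb}$. Feeding $d$ to $\rt^1_\Nb$ therefore has no meaning, and what you have actually argued is $\lpo\compproduct\rt^1_\Nb\weireducible\rt^2_\Nb$, which is strictly weaker than the claim. The parenthetical ``dovetailing'' remedy does not rescue this in the way it is phrased: if one takes the naive dovetail $e(\coding{x,y}):=d(x,y)$, an $e$-homogeneous set could consist entirely of pairs with a fixed first coordinate, which destroys the argument that $S_i$ is infinite. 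The correct collapse to a single index must make $s$ play both roles at once, e.g.\ $e(s):=2c(s)+b_s$ with $b_s$ recording whether the first $s$ steps of $\Phi_w$ applied to (a name of) $c(s)$ have produced a $1$; this actually does work and would have closed the gap, but you never exhibit it. Separately, the aside ``$\srt^1_{2k}\weiequiv\rt^1_{2k}$'' is false — indeed, the paper records $\SRT{1}{k}\weiequiv\mflim_k\strictlyweireducible\Choice{k}'\weiequiv\RT{1}{k}$ — though this is not load-bearing for your argument.

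For comparison: the paper's proof works directly with the standard representative of $\rt^1_\Nb$ (whose output is a homogeneous set) and defines a genuine $1$-coloring $f\function{\Nb}{2k}$ by $f(s):=2i_s+b_s$, where $H_s$ is the first $c$-homogeneous set of size $s$ found, $i_s:=c(H_s)$, and $b_s$ records whether $\Phi_p(H_s)$ outputs a $1$ within $s$ steps. The number $s$ thus simultaneously bounds the prefix of the approximating homogeneous set and the running time, which is exactly the device needed to compress your $(x,y)$-pair into a single coordinate; the backward map then uses the canonical color class $S_i$ and continuity of $\Phi_p$ to read off the correct $\lpo$-bit. Your (implicit) choice to use the first-order, color-producing representative of $\rt^1_\Nb$ is legitimate and simplifies the verification, but to make it a proof you would still need to write down the $1$-coloring explicitly and observe that a homogeneous set for it is an infinite subset of some $S_i$, so that $S_i$ is infinite — which is precisely the point the pair version obscures.
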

\begin{proof}
    Let us consider $(p,c)\in \dom (\lpo\compproduct\rt^1_\Nb)$, where $c\function{\Nb}{\Nb}$ is a coloring of the integers such that its range is bounded by some $k\in\Nb$. We build a coloring $f\function{\Nb}{\Nb}$, the range of which will be bounded by $2k$, as follows. 

    For every $s$, we examine the coloring $c$ until we find a $c$-homogeneous set of size $s$, call it $H_s$, and let $i_s:=c(H_s)$. We then run $\Phi_p(H_s)$ for $s$ steps, thus obtaining a finite string $\sigma_s$. There are now two cases: if $\sigma_s(n)= 0$ for every $n<|\sigma_s|$, we set $f(s):=2i_s$ (notice that this includes the case that $\sigma_s=\str{}$), otherwise, we put $f(s):=2i_s+1$. 
    
    Notice that $f$ is indeed an instance of $\rt^1_\Nb$, provided $c$ is: indeed, since for every $s$ there exists a $c$-homogeneous set of size $s$, $f$ is total, and its range is $2k$, where $k$ is the range of $c$.
    
    Let $\Phi$ be the Turing functional mapping a pair $(p,c)\in \Baire\times\Baire$, where $c\in\dom(\rt^1_\Nb)$, to $f$ defined as above. Moreover, let $\Psi$ be the Turing functional defined as follows: on every input $(p,c,H)$, where $(p,c)$ is as before and $H\subseteq \Nb$, let $i,b$ be s.t.\ $\Phi(p,c)(H)= 2i+b$, 
	and let $S_i:= \{x\in \Nb \st c(x)=i\}$. Then, $\Psi(p,c,H)$ outputs the pair $(S_i, b)$. We claim that the functionals $\Phi,\Psi$ as above witnesses the reduction $\lpo\compproduct\rt^1_\Nb\weireducible\rt^1_\Nb$.
    
    We have already shown that $f=\Phi(p,c)$ is an instance of $\rt^1_\Nb$. Let $H$ be an infinite $f$-homogeneous set. We start by showing that $S_i$ is an infinite $c$-homogeneous set. Notice that by definition, $S_i$ is $c$-homogeneous, so we only have to show that it is infinite. This follows easily from the observation that if $H$ is $f$-homogeneous for $2i+b$, by construction there are infinitely many $s$ such that $S_i$ has size at least $s$, which proves that $S_i$ is infinite. 
    
    Hence, we only have to show that $b$ is a valid answer to $\LPO(\Phi_p(S_i))$. We claim that  $H$ is $f$-homogeneous for an even color if and only if $\Phi_p(S_i)(n)= 0$ for every $n\in\mathbb{N}$: notice that, if we do this, then indeed $b$ is as we want. Suppose at first that $H$ is homogeneous for $2i$, for some $i<k$. Then, for every $s\in H$, by the definition of $f$ we have that there is a set $H_s$ of size $s$ $c$-homogeneous for $i$ and such that $\Phi_p(H_s)(n)= 0$ for every $n$ such that $\Phi_p(H_s)(n)$ converges. Notice moreover that, for every $s\in H$, $H_s$ is an initial segment of $S_i$: it follows that $\Phi_p(S_i)(n)= \lim_{s\in H, s\to\infty} \Phi_p(H_s)(n)= 0$ for every $n$ ($\Phi_p(S_i)$ is a total function, since we have already shown that $S_i$ is a $c$-homogeneous set). Hence, indeed, $0$ is a correct answer to the $\lpo$-instance $\Phi_p(S_i)$. Suppose next that $H$ is $f$-homogeneous for $2i+1$, for some $i$. Then, for every $s\in H$, the definition of $f$ gives us that there is a $c$-homogeneous $H_s$ such that, for some $n\in \Nb$, $\Phi_p(H_s)(n)=1$. It then follows from an analysis similar to the previous case that $1$ is a correct answer to the $\lpo$-instance $\Phi_p(S_i)$, which concludes the proof of the claim.
\end{proof}    

We now introduce two problems that were originally defined in \cite{NP18}.

\begin{definition} We define the following functions: 
    \begin{itemize}
        \item $\isfins \function{\Cantor}{\mathbb{S}}$ is the function such that, on input $p\in \Cantor$, outputs $\top$ if there are only finitely many $n$ such that $p(n)=1$, and $\bot$ otherwise.
        \item $\isinfs\function{\Cantor}{\mathbb{S}}$ is the function such that, on input $p\in \Cantor$, outputs $\top$ if there are infinitely many $n$ such that $p(n)=1$, and $\bot$ otherwise.
    \end{itemize}
\end{definition}

Notice that the two problems above are weakenings of $\isinf$ (which we have introduced in \thref{cor:lpo'-usrt}) and of $\isfin$, which is the problem with domain $\Cantor$ and codomain $2$ such that $\isfin(p)=1$ if and only if there are finitely many $n$ such that $p(n)=1$, which are both equivalent to $\lpo'$. It is clear that the difference between the $\isfins$ and $\isfin$ (and between $\isinfs$ and $\isinf$) is in the representation of the solution. Recalling that, for every $p\in \Cantor$, $\repmap{\mathbb{S}}(p)=0$ if and only if $p=0^\Nb$, the following result is immediate.

\begin{lemma}\thlabel{lem:obvisinffin}
    $\isfin\weireducible\lpo\compproduct\isfins$ and $\isinf\weireducible\lpo\compproduct\isinfs$.
\end{lemma}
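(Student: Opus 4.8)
The plan is to exploit the simple observation that a single application of $\LPO$ to a $\Sier$-name "decodes" a Sierpi\'nski point into an honest bit: running $\isinfs$ (resp.\ $\isfins$) does all the non-computable work of deciding whether $p$ has infinitely (resp.\ finitely) many $1$'s and hands back a $\Sier$-name, and then $\LPO$ reads off that name the Boolean value that $\isinf$ (resp.\ $\isfin$) would have produced. So I would prove $\isinf\weireducible\LPO\compproduct\isinfs$ in detail and note that the argument for $\isfin\weireducible\LPO\compproduct\isfins$ is obtained by replacing every occurrence of $\isinfs,\isinf$ and the phrase ``infinitely many'' with $\isfins,\isfin$ and ``finitely many''.

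First I would unwind the representation of $\Sier$: since $\repmap{\Sier}(q)=0$ iff $q=0^\Nb$, a $\repmap{\Sier}$-name for $\top=1$ is any $q\in\Cantor$ with $q\neq 0^\Nb$, and the unique name for $\bot=0$ is $0^\Nb$. Because $\isinfs(p)=\top$ holds exactly when $p$ has infinitely many $1$'s, which is exactly the condition defining $\isinf(p)=1$, every $\repmap{\Sier}$-name $q$ for $\isinfs(p)$ satisfies $q\neq 0^\Nb \iff \isinf(p)=1$, and hence $\LPO(q)=\isinf(p)$. Then I would assemble the reduction: the forward functional maps an instance $p$ of $\isinf$ to the instance $(w,p)$ of $\LPO\compproduct\isinfs$, where $p$ is read as an instance of $\isinfs$ and $w$ is a fixed computable index for the identity functional $\Phi_w=\id$. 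This is a legal instance of the compositional product, since any $\repmap{\Sier}$-name $q$ of $\isinfs(p)$ lies in $\Cantor=\dom(\LPO)$ and $\Phi_w(q)=q\in\dom(\LPO)$; and a solution of $\LPO\compproduct\isinfs$ on $(w,p)$ is a pair $(q,n)$ with $q$ a name of $\isinfs(p)$ and $n=\LPO(\Phi_w(q))=\LPO(q)=\isinf(p)$, so the backward functional just returns $n$.

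I do not expect any genuine obstacle: the content is entirely in matching up the conventions for the Sierpi\'nski representation and for the fixed representative of the compositional product. The only point that needs a moment of care is bookkeeping: if the opposite convention for $\Sier$ were used (so that $\top$ is named by $0^\Nb$), the same proof goes through with the backward functional additionally flipping the output bit, so the statement is insensitive to this choice.
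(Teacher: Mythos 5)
Your proof is correct, and it is exactly the spelling-out of what the paper declares ``immediate'' from the observation that $\repmap{\Sier}(p)=0$ iff $p=0^\Nb$; the paper gives no explicit proof, so there is nothing to diverge from. The forward functional feeding $p$ together with an index for the identity, and the backward functional reading off the single bit $\LPO$ returns, is the intended argument.
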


We are now ready to prove that $\rt^1_\Nb$ does not compute neither $\tcn$ nor $\sort$.

\begin{proposition}
   $\sort\not\weireducible\rt^1_\Nb$ and $\tcn\not\weireducible\rt^1_\Nb$.
\end{proposition}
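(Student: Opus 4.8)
The plan is to argue by contradiction, showing that a reduction $\sort\weireducible\rt^1_\Nb$ (resp.\ $\tcn\weireducible\rt^1_\Nb$) would yield $\lpo'\weireducible\rt^1_\Nb$, contradicting \thref{lem:lpo'-rt1n}. The engine is \thref{lem:lport1n-rt1n}: since $\lpo\compproduct\rt^1_\Nb\weireducible\rt^1_\Nb$ and the compositional product is monotone in both arguments, if $h\weireducible\rt^1_\Nb$ then $\lpo\compproduct h\weireducible\rt^1_\Nb$ as well. Combining this with \thref{lem:obvisinffin} (which gives $\lpo'\weiequiv\isfin\weireducible\lpo\compproduct\isfins$ and $\lpo'\weiequiv\isinf\weireducible\lpo\compproduct\isinfs$), it suffices to prove the two reductions $\isfins\weireducible\sort$ and $\isinfs\weireducible\tcn$.

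The reduction $\isfins\weireducible\sort$ is immediate and in fact strong: for $p\in\Cantor$ the output $\sort(p)$ equals $0^\Nb$ precisely when $p$ has infinitely many $1$'s, so, recalling that $\repmap{\mathbb{S}}(q)=0$ iff $q=0^\Nb$, the string $\sort(p)$ is already a name for $\isfins(p)$. Hence the identity functionals witness $\isfins\strongweireducible\sort$, and so $\sort\weireducible\rt^1_\Nb$ gives $\isfins\weireducible\rt^1_\Nb$, whence $\lpo'\weiequiv\isfin\weireducible\lpo\compproduct\isfins\weireducible\lpo\compproduct\rt^1_\Nb\weireducible\rt^1_\Nb$, contradicting \thref{lem:lpo'-rt1n}.

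For $\tcn$ I would build $\isinfs\weireducible\tcn$ explicitly. Given $p\in\Cantor$, let $X_p:=\{\,n\in\Nb : (\forall m\ge n)(p(m)=0)\,\}$; its complement $\{\,n : (\exists m\ge n)(p(m)=1)\,\}$ is uniformly c.e.\ in $p$, so $X_p$ is a $\boldfacePi^0_1$ instance computable from $p$, and $X_p=\emptyset$ iff $p$ has $1$'s at arbitrarily large positions, i.e.\ iff $\isinfs(p)=\top$. Apply $\tcn$ to $X_p$, obtaining some $m\in\Nb$: if $X_p\neq\emptyset$ then $m\in X_p$, so $p$ is constantly $0$ from $m$ on; if $X_p=\emptyset$ the value $m$ is arbitrary (this is exactly where totalization is harmless). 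The backward functional, on input $(p,m)$, outputs the $\mathbb{S}$-name $q$ with $q(i)=1$ iff $p(k)=1$ for some $k$ with $m\le k\le m+i$. If $p$ has finitely many $1$'s then $m\in X_p$, so $q=0^\Nb$, a name for $\bot=\isinfs(p)$; if $p$ has infinitely many $1$'s then there is a $1$ at some position $\ge m$, so $q\neq0^\Nb$ and $\repmap{\mathbb{S}}(q)=\top=\isinfs(p)$. This establishes $\isinfs\weireducible\tcn$, hence $\tcn\weireducible\rt^1_\Nb$ would give $\lpo'\weiequiv\isinf\weireducible\lpo\compproduct\isinfs\weireducible\lpo\compproduct\rt^1_\Nb\weireducible\rt^1_\Nb$, once more contradicting \thref{lem:lpo'-rt1n}.

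The $\sort$ half and the final bookkeeping with $\lpo\compproduct(\cdot)$ are essentially immediate from the lemmas already available, so the only genuinely non‑routine step is the reduction $\isinfs\weireducible\tcn$: the point is choosing the right closed set $X_p$ — one whose emptiness encodes exactly ``$p$ has infinitely many $1$'s'' — and checking that the (otherwise information‑free) output of $\tcn$ on the empty set still lets the backward functional produce a correct Sierpiński name. That is where I expect the main care to be needed.
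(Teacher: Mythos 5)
Your proof is correct and follows essentially the same strategy as the paper: assume the reduction to $\rt^1_\Nb$, push $\isfins$ (resp.\ $\isinfs$) through it, then upgrade to $\lpo'\weireducible\rt^1_\Nb$ via \thref{lem:obvisinffin} together with the absorption $\lpo\compproduct\rt^1_\Nb\weireducible\rt^1_\Nb$ of \thref{lem:lport1n-rt1n}, contradicting \thref{lem:lpo'-rt1n}. The only difference is that you give direct constructions of the reductions $\isfins\weireducible\sort$ and $\isinfs\weireducible\tcn$ where the paper simply cites {\cite[Prop.\ 24]{NP18}}; your constructions (reading off the Sierpi\'nski name directly from the output of $\sort$, and the $\boldfacePi^0_1$ set $X_p$ whose emptiness encodes ``infinitely many $1$'s'') are correct and match the intended content of that citation.
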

\begin{proof}
    By \cite[Prop.\ 24]{NP18}, $\isfins\weireducible\sort$ and $\isinfs\weireducible\tcn$. Now, suppose for a contradiction that $\sort\weireducible\rt^1_\Nb$: since $\isfin\weiequiv\lpo'$, by \thref{lem:lport1n-rt1n} and \thref{lem:obvisinffin} we would have that 
    \[
        \lpo'\weireducible\lpo\compproduct\isfins\weireducible\lpo\compproduct\rt^1_\Nb\weireducible\rt^1_\Nb,
    \]
    which contradicts \thref{lem:lpo'-rt1n}, thus proving that $\sort\not\weireducible\rt^1_\Nb$.
    
    The other non-reduction is proved analogously. 
\end{proof}

\section{Conclusions}
In this paper, we explored the algebraic relations between the first-order part operator and several other operators on multi-valued functions already introduced in the literature. We now recap the results and highlight a few questions that remained open. 

In Section~\ref{sec:alg_fop}, we study the relations between $\firstOrderPart{(\cdot)}$ and the operators $\sqcup,\sqcap, \times, \compproduct$, and $'$. Moreover, to better understand the connections between the first-order part and the parallelization operator, in Section~\ref{sec:u*} we introduced the unbounded finite parallelization $\ustar{(\cdot)}$ and explored its connections with the operators $\sqcup,\sqcap, \times, \parallelization{\phantom{a}}, \compproduct$, and $'$. 

In particular, \thref{thm:FOP(parallelization)=u*(FOP)} and its generalization \thref{thm:FOP(parallelization)=u*(FOP)_strong} characterize the first-order part of the parallelization of a first-order problem. As already mentioned, these theorems can be rephrased as the commutativity, for first-order problems between the first-order part and the unbounded finite parallelization. We can then update the Open Question \ref{q:commutativity} as follows:

\begin{open}
	Under which hypotheses on $f$ we have $\firstOrderPart{(\parallelization{f})}\weiequiv\ustar{(\firstOrderPart{f})}$? How about $\firstOrderPart{((\parallelization{f})^{(n)})}\strongweiequiv (\ustar{f})^{(n)}$, for $n\in\mathbb{N}$?
\end{open}

In Section~\ref{sec:fop_diamond}, we explored the connections between first-order part, unbounded finite parallelization, and the diamond operator. \thref{thm:complete_problems_u*_diamond} provides a sufficient condition under which $\ustar{(\cdot)}$ and $(\cdot)^\diamond$ coincide. Again, this is not a necessary condition, and therefore it is natural to ask:

\begin{open}
	Under which hypotheses on $f$ we have $\ustar{f}\weiequiv f^\diamond$?
\end{open}

Going in the opposite direction, it would be interesting to better understand when these operations are different. In \thref{thm:f*<fu*} and \thref{thm:*<u*_fractals} we provided some sufficient conditions under which the finite parallelization differs from its unbounded counterpart. The following question was raised by Arno Pauly:

\begin{open}
	Is there a first-order problem s.t.\ $f^*\strictlyweireducible \ustar{f}\strictlyweireducible f^\diamond$? How about $f\strictlyweireducible f^*\strictlyweireducible \ustar{f}\strictlyweireducible f^\diamond$?
\end{open}

In Section~\ref{sec:applications} we characterize the first-order part of several well-known principles, including $\mflim^{(n)}$ and $\WKL^{(n)}$. A possible candidate to answer positively the first part of the last question is $\PiBound$: indeed, we proved in \thref{thm:pibound*<u*} that $f^*\strictlyweireducible \ustar{f}$. The proof strategy differs from the ones used to prove \thref{thm:f*<fu*} and \thref{thm:*<u*_fractals}, and can certainly be applied to other bounding principles as well. It is not clear, however, whether $\PiBound^\diamond \weireducible \ustar{\PiBound}$. Besides, $\PiBound \weiequiv \PiBound^*$, hence it is not a viable example to answer positively the second part of the last question.

Finally, we provided some bounds on the first-order part of some principles related to Ramsey theorem. The following problem remains open:

\begin{open}
	For $n>2$, $\firstOrderPart{\RT{n}{k}} \weireducible \firstOrderPart{\SRT{n}{k}}$?
\end{open}

\vspace{-3ex}

\bibliographystyle{mbibstyle}
\bibliography{bibliography}

\printauthor

\end{document}